\documentclass[10pt,reqno]{amsart}

\usepackage{graphicx}
\usepackage{tabularx}
\graphicspath{ {./images/} }
\usepackage{xargs}
\usepackage{geometry}
\usepackage{fancyhdr} 
\usepackage{lastpage} 
\usepackage{extramarks} 
\usepackage[most]{tcolorbox} 
\usepackage{xcolor} 
\usepackage[hidelinks]{hyperref} 
\DeclareUnicodeCharacter{0345}{\textcompwordmark}
\usepackage[utf8]{inputenc} 
\usepackage[russian,english]{babel}
\usepackage[permil]{overpic}
\usepackage{pict2e} 
\usepackage{listings}
\usepackage{amssymb}
\usepackage{amsthm}
\theoremstyle{plain}
\usepackage[
backend=biber,
style=numeric,
sorting=ynt
]{biblatex}
\usepackage{xargs}
\usepackage{float}
\usepackage{lipsum}
\usepackage{geometry}
\usepackage{fancyhdr} 
\usepackage{lastpage} 
\usepackage{extramarks} 
\usepackage[most]{tcolorbox} 

\usepackage{xcolor} 
\usepackage[hidelinks]{hyperref} 
\usepackage[permil]{overpic}
\usepackage{pict2e} 
\usepackage{listings}

\usepackage{amsmath}
\usepackage{amssymb}
\usepackage{tikz-cd}
\usetikzlibrary{babel}
\providecommand{\Inv}{\operatorname{Inv}}

\newtheorem*{theorem*}{Theorem}

\newtheorem*{assumption}{Assumption}
\theoremstyle{notation}

\numberwithin{equation}{section}
\theoremstyle{plain}
\newtheorem{definition}{Definition}[subsection]
\newtheorem{theorem}[equation]{Theorem}

\newtheorem*{remark}{Remark}
\newtheorem{corollary}[equation]{Corollary}
\newtheorem{lemma}[equation]{Lemma}
\newtheorem*{conjecture*}{Conjecture}

\newtheorem{proposition}[equation]{Proposition} 
\providecommand{\keywords}[1]
{
  \small	
  \textbf{\textit{Keywords---}} #1
}

\definecolor{my-linkcolor}{rgb}{0.7.5,0.15,0.15}
\definecolor{my-citecolor}{rgb}{0.3,0.7,0.1}
\definecolor{my-urlcolor}{rgb}{0,0,0.75}
\hypersetup{
	colorlinks,
	linkcolor={my-linkcolor},
	citecolor={my-citecolor},
	urlcolor={my-urlcolor}
}

\newenvironment{dedication}
  {
   \thispagestyle{empty}
   \itshape             
   \raggedleft          
  }
  {
  }

\title{When Entropy flows: drifting along the route to Chaos}
\author{Eran Igra$^{1,2}$, Valerii Sopin$^{1,2}$, Yanghong Yu$^{3}$}
\address{\newline 1. Shanghai Institute for Mathematics and Interdisciplinary Sciences (SIMIS), Shanghai 200433, China
\newline 2. Research Institute of Intelligent Complex Systems, Fudan University, Shanghai 200433, China
\newline 3. Institue of Science Tokyo, Tokyo, Japan
}
\email{eranigra@simis.cn}
\email{VvS@myself.com, vsopin@simis.cn}
\email{yu.y.2235@m.isct.ac.jp}
\begin{document}

\begin{abstract}
Consider a smooth one--parameter family of vector fields defined over some smooth manifold transitions from order into chaos. Inspired by the Second law of Thermodynamics, one is led to ask: can we find a flow whose dynamics realize this transition? To answer this question, motivated by the Mallet-Yorke Orbit Index theory, the Arnold-Khesin scheme for hydrodynamics and a heuristic argument by Rene Thom, we introduce a construction that transforms any one--parameter family of vector fields into a new object: the "Entropy flow". The Entropy flow is a flow defined on the product of the phase space with the parameter space and is best thought of as a flow generated by the original one--parameter family together with a drift in the parameter space, that pushes the trajectory of a given initial condition into a disordered, more complex state. To exemplify, for the Period Doubling, the Ruelle-Takens-Newhouse and the Intermittency routes to chaos the Entropy flow behaves exactly as expected - that is, it truly pushes trajectories into more complex states. In addition, in the spirit of Forcing Theory, in the paper we use the Conley index to discuss how one can use the Entropy flow to study the connection between topology and bifurcations. Moreover, drawing on the numerical and analytic evidence, we will analyze how the Entropy flow behaves in several examples of famous flows, including the Lorenz system, the Rössler attractor, and the breakup of the Shilnikov homoclinic scenario.
\end{abstract}

\maketitle
\keywords{\textbf{Keywords} - Chaos, Bifurcation Theory, Topological Dynamics, Conley index Theory, Forcing Theory}
\tableofcontents

\begin{dedication}
    We dedicate this paper to James A. Yorke, who taught us to see 
     how periodic orbits push Order into Chaos.
    \vspace{\baselineskip}

  \end{dedication}

\section{Introduction}



Let $M$ be a smooth manifold, let $\dot{s}=F(s)$ be a smooth vector field on it and let $\phi:M\times \mathbb{R}\to M$ be a corresponding flow. By the Second Law of Thermodynamics, one would want that for all unbounded, increasing sequences $\{t_j\}_{j\in\mathbb{N}}$ and all large ensembles of "generic" initial conditions, $x_1,...,x_n$, the finite sequences $\phi(x_1,t_j),...\phi(x_n,t_j)$ become more and more "mixed" as $j\to\infty$. This leads us to ask: can we actually construct a flow having these properties? To better formulate this question, let us formally treat $\dot{s}=F_\tau(s)$ as a $C^k$, $k\geq1$, family of vector fields, defined by a $C^k$-map $F:M\times I\to TM$, where $I$ is some one-dimensional parameter space and $F(s,\tau)=F_\tau(s)$. As $\tau$ is varied in $I$, the dynamics generated by $F_\tau$ should be expected to become increasingly more chaotic (usually by undergoing some route to chaos). As discussed in \cite{RT}, \cite{fei}, \cite{SRT}, \cite{fei2}, \cite{fei1}, \cite{PP}, \cite{fei4} and \cite{shil3} (among others), these transitions from order into chaos could be understood as toy models for the evolution of turbulence (see Chapter 5 in \cite{YLi} and \cite{fei3} for a survey on the modern approach to this problem). We now rephrase our question above as follows: can we find a flow on $M\times I$ whose trajectories behave like the transition of $\dot{s}=F_\tau(s)$ from order into chaos as $\tau$ is varied?\\

It is exactly this problem we tackle in this paper: we prove that given a $C^k$ family of vector fields $\dot{s}=F_\tau(s)$, $s\in M$, $\tau\in I$, there exists a $C^1$-flow $\phi:(M\times I)\times\mathbb{R}\to M\times I$ whose invariant sets include many of the bifurcation sets for $\dot{s}=F_\tau(s)$. In particular, the stable manifolds of these bifurcation sets attract and repel initial conditions in $M\times I$, thus propelling trajectories towards increasingly disordered motion. Specifically, inspired by Mallet-Yorke Index Theory (see \cite{PY}, \cite{PY2}, \cite{PY3}, \cite{Perd}) and the theory of Bifurcations without Parameters (see \cite{fiedler}, \cite{lieb} for a survey), we construct the flow $\phi$ as the solution for the following system, defined on $M\times I$:
\begin{equation*}
    \begin{cases}
        \dot{s}=F_\tau(s)+V(s,\tau),\\
        \dot{\tau}=P(s,\tau),
    \end{cases}
\end{equation*}
where both $P$ and $V$ are smooth functions that vanish away from two sets $Per$ and $Fix$ in $M\times I$, which correspond to the "distinguishable" periodic orbits and fixed points for $\dot{s}=F_\tau(s)$ (respectively). Intuitively, the set $Per$ should be understood as the collection of initial conditions $(s,\tau)\in M\times I$ that lie on attracting or repelling periodic orbits for $F_\tau$. Similarly, $Fix$ should be understood as the collection of points $(s,\tau)\in M\times I$ s.t. $s$ is either a source or sink for $F_\tau$. That being said, the definition for both is more global than that, and both these sets certainly allow some saddle orbits and fixed points inside (see Section \ref{defsect} for the precise details). Therefore, the functions $P$ and $V$ should be understood as follows: if one thinks of each $F_\tau$ as encoding a parameter-dependent law of motion, then $P$ is the force causing the law of motion directing the motion of $s$ to change while in motion. Similarly, $V$ can be interpreted as a function controlling the transition of energy from one bifurcation to another, in a way that increases complexity (see the discussion after Definition \ref{entropyflow} for the precise formulation). \\

To illustrate how this flow on $M\times I$ behaves in practice, let us assume $I=(-1,1)$ and the existence of a period doubling cascade of attracting orbits at parameters $\{\tau_n\}\subseteq (-1,0)$, $\tau_n\to0$. Then, around the subsets $M\times I$ corresponding to this cascade, $\mathcal{D}$, the function $P$ would in general be positive (i.e., the flow above would always push towards $M\times\{0\}$). Similarly, given a period doubling cascade of attractors occurring at a sequence $\{\tau_n\}\subseteq(0,1)$ s.t. $\tau_n\to0$, $P$ would be negative around the subset $\mathcal{T}$ of $M\times I$ corresponding to the cascade (see Section \ref{perioddoubling}). In other words, the flow would push initial conditions $(s,\tau)\in M\times I$ close to $\mathcal{D}$ or $\mathcal{T}$ (respectively) towards $M\times\{0\}$, where there exists some complex motion for $F_0$ corresponding to the "end" of the cascade. To honor the Second Law of Thermodynamics, we name this new flow as the \textbf{Entropy flow}. At this point we explicitly state that a connection between our construction and the Second Law of Thermodynamics does not arise from any known definition of Entropy. Rather, we chose the name since, as stated above, the Entropy flow is designed to push initial conditions in $(s,\tau)$ from an ordered into disordered state. Before moving on, we stress that the Entropy flow for realistic systems can, and many times will, have extremely complex dynamics in $M\times I$. We will exemplify that with several examples throughout this paper, including the Rössler, the Lorenz, and the Michelson systems, among others (see \cite{Ross76}, \cite{Lo}, and \cite{Michh}, respectively).\\

Our motivation to construct the Entropy flow arose from two main sources. The first is an attempt to study bifurcations in a topological setting. Specifically, let us recall that there exists a rich theory for how certain prescribed topological conditions can force certain dynamics to appear, a field often referred to as "Dynamical Forcing Theory" (see \cite{Bo} for a survey). The Entropy flow allows us to study the analogous question - namely, how does topology force bifurcations to appear. To illustrate, let us recall the sets $Per$ and $Fix$ mentioned above. It is the topological configuration of these sets in $M\times I$ that constrains the bifurcation structure of these periodic orbits and fixed points in $M\times I$. As their topology can be hard to analyze directly, by making them the invariant sets for the Entropy flow we can study them as invariant sets for it. This allows us to apply topological tools like the Conley index Theory (see \cite{Mischaikow2002ConleyIndex} for a survey). As such, a central part of this paper develops a Conley index framework for the topology of bifurcations (see Subsection \ref{2dtheory}). In detail, we prove that when a bifurcation set for $\dot{s}=F_\tau(s)$ can be isolated in $M\times I$, we can describe the way it constrains the dynamics of the Entropy flow using the Conley index (see Proposition \ref{decomposition} and Theorem \ref{bifcodesth1}). As the dynamics of the Entropy flow are defined by the bifurcations of $\dot{s}=F_\tau(s)$, these facts should be interpreted as saying that one bifurcation can force others to appear, i.e., that topology forces bifurcations. These results, therefore, should be viewed as bifurcation analogues for the Sharkovskii Theorem (see \cite{Shar}), the Li-Yorke Theorem (see \cite{Yor}), and the Thurston-Nielsen Classification Theorem (see \cite{Fat}), all of which can be interpreted as ways topology forces dynamics (that being said, we stress the analogy should be understood as programmatic rather than a direct equivalence). \\

To further elaborate on the above, our idea is the following: let us assume there exists some isolated invariant set $\mathcal{A}\subseteq M\times I$ for the Entropy flow, corresponding to a collection of some bifurcation sets, each of which can be isolated (again, w.r.t. the Entropy flow). By splitting $\mathcal{A}$ into these components we will construct a filtration of them adapted to the flow (see Definition \ref{def:entropy-index-ladder}), which we use to define transition groups (see Definition \ref{transportmap}) and transport maps between them (see Definition \ref{transportmap}) encoding the bifurcations as the parameter $\tau$ varies via the lens of the Entropy flow. In detail, these objects record how Conley classes are inherited, created, killed or attached as the Entropy flow passes through successive bifurcation blocks (for the details, see Definition \ref{birthdeathcode}). In this sense, our contribution is not to introduce new index filtrations as abstract Conley-theoretic objects, but to select them from the geometry of bifurcation sets in \(M\times I\) w.r.t the Entropy flow and interpret their induced maps as records of bifurcation transitions.\\

The second source of our motivation for constructing the Entropy flow is more philosophical in nature and arose from three different places. The first is the heuristic originally due to Rene Thom, stated in \cite{Thom}. According to it we should not expect to find structurally stable hyperbolic behavior in systems derived from scientific experiments, as such systems often have properties that are essentially unstable. The second is the Chaotic Hypothesis (see \cite{gal}), according to which chaotic behavior should always assumed to have hyperbolic-like properties, even if the dynamics do not satisfy any "nice" splitting condition. The final source is the Arnold-Khesin scheme for fluid dynamics (see \cite{ArKhe}), according to which the solutions of the Euler equations can be seen as a geodesic flow on the space of diffeomorphisms, i.e., a "transition between states". These three different ideas led us to construct the Entropy flow as a flow that behaves like all of these, combined. In detail, as a flow on $M\times I$ the Entropy flow is extremely unstable dynamically, yet, as will be made clear later on, the projection of its flow lines from $M\times I$ to $M$ can certainly appear hyperbolic - thus connecting Rene Thom's heuristic and the Chaotic Hypothesis. At this point we stress that at no stage do we claim the Entropy flow describes a fluid flow like the Arnold-Khesin Scheme was designed to do. That being said, somewhat surprisingly, the projection to $M$ of the flow lines in $M\times I$ for some Entropy flows share a (even if possibly superficial) similarity to turbulent motion. We discuss this in Section \ref{turbulence}, where we will observe this correspondence arising naturally in the Entropy flow for the Shilnikov homoclinic scenario (see \cite{LeS}).\\

This paper is organized as follows: in Section \ref{defsect} we construct the Entropy flow. In order to ensure a large degree of flexibility and a broad applicability for our construction, this Section is highly technical. After that, we discuss two examples showing how the Entropy flow behaves - the first is derived from the van der Pol oscillator (see \cite{vdp}) and the second -- from a cascade of pitchfork bifurcations. Following that, in Section \ref{basicqualitative} we perform basic qualitative analysis and study the behavior of the Entropy flow in the three famous routes to chaos: the Period Doubling route to chaos (see \cite{fei}), the Ruelle-Takens-Newhouse route to chaos (see \cite{RT} and \cite{SRT}), and the Intermittency route to chaos (see Theorem \ref{heteroclinicnets}). Broadly speaking, in all of these routes to chaos the Entropy flow behaves as expected, i.e., pushes initial conditions towards a disordered state (see Theorem \ref{major1}, Corollary \ref{ruelletakens} and Theorem \ref{heteroclinicnets}, respectively, for the precise details). Further, in Section \ref{bifdyn} we discuss how one can use the Entropy flow to study the constrains topology places on bifurcations (see Theorem \ref{bifcodesth1} and Proposition \ref{decomposition}). Finally, in Section \ref{turbulence} we discuss the behavior of the Entropy flow around the Shilnikov homoclinic scenario (see Theorem \ref{shilnikoventropy} and Corollary \ref{turbulententropy}), which leads us to heuristically compare such Entropy flows with the Richardson’s notion of turbulence (see \cite{RichCascade}). We conclude this paper by discussing how our work can possibly continued and to what problems we expect it can possibly applied.\\

Before we begin, we would like to state that even if this may not always appear so, this paper is inspired by many numerical and scientific studies, including \cite{Niel1}, \cite{BBS}, \cite{MBKPS}, \cite{BaIbPer}, \cite{Niel2} (among many others). Moreover, many of our ideas were motivated by the results and approaches of \cite{PY2}, \cite{SY2}, \cite{SY}, \cite{facet}, \cite{transham}, which, together, led us to consider the sets $Per$ and $Fix$ as topological objects. In particular, we started this project keeping in mind the next quote from the preface of \cite{AAIS} (see page 7 in it): "\textit{\textbf{Reformulating the words of Poincare on periodic solutions, one may say that bifurcations, like torches, light the way from well-understood dynamical systems to unstudied ones}}". 

\subsection*{Acknowledgements}
The authors are grateful to James A. Yorke, Warwick Tucker, Tali Pinsky, Zin Arai, Genadi Levin, Vered Rom Kedar, Michał Lipiński, Bowen Chen, Joshua Haim Mamou, Evelyn Sander, Jinxin Xue, Michael Faran, Domenico Lippolis, Jinzi Mac Huang, Bin Shi, Mikhail Zhitomirskii, Gabriel Teixeira Guimarães, Michael Khanevsky and Noy Soffer-Aranov for their helpful comments and suggestions.

\section{Defining the Entropy flow}
\label{defsect}

From now on, let $\dot{s}=F_\tau(s)$ be a $C^1$ one--parameter family of vector fields defined on a smooth manifold $M$, parameterized by $\tau\in I$. Our goal in this Section is to construct and define the Entropy flow on $M\times I$ in a way that captures how the dynamics of $\dot{s}=F_\tau(s)$ transition from simple behavior into chaos, based on the bifurcations occuring as $\tau$ is varied in $I$ (see Definition \ref{generalentropy}). This Section is organized as follows: we begin by recalling several facts about the Implicit Function Theorem which leads us to define two subsets of $M\times I$, $Per$ and $Fix$, corresponding to the "distinguishable" periodic orbits and fixed fixed points of $\dot{s}=F_\tau(s)$ (see Definition \ref{regular}). As will be clear, $Per$ and $Fix$ are non-empty for a large class of flows, including many "real life" versions - in particular, the components of $Per$ and $Fix$ are often separated from one another by bifurcation orbits and fixed points. Following that, to motivate our construction, we heuristically discuss what desired properties we would like the Entropy flow to have. This will lead us to prove several technical facts (see Propositions \ref{def1}, \ref{def11} and Corollary \ref{def2}), that would allow us to transform the heuristic into a rigorous construction (see Subsections \ref{sec1}, \ref{sec2} and \ref{sec3}). At this point we remark that despite our idea being relatively geometric and straightforward, to make it rigorous our arguments will be extremely technical. Therefore, to illustrate how one can actually analyze Entropy flows for $C^1$ families of vector fields at the end of this Section we provide two explicit examples (see Subsections \ref{vpo} and \ref{pitchhh}).\\

To begin, from now on $M$ will always be a connected, locally compact, separable, smooth orientable manifold of dimension at least $1$ - we will also often implicitly assume $M$ is metrizable. $I$ will always denote a connected subset of either $\mathbb{R}$ or $S^1$, that includes some open set (although for the sake of discussion, for most of this Section $I$ would be assumed to be $(-1,1)$). We will always denote by $\dot{s}=F_\tau(s)$ a $C^1$ one--parameter family, where $F_\tau(s)$ will always be defined by a $C^1$-map $F:M\times I\to TM$ satisfying the equation $F(s,\tau)=F_\tau(s)$. Our goal in this Section is to define two sufficiently smooth functions, $V,P:M\times I\to\mathbb{R}$ s.t. the following system of differential equations:
\begin{equation}
\label{reg}
    \begin{cases}
        \dot{s}=F_\tau(s)+V(s,\tau)\\
        \dot{\tau}=P(s,\tau)
    \end{cases}
\end{equation}
is well-defined and at least $C^1$ in $M\times I$, and satisfies the following:
\begin{itemize}
    \item $P(s,\tau)$ points in the direction in $I$ where the trajectory of $s$ is "more complex" w.r.t. $F_\tau$ - where by "more complex" we mean "oscillates between more fixed points and periodic orbits".
    \item The function $V$ is a function ensuring that if $\mathcal{J}_\tau\subseteq M$ is an invariant set for $F_\tau$, then the flow on $M\times I$ has a connected invariant set $\mathcal{J}\subseteq M\times I$ s.t. $\mathcal{J}\cap M\times\{\tau\}=\mathcal{J}_\tau$ - in other words, $V$ is a correction term ensuring tangency to continuously varying invariant sets.\\ 
\end{itemize}

To give a more geometric explanation which types of $P$ and $V$ we are looking for, consider, for example, the case when $I=(-1,1)$ and $s$ lies on some chaotic attractor $A_{0}$ in $M$ for $F_{0}$ which persists in $\tau\in(-\epsilon,\epsilon)$ and whose complexity increases when, say, $\tau$ increases (for some small $\epsilon>0$). In this case, we would like $P$ to be positive while $V$ should ensure the trajectory of $(s,0)$ remains tangent to $A=\cup_{\tau\in(-\epsilon,+\epsilon)}A_\tau\times\{\tau\}$ as we move in $M\times I$. In particular, if there exists some $\tau_0\in(-\epsilon,\epsilon)$ s.t. the chaotic attractor $A_{\tau_0}$ for $F_{\tau_0}$ is the "most complex" in some sense, we would like $P$ to always point towards $A_{\tau_0}\times\{\tau_0\}$, at least around $A$. That being said, we would not like our dynamics to be "too far away" from those of the original system, or, in other words, we would like our flow not to be too far in the $C^1$-metric from the system:
\begin{equation}
\label{lam1}
    \begin{cases}
        \dot{s}=F_\tau(s)\\
        \dot{\tau}=0
    \end{cases}
\end{equation}
These requirements on $P$ and $V$ would ensure the trajectories of this new flow would be projected from $M\times I$ to $M$ they would appear as if the motion becomes more and more irregular around $A$, without changing its dynamical structure "too much". It is this geometric intuition that will guide our construction of both $P$ and $V$. To begin making this idea precise, we now recall the following Corollaries of the Implicit Function Theorem which we state in a topological way more suitable for our needs:
\begin{corollary}
    \label{regularitylemma} Assume $I=(-1,1)$ or $I=S^1$, and that $x_0$ is a fixed point for the vector field $F_{0}$ s.t. $|J_{0}(0)|\ne0$, where $J_{0}$ denotes the Jacobian of $F_{0}$ at $x_0$. Then, there exists some $\epsilon>$ and an arc $\gamma$ in $M\times(-\epsilon,\epsilon)$ passing through $(x_0,0)$ s.t the following \textbf{isolation properties} hold:
    \begin{itemize}
        \item $\gamma$ is a $C^1$ curve homeomorphic to an open interval.
        \item For all $\tau\in(-\epsilon,\epsilon)$, $\gamma$ is transverse to $M\times\{\tau\}$ at a unique point, $(x_\tau,\tau)$ satisfying $F_\tau(x_\tau)=0$. In particular, $\gamma\cap M\times\{0\}=(x_0,0)$.
        \item There exists a connected neighborhood $N$ of $\gamma$ in $M\times (-\epsilon,+\epsilon)$ s.t. for all $\tau|\in (-\epsilon,\epsilon)$, $F_\tau$ does not vanish in the set $N\setminus\gamma$.
    \end{itemize}
    
   Similarly, let $T_0$ be a periodic orbit for $F_{0}$ and let $f_0:S_0\to S_0$ be some first-return map for a cross-section $S_0$ transverse to $T_0$ s.t. $S\cap T_0=\{p_0\}$. Let $D_0(T_0)$ denote the differential of $f_0$ at $p_0$ - then, if all the eigenvalues of $D_0(T_0)$ are not roots of unity, there exists some $\epsilon>0$ and some $C^1$-surface $\mathcal{S}$ in $M\times(-\epsilon,\epsilon)$ passing through $T_0\times\{\tau_0\}$ satisfying the following \textbf{isolation properties}:
    \begin{itemize}
        \item $\mathcal{S}$ is homeomorphic to a cylinder , i.e., to $S^1\times(-\epsilon,\epsilon)$.
        \item $\mathcal{S}$ is transverse to $M\times\{\tau\}$ for all $\tau\in(-\epsilon,\epsilon)$, and $\mathcal{S}\cap M\times\{\tau\}=T_\tau\times\{\tau\}$ where $T_\tau$ is a periodic orbit for the vector field $F_\tau$. 
        \item For all $\tau\in(-\epsilon,\epsilon)$ there exists a cross-section $S_\tau$, transverse to $T_\tau$ w.r.t. to $F_\tau$ and varying $C^1$ with $\tau$, s.t. the following two conditions hold:
        \begin{enumerate}
            \item The first-return map $f_\tau:S_\tau\to S_\tau$ is well-defined and continuous, and varies smoothly with $\tau$. 
            \item  If $\{p_\tau\}=T_\tau\cap S_\tau$, then $\{p_\tau\}=\{y\in S_\tau|f_\tau(y)=y\}$.
            \item For all $n>0$ there exists some $\epsilon'\in(0,\epsilon]$ and some cross-section $S'_\tau\subseteq S_\tau$, varying $C^1$ as we vary $\tau\in(-\epsilon',\epsilon')$ s.t. $\{x_\tau\}=\{y\in S_{\tau'}|f^n_\tau(y)=y\}$.
        \end{enumerate}
    \end{itemize}
    
\end{corollary}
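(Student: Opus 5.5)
The plan is to prove both halves by a direct application of the Implicit Function Theorem in local coordinates, followed by a compactness argument that yields the isolating neighborhoods.

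For the fixed point part, I work in a chart around $x_0$ and regard $F$ as a $C^1$ map $G(s,\tau)=F_\tau(s)$ from $\mathbb{R}^n\times(-1,1)$ to $\mathbb{R}^n$. Since $D_sG(x_0,0)=J_0$ is invertible, the Implicit Function Theorem gives $\epsilon>0$, a neighborhood $U$ of $x_0$, and a unique $C^1$ map $\tau\mapsto x_\tau$ with $x_\tau\in U$ and $G(x_\tau,\tau)=0$. I then set $\gamma=\{(x_\tau,\tau):|\tau|<\epsilon\}$.
\begin{itemize}
\item $\gamma$ is a $C^1$ graph over $(-\epsilon,\epsilon)$, so it is homeomorphic to an open interval.
\item Its tangent vector $(\dot x_\tau,1)$ has nonzero $\tau$-component, so $\gamma$ is transverse to each slice $M\times\{\tau\}$ and meets it at exactly one point.
\item For the third isolation property I take $N=U\times(-\epsilon,\epsilon)$, shrinking $\epsilon$ if needed. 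The uniqueness clause of the IFT says precisely that $F_\tau$ has no zeros in $N\setminus\gamma$.
\item $N$ is connected as a product of connected sets.
\end{itemize}

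For the periodic orbit part, I reduce to the fixed point case via the first-return map. I choose a local hypersurface $S_0$ through $p_0$ transverse to $F_0$. Because transversality is an open condition, the same $S_0$ (or a $C^1$ family $S_\tau$, for instance $S_\tau=S_0$) stays transverse to $F_\tau$ near $p_0$ for small $|\tau|$. Smooth dependence of solutions on initial data and parameters, together with the IFT applied to the return time, then gives a $C^1$ return-map family $f(y,\tau)=f_\tau(y)$ defined on a neighborhood of $(p_0,0)$.

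Next I apply the first half to $H(y,\tau)=f_\tau(y)-y$. Its $y$-derivative at $(p_0,0)$ is $D_0(T_0)-\mathrm{Id}$, which is invertible because $1$ is not an eigenvalue. This produces a unique $C^1$ curve $p_\tau$ of fixed points of $f_\tau$, which gives items (1) and (2) after shrinking $S_\tau$ to the IFT neighborhood. The periodic orbit $T_\tau$ is the $F_\tau$-orbit of $p_\tau$, and I set $\mathcal S=\bigcup_\tau T_\tau\times\{\tau\}$. The map $(t,\tau)\mapsto(\phi_\tau(p_\tau,t),\tau)$, for $t$ in $[0,\text{period}(\tau)]$ with the endpoints glued, is a $C^1$ injective immersion of $S^1\times(-\epsilon,\epsilon)$. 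It is an embedding onto its image, and its image meets each slice transversally exactly in $T_\tau\times\{\tau\}$.

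For item (3) I repeat the argument with $f^n_\tau$. Its differential at $p_0$ is $D_0(T_0)^n$, and this has no eigenvalue $1$ exactly because no eigenvalue of $D_0(T_0)$ is a root of unity. The IFT then gives $\epsilon'\le\epsilon$ and a smaller section $S'_\tau$ on which $p_\tau$ is the unique fixed point of $f^n_\tau$.

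I expect no deep obstacle; the most delicate step is purely bookkeeping. One point is making sure the cross-sections and return maps genuinely vary $C^1$ in $\tau$, which requires the return-time IFT argument. The other is noting that $\epsilon'$ depends on $n$, so no uniform isolation in $n$ is claimed. The statement's notation ($x_\tau$ in (3), $\tau_0$) should also be read as $p_\tau$ and $0$ respectively.
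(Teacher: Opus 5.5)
Your proof is correct and is exactly the Implicit Function Theorem argument the paper intends. The paper gives no separate proof and simply presents the statement as a corollary of the IFT; you spell out what it leaves implicit, namely the reduction to $f_\tau(y)-y$ via a $C^1$ return map, and the invertibility of $D_0(T_0)^n-\mathrm{Id}$ for item (3), with $\epsilon'$ depending on $n$. One caveat: after shrinking, $f_\tau$ maps $S_\tau$ into the larger section $S_0$, not into $S_\tau$ (for saddle-type $T_0$ no small neighbourhood of $p_\tau$ is mapped into itself), so item (1) has to be read as a statement about a local Poincar\'e map, which is how the hypothesis on $f_0$ is already phrased.
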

\begin{remark}
    When $M=\mathbb{R}^3$ or $S^3$, one further conclusion is added - namely, that the knot type of the periodic orbit $T_\tau$ in $M$ doesn't change as we vary $\tau$ in $(-\epsilon,\epsilon)$.  Moreover, by Proposition 3.3 in \cite{PY2}, we know that in all dimensions, the Mallet-Yorke Index of $T_\tau$ is well-defined and remains unchanged as $\tau$ is varied in $(-\epsilon,\epsilon)$.
\end{remark}
It is easy to see that every hyperbolic periodic orbit or fixed point satisfies the above, i.e., they are both isolated and persist under small perturbations. Inspired by Corollary \ref{regularitylemma} above, we now introduce the following preliminary definitions for "well-behaved" periodic orbits and fixed points:
\begin{definition}
    \label{regular} Consider a $C^1$ one--parameter family $\dot{s}=F_\tau(s)$ defined on a smooth manifold $M$ as above, where $\tau$ varies $C^1$ in $I$. A \textbf{regular}\textbf{ periodic orbit} for $F_\tau$, $\tau\in I$ is a periodic orbit that satisfies the conclusions of Corollary \ref{regularitylemma}, and \textbf{does not} correspond to a Neimark-Sacker bifurcation orbit. Similarly, a \textbf{regular} \textbf{fixed point} for $F_\tau$ is a fixed point satisfying the conclusions of Corollary \ref{regularitylemma}, which is also \textbf{not} a Hopf bifurcation point. We now define the following sets in the product space $M\times I$:
    \begin{itemize}
        \item $Reg_1=\{(x,\tau)|\text{$x$ lies on a regular periodic orbit for $F_\tau$}\}$.
        \item $Reg_2=\{(x,\tau)|\text{$x$ is a regular fixed point for $F_\tau(x)$}\}$.
    \end{itemize}
    
By definition, every component $\alpha$ in $Reg_1$ is a $C^1$-surface and locally homeomorphic to a cylinder. Similarly, every component $\alpha$ of $Reg_2$ is locally homeomorphic to $(0,1)$. 
\end{definition}
\begin{remark}
 We stress that in Definition \ref{regular} we only require the regular periodic orbits or fixed points to satisfy the conclusions of Corollary \ref{regularitylemma}. We \textbf{do not} require them to also satisfy the assumptions of Corollary \ref{regularitylemma}, or those of the Implicit Function Theorem. The reason for that is because we want our definitions to apply in the broadest setting.
\end{remark}

Before proceeding we remark that despite the abstract setting, generically, at least the set $Reg_1$ is non-empty (see the Appendix of \cite{PY2}). More precisely, as discussed and proven in both \cite{PY} and \cite{PY2}, for a $C^3$-generic choice of a $C^3$ one--parameter family $\dot{s}=F_\tau(s)$, all periodic orbits are either regular, or correspond to either a saddle node, period Doubling, or Neimark-Sacker bifurcation (for more details, see \cite{PY}, \cite{PY2}, \cite{KY2} and the references therein). Consequently, the case where at least $Reg_1\ne\emptyset$ can be thought of as the general rule. The reason we care about regular periodic orbits and fixed points is because of two reasons: the first is that the different components of $Reg_1\cup Reg_2$ are often glued to one another at bifurcations. The second is that these periodic orbits and fixed points are, in some sense, isolated - at least provided their local dynamics have certain behavior. To make this idea precise, we now define:
\begin{definition}
    \label{isolationdef} Consider a $C^1$ family $\dot{s}=F_\tau(s)$ defined over some smooth manifold $M$, where the parameter $\tau$ varies in $I$. With previous notations, given a periodic orbit $T\times\{\tau\}\subseteq Reg_1$ for $F_\tau$, we say it is \textbf{completely isolated} if there exists some neighborhood $N$ of $T\times\{\tau\}$ in $M\times I$ s.t. if $\alpha$ is the component of $Reg_1$ for which $T\times\{\tau\}\subseteq \alpha$, then $(N\cap( \overline{Reg_1\cup Reg_2}))\subseteq \alpha$. Similarly, given a fixed point $(x,\tau)\in Reg_2$ for $F_\tau$ is \textbf{completely isolated} if there exists some neighborhood $N$ s.t. if $\alpha$ is the component of $Reg_2$ for which $(x,\tau)\in \alpha$, then $(N\cap(\overline{Reg_1\cup Reg_2}))\subseteq\alpha$. We define:
    \begin{itemize}
        \item $Per=\{(s,\tau)\in Reg_1|\text{$s$ lies on a completely isolated periodic orbit for the vector field $F_\tau$}\}$.
        \item $Fix = \{(s,\tau)\in Reg_2|\text{$s$ is a completely isolated fixed point for the vector field $F_\tau$}\}$.
    \end{itemize}
\end{definition}
In other words, the sets $Per$ and $Fix$ correspond to the "distinguishable" periodic orbits as $\tau$ is varied in $I$. For example, $Per$ includes all the subsets of $Reg_1$ corresponding to attractors and repellers - and similarly, $Fix$ includes all the fixed points that are sinks or sources (that being said, not all periodic orbits in $Per$ need be attractors or repellers, and not all fixed points in $Fix$ need be sinks and sources). Intuitively, the components of $Fix\cup Per$ are glued to one another at bifurcation periodic orbits and/or bifurcation fixed points. As such, recalling the previous discussion, it makes sense to define the functions $P$ and $V$ described above as directions on the components of $Per\cup Fix$ pointing towards (or away) from bifurcations. With that idea in mind, we are now finally ready to start defining the Entropy flow, which we will do in three stages - see Subsections \ref{sec1}, \ref{sec2} and \ref{sec3}. Recall that given a $C^1$ family $\dot{s}=F_\tau(s)$ defined on a manifold $M$, where$\tau$ varies$ I$, our goal is to find sufficiently smooth functions $P$ and $V$ s.t. the following system:
\begin{equation*}
    \begin{cases}
        \dot{s}=F_\tau(s)+V(s,\tau)\\
        \dot{\tau}=P(s,\tau)
    \end{cases}
\end{equation*}
is well-defined, pushes in the direction of more dynamical "complexity"  and keeps the invariant sets invariant "as best as possible". As explained above, we will do so by perturbing Equations \ref{lam1} around the components of $Per$ and $Fix$. To begin, from now on unless said otherwise, let $\dot{s}=F_\tau(s)$ denote a $C^1$ one--parameter family of vector fields, where $\tau$ varies in an open interval, $(-1,1)$ - we will define the generalization to the case where $I=S^1,(-1,1],[-1,1)$ later on in this Section. We first prove the following Proposition:
\begin{proposition}
    \label{def1} Let $\dot{s}=F_\tau(s)$ be a $C^1$ one--parameter family, where $\tau$ varies in $I=(-1,1)$. Let $\alpha$ be a component in $Per$, and let $N_\alpha$ be a neighborhood of $\alpha$ in $M\times I$ s.t. $N_\alpha\cap (Reg_1\cup Reg_2)=\alpha$ (where $Reg_1$ and $Reg_2$ are as in Definition \ref{regular}). Then, there exist $C^1$-functions $P_\alpha:M\times I\to\mathbb{R}$, $V_\alpha:M\times I\to T(M\times I)$ s.t. the vector field $E_\alpha(s,\tau)=(F_\tau(s)+V_\alpha(s,\tau),P_\alpha(s,\tau))$ satisfies the following in $M\times I$:
    \begin{itemize}
        \item $E_\alpha$ is tangent to $\alpha$.
        \item $P_\alpha$ and $V_\alpha$ both vanish outside $N_\alpha$.
        \item The sign of $P_\alpha(s,\tau)$ in $N_\alpha$ is non-zero throughout $\alpha$, and can be chosen to be either positive or negative. Consequently, $E_\alpha$ has no fixed points in $N_\alpha$.
    \end{itemize}
\end{proposition}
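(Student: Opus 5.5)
The plan is to build $E_\alpha$ in three pieces. First, a transverse vector field on the cylinder $\alpha$. Second, a correction $V_\alpha$ that makes the pair $(F_\tau+V_\alpha,P_\alpha)$ tangent to $\alpha$. Third, a bump function $\rho_\alpha$ that localizes everything inside $N_\alpha$.

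Throughout, I use that $\alpha\subseteq Reg_1$. By Definition \ref{regular} and Corollary \ref{regularitylemma}, $\alpha$ is therefore a $C^1$-surface, locally a cylinder $S^1\times(a,b)$. It is transverse to every slice $M\times\{\tau\}$, and $\alpha\cap M\times\{\tau\}=T_\tau\times\{\tau\}$, where $T_\tau$ is a periodic orbit of $F_\tau$. Since $\alpha\subseteq Per$ is completely isolated, $\alpha$ is closed in $N_\alpha$. Shrinking $N_\alpha$ if necessary, I may assume it is a tubular neighborhood of $\alpha$. Note also that the projection $\pi:\alpha\to I$, $(s,\tau)\mapsto\tau$, is a $C^1$ submersion.

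\textbf{Step 1 (the tangent field on $\alpha$).} At each $(s,\tau)\in\alpha$ the tangent plane $T_{(s,\tau)}\alpha$ is spanned by two vectors:
\begin{itemize}
\item $(F_\tau(s),0)$, which is tangent to $T_\tau\times\{\tau\}$ and nonzero because $T_\tau$ is a genuine periodic orbit;
\item a transverse vector of the form $(w(s,\tau),1)$, which exists because $d\pi$ is onto.
\end{itemize}
I choose $w$ continuously, and $C^1$ after smoothing, using the cylinder parametrization $(\theta,\tau)\mapsto(\gamma_\tau(\theta),\tau)$ of $\alpha$ provided by the corollary. Concretely, $w=\partial_\tau\gamma_\tau(\theta)$. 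Next I fix a sign $\sigma\in\{\pm1\}$ and a positive $C^1$ function $c$ on $\alpha$ (for instance $c\equiv1$). On $\alpha$ I then define $P_\alpha=\sigma c$ and $V_\alpha=\sigma c\,w$. With these choices, $E_\alpha=(F_\tau+\sigma c w,\sigma c)$ is the sum of two tangent vectors and hence tangent to $\alpha$. Moreover $P_\alpha$ is nonvanishing on $\alpha$, with sign $\sigma$ chosen freely.

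\textbf{Step 2 (extension and localization).} I extend $w$ to a $C^1$ field on $N_\alpha$ by composing with the tubular projection $N_\alpha\to\alpha$. I then take a $C^1$ bump function $\rho_\alpha:M\times I\to[0,1]$ with $\rho_\alpha\equiv1$ on a smaller closed neighborhood $N'_\alpha\supset\alpha$ and $\operatorname{supp}\rho_\alpha\subseteq N_\alpha$. It exists because $\alpha$ is closed in $N_\alpha$ and $M\times I$ is metrizable. Finally I set
\[
P_\alpha=\sigma\rho_\alpha c,\qquad V_\alpha=\sigma\rho_\alpha c\,w,
\]
both taken to be zero outside $N_\alpha$. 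These are $C^1$, they vanish outside $N_\alpha$, and they agree with Step 1 on $\alpha$, so tangency is preserved. On the support of $\rho_\alpha$ the sign of $P_\alpha$ is $\sigma$; in particular $P_\alpha\neq0$ on $N'_\alpha$.

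\textbf{Step 3 (no fixed points).} Fixed points can only occur in two places, and I rule out each.
\begin{itemize}
\item On $\{\rho_\alpha>0\}$, the $\tau$-component $P_\alpha$ is nonzero, so there are no fixed points there.
\item On $N_\alpha\cap\{\rho_\alpha=0\}$, the field $E_\alpha$ reduces to $(F_\tau(s),0)$. Its zeros would be fixed points of $F_\tau$, that is, points of $Reg_2$ or non-regular fixed points. The hypothesis $N_\alpha\cap(Reg_1\cup Reg_2)=\alpha$ excludes $Reg_2$.
\end{itemize}
To handle non-regular fixed points as well, I use complete isolation to shrink $N_\alpha$ so that $F_\tau$ has no zeros in $N_\alpha$ at all. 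This is possible because $F_\tau\neq0$ along $\alpha$, which is compact in each slice, so by continuity $F_\tau$ stays nonzero in a thin tube around $\alpha$.

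I expect the main obstacle to be regularity and uniformity along the noncompact direction of $\alpha$. Near the ends of $\alpha$ in $\tau$, that is near bifurcations, the cylinder may degenerate: the period can blow up or the tube width can shrink to zero. I would handle this by letting the tube width and $c$ vary with $\tau$, using a locally finite partition of unity over $\tau$-intervals. The corollary only provides $C^1$ data locally, and this patching is what assembles it into a globally $C^1$ $w$ with positive-width tube.
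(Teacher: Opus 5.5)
Your architecture matches the paper's. On $\alpha$ the field is $(F_\tau,0)$ plus a sign-definite multiple of a tangent vector with nonzero $\tau$-component; the paper obtains it by projecting $(F_\tau,P'')$ onto $T\alpha$, and you write it as $(w,1)$. $V_\alpha$ is read off from this, and everything is localized in $N_\alpha$. Two steps do not go through, however.

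The first is the fixed-point clause, which concerns the \emph{given} $N_\alpha$. Shrinking $N_\alpha$ is harmless for the support clause but not for this one. On $N_\alpha\setminus\operatorname{supp}\rho_\alpha$ your field is $(F_\tau,0)$. The hypothesis $N_\alpha\cap(Reg_1\cup Reg_2)=\alpha$ excludes only \emph{regular} equilibria. So a non-isolated equilibrium away from $\alpha$ (say $F_\tau$ vanishing on an open set; such points fail the third isolation property and are not in $Reg_2$) becomes a fixed point of $E_\alpha$ inside $N_\alpha$. The paper avoids this by choosing $P''=\pm g_\epsilon$ with $g_\epsilon=\epsilon\sum_i 2^{-i}\psi_i$, whose nonvanishing set is exactly the open set $N_\alpha$. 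Hence the $\tau$-component of $E_\alpha$ never vanishes there. Your construction is repaired the same way: take $P_\alpha=\sigma g_\epsilon$ and $V_\alpha=\rho_\alpha P_\alpha\tilde w$, with your tubular bump $\rho_\alpha$ and extension $\tilde w$. This works because tangency only requires $V_\alpha=P_\alpha w$ on $\alpha$ itself.

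The second problem is ``$C^1$ after smoothing''. With $F$ only $C^1$, $\alpha$ is only a $C^1$ surface, so $w=\partial_\tau\gamma_\tau(\theta)$ is merely continuous. A smoothed $\tilde w$ has $(\tilde w,1)\notin T\alpha$ in general, so tangency is lost. Patching does not help either: it is otherwise fine, since admissible $w$'s form an affine line $w+\mathbb{R}F_\tau$, but it does not raise regularity. This is a genuine obstruction, as the following example shows.

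Take $F_\tau(x_1,x_2)=(-x_2,x_1)+(r(\tau)-|x|)(x_1,x_2)$ on $\mathbb{R}^2$ with $r(\tau)=\frac32+\frac14\tau|\tau|$, which is $C^1$ but not $C^2$. The hyperbolic attracting cycles $|x|=r(\tau)$ form a component $\alpha$ of $Per$. Since $x\cdot F_\tau=0$ on $\alpha$, tangency of $(F_\tau+V,P)$ to $\{|x|^2=r(\tau)^2\}$ means $x\cdot V=r r'P$ on $\alpha$. At the points $(r(\tau),0,\tau)$ this reads $V_1=r'(\tau)P$. So $C^1$ functions $V,P$ with $P\neq0$ on $\alpha$ would force $r'$ to be $C^1$, which it is not at $\tau=0$.

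The paper's proof passes over the same point by asserting that $\alpha$ is smooth, so that the tangent planes $\mathcal{P}(s,\tau)$ vary smoothly. Both arguments become correct once $\alpha$ is $C^2$, for instance if $F$ is $C^2$, so that the Poincar\'e map and its fixed point are $C^2$ in $\tau$. Then $w$ and the tubular projection are already $C^1$ and no smoothing is needed.
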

\begin{proof}

We begin by proving that given some connected, metrizable, locally compact, and separable smooth manifold $\mathcal{N}$ and some connected open subset $O$ in $\mathcal{N}$, there exists a smooth function $g:\mathcal{N}\to\mathbb{R}$ whose support is exactly $O$ - and that when $O$ is compact, its $C^1$-norm can be chosen to be arbitrarily small. To this end, recall we assume $\mathcal{N}$ is metrizable, separable, smooth, and locally compact. Let $\{B_{r_i}(x_i)\}_{i\geq1}$ be a countable collection of balls with radii $r_i$ s.t. both $\cup_i B_{r_i}(x_i)=O$ and $B_{2r_i}(x_i)\subseteq O$. Now, let $\Psi$ be a partition of unity for $M$ and let $\{\psi_i\}_{i\geq1}$ be a subset of $\Psi$ defined by the following:
\begin{itemize}
    \item $supp(\psi_i)\subseteq B_{2r_i}(x_i)$ (where $supp$ denotes the support).
    \item $\psi_i(s)>0$ for all $s\in B_{r_i}(x_i)$.
    \item $\psi_i(x)\in[0,1)$ for all $x\in \overline{B_{2r_i}(x_i)}$.
    \item The collection is locally finite, i.e., for all $i$ there is a finite collection of $j$'s s.t. $supp(\psi_i)\cap supp(\psi_j)\ne\emptyset$.
\end{itemize}

Such a collection exists by the proof of Theorem 2.23 of \cite{Lee}. Now, choose some $\epsilon>0$ and define $g_\epsilon(x)=\epsilon\sum_{i\geq1}\frac{\psi_i(x)}{2^i}$. By definition, $g$ is smooth and the set where $g$ does not vanish is precisely $O$, therefore when $O$ is precompact in $\mathcal{N}$ by decreasing $\epsilon>0$ we can ensure that the $C^1$-norm of $g$ is arbitrarily small in $O$. We now choose $\mathcal{N}=M\times I$ and $O$ as $N_\alpha$, where we set $P''(s,\tau)$ as either $g_\epsilon(s,\tau)$ or $-g_\epsilon(s,\tau)$ for some $\epsilon$ - by definition, $P''$ has constant sign on $\alpha$. Note that given some $(s,\tau)\in N_\alpha$ the vectors $v_1=(0,P''(s,\tau))$ and $v_2=(F_\tau(s),0)$ are linearly independent since $P''$ is non-zero in $N_\alpha$. To continue, let $\mathcal{P}(s,\tau)$ denote the plane tangent to the surface $\alpha$ at $(s,\tau)$ - since $\alpha$ is a smooth surface per Definition \ref{regular} and Definition \ref{isolationdef}, $\mathcal{P}(s,\tau)$ changes smoothly as $(s,\tau)$ is varied. We now consider a $C^1$ vector field $\Pi:M\times I\to T(M\times I)$, satisfying the following three properties:
\begin{itemize}
    \item When $(s,\tau)\in \alpha$, $\Pi(s,\tau)=\pi(s,\tau)$ - where $\pi(s,\tau)$ denotes the projection of the vector $(F_\tau(s),P''(s,\tau))$ to the plane $\mathcal{P}(s,\tau)$.
    \item  $\Pi^{-1}(T\alpha)\cap N_\alpha=\alpha$, where $T\alpha$ is the bundle of tangent planes to $\alpha$, i.e., $T\alpha=\cup_{(s,\tau)\in \alpha}\mathcal{P}(s,\tau)\times\{(s,\tau)\}$. 
    \item $\Pi(s,\tau)=(F_\tau(s),0)$ outside of $N_\alpha$.
\end{itemize}

We now claim that for all $(s,\tau)\in \alpha$ we have $\Pi(s,\tau)\cdot(0,P''(s,\tau))>0$. To see why, note that $(F_\tau(s),0)$ is tangent to $M\times\{\tau\}$ and lies on $\mathcal{P}(s,\tau)$, hence $(F_\tau(s),P''(s,\tau))$ points into $M\times(\tau,1)$ when $P''(s,\tau)>0$ and into $M\times(-1,\tau)$ when $P''(s,\tau)<0$. As $\alpha$ is transverse to $M\times\{\tau\}$ at any $(s,\tau)\in \alpha$, it follows the plane $\mathcal{P}(s,\tau)$ is transverse to $M\times\{\tau\}$ at $(s,\tau)$. Hence, as $\pi(s,\tau)=\Pi(s,\tau)$ for all $(s,\tau)\in \alpha$, we conclude that for all initial conditions $(s,\tau)\in \alpha$ we have $\Pi(s,\tau)\cdot(0,P''(s,\tau))>0$. All in all, setting $E_\alpha(s,\tau)=\Pi(s,\tau)$ it follows that for all $(s,\tau)\in \alpha$ the vector field $E_\alpha(s,\tau)=\Pi(s,\tau)$ is tangent to $\alpha$, and has no fixed points near $\alpha$. This implies we can choose it s.t. $E_\alpha(s,\tau)\cdot(0,1)$ has constant sign throughout $N_\alpha$ - i.e., $E_\alpha$ has no fixed points in $N_\alpha$. Finally, it is also easy to see we can write $E_\alpha(s,\tau)=(F_\tau(s)+V_\alpha(s,\tau),P_\alpha(s,\tau))$ for some $C^1$-functions $P_\alpha$ and some correction term $V_\alpha$, which ensures the tangency of $E_\alpha$ to $\alpha$. 
\end{proof}
Having shown we can construct functions $P_\alpha$ and $V_\alpha$ around a cylinder of periodic orbits $\alpha\subseteq Per$, we now prove we can do the same for components $\alpha\subseteq Fix$:
\begin{proposition}
   \label{def11}  Let $\dot{s}=F_\tau(s)$ be a $C^1$ one--parameter family, where $\tau$ varies in $I=(-1,1)$. Let $\alpha$ be a component of $Fix$ and let $N_\alpha$ denote a neighborhood of $\alpha$ in $M\times I$ s.t. $N_\alpha\cap(Reg_1\cup Reg_2)=\alpha$, and $I=(-1,1)$. Similarly to Proposition \ref{def1}, there exist $C^1$-functions $P_\alpha:M\times I\to\mathbb{R}$ and $V_\alpha:M\times I\to T(M\times I)$ s.t. the vector field $E_\alpha(s,\tau)=(F_\tau(s)+V_\alpha(s,\tau),P_\alpha(s,\tau))$ satisfies the following:
    \begin{itemize}
        \item $E_\alpha$ is tangent to $\alpha$. 
        \item $P_\alpha$ and $V_\alpha$ both vanish outside $N_\alpha$.
        \item The sign of $P_\alpha(s,\tau)$ in non-zero and doesn't change as we vary $(s,\tau)$ on $\alpha$. It can be chosen to be either positive or negative.
        \item  $E_\alpha$ has no fixed points in $N_\alpha$.
        \end{itemize}
\end{proposition}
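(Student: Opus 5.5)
The plan is to mimic the proof of Proposition \ref{def1}, replacing the cylinder of periodic orbits by the curve of fixed points. By Definition \ref{regular} and Corollary \ref{regularitylemma}, a component $\alpha$ of $Fix$ is a $C^1$ curve, locally homeomorphic to $(0,1)$, which meets each slice $M\times\{\tau\}$ transversally in at most one point $(x_\tau,\tau)$ with $F_\tau(x_\tau)=0$. I would therefore parametrize $\alpha$ locally as $\tau\mapsto (x_\tau,\tau)$, with $x_\tau$ a $C^1$ function of $\tau$.

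First, as in the proof of Proposition \ref{def1}, I would apply the partition-of-unity construction with $\mathcal N=M\times I$ and $O=N_\alpha$. This gives a smooth $g_\epsilon\ge 0$ whose non-vanishing set is exactly $N_\alpha$, and I set $P_\alpha=\pm g_\epsilon$, with the sign chosen freely. Then $P_\alpha$ vanishes outside $N_\alpha$, is nonzero throughout $N_\alpha$, and in particular has constant sign on $\alpha$.

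Next, I define the tangent field along $\alpha$. Since $F_\tau(x_\tau)=0$, the $s$-component of $E_\alpha$ on $\alpha$ is just $V_\alpha$. Tangency requires
\[
E_\alpha(x_\tau,\tau)=P_\alpha(x_\tau,\tau)\,\bigl(\tfrac{dx_\tau}{d\tau},1\bigr),
\]
so I set $V_\alpha(x_\tau,\tau)=P_\alpha(x_\tau,\tau)\,\tfrac{dx_\tau}{d\tau}$ on $\alpha$. I then extend $V_\alpha$ from the closed embedded curve $\alpha\subseteq N_\alpha$ to a $C^1$ field on $M\times I$. Concretely, I take a tubular neighbourhood of $\alpha$, extend $V_\alpha$ constantly along the fibres, and multiply by a bump function supported in $N_\alpha$ that equals $1$ near $\alpha$. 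Because $P_\alpha$ itself is a factor, the extension can be arranged to vanish outside $N_\alpha$. The resulting field $E_\alpha=(F_\tau+V_\alpha,P_\alpha)$ is tangent to $\alpha$, and its $\tau$-component is nonzero wherever it matters on $\alpha$.

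Finally, the absence of fixed points in $N_\alpha$ is immediate, since the $\tau$-component $P_\alpha$ is nonzero at every point of $N_\alpha$, so $E_\alpha$ cannot vanish there.

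I expect the main obstacle to be regularity. The map $\tau\mapsto x_\tau$ is only $C^1$, so $dx_\tau/d\tau$ is merely $C^0$, and a naive formula for $V_\alpha$ yields only a continuous field. To fix this, I would first try to use $C^1$ smoothness of $F$ together with the Implicit Function Theorem wherever the Jacobian is invertible. Where that is unavailable (regularity is only assumed via the conclusions of Corollary \ref{regularitylemma}), I would approximate $\alpha$ by a smooth curve and correct it within $N_\alpha$, or accept $C^1$ in the sense used in Proposition \ref{def1}. A secondary point is ensuring the extension is possible globally when $\alpha$ is non-compact or accumulates on $\partial N_\alpha$; this is handled by the locally finite partition of unity already used above.
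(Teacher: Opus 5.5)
Your construction is the paper's: $P_\alpha=\pm g_\epsilon$ from the partition-of-unity function of Proposition \ref{def1}, a field equal on $\alpha$ to a nonzero multiple of the tangent direction $(x'_\tau,1)$, an extension off $\alpha$, and no zeros because $P_\alpha\neq0$ on $N_\alpha$. (The paper projects $(0,P'')$ onto $T\alpha$, which differs from your choice only by a positive scalar factor.)

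The trouble is the step you yourself flagged as the main obstacle. It is not a technicality but a genuine obstruction, and none of your remedies removes it. Write $E_\alpha=(E^s,E^\tau)$. Since $F_\tau(x_\tau)=0$, tangency to $\alpha$ with $E^\tau\neq0$ on $\alpha$ forces $x'_\tau=E^s(x_\tau,\tau)/E^\tau(x_\tau,\tau)$. If $P_\alpha,V_\alpha$ (hence $E_\alpha$) are $C^1$, the right-hand side is a $C^1$ function of $\tau$, so $\alpha$ must be a $C^2$ curve.

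For a merely $C^1$ family this fails. On $M=\mathbb{R}$ take $F_\tau(s)=-(s-h(\tau))$ with $h(\tau)=\tau|\tau|$. Then $F$ is $C^1$, and the graph of $h$ is a completely isolated curve of hyperbolic sinks, hence a component of $Fix$. But $h'(\tau)=2|\tau|$ is not $C^1$, so no $C^1$ pair $(P_\alpha,V_\alpha)$ with the stated properties exists.

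Your fallbacks do not help. The Implicit Function Theorem applied to a $C^1$ map returns only a $C^1$ curve. Replacing $\alpha$ by a smoothed nearby curve makes $E_\alpha$ tangent to the wrong set, so $\alpha$ is no longer invariant. And ``accepting $C^1$ in the sense of Proposition \ref{def1}'' is not an argument.

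So the statement cannot be proved under its stated hypotheses. What your construction does establish is the version in which $\alpha$ is additionally assumed to be $C^2$. This holds, for example, if $F$ is $C^2$ and the Jacobian of $F_\tau$ at $x_\tau$ is invertible along $\alpha$, since the Implicit Function Theorem then gives $x_\tau\in C^2$. In that case $P_\alpha\,x'_\tau$ is $C^1$ along $\alpha$, and your extension gives a $C^1$ field $V_\alpha$ supported in $N_\alpha$: in charts, take $P_\alpha(s,\tau)\,x'_\tau$ read in chart coordinates, glue by a partition of unity, and multiply by a bump function equal to $1$ near $\alpha$. The rest of your argument then goes through unchanged.

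The paper's own proof has the same gap. It simply asserts a $C^1$ field $\Pi$ whose restriction to $\alpha$ is the projection of $(0,P'')$ onto $T\alpha$, and that restriction is in general only continuous, for exactly the reason above.
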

\begin{proof}
    We replicate the argument of Proposition \ref{def1} to $N_\alpha$. The function $P''$ is defined in the same way, but for $N_\alpha$ instead of $N_\alpha$ - i.e., using a partition of unity whose support lies strictly inside $N_\alpha$. Similarly, as $\alpha$ is a curve transverse to $M\times \{\tau\}$ at any $(s,\tau)\in\alpha$, there exists a $C^1$ vector field $\Pi:M\times I\to T(M\times I)$, satisfying the following:
\begin{itemize}
    \item When $(s,\tau)\in\alpha$, $\Pi(s,\tau)$ is just the projection of the vector $(F_\tau(s),P''(s,\tau))=(0,P''(s,\tau))$ to $T\alpha$, the bundle of lines tangent to $\alpha$.
    \item $\Pi$ satisfies $\Pi^{-1}(T\alpha)=\alpha$.
    \item $\Pi(s,\tau)=(F_\tau(s),0)$ outside of $N_\alpha$.
    \item The component of $\Pi$ in the $\tau$ direction is non-zero throughout $N_\alpha$.
\end{itemize}
Similarly, the vector field $\Pi$ can be rewritten as $\Pi(s,\tau)=E_\alpha(s,\tau)=(F_\tau(s)+V_\alpha(s,\tau),P_\alpha(s,\tau))$ inside $N_\alpha$, where the functions $P_\alpha,V_\alpha$ are at least $C^1$ and vanish outside of $N_\alpha$. Moreover, we can again ensure $E_\alpha$ is such that the sign of $P_\alpha$ is constant throughout $N_\alpha$ - which implies there are no fixed points for $E_\alpha$ in $N_\alpha$.
\end{proof}
\begin{remark}
At first sight, $V_\alpha$ are just correction terms ensuring the tangency of $E_\alpha$ to $\alpha\subseteq (Per\cup Fix)$. Towards the end of this Section we will give these functions a more concrete physical interpretation.
\end{remark}
The arguments above can be iterated and modified, and in fact, we will also need several simple modifications of them in order to construct the Entropy flow. For example, a similar argument to the one used in Propositions \ref{def1}, \ref{def11} implies the following useful Corollary, which we will be heavily applying throughout the definition of the Entropy flow:
\begin{corollary}
    \label{def2} 
Let $\dot{s}=F_\tau(s)$ be a $C^1$ one--parameter family, where $\tau$ varies in $I=(-1,1)$. Let $\alpha$ be a component of $Per$, let $\tau\in I$ be a parameter s.t. $\alpha\setminus M\times\{\tau\}$ has exactly two non-empty components, and let $N_\alpha$ be an isolating set as in Proposition \ref{def1}. Then, there exist $C^1$-functions $P_\alpha$ and $V_\alpha$ satisfying the following:

\begin{itemize}
    \item Let $N^1_\alpha$ and $N^2_\alpha$ denote the components of $N_\alpha\cap M\times(-1,\tau)$ and $N_\alpha\cap M\times(\tau,1)$ (respectively). Then, we can choose $P_\alpha$ and $V_\alpha$ s.t. $P_\alpha$ has opposing signs in $N^1_\alpha$ and $N^2_\alpha$, and the vector field $E_\alpha(s,\tau)=(F_\tau(s)+V_\alpha(s,\tau),P_\alpha(s,\tau))$ is tangent to $\alpha$.
    \item The flow of $E_\alpha(s,\tau)$ has exactly one periodic orbit on $\alpha$, given by the intersection $R=M\times\{\tau\}\cap \alpha$, and no fixed points in $N_\alpha$. In particular, $R=T\times\{\tau\}$ where $T$ is a periodic orbit for $F_\tau$ (see the upper image in Figure \ref{eq1}).
\end{itemize}

Similarly, let $\alpha$ be a component in $Fix$ s.t. $\alpha\setminus M\times\{\tau\}$ has two components, where $N_\alpha$ is an isolating sets as in Proposition \ref{def11}. Then, there exist $C^1$-functions $P_\alpha$ and $V_\alpha$ satisfying the following:    
    \begin{itemize}
        \item Let $N^1_\alpha$, $N^2_\alpha$ denote the components of $N_\alpha\cap M\times(-1,\tau)$, $N_\alpha\cap M\times(\tau,1)$ (respectively). Then, $P_\alpha$ is negative in $N^1_\alpha$, positive in $N^2_\alpha$, and the vector field $E_\alpha(s,\tau)=(F_\tau(s)+V_\alpha(s,\tau),P_\alpha(s,\tau))$ tangent to $\alpha$. 
        \item The vector field $E_\alpha$ will have exactly one fixed point interior to $N_\alpha$, given by $(x_\tau,\tau)\in\alpha$.  In particular, $F_\tau(x_\tau)=0$ (see the lower image in Figure \ref{eq1}). 
    \end{itemize}
\end{corollary}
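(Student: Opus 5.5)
The plan is to rerun the constructions of Propositions \ref{def1} and \ref{def11}. The only change is that the scalar function $P''$ is now made to change sign across the slice $M\times\{\tau\}$ instead of keeping a constant sign. Write $\tau_*$ for the distinguished parameter to avoid confusion with the variable $\tau$. First build, exactly as in the proof of Proposition \ref{def1}, a smooth $g_\epsilon\ge 0$ whose nonvanishing set is exactly $N_\alpha$. Then put
\[
P''(s,\sigma)=h(\sigma)\,g_\epsilon(s,\sigma),
\]
where $h:I\to\mathbb{R}$ is a smooth function with a single simple zero at $\tau_*$. In the $Fix$ case take, for example, $h(\sigma)=\sigma-\tau_*$; in the $Per$ case either sign convention may be used. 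This makes $P''$ of one sign on $N^1_\alpha$ and the opposite sign on $N^2_\alpha$, and zero on $N_\alpha$ exactly along $M\times\{\tau_*\}$. Because $\alpha\setminus M\times\{\tau_*\}$ has exactly two components, and $\alpha$ is transverse to every slice, each of $N^1_\alpha$ and $N^2_\alpha$ contains exactly one of them.

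Next define the vector field $\Pi$ as before. On $\alpha$, let $\Pi$ be the projection of $(F_\sigma(s),P''(s,\sigma))$ onto the tangent space of $\alpha$: the plane $\mathcal{P}(s,\sigma)$ in the $Per$ case, the line $T\alpha$ in the $Fix$ case. Extend $\Pi$ in a $C^1$ way to $M\times I$ so that $\Pi=(F_\sigma,0)$ outside $N_\alpha$. Then set $V_\alpha$ and $P_\alpha$ to be the differences of components, so that $\Pi=(F_\sigma+V_\alpha,P_\alpha)=E_\alpha$. By construction $E_\alpha$ is tangent to $\alpha$, and $P_\alpha,V_\alpha$ vanish off $N_\alpha$. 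The transversality argument from Proposition \ref{def1} shows that the $\tau$-component of $\Pi$ on $\alpha$ has the sign of $P''$. When extending off $\alpha$ inside $N_\alpha$, I also require that the $\tau$-component equal $P''$ up to a positive factor. This is possible by taking the $\tau$-component to be $P''$ times a positive function and absorbing the remaining freedom into $V_\alpha$. With this choice the sign of $P_\alpha$ is opposite on $N^1_\alpha$ and $N^2_\alpha$, which gives the first bullet in both cases.

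For the second bullets, any zero or periodic orbit of $E_\alpha$ in $N_\alpha$ must lie where $P_\alpha=0$. Otherwise $\tau$ would be strictly monotone along it, which rules out both. So the invariant objects lie in $N_\alpha\cap M\times\{\tau_*\}$, and there $E_\alpha=(F_{\tau_*}+V_\alpha,0)$.

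In the $Fix$ case, on $\alpha$ the field is the projection of $(0,P'')$ onto $T\alpha$, which vanishes exactly at $(x_{\tau_*},\tau_*)$. That gives the fixed point, with $F_{\tau_*}(x_{\tau_*})=0$. In the $Per$ case, on $\alpha\cap M\times\{\tau_*\}=T\times\{\tau_*\}$ we have $P''=0$. The vector $(F_{\tau_*},0)$ is already tangent to $\alpha$, so the projection is $(F_{\tau_*},0)$ itself. This makes $R=T\times\{\tau_*\}$ a periodic orbit of $E_\alpha$. Any other periodic orbit on $\alpha$ would have to cross the region where $P_\alpha\neq0$, so it cannot exist.

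The main obstacle is excluding spurious fixed points or periodic orbits of $E_\alpha$ in $N_\alpha\cap M\times\{\tau_*\}$ away from $\alpha$. To handle it I would choose the extension with $V_\alpha$ supported in an arbitrarily thin tube around $\alpha$ and $C^1$-small, with $\epsilon$ small as in Proposition \ref{def1}. Then on the slice $E_\alpha\approx F_{\tau_*}$. Moreover, $N_\alpha\cap(Reg_1\cup Reg_2)=\alpha$, and the isolation properties of Corollary \ref{regularitylemma} say that $F_{\tau_*}$ has no other zeros in a small neighborhood of $\alpha$, and that $T$ is the unique fixed point of the first-return map. If necessary, I would shrink $N_\alpha$ to such a neighborhood; this is legitimate since $N_\alpha$ is only required to be an isolating set. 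The argument is then completed by the persistence of these isolation properties under $C^1$-small perturbation.
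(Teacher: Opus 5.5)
Your construction is essentially the paper's. The paper reruns the argument of Proposition \ref{def1} (resp.\ \ref{def11}) separately on $N^1_\alpha$ and $N^2_\alpha$ with opposite signs and then glues, with $P_\alpha=V_\alpha=0$ on $N_\alpha\cap M\times\{\tau\}$. Your single factor $h(\sigma)g_\epsilon$ produces the same sign pattern and makes $C^1$-regularity across the slice immediate. Your monotonicity argument for uniqueness of the periodic orbit (resp.\ fixed point) on $\alpha$ is fine, and it goes beyond the paper's proof, which only checks that $T\times\{\tau\}$ is periodic.

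The step that fails is your last paragraph.

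First, in the $Fix$ case, $C^1$-smallness of $V_\alpha$ does not preserve uniqueness of the zero $x_{\tau_*}$. Regular fixed points need only satisfy the \emph{conclusions} of Corollary \ref{regularitylemma} (see the Remark after Definition \ref{regular}), so $x_{\tau_*}$ may be degenerate. For example, take $F_\sigma(x)=x^3+(\sigma-\tau_*)^2x$ on $M=\mathbb{R}$; then $x^3-\delta x$ shows that an arbitrarily $C^1$-small perturbation vanishing at $x_{\tau_*}$ creates two new zeros arbitrarily close to it.

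Second, shrinking $N_\alpha$ is not harmless, because the conclusion is about the given $N_\alpha$. On $N_\alpha\setminus N'$ you would have $E_\alpha=(F_\sigma,0)$. So zeros of $F_\sigma$ there, for any $\sigma$, become fixed points of $E_\alpha$, whereas the unshrunk construction had excluded them via $P_\alpha\neq0$.

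The remedy is to do what the paper's gluing does implicitly: make $V_\alpha$ vanish on the whole slice. For instance, set $V_\alpha=h(\sigma)W$, where $W|_\alpha$ is $g_\epsilon$ times the $s$-component of the projection of $(0,1)$. Then $E_\alpha=(F_{\tau_*},0)$ on $N_\alpha\cap M\times\{\tau_*\}$, and no perturbation argument is needed. The fixed points of $E_\alpha$ in $N_\alpha$ are then exactly the zeros of $F_{\tau_*}$ in that slice. You must exclude all of them (the $Per$ case) or all but $x_{\tau_*}$ (the $Fix$ case). The hypothesis $N_\alpha\cap(Reg_1\cup Reg_2)=\alpha$ does not give this, since it only excludes \emph{regular} fixed points. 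The paper's proof is silent on this point too, so it has to be imposed as a condition on $N_\alpha$ rather than derived.

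Finally, with a simple zero of $h$ you get $\partial_\tau P_\alpha\neq0$ on the slice. This conflicts with the convention fixed before Corollary \ref{final} and used in Proposition \ref{centerdim}. Taking $h(\sigma)=(\sigma-\tau_*)^3$ together with $V_\alpha=hW$ repairs that as well.
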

\begin{proof}
    We prove the assertion for components of periodic orbits $\alpha\subseteq Per$. The assertion for components of fixed points $\alpha\in Fix$ is proven using similar arguments to those below, with Proposition \ref{def11} instead of Proposition \ref{def1}. Choose a parameter $\tau$ s.t. $\alpha\setminus M\times\{\tau\}$ has two components, $\alpha_1$ and $\alpha_2$, and let $N^1_\alpha$, $N^2_\alpha$ and $T\times\{\tau\}$ be as above, defined by $\alpha_i\subseteq N^i_\alpha$, $i=1,2$. We now use a similar argument to the one used to prove Proposition \ref{def1} to define functions $P_i,V_i$ on $N^i_\alpha$, $i=1,2$ s.t. for all $(s,\tau)\in N^i_\alpha$ the vector $(F_\tau(s)+V_i(s,\tau),P_i(s,\tau))$, is tangent to $\alpha_i$ - where $P_1$ and $P_2$ have opposing signs in $N^1_\alpha$ and $N^2_\alpha$. To continue, define the following functions:

    \begin{equation*}
        P_\alpha(s,\tau)=\begin{cases}
            P_i(s,\tau) &\text{when $(s,\tau)\in N^i_C, i=1,2$}\\
            0 &\text{otherwise}
        \end{cases}
    \end{equation*}
    
   \begin{equation*}
        V_\alpha(s,\tau)=\begin{cases}
            V_i(s,\tau) &\text{when $(s,\tau)\in N^i_C, i=1,2$}\\
            0 &\text{otherwise}
        \end{cases}
    \end{equation*}
Since both $P_i,V_i$, $i=1,2$ are at least $C^1$, so are $P_\alpha$ and $V_\alpha$. By their definitions, both $P_\alpha$ and $V_\alpha$ vanish on $T\times\{\tau\}$, which implies that for all $(s,\tau)\in T\times\{\tau\}$ we have $E_\alpha(s,\tau)=(F_\tau(s),0)$. Since $F_\tau$ is tangent to $T$ (as it is a periodic orbit for $F_\tau$) and since for all $(s,\tau)\in T\times\{\tau\}$ we have $E_\alpha(s,\tau)=(F_\tau(s),0)$ we conclude $T\times\{\tau\}$ is a periodic orbit for $E_\alpha(s,\tau)$ in $N_\alpha$.
\end{proof}

\begin{figure}[h]
\centering
\begin{overpic}[width=0.35\textwidth]{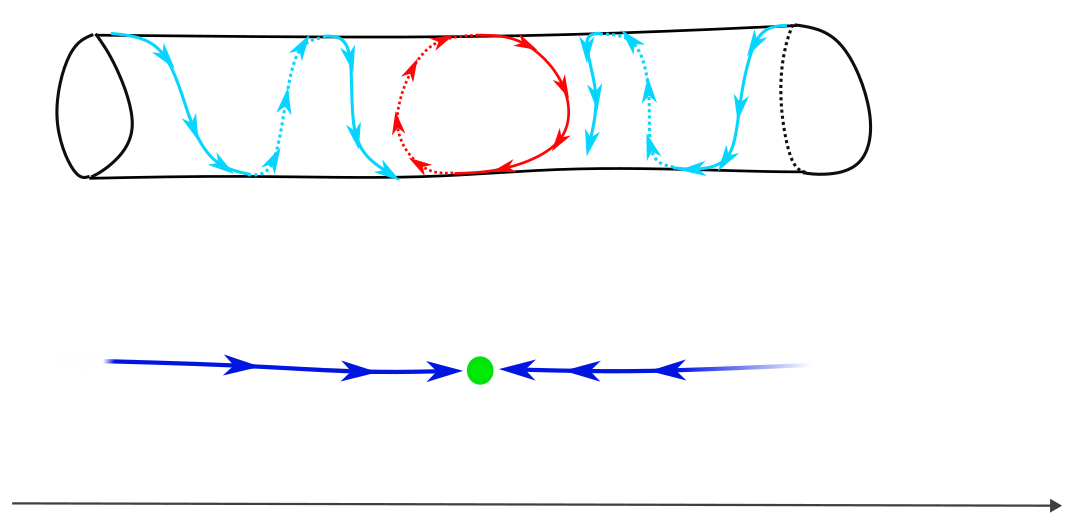}

\put(995,10){$\tau$}

\end{overpic}
\caption{\textit{On the upper image - a diagram showing the motion of $E_\alpha$ on a cylinder of periodic orbits, $\alpha\subseteq Per$, where $P_\alpha$ has opposite signs at each end of $\alpha$. The red loop corresponds to the set $M\times\{\tau\}\cap \alpha$, where both $P_\alpha$ and $V_\alpha$ vanish (hence the said loop is periodic for $F_\tau$). On the lower image there is an analogous situation for the curve $\alpha\subseteq Fix$. Since $P_\alpha$ points in opposite signs at the end of $\alpha$, there exists some fixed point $(x_\tau,\tau)$ for $E_\alpha$ on $\alpha$ (the green dot).}}
\label{eq1}
\end{figure}

Having proven Propositions \ref{def1}-\ref{def11} and Corollary \ref{def2} we now sketch how we will construct the Entropy flow. The idea is as follows: let $\{G_\alpha\}_\alpha$ denote the components of $Fix\cup Per$, or more precisely, the components of $Fix\cup Per$ \textbf{and} a collection of all isolated invariant sets that are "maximal" in some sense. As stated earlier, the components of $\{G_\alpha\}_\alpha$ should be intuitively understood to be glued to one another at bifurcation parameters. Moreover, intuitively, this collection of sets comes together with a collection of open neighborhoods $\{N_\alpha\}_\alpha$ satisfying $G_\alpha\subseteq N_\alpha$ for all $\alpha$, and $N_\alpha\cap N_\beta=\emptyset$ for all $\alpha\ne\beta$ (we will prove the existence of this collection later on, in Subsection \ref{sec3}). Our idea would be as follows: as the elements of $\{G_\alpha\}_\alpha$ are connected to one another at bifurcation orbits and fixed points, we will prescribe some required local behavior of $P_\alpha$ and $V_\alpha$ around the bifurcation orbits or fixed points connecting $G_\alpha$ to other components $G_\beta$. Following that, we use Propositions \ref{def1}, \ref{def11} and Corollary \ref{def2} (case depending) to extend this local behavior to all of $G_\alpha$ and $N_\alpha$ (see the illustration in Figure \ref{sew}). We will refer to functions $P_\alpha$ and $V_\alpha$ whose behavior around the bifurcations on $\partial G_\alpha$ coincides with our requirements \textbf{admissible} - see Definitions \ref{admissiblefixed} and \ref{admissibleper}. Finally, after proving that sets $N_\alpha$ and $N_\beta$ as described above exist, we we will sew together all these admissible functions $P_\alpha$ and $V_\alpha$ defined locally around each $N_\alpha$, thus defining the functions $P$ and $V$ needed for the global definition of the Entropy flow.\\

That being said, due to the length and technical complexity of our arguments (mostly due to the many bifurcation scenarios considered), we will divide this definition to three consecutive stages. As such, the rest of this section is organized as follows: we first describe in Subsection \ref{sec1} the required admissible behavior of $P_\alpha,V_
\alpha$ around the bifurcation fixed points connecting components $G_\alpha\subseteq Fix$ to other components of $G_\beta\subseteq (Per\cup Fix)$ (see Definition \ref{admissiblefixed}). Following that, in Subsection \ref{sec2} we lay down the analogous definition for admissible behavior on components of periodic orbits in $Per$ (see Definition \ref{admissibleper}). Finally, in Subsection \ref{sec3} we prove the existence of $\{N_\alpha\}_\alpha$, describe the extension of $P_\alpha$ and $V_\alpha$ to $N_\alpha$, extend the above to some invariant sets (and to parameter spaces other than $(-1,1)$, and tie everything together to give a global definition for the Entropy flow (see Definition \ref{generalentropy}).\\

Despite the long technicalities ahead, the thread passing through all our definitions is that we want admissible functions $P_\alpha$ and $V_\alpha$ to always push initial conditions in $M\times I$ towards a state of greater complexity along the $C^1$ curve of vector fields $\dot{s}=F_\tau(s)$ - which we will broadly interpret as $F_\tau$ having more fixed points and periodic orbits for $F_\tau$. This motivates us to conclude this Section by describing the admissible behavior around components of $Per\cup Fix$ in the absence of bifurcations. To do so, recall we consider a $C^1$ one--parameter family $\dot{s}=F_\tau(s)$, where $\tau\in(-1,1)$. Consider a component $G_\alpha\subseteq Per\cup Fix$ s.t. $G_\alpha$ corresponds to either a periodic orbit or a fixed point that does not bifurcate as we vary $\tau\in I$. When this is the case, intuitively, there is no difference in the local dynamical complexity between $F_{\tau_1}$ around $G_\alpha\cap M\times\{\tau_1\}$ compared to that of $F_{\tau_2}$ around $G_\alpha\cap M\times\{\tau_2\}$, for all $\tau_1\ne\tau_2$. As such, if $N_\alpha$ is some neighborhood of $G_\alpha$ in $M\times (-1,1)$ s.t. $N_\alpha\cap (Per\cup Fix)=\alpha$, we are motivated to prescribe the admissible behavior $P_\alpha$ and $V_\alpha$ in $N_\alpha$ as follows:
\begin{definition}
    \label{def3} Let $\dot{s}=F_\tau(s)$ be a $C^1$ one--parameter family defined over $M$, where $\tau$ varies in $(-1,1)$. Assume $G_\alpha$ is a component of $Per\cup Fix$ corresponding to either a component of periodic orbits or fixed points that do not bifurcate as $\tau$ is varied in $(-1,1)$ (in particular, for all $\tau\in(-1,1)$ $\alpha\cap M\times\{\tau\}\ne\emptyset$). Then, with previous notations and assumptions, $V_\alpha$ and $P_\alpha$ are \textbf{admissible} if they vanish identically in $N_\alpha$.
\end{definition}

\subsection{Stage $I$ - the behavior on components of $Fix$}
\label{sec1}
In this Subsection we describe the admissible behavior we require the functions $P_\alpha$ and $V_\alpha$ to have on components $G_\alpha\subseteq Fix$. We will do so on a case-by-case basis, depending on the bifurcations the fixed points in $G_\alpha$ undergo at the boundary of $G_\alpha$. To this end, we first recast several common bifurcation scenarios involving fixed points in a topological setting. Again, assume $\dot{s}=F_\tau(s)$ is a $C^1$ one--parameter family of vector fields, where $\tau$ varies in $(-1,1)$. For simplicity, we describe the scenario when the bifurcation occurs as $\tau$ crosses from $(-1,0)$ to $(0,1)$ - the symmetric case is described similarly:
\begin{itemize}
    \item \textbf{Creation} - two fixed points for $F_\tau$, $\tau<0$, denoted by $x^1_{\tau},x^2_{\tau}$ and varying continuously with $\tau$, collide and disappear at some fixed point at $x_0$ for $F_0$. One example of such a bifurcation is the saddle node bifurcation  (see  the illustration in Figure \ref{saddlepitch}). 
    \item \textbf{Splitting} - a fixed point $x_\tau$ for $F_\tau$ persists as a fixed point for $\tau<0$, and when $\tau$ enters $(0,1)$, the fixed point $x_0$ splits into finitely (or countably) many fixed points. For example, a pitchfork bifurcation corresponds to this kind of bifurcation, where a fixed point splits into three different fixed point (see the illustration in Figure \ref{saddlepitch}).
    \item \textbf{Expansion} - a fixed point $x_\tau$ for $F_\tau$ persists in $\tau<0$, and when $\tau$ crosses into $(0,1)$, one (or more) periodic orbits are born from it while the fixed point $x_\tau$ persists for $\tau>0$. For example, a Hopf bifurcation is an example of such a bifurcation (see the illustration in Figure \ref{hopff}). In this case, we assume there are no zero eigenvalues for the Jacobian of $F_0$ at the bifurcation fixed point $x_0$.
    \item \textbf{Mixed} - any other type of bifurcation not falling into the classification above. For example, a Fold-Hopf bifurcation, which can have characteristics of both Hopf and saddle node bifurcation \cite{guck}.\\
\end{itemize}

\begin{figure}[h]
\centering
\begin{overpic}[width=0.35\textwidth]{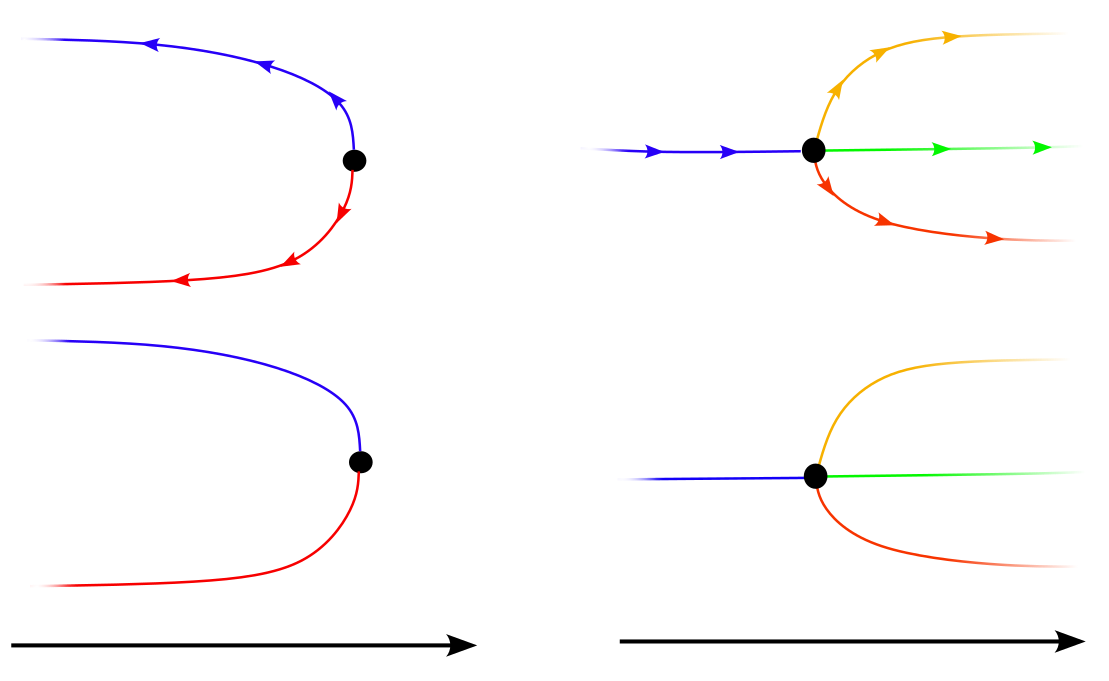}

\put(445,15){$\tau$}
\put(995,18){$\tau$}

\end{overpic}
\caption{\textit{On the lower left - a saddle node bifurcation, where two components in $Fix$ (or $Reg_2$ - see Definition \ref{regular}) collide. On the lower right - a component in $Fix$ which undergoes a pitchfork bifurcation where it splits into three distinct components. In each diagram, a different color represents different components in $Fix$ (or $Reg_2$), while the black dot represents the bifurcation point. On the upper left and upper right we see the direction of the admissible behavior (in this illustration, all fixed points are assumed to be in $Fix$).}}
\label{saddlepitch}
\end{figure}
In each of the bifurcation types above one could describe the bifurcation as a topological property of curves, which may (or may not) lie in $Reg_2$, i.e., the set of regular fixed points, that meet at some bifurcation point $x_0$ (see Definition \ref{regular}). Moreover, in the case of expansion type bifurcation, i.e., the third type, the topological scenario can be described as a curve of fixed points which is transverse to one (or more) surfaces in $M\times (-1,1)$ corresponding to the families of periodic orbits bifurcating from the fixed point $x_0$. With this geometric image in mind, consider a component $G_\alpha\subseteq Fix$ corresponding to fixed points which eventually bifurcate as $\tau$ is varied. We now prescribe the admissible behavior of the functions $P_\alpha$ and $V_\alpha$ around the bifurcation fixed points on the boundary of $G_\alpha$ (i.e., where it connects with components $G_\beta\subseteq Per\cup Fix$, where $\beta\ne\alpha$). For brevity, throughout this Subsection we will always implicitly assume the existence of open sets $N_\alpha$ and $N_\beta$, $N_\alpha\cap N_\beta=\emptyset$ for all components $G_\beta\subseteq Fix\cup Per$ connecting with $G_\alpha$ at the bifurcation fixed point $(x_0,0)$ - as stated previously, we will deal with the question of existence for these sets later on in Subsection \ref{sec3}. Also, to avoid repetition, the notation $V_\alpha$ would always denote some $C^1$ correction term s.t. for all $(s,\tau)\in G_\alpha$ the vector $(F_\tau(s)+V_\alpha(s,\tau),P_\alpha(s,\tau))$ is tangent to $G_\alpha$ (such correction terms exist by Proposition \ref{def11} and Corollary \ref{def2}). We first deal with creation type bifurcations of fixed points:
\begin{definition}
    \label{def4} Without any loss of generality, let $G_\alpha\subseteq M\times (-1,0)$ be a component of fixed points in $Fix$. Assume $(x_0,0)$ is a creation type bifurcation point which connects to some finite or countable collection of components $\{G_{\delta_i}\}_{i=1}^k\subseteq Reg_2$, where $1\leq i\leq k\leq\infty$ - by the above definition of creation type bifurcation, we have $G_{\delta_i}\subseteq M\times(-1,0)$. Then, for $P_\alpha$, $V_\alpha$, $P_{\delta_i}$ and $V_{\delta_i}$ to be admissible they must satisfy the following on $G_\alpha$ around $(x_0,0)$:
    \begin{itemize}
        \item   $P_\alpha$ always points in the direction of creation, i.e., $P_\alpha(s,\tau)$ is negative for $(s,\tau)\in \alpha$ sufficiently close to $(x_0,0)$ (see the illustration in Figure \ref{saddlepitch}). 
        \item For all $1\leq i\leq k$ s.t. $G_{\delta_i}$ is also completely isolated, we similarly require $P_{\delta_i}(s,\tau)$ to be negative on all $(s,\tau)\in\delta$ sufficiently close to $(x_0,0)$ (see the illustration in Figure \ref{saddlepitch}). 
        \item Moreover, $P_\alpha,P_{\delta_i}$ and $V_\alpha,V_{\delta_i}$ and their differentials are all required vanish at $(x_0,0)$, i.e., $(F_0(x_0)+V_r(x_0,0),P_r(x_0,0))=(F_0(x_0),0)=(0,0)$, for all $r=\alpha,\delta_i$, $1\leq i\leq k\leq\infty$. 
    \end{itemize}

The symmetric case where $G_\alpha,\{G_{\delta_i}\}_{i=1}^k\subseteq M\times(0,1)$ is given by reflecting the definition above as illustrated in Figure \ref{reflect1}.
\end{definition}
\begin{figure}[h]
\centering
\begin{overpic}[width=0.35\textwidth]{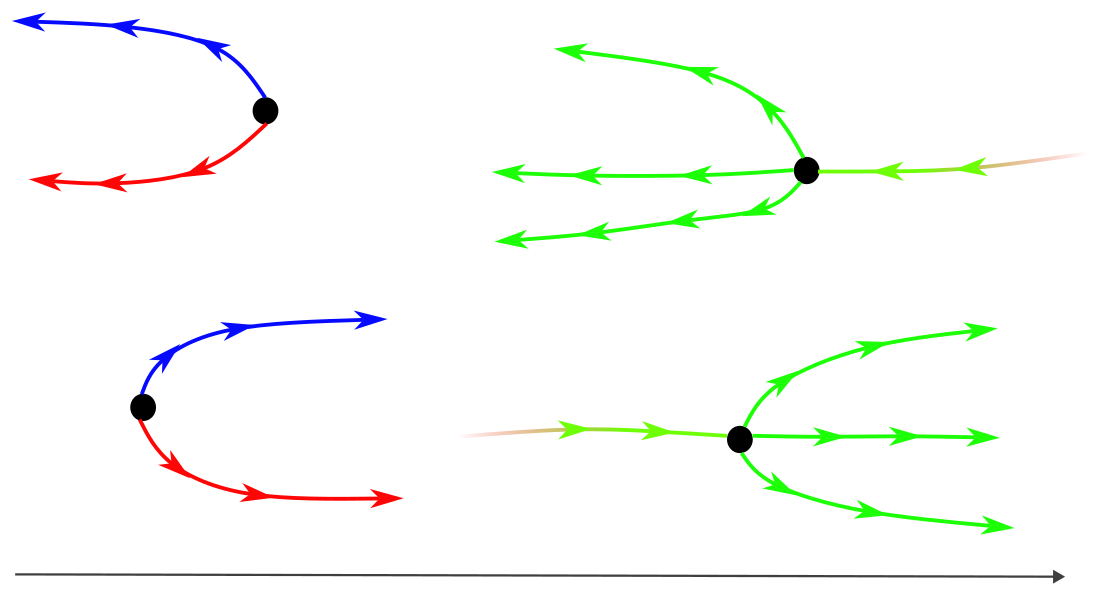}

\put(990,25){$\tau$}

\end{overpic}
\caption{\textit{On the left - a diagram for the behavior of $P_\alpha$ and $V_\alpha$ around two saddle node bifurcations (i.e., creation type) for fixed points (both point in the direction of creation). On the right - diagrams with the same motion for a pitchfork bifurcation (i.e., splitting type), where the motion is in the direction of splitting. The black dots represent bifurcation points.}}
\label{reflect1}
\end{figure}

In other words, at creation types bifurcation, we want the local behavior of $P_\alpha$ and $V_\alpha$ on $G_\alpha$ around the bifurcation to point in the direction in the parameter space of existence of more fixed points. Specifically, we want to push the trajectory of initial conditions $(s,\tau)\in M\times (-1,1)$ close to $(x_0,0)$ towards a state $F_\tau$ where the motion has a larger non-wandering set. We now replicate the same idea for splitting type bifurcations:

\begin{definition}
      \label{def41} Again, let $G_\alpha\subseteq M\times (-1,0)$ be a curve of fixed points in $Fix$. Assume $G_\alpha$ is a component in $Fix$ that connects to some bifurcation fixed point $(x_0,0)$ of the splitting type, where $G_\alpha$ connects at $(x_0,0)$ with a collection of components of $Reg_2$, $\{G_{\delta_i}\}_{i=1}^k$, $1\leq k\leq\infty$ - by definition, $G_{\delta_i}\subseteq M\times(0,1)$ for all $i$. Then, $P_\alpha$, $V_\alpha$, $P_{\delta_i}$ and $V_{\delta_i}$ are admissible if they satisfy the following around $(x_0,0)$:
    \begin{itemize}
        \item For all $(s,\tau)\in\alpha$ sufficiently close to $(x_0,0)$ we require $P_\alpha(s,\tau)$ to be positive, i.e., $P_\alpha$ will point towards $(x_0,0)$, the direction of splitting (see the illustrations in Figure \ref{saddlepitch} and Figure \ref{reflect1}). 
        \item Similarly, for all $i$ s.t. $\delta_i\subseteq Fix$, we require $P_{\delta_i}(s,\tau)$ to be positive on all $(s,\tau)\in\delta_i$ sufficiently close to $(x_0,0)$ (see the illustration in Figure \ref{saddlepitch}). 

    \end{itemize}

Moreover, in all these cases, $P_\alpha, P_{\delta_i}$ and $V_\alpha,V_{\delta_i}$ and their respective differentials are all required to vanish at $(x_0,0)$. In particular,, $(F_0(x_0)+V_r(x_0,0),P_r(x_0,0))=(F_0(x_0),0)=(0,0)$, where $r\in\{\alpha,\delta_i\}$, $1\leq i\leq k$. Again, the symmetric case where $G_\alpha\subseteq M\times(0,1)$ is given by reflection (see the illustration in Figure \ref{reflect1}).
\end{definition}

In other words, at splitting type bifurcation we again require $P_\alpha$ and $P_{\delta_i}$ to push towards a parameter $\tau$ for which $F_\tau$ has more fixed points. Having dealt with creation and splitting type fixed points, we now move on to the case of expansion type bifurcations. In this case, we prescribe the admissible behavior of $P_\alpha$ and $V_\alpha$ as described in the definition below:

\begin{definition}
\label{def42} Let $G_\alpha\subseteq M\times (-1,0)$ be a curve of fixed points in $Fix$ that connects to some expansion-type bifurcation fixed point $(x_0,0)$ on its boundary. Specifically, at $(x_0,0)$ the $G_\alpha$ connects to another component $G_\delta\subseteq Reg_2$ of fixed points, and surfaces $\mathcal{S}_1,...,\mathcal{S}_k\subseteq M\times(0,1)$ of periodic orbits in $Reg_1$ where $1\leq k\leq\infty$ (the surfaces $\mathcal{S}_1,...\mathcal{S}_k$ may or may not be in $Per$). In this case, consider the determinant $|J_0|$, where $J_0$ is the Jacobian determinant of the vector field $F_0$ at $x_0$ (per our definition of expansion bifurcations, $|J_0|\ne0$). Then, depending on the sign of $|J_0|$, $P_\alpha$, $V_\alpha$, $P_\delta$, $V_\delta$, $P_{\mathcal{S}_i}$ and $V_{\mathcal{S}_i}$ are admissible if they satisfy the following around $(x_0,0)$ (see the illustration in Figure \ref{hopff}):    

        \begin{enumerate}
            \item If $|J_0|<0$, then for all $(s,\tau)\in G_\alpha$ sufficiently close to $(x_0,0)$ we require $P_\alpha(s,\tau)$ to be positive. Similarly, whenever $G_\delta\subseteq Fix$, we require $P_\alpha(s,\tau)$ to be negative for all $(s,\tau)\in G_\delta$ sufficiently close to $(x_0,0)$. Moreover, given any $1\leq i\leq k$ s.t. $\mathcal{S}_i$ is in $Per$, we require $P_{\mathcal{S}_i}$ to be positive on all initial conditions in $\mathcal{S}_i$ sufficiently close to $(x_0,0)$ (see the left image in Figure \ref{hopff}). 
            \item If $|J_0|>0$, then, for all $(s,\tau)\in G_\alpha$ sufficiently close to $(x_0,0)$ we require $P_\alpha(s,\tau)$ to be negative. Similarly, when $G_\delta\subseteq Per$, we require $P_\delta(s,\tau)$ to be negative whenever $(s,\tau)\in G_\delta$ is sufficiently close to $(x_0,0)$. Analogously, given any $1\leq i\leq k$ s.t. $\mathcal{S}_i$ is in $Per$, we require $P_{\mathcal{S}_i}$ to be negative on all initial conditions in $\mathcal{S}_i$ sufficiently close to $(x_0,0)$ (see the right image in Figure \ref{hopff}). 

\end{enumerate}

 Again, in all these cases, $P_\alpha,P_\delta, P_{\mathcal{S}_i}$, $V_\alpha,V_\delta,V_{\mathcal{S}_i}$ and their differentials are required to vanish at $(x_0,0)$. Similarly to previous definitions, the symmetric case where $G_\alpha\subseteq M\times(0,1)$ is given by reflecting the above.
\end{definition}
\begin{figure}[h]
\centering
\begin{overpic}[width=0.5\textwidth]{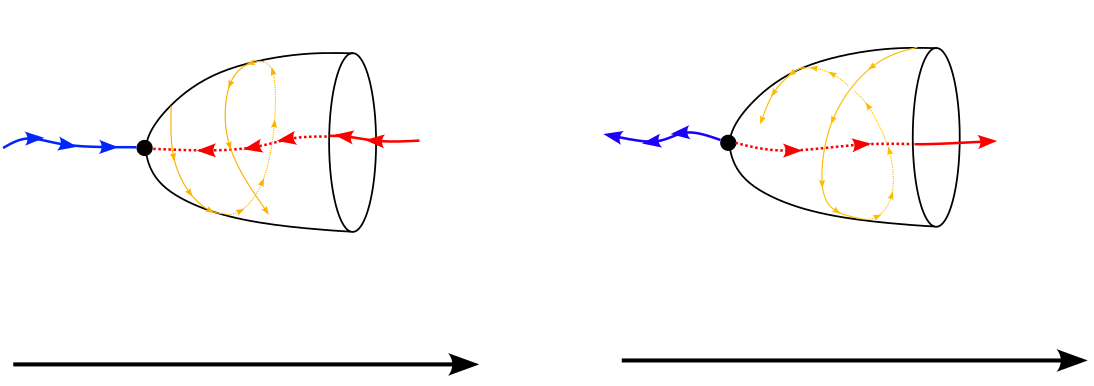}

\put(455,15){$\tau$}
\put(1005,15){$\tau$}

\end{overpic}
\caption{\textit{On the left - the local behavior of the vector field $E$ around fixed point that undergoes Hopf bifurcation when $|J_0|<0$. On the right - the local behavior when $|J_0|>0$. In each diagram, the black dot denotes $(x_0,0)$ and the blue and red curves denotes different components in $Fix$. The orange flow lines are in both cases a flow line on the surface of periodic orbit created at the bifurcation (in this illustration we assume this surface is completely isolated, i.e., that it is in $Per$). }}
\label{hopff}
\end{figure}

In other words, locally around an expansion type bifurcation we require a "saddle-like" behavior around $(x_0,0)$. Our motivation is that saddles are often associated with complex behavior, therefore, we would like the fixed points for our new flow on $M\times I$ to have saddle-like behavior which on the one hand pushes towards complexity from one direction, yet also sees the difference between different Poincare Indices of $x_0$ w.r.t. $F_0$. We now introduce the final definition of admissibility w.r.t. mixed type bifurcation:

\begin{definition}
\label{def43} Let $G_\alpha\subseteq M\times (-1,0)$ be a component of $Fix$ that connects to some mixed-type bifurcation fixed point $(x_0,0)$ on its boundary. In particular, at $(x_0,0)$ the arc $G_\alpha$ connects to other sets $\{\delta_i\}_{i=1}^k$ of fixed points (which may or may not be in $Fix$), and surfaces $\{\mathcal{S}_j\}_{j=1}^d$ of periodic orbits, where $1\leq d,k\leq\infty$ (which, again, may or may not be in $Per$). For admissibility, we require the functions $P_\alpha$, $V_\alpha$, $P_{\delta_i}$, $V_{\delta_i}$, $P_{\mathcal{S}_j}$ and $V_{\mathcal{S}_j}$  where $1\leq i\leq k$, $1\leq j\leq d$ to satisfy the following around $(x_0,0)$ (see the illustration in Figure \ref{eq2}):
\begin{itemize}
    \item For all $(s,\tau)\in G_\alpha$ sufficiently close to $(x_0,0)$, $P_\alpha$ is negative.
    \item For all $i$ s.t. $G_{\delta_i}\subseteq Fix$, if $G_{\delta_i}\subseteq M\times(-1,0)$ we require $P_{\delta_i}$ to be negative on all $(s,\tau)\in G_{\delta_i}$ sufficiently close to $(x_0,0)$. Conversely, when $G_{\delta_i}\subseteq M\times(0,1)$, we require $P_{\delta_i}$ to be positive on all $(s,\tau)\in G_{\delta_i}$ sufficiently close to $(x_0,0)$.
    \item For all $i$ s.t. $\mathcal{S}_j\subseteq Per$, if $\mathcal{S}_j\subseteq M\times(-1,0)$ we require $P_{\mathcal{S}_j}$ to be negative on all $(s,\tau)\in {\mathcal{S}_j}$ sufficiently close to $(x_0,0)$. Conversely, when $\mathcal{S}_j\subseteq M\times(0,1)$, we require $P_{\mathcal{S}_j}$ to be positive on all $(s,\tau)\in \mathcal{S}_j$ sufficiently close to $(x_0,0)$.
    \item Again, $V_\alpha$, $P_\alpha$, $V_{\delta_i}$, $P_{\delta_i}$, $V_{\mathcal{S}_j}$, $P_{\mathcal{S}_j}$ and their differentials are all required to vanish at $(x_0,0)$. 
\end{itemize}

\end{definition}
\begin{figure}[h]
\centering
\begin{overpic}[width=0.35\textwidth]{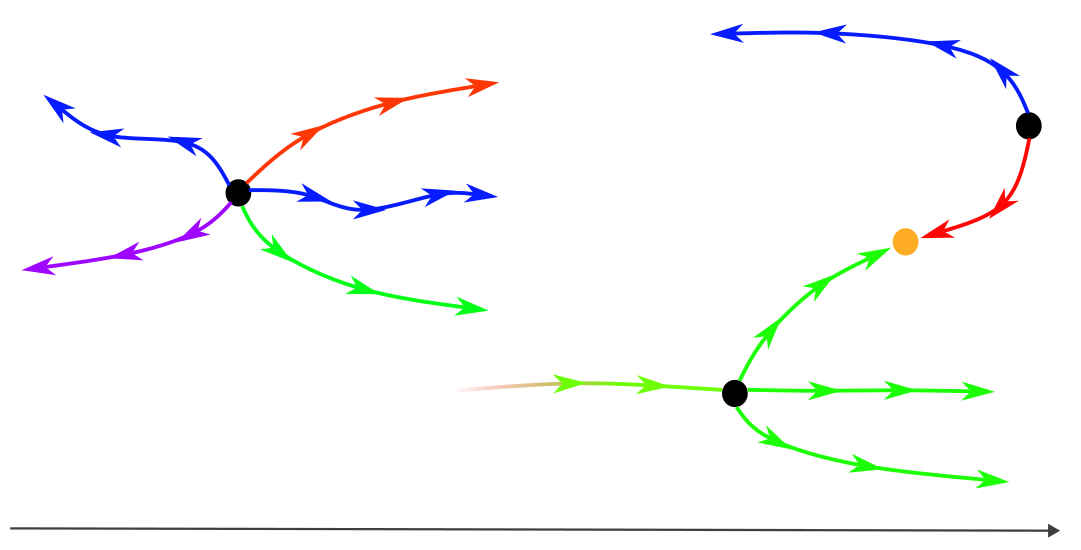}

\put(1000,10){$\tau$}

\end{overpic}
\caption{\textit{On the left - a diagram showing the admissible behavior around a mixed type bifurcation. On the right - diagrams showing the global admissible motion on a collection of components in $Fix$, glued at bifurcation points (the black dot). In this diagram, the orange dot represents an equilibrium fixed point, which exists due to the saddle node bifurcation (creation type) that connects to a pitchfork bifurcation (splitting type).}}
\label{eq2}
\end{figure}

It is easy to see that Definitions \ref{def4}-\ref{def43} all extend to the case where the bifurcation does not occur at $(x_0,0)$ but rather at some other parameters in $I=(-1,1)$. Therefore, having described the prescribed admissible behavior we require around the bifurcations, we now define explicitly what behavior we require globally of $P_\alpha$ and $V_\alpha$ on a component $G_\alpha$ of $Fix$. We do so in the following definition:
\begin{definition}
    \label{admissiblefixed} Let $\dot{s}=F_\tau(s)$ be a $C^1$ one--parameter family, where the parameter $\tau$ varies in $(-1,1)$. Let $G_\alpha$ be a component of $Fix$, let $\pi:M\times (-1,1)\to (-1,1)$ denote the projection $\pi(s,\tau)=\tau$. Set $\pi(G_\alpha)=J=(\tau_1,\tau_2)$, and parameterize $G_\alpha=\{(x_\tau,\tau)|\tau\in J\}$. Assume there exists a neighborhood $N_\alpha$ of $G_\alpha$ s.t. $N_\alpha\cap(Per\cup Fix)=G_\alpha$. Two $C^1$-functions $P_\alpha$ and $V_\alpha$ defined in $M\times (-1,1)$, whose support lies in $N_\alpha$ are said to be \textbf{admissible w.r.t. $N_\alpha$} (or just \textbf{admissible} when $N_\alpha$ is clear by context) if they satisfy the following conditions (see the illustrations in Figures \ref{eq1} and \ref{eq2}):
    \begin{itemize}
        \item As $\tau\to\tau_i$, $i=1,2$, both $P_\alpha(s,\tau)\to0$ and $V_\alpha(s,\tau)\to0$. The same  also true for their differentials, as given in Definitions \ref{def4}-\ref{def43}.
        \item The function $V_\alpha$ is a correction term s.t. for all $(s,\tau)\in G_\alpha$ the vector $(F_\tau(s)+V_\alpha(s,\tau),P_\alpha(s,\tau))$ is tangent to $G_\alpha$. 
         \end{itemize}
In addition, exactly one of the four conditions below also has to be satisfied, depending on $J$:
        \begin{itemize}
        \item If $G_\alpha$ intersects all $M\times\{\tau\}$, i.e., $J=(-1,1)$ and the fixed points on $G_\alpha$ do not bifurcate as $\tau$ is varied, then $P_\alpha$ and $V_\alpha$ are admissible precisely when both of them vanish throughout $M\times (-1,1)$ (i.e., $P_\alpha$, $V_\alpha$ are defined as in Definition \ref{def3}).
        \item Assume $J\ne (-1,1)$ and that, say, $(x_{\tau_1},\tau_1)$ is a bifurcation fixed point for the family $\dot{s}=F_\tau(s)$ of one of the types considered above, while $(x_{\tau_2},\tau_2)$ either does not belong to such a bifurcation type or doesn't exist (for example, when $x_\tau$ has no limit in $M$ as $\tau\to1$). In this scenario, $P_\alpha,V_\alpha$ are admissible provided the following holds:
        \begin{enumerate}
            \item The behavior of functions $P_\alpha$ and $V_\alpha$ on initial conditions on $G_\alpha$ near $(x_{\tau_1},\tau_1)$ is admissible with respect to Definitions \ref{def4}-\ref{def43}.
            \item $P_\alpha$ never changes its sign on $N_\alpha$ - i.e., the local behavior of $P_\alpha$ around $(x_1,\tau_1)$ determines the global behavior in $N_\alpha$.
        \end{enumerate}
The symmetric case where $(x_{\tau_2},\tau_2)$ is a bifurcation fixed points of one of the types above and $(x_{\tau_1},\tau_1)$ is not or does not exist is obtained by reflecting the definition above.
        \item Assume $J\ne (-1,1)$ and both $(x_1,\tau_1),(x_2,\tau_2)$ are bifurcation points corresponding to types above. Then, the functions $P_\alpha,V_\alpha$ are admissible provided:
        \begin{enumerate}
            \item  On initial conditions $(s,\tau)\in G_\alpha$ close to both $(x_i,\tau_i)$, $i=1,2$, the behavior of $P_\alpha$ and $V_\alpha$ is admissible with respect to Definitions \ref{def4}-\ref{def43}.
            \item If the values of of $P_\alpha$ near $(x_1,\tau_1)$ have opposing sign to its values near $(x_2,\tau_2)$ we also require that as we increase $\tau$ from $\tau_1$ to $\tau_2$, then $P^{-1}_\alpha(0)\cap G_\alpha=(x,\tau_0)$, for some $\tau_0\in (\tau_1,\tau_2)$ - otherwise, if the signs at the two ends of $G_\alpha$ are not opposed the sign of $P_\alpha$ on $G_\alpha$ is required to be constant throughout $N_\alpha$. We further require $(x,\tau_0)$, the \textbf{equilibrium point}, to be the unique fixed point for the vector field $(s,\tau)\to(F_\tau(s)+V_\alpha(s,\tau),P_\alpha(s,\tau))$ defined in $N_\alpha$. 
        \end{enumerate}

         \item          In all other possible configurations not covered by the above, we say $P_\alpha$ and $V_\alpha$ are admissible if they vanish identically throughout $N_\alpha$.
    \end{itemize}

\end{definition}
\begin{figure}[h]
\centering
\begin{overpic}[width=0.4\textwidth]{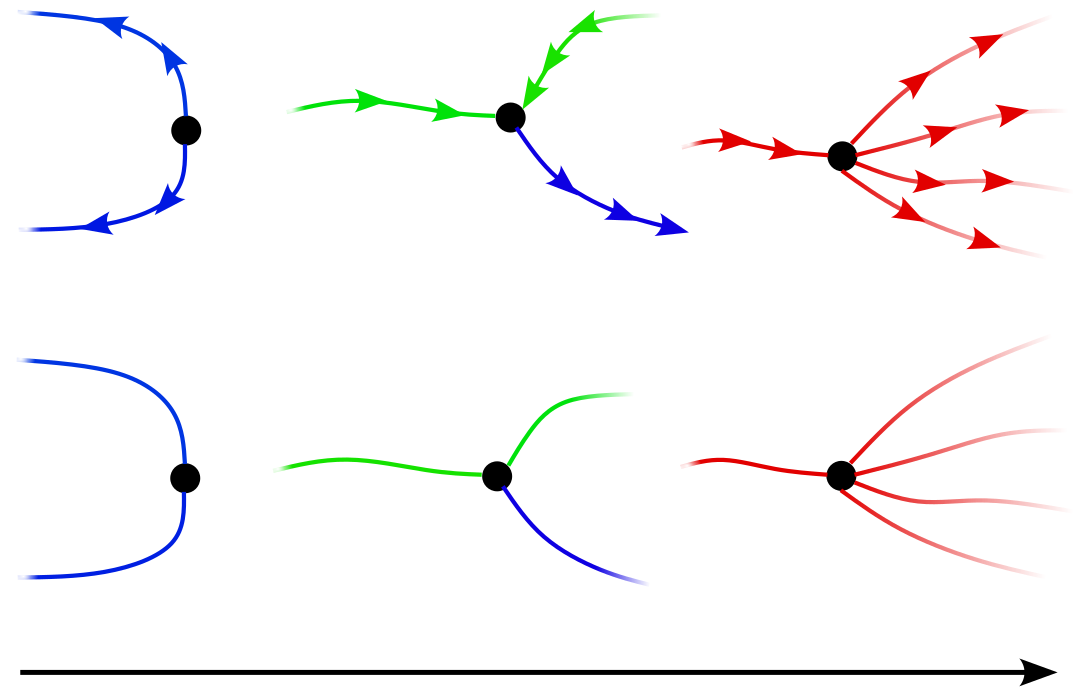}
\put(1000,15){$\tau$}
\end{overpic}
\caption{\textit{A diagram representing the local admissible behavior around creation (left), multiplying (middle) and splitting (right) bifurcations for periodic orbits. In all panels, the black dots denotes the bifurcation orbit $T_0\times\{0\}$, the lower diagram corresponds to the bifurcation diagram, while the upper one showcases the admissible behavior on the periodic orbits bordering the bifurcation. The different colors represent the components in $Per$ along which the period changes continuously as we vary $\tau$. Here, all components of periodic orbits are in $Per$.}}
\label{perbifurcations}
\end{figure}

It is immediate by Proposition \ref{def1}, Corollary \ref{def2} and Definition \ref{def3} that given $G_\alpha\subseteq Per\cup Fix$ for which a neighborhood $N_\alpha$ as above exists, there also exist admissible functions $P_\alpha$ and $V_\alpha$ w.r.t. it. It is also immediate from our definition that when $P_\alpha$ is non-zero in $N_\alpha$, there is at most one initial condition $(s,\tau)\in N_\alpha$ for which we have $(F_\tau(s)+V(s,\tau),P(s,\tau))=(0,0)$ - namely, the equilibrium point $(x_0,\tau_0)$ (if it exists - i.e., only when $P_\alpha$ has opposing signs on the ends of $G_\alpha$). That being said, before we prove the existence of such $N_\alpha$ for all components $G_\alpha\subseteq Fix$, we will first define admissibility for the analogous case for components of periodic orbits $G_\alpha$ in $Per$.

\subsection{Stage $II$ - the behavior on components of $Per$}
\label{sec2}
Having defined the behavior of $P_\alpha$ and $V_\alpha$ around fixed points, we now turn to do the same around components of periodic orbits $G_\alpha\subseteq Per$. To begin, recall $Per$ is defined as the collection of completely isolated periodic orbits (see Definition \ref{isolationdef}) for some $C^1$ family $\dot{s}=F_\tau(s)$, where $\tau$ varies in $(-1,1)$. Let $T_\tau$ be a periodic orbit for $F_\tau$ that varies continuously as $\tau$ is varied in, say, $(-1,0)$, and let $T_0$ denote a periodic orbit for $F_0$ where the said orbit undergoes a bifurcation (in particular, we assume the period of $T_\tau$ varies continuously with $\tau\in(-1,0]$). Similarly as we did for fixed point, we now classify the type of bifurcations occurring as $\tau$ crosses into $(0,1)$. Similarly to previous Subsection, we will only describe the scenario when the bifurcation occurs as $\tau$ crosses from $(-1,0)$ to $(0,1)$ - the symmetric case is described analogously:
\begin{itemize}
    \item \textbf{Creation} - $T_0$ is a collision points for (at least) two distinct periodic orbits that collide and disappear as $\tau$ crosses into $(0,1)$, i.e., the orbit $T_0$ is born at $F_0$, it splits into (at least) two periodic orbits $T_\tau$, $R_\tau$ as $\tau$ enters $(-1,0)$. For example, this occurs if $T_0$ is a saddle node bifurcation (see Figure \ref{perbifurcations}).
    \item \textbf{Multiplying} - there exists some natural $m>1$ s.t. when $\tau$ crosses into $(0,1)$ the orbit $T_0$ splits into (at least) two distinct orbits, $R_\tau$ and $D_\tau$ s.t. the following holds (see Figure \ref{perbifurcations}):
    \begin{enumerate}
        \item The period changes continuously at $T_\tau$ as varied to $R_\tau$ via $T_0$.
        \item Let $p_0$ denoted the period of $T_0$ w.r.t. $F_0$. Then, the period of periodic orbits $D_\tau$ for $\tau$ close to $0$ is approximately $mp_0$ - for example, when $m=2$, $T_0$ is a period doubling bifurcation orbit.
    \end{enumerate}
    \item \textbf{Splitting} - when $\tau$ crosses into $(0,1)$, $T_0\times\{0\}$ splits into a finite (or infinite) collection of periodic orbits, $T^1_\tau,...,T^k_\tau$, where $\tau$ varies continuously in $(0,1)$, $k\leq \infty$. For all $1\leq i\leq k$ the period varies continuously as we vary periodic orbits $T_\tau$, $\tau\in(-1,0)$ to $T^i_\tau$, $\tau\in(0,1)$ via $T_0\times\{0\}$ (see Figure \ref{perbifurcations}).
    \item \textbf{Expansion} - when $\tau$ crosses into $(0,1)$, at least one invariant two-dimensional torus is born, which persists in $M\times(0,\epsilon)$ for some $1\geq \epsilon>0$, while the orbit $T_0$ is deformed continuously to $T_\tau$, a periodic orbit for $F_\tau$, for $\tau>0$. Moreover, the periods of $T_\tau$ vary continuously with $\tau\in(-1,1)$. For example, this occurs at  Neimark-Sacker bifurcation (see Figure \ref{perbifurcation2}). 
\item \textbf{Mixed} - any other type of bifurcation of periodic orbit that does not fall completely into one of the descriptions above. For example, in a type $m$ bifurcations, originally introduced in \cite{mey} and \cite{mey2} and studied in \cite{mey3}, two multipliers cross $S^1$, which causes both the creation of an invariant torus and an $m$-multiplying bifurcation. \\
\end{itemize}

Now, let $G_\alpha$ be a component of periodic orbits in $Per$ and let $N_\alpha$ be some neighborhood of $G_\alpha$ s.t. $N_\alpha\cap(Per\cup Fix)=N_\alpha$. Similarly to what we did with curves of fixed points in $Fix$, we first describe what local behavior we require from the functions $P_\alpha$ and $V_\alpha$ around the bifurcation orbits in $\partial G_\alpha$ gluing $G_\alpha$ to components $G_\beta$ of $Per$ (note that by Definition \ref{def42} and Definition \ref{def43} we already know the behavior around components $G_\alpha\subseteq Per$ which connect to components of $Fix$ at bifurcation fixed points). After we do that we will similarly define the notion of admissibility of $P_\alpha$ and $V_\alpha$ w.r.t. $N_\alpha$ (see Definition \ref{admissibleper}). Again, for brevity, throughout this Subsection we will always implicitly assume the existence of open sets $N_\alpha$ and $N_\beta$, $N_\alpha\cap N_\beta=\emptyset$ for all components $G_\beta\subseteq Fix\cup Per$ connecting with $G_\alpha$ at $T_0\times\{0\}$. And again, to avoid repetition, throughout this Subsection, when we consider the functions $V_\alpha$ we will always implicitly assume they are correction terms s.t. for all $(s,\tau)\in G_\alpha$ the vector $(F_\tau(s)+V_\alpha(s,\tau),P_\alpha(s,\tau))$ is tangent to $G_\alpha$ - similarly to previous section, such correction terms exist by Proposition \ref{def1} and Corollary \ref{def2}. As before, we begin with creation type bifurcations:
\begin{definition}    \label{def5} Let $\dot{s}=F_\tau(s)$ be a one--parameter $C^1$ family, and let $G_\alpha\subseteq M\times(-1,0)$ be a component of $Per$ which connects at a creation bifurcation orbit $T_0\times\{0\}$ with some other surfaces of periodic orbits $\{G_{\beta_i}\}_{i=1}^k\subseteq M\times(-1,0)$, $1\leq k\leq\infty$. Then, we say the functions $P_\alpha$, $V_\alpha$, $P_{\beta_i}$ and $V_{\beta_i}$ are admissible if satisfy the following conditions around $T_0\times\{0\}$ (see the illustration in Figure \ref{perbifurcations}):
    \begin{itemize}
        \item    $P_\alpha(s,\tau)$ is negative around all initial conditions $(s,\tau)\in G_\alpha$ sufficiently close to $T_0\times\{0\}$. 
        \item Similarly, whenever $G_\beta\subseteq Per$, $P_{\beta_i}(s,\tau)$ is negative for all $(s,\tau)\in G_\beta$ sufficiently close to $T_0\times\{0\}$.
        \item Similarly to Definition \ref{def4}, $P_\alpha$, $P_{\beta_i}$, $V_{\alpha}$ and $V_{\beta_i}$ and their respective differentials are further required to vanish at $T_0\times\{0\}$.
\end{itemize}

In the symmetric case where $G_\alpha,\{G_{\beta_i}\}_{i=1}^k\subseteq M\times(0,1)$, we just reflect the above, exactly as we did in Definition \ref{def4}.
\end{definition}
\begin{figure}[h]
\centering
\begin{overpic}[width=0.35\textwidth]{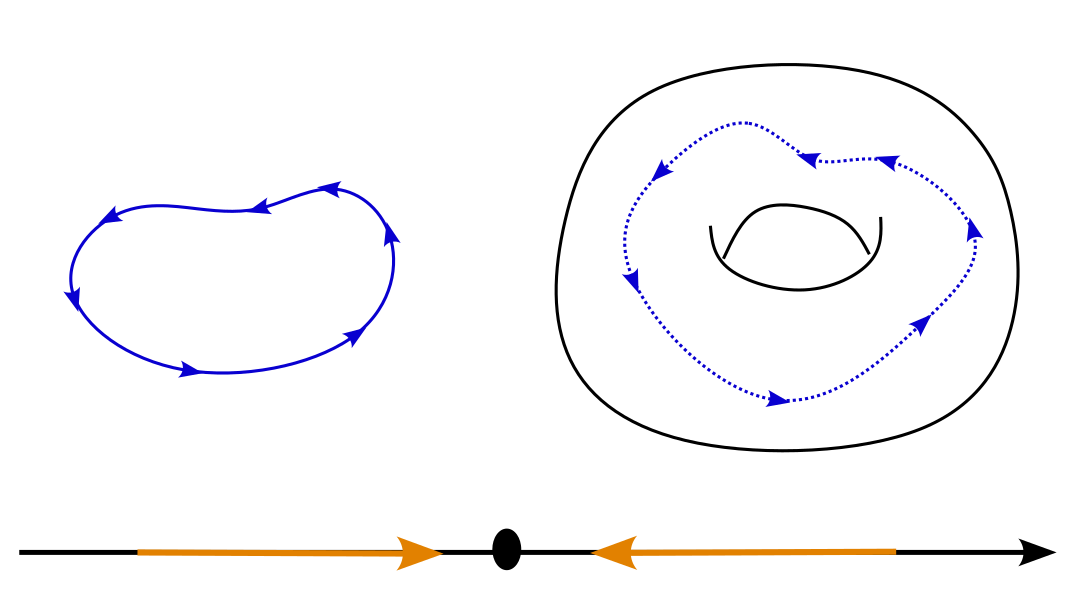}
\put(455,-20){$0$}
\put(990,50){$\tau$}
\end{overpic}
\caption{\textit{A diagram representing the admissible behavior around an expansion bifurcation. Left to $\tau=0$, there exists a periodic orbit which undergoes a supercritical Neimark-Sacker bifurcation, hence the behavior is as dictated by Definition \ref{def5}.}}
\label{perbifurcation2}
\end{figure}

Having prescribed the local behavior of $P_\alpha$ and $V_\alpha$ on $G_\alpha$ when $G_\alpha$ is connected to a creation type bifurcation orbit $T_0\times\{0\}$, we note that similarly to Definition \ref{def4}, the prescribed behavior can be described as "pushing towards the direction of creation". We now replicate the same idea for multiplying and splitting type bifurcations, using logic similar to Definition \ref{def41}:

\begin{definition}
      \label{def511} Let $G_\alpha\subseteq M\times(-1,0)$ be a component of $Per$ which connects to a bifurcation periodic orbit $T_0\times\{0\}$ whose bifurcation type is either multiplying or splitting. Assume $G_\alpha$ is glued at $T_0\times\{0\}$ with a collection of families of periodic orbits $\{G_{\beta_i}\}_{i=1}\subseteq M\times(0,1)$, where $1\leq k\leq\infty$. Then, the functions $P_{\alpha},V_\alpha$, $P_{\beta_i}$ and $V_{\beta_i}$, $1\leq i\leq k$ are admissible provided they satisfy the following around $T_0\times\{0\}$ (see Figure \ref{perbifurcations}):
    \begin{itemize}

        \item If $T_0\times\{0\}$ is a \textbf{multiplying bifurcations}, denote by $G_{\beta_1},...,G_{\beta_r}$ the surfaces of periodic orbits along which the period does not change, and by $G_{\beta_{r+1}},...,G_{\beta_k}$ the surfaces of periodic orbits along which the period is multiplied by some positive integer $m$. We require the following for admissibility:
        \begin{enumerate}
          \item On initial conditions $(s,\tau)\in G_\alpha$ sufficiently close to $T_0\times\{0\}$, we require $P_\alpha$ to be positive (i.e., so that the vector $(F_\tau(s)+V_\alpha(s,\tau),P_\alpha(s,\tau))$ points towards $T_0\times\{0\}$). 
          \item For all $1\leq i\leq r$ s.t. $G_{\beta_i}\subseteq Per$, on $(s,\tau)\in G_{\beta_i}$ sufficiently close to $T_0\times\{0\}$ we require $P_{\beta_i}$ to be negative (i.e., for such $(s,\tau)$ the vector $(F_\tau(s)+V_\alpha(s,\tau),P_\alpha(s,\tau))$ should point towards $T_0\times\{0\}$).
          \item Whenever $i$ is such that $G_{\beta_i}\subseteq Per$, $r+1\leq i\leq k$, for all $(s,\tau)\in G_{\beta_i}$ sufficiently close to $T_0\times\{0\}$ we require $P_{\beta_i}$ to be positive so that the vector $(F_\tau(s)+P_{\beta_{i}}(s,\tau),V_{\beta_i}(s,\tau)$ points away from $T_0\times\{0\}$. In other words, on such $(s,\tau)$ we point in the direction of increased oscillation.
\end{enumerate}   
    \item  In \textbf{splitting bifurcations}, $G_\alpha$ connect at $T_0\times\{0\}$ with $k$ families of periodic orbits, $G_{\beta_i}$, $1\leq i\leq k\leq \infty$, the period changes continuously as we move from $G_\alpha$ to $G_{\beta_i}$ through $T_0\times\{0\}$ (for all $i$). In this case, we require $P_\alpha$ to be positive on all $(s,\tau)\in G_\alpha$ sufficiently close to $T_0\times\{0\}$ (i.e., vector $(F_\tau(s)+V_\alpha(s,\tau),P_\alpha(s,\tau))$ points towards the splitting at $T_0\times\{0\}$). Similarly, for all $i$ s.t. $G_{\beta_i}\subseteq Per$, on all $(s,\tau)\in G_{\beta_i}$ sufficiently close to $T_0\times\{0\}$ we require $P_{\beta_i}$ to be positive - i.e., the vector $(F_\tau(s)+V_\alpha(s,\tau),P_\alpha(s,\tau))$ points in the direction of more periodic orbits.

\end{itemize}

In both cases, the functions $P_\alpha,V_\alpha, P_{\beta_i}$ and $V_{\beta_i}$ and all their respective differentials are further required to vanish as $(s,\tau)\to T_0\times\{0\}$.  Again, in the symmetric case where $G_\alpha\subseteq M\times(0,1)$ and $\{G_{\beta_i}\}_{i=1}^k\subseteq M\times(-1,0)$ we reflect the above, exactly as in Definition \ref{def41}.
\end{definition}
We now turn to define the analogue of Definition \ref{def42}. We will define admissibility in this case in a similar way, only that this time, as we are dealing with Neimark-Sacker bifurcations we will be considering families of two-dimensional invariant tori. We will need to introduce a few definitions to deal with such scenarios - to this end, given a $C^1$ curve $\dot{s}=F_\tau(s)$ we say a two-dimensional invariant torus $\mathbb{T}^2_\tau$ for $F_\tau$ on which the motion is aperiodic is  \textbf{isolated} if $\mathbb{T}^2_\tau\times\{\tau\}$ cannot be approximated by points in $M\times (-1,1)$ that lie on periodic orbits or fixed points for all vector fields in the said family. Similarly, given a connected subset $\mathcal{T}\subseteq M\times (-1,1)$, we say it is a \textbf{family of isolated tori} if the following holds:
\begin{itemize}
    \item  For all $\tau$, we have either $\mathbb{T}^2_\tau\times\{\tau\}=\mathcal{T}\cap (M\times\{\tau\})$ or $\emptyset=\mathcal{T}\cap (M\times\{\tau\})$. Moreover, $\mathbb{T}^2_\tau$ is an isolated two-dimensional invariant torus for $F_\tau$, on which the motion is aperiodic, and  $\mathbb{T}^2_\tau$ varies continuously.
    \item $\mathcal{T}$ is maximal - i.e., we cannot extend it in $M\times (-1,1)$ s.t. $\mathcal{T}$ remains connected and the two properties above still hold.\\
\end{itemize}

With these new definitions in mind, we now turn to define the admissible local behavior around expansion type bifurcations of periodic orbits:

\begin{definition}
\label{def52}    Let $\dot{s}=F_\tau(s)$ be a one--parameter $C^1$ family, and let $G_\alpha\subseteq M\times(-1,0)$ be a component of $Per$ which connects to an expansion type bifurcation periodic orbit $T_0\times\{0\}$. Specifically, assume $G_\alpha$ connects at $T_0\times\{0\}$ with a family of periodic orbits $G_\beta\subseteq M\times(0,1)$, and several families of two-dimensional invariant tori, $\mathcal{T}_1,...,\mathcal{T}_n\subseteq M\times(0,1)$, where $n\leq \infty$. Then, the functions $P_\alpha,V_\alpha$, $P_{\beta},V_{\beta}$, $1\leq i\leq n$, are admissible provided they satisfy the following behavior around $T_0\times\{0\}$ (see the illustrations in Figures \ref{RT} and \ref{RT2}):

\begin{itemize}
    \item Assume $T_0$ is an expansion bifurcation orbit s.t. $G_\alpha$ corresponds to a family of attractors which become repellers as the orbits on $G_\alpha$ are varied to orbits on $G_\beta$ via $T_0$. Then, the following is required for admissibility: 
    \begin{enumerate}
        \item for initial conditions $(s,\tau)\in G_\alpha$ sufficiently close to $T_0\times\{0\}$, we require $P_\alpha(s,\tau)$ to be positive, i.e., the vector $(F_\tau(s)+V_\alpha(s,\tau),P_\alpha(s,\tau))$ is required to point towards the bifurcation at $T_0\times\{0\}$.
        \item Similarly, we require $P_{\beta}$ to be negative on all $(s,\tau)\in G_\beta$ sufficiently close to $T_0\times\{0\}$ - i.e., again the vector $(F_\tau(s)+V_\alpha(s,\tau),P_\alpha(s,\tau))$ is required to point towards the bifurcation at $T_0\times\{0\}$ (note that as $G_\beta$ is made of repellers it is automatically in $Per$).
    \end{enumerate}
    
     \item Assume $T_0$ is an expansion bifurcation orbit where a repelling orbits on $G_\alpha$ become attractors on $G_\beta$ as $\tau$ crosses from $(-1,0)$ to $(0,1)$. Then, the following is required for admissibility:
     \begin{enumerate}
         \item On all initial conditions $(s,\tau)\in C$ sufficiently close to $T_0\times\{0\}$ we require $P_\alpha(s,\tau)$ to be negative, i.e., this time the vector $(F_\tau(s)+V_\alpha(s,\tau),P_\alpha(s,\tau))$ is required to point away from the bifurcation.
         \item  Similarly, we require $P_{\beta}$ to be positive on all $(s,\tau)\in G_\beta$ sufficiently close to $T_0\times\{0\}$, i.e., the vector $(F_\tau(s)+V_\alpha(s,\tau),P_\alpha(s,\tau))$ is required to point away from the bifurcation (symmetrically, $G_\beta\subseteq Per$ as it corresponds to a family of attractors).
     \end{enumerate}
     \item In any other case, we require $P_\alpha$ to be negative on $(s,\tau)\in G_\alpha$ near $T_0\times\{0\}$ and positive on $(s,\tau)\in G_\beta$ near $T_0\times\{0\}$ - i.e., the vector $(F_\tau(s)+V_\alpha(s,\tau),P_\alpha(s,\tau))$ is required to repel away from the bifurcation.
\end{itemize}

In all three possibilities, the functions $P_\alpha,V_\alpha$, $P_{\beta}$ and $V_{\beta}$ and their differentials are further required to vanish as $(s,\tau)\to T_0\times\{0\}$. Similarly to previous definitions, in the symmetric case when $G_\alpha\subseteq M\times(0,1)$, and $\mathcal{T}_1,...,\mathcal{T}_n,G_\beta\subseteq M\times(-1,0)$ we  reflect the definition exactly as we did in all previous cases.
\end{definition}
\begin{remark}
Note that unlike expansion bifurcations for fixed points, here we did not ascribe any admissible motion on the families of isolated invariant tori $\mathcal{T}_1,...,\mathcal{T}_n$ - we will do so in the next Subsection.
\end{remark}
We now proceed to deal with the final case - the basket case referred to earlier as the "mixed" scenario. In this case, we have a component of periodic orbits $G_\alpha\subseteq M\times(-1,0)$ that meets a collection of families of periodic orbits $G_1,...,G_d\subseteq M\times(-1,1)$ and a collection of families of invariant tori (or more generally, invariant sets) $\mathcal{T}_1,...,\mathcal{T}_k\subseteq M\times(-1,1)$ at $T_0\times\{0\}$, where $1\leq d,k\leq \infty$ (note one of these collections can be empty). Following Definition \ref{def43}, we describe the admissible behavior as follows:
\begin{definition}
\label{def53} With the notations above, the following is required for admissibility of the functions $P_\alpha,V_\alpha$, $P_1,...,P_d$, $V_1,...,V_d$:
\begin{itemize}
    \item The function $P_\alpha$ is negative on all initial conditions $(s,\tau)\in G_\alpha$ sufficiently close to $T_0\times\{0\}$. In addition, for all $1\leq i\leq d$ s.t. $G_i\subseteq Per\cap M\times(-1,0)$, $P_i$ is negative on all initial conditions $(s,\tau)\in G_i$ sufficiently close to $T_0\times\{0\}$ (in other words, on such $(s,\tau)$ the vector $(F_\tau(s)+V_\alpha(s,\tau),P_\alpha(s,\tau))$ is required to repel away from the bifurcation). 
    \item For all $1\leq i\leq d$ s.t. $G_i\subseteq Per\cap M\times(0,1)$, $P_i$ is positive on all initial conditions $(s,\tau)\in G_i$ sufficiently close to $T_0\times\{0\}$ (again, the vector $(F_\tau(s)+V_\alpha(s,\tau),P_\alpha(s,\tau))$ is required to point away from the bifurcation).
    \item Similarly to the above, we again require $P_\alpha,V_\alpha, P_{j},V_{j}$ (for all relevant $1\leq j\leq d$), and all their respective differentials to vanish at $T_0\times\{0\}$.
\end{itemize}

\end{definition}
\begin{remark}
    Similarly to previous definition, we again "ignore" the invariant tori and other persistent invariant sets $\mathcal{T}_1,...,\mathcal{T}_k\subseteq M\times(-1,1)$ glued to $T_0\times\{0\}$. We will consider them in the next Subsection.
\end{remark}
Having described the local behavior required around the bifurcation orbits gluing different components in $Per$ to one another (and possibly to invariant tori), we now sew them together. In other words, similarly to Definition \ref{admissiblefixed} we now define the admissible functions on components of $Per$:

\begin{definition}
    \label{admissibleper} Let $\dot{s}=F_\tau(s)$ be a $C^1$ one--parameter family defined over a manifold $M$ and parameterized by $\tau\in(-1,1)$. Let $G_\alpha$ be a component of $Per$ and again let $\pi:M\times (-1,1)\to (-1,1)$ denote the projection $\pi(s,\tau)=\tau$, where $\pi(G_\alpha)=J=(\tau_1,\tau_2)$. Consider a neighborhood $N_\alpha$ of $G_\alpha$ in $M\times(-1,1)$ s.t. $N_\alpha\cap(Per\cup Fix)=G_\alpha$ - two $C^1$-functions $P_\alpha$ and $V_\alpha$ defined throughout $M\times (-1,1)$ and supported in $N_\alpha$ are \textbf{admissible w.r.t. $N_\alpha$} (or just \textbf{admissible} when $N_\alpha$ is clear from context) if they satisfy the following conditions in $N_\alpha$ (see the illustration in Figure \ref{sew}):
    \begin{itemize}
        \item As $(s,\tau)\to\partial N_\alpha$, both $P_\alpha(s,\tau)\to0$ and $V_\alpha(s,\tau)\to0$. Similarly to Definition \ref{admissiblefixed}, the same is also required from their differentials as $(s,\tau)\to\partial N_\alpha$.
        \item The function $V_\alpha$ is a correction term s.t. for all $(s,\tau)\in G_\alpha$ the vector $(F_\tau(s)+V_\alpha(s,\tau),P_\alpha(s,\tau))$ is tangent to $G_\alpha$, and never vanishes on $N_\alpha$.
        \end{itemize}
        In addition, exactly one of the following four conditions also has to be satisfied:
        \begin{itemize}
        \item If there are no bifurcation orbits on $\partial G_\alpha$ w.r.t. $\dot {s}=F_\tau(s)$, $J=(-1,1)$, then $P_\alpha$, $N_\alpha$ are admissible only if both vanish throughout $M\times (-1,1)$ (i.e., $P_\alpha$, $V_\alpha$ are admissible in the sense of Definition \ref{def3}).
        \item Assume $J\ne I$, and write $T_{i}\times\{\tau_i\}=\partial G_\alpha\cap M\times\{\tau_i\}$, $i=1,2$ (these sets may or may not be empty). Further assume that $T_{1}\times\{\tau_1\}$ is a bifurcation periodic orbit of one of the types described above, and that the set $T_{2}\times\{\tau_2\}$ is either empty or $T_2$ is a bifurcation set that does not correspond to any of the bifurcation types described ($T_2$ may or may not be a periodic orbit for $F_{\tau_2}$). Then, we say $P_\alpha$ and $V_\alpha$ are admissible provided the following is satisfied:
        \begin{enumerate}
            \item $P_\alpha$ never changes its sign on $G_\alpha$.
            \item The behavior of $P_\alpha$ and $V_\alpha$ on $(s,\tau)\in G_\alpha$ sufficiently close to $T_{\tau_1}\times\{1\}$ is admissible per Definitions \ref{def4}-\ref{def53}. In other words, the bifurcation type of $T_1$ determines the global behavior on $N_\alpha$.
        \end{enumerate}
The symmetric scenario where $J\ne(-1,1)$ and $T_{2}\times\{\tau_2\}$  is a bifurcation periodic orbit of one of the types above, and the set $T_{1}\times\{\tau_1\}$ is either empty or corresponds to some bifurcation set that does not fall into any of the prescribed bifurcation types is obtained by reflecting the definition above.
        \item  Assume $J\ne (-1,1)$ and that with previous notation, both $T_{1}\times\{\tau_1\},T_{2}\times\{\tau_2\}$ are bifurcation points belonging to one of the bifurcation types described above. In this case, we say $P_\alpha$ and $V_\alpha$ are admissible provided the following holds:
        \begin{enumerate}
            \item The behavior of $P_\alpha$ and $V_\alpha$ on initial conditions $(s,\tau)\in G_\alpha$ around both $T_{\tau_1}\times\{\tau_1\}$ and $T_{\tau_2}\times\{\tau_2\}$ is admissible per Definitions \ref{def4}-\ref{def53} . 
            \item If the sign of $P_\alpha$ on $(s,\tau)\in G_\alpha$ close to $T_{1}\times\{\tau_1\}$ is opposed to its sign on $(s,\tau)\in G_\alpha$ close $T_{2}\times\{\tau_2\}$ we require $P_\alpha^{-1}(0)\cap G_\alpha$ to be given by $T\times\{\tau_0\}$, where $\tau_0\in(\tau_1,\tau_2)$ and $T$ is periodic orbit for the vector field $F_{\tau_0}$ (we will also require that $V_\alpha$ vanishes on $T\times\{0\}$) - if the said signs are not opposed, $P_\alpha$ is required to have constant sign $N_\alpha$. This condition makes $T\times\{0\}$ (whenever it exists) into a periodic orbit for the vector field defined by $(s,\tau)\to(F_\tau(s)+V_\alpha(s,\tau),P_\alpha(s,\tau))$. We will often refer to this orbit as an \textbf{equilibrium periodic orbit}.
            \end{enumerate}
            \item In all other possible configurations different from the three above, for admissibility we require $P_\alpha$ and $V_\alpha$ to vanish identically in $N_\alpha$. 
        \end{itemize}
\end{definition}
\begin{figure}[h]
\centering
\begin{overpic}[width=0.3\textwidth]{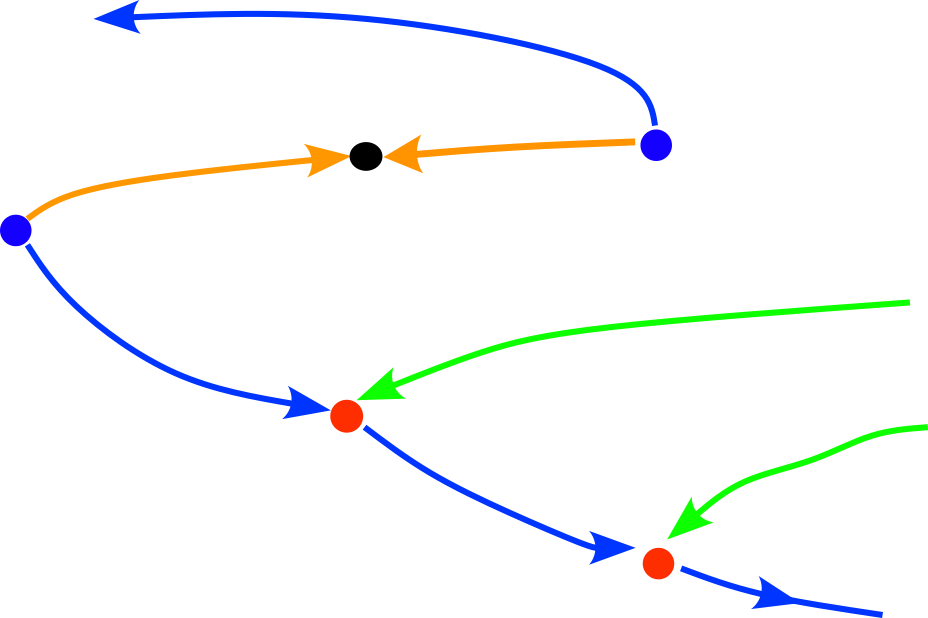}
\put(330,140){$s_3$}
\put(625,10){$s_4$}
\put(-30,340){$s_1$}
\put(735,470){$s_2$}
\put(350,420){$s_5$}
\end{overpic}
\caption{\textit{A diagram representing the behavior of the Entropy flow on a collection of components in $Per$ connected to one another at bifurcation orbits. A dot between two arrows implies that there exists a component in $Per$ connecting them, where green and orange denotes completely isolated saddles, blue denotes attracting periodic orbits. In this scenario, the blue dots at $s_1,s_2$ denote saddle node bifurcations, the red dots $s_3,s_4$ correspond to period doubling bifurcations and the black dot $s_5$ corresponds to a periodic orbit that is an equilibrium state.}}
\label{sew}
\end{figure}
Again, it is immediate by Proposition \ref{def1}, Corollary \ref{def2} and Definition \ref{def3} that given $G_\alpha\subseteq Per$ for which a neighborhood $N_\alpha$ as above exists, there also exist admissible functions $P_\alpha$ and $V_\alpha$ w.r.t. it. Similarly to the case of admissible fixed points in Definition \ref{admissiblefixed}, it is also immediate from our definition that when $P_\alpha$ is a non-zero function in $N_\alpha$, there is at most one closed curve in $N_\alpha$ which is periodic for the vector field $(s,\tau)\to(F_\tau(s)+V_\alpha(s,\tau),P_\alpha(s,\tau))$ - namely, the equilibrium periodic orbit $T\times\{\tau_0\}$. Having defined admissibility conditions for functions $P_\alpha$ and $V_\alpha$ defined around components of $Per$, we now proceed to extend the definition of admissibility to neighborhoods of certain invariant sets, and then tie all these different definitions together to define the Entropy flow.

\subsection{Stage $III$ - the final definition}
\label{sec3}
Having considered the local bifurcations involving periodic orbits and fixed points, in this Subsection we finally tie everything together and define the Entropy flow. We first do so when the parameter space is $(-1,1)$, after which we quickly generalize it to more complicated connected one-dimensional parameter spaces. We begin by extending the admissibility definitions from isolated periodic orbits and fixed points to isolated invariant sets, after which we proceed to define $P$ and $V$, thus completing the definition of the Entropy flow. The reason we do so is because while the definitions in the previous two Subsections are tailor made for persistent local behavior (i.e., fixed points and periodic orbits), we would also like to account for global phenomena - namely, for the appearance of robust invariant sets which persist without bifurcating when the parameter $\tau$ varies in some sub-interval $J\subseteq(-1,1)$. Before we begin, we remark that even though it may not be completely clear from context, our inspiration for this extension arose from Fenichel's Theorem (see \cite{fen}), as intuitively one would expect such invariant to be normally hyperbolic w.r.t. the flow given by Equations \ref{lam1}.\\

To begin, given a $C^1$ one--parameter family $\dot{s}=F_\tau(s)$, $\tau\in(-1,1)$, assume there exists some connected open set $N_\tau\subseteq M$, varying continuously in $\tau\in (\tau_1,\tau_2)$ s.t. the following is satisfied:
\begin{itemize}
    \item $(\tau_1,\tau_2)$ is a proper subinterval of $(-1,1)$, and, setting $N_A=\cup_{\tau\in(\tau_1,\tau_2)}N_\tau\times\{\tau\}$, then both $N_A\cap {Per}=\emptyset$ and $N_A\cap\overline{Fix}=\emptyset$.
    \item For all $\tau\in(\tau_1,\tau_2)$ there exists a connected, maximal invariant set $A_\tau$ in $N_\tau$ w.r.t. $F_\tau$ which varies with $\tau$ (for example, a hyperbolic attractor). Moreover, for all $\tau,\tau'\in(\tau_1,\tau_2)$ the dynamics of $F_\tau$ on $N_\tau$ are orbitally equivalent to those of $F_{\tau'}$ on $N_{\tau'}$ - in particular, the dynamics of $F_\tau$ on $A_\tau$ are orbitally equivalent to those of $F_{\tau'}$ on $A_{\tau'}$.
    \item The interval $(\tau_1,\tau_2)$ is maximal w.r.t. the properties above, i.e., we cannot extend $(\tau_1,\tau_2)$ in $(-1,1)$ s.t. the two requirements above are both satisfied (see the illustration in Figure \ref{perbifurcation4} with a Smale Horseshoe).\\

\end{itemize}

We refer to the set $A=\cup_{\tau\in(\tau_1,\tau_2)} A_\tau\times\{\tau\}$ as a \textbf{completely isolated invariant set} - it is easy to see that hyperbolic invariant sets are an example of the above, and so are families of isolated tori (as defined in the previous Section). We would now like to define the notion of admissible behavior on the set $N_A$ in a way that pushes $(s,\tau)\in N_A$ away from $\partial N_A\cap M\times\{\tau_i\}$, $i=1,2$. We do so as follows:
\begin{definition}
    \label{def6} Let $\dot{s}=F_\tau(s)$ be a $C^1$ one--parameter family defined over a manifold $M$ of dimension $n\geq1$, and parameterized by $\tau\in(-1,1)$. Let $A$ be a completely isolated invariant set with $N_A$ as defined above. A $C^1$-function $P_A:M\times I\to \mathbb{R}$ with a support in $N_A$ is admissible if the following holds (see Figure \ref{perbifurcation4}):
    \begin{itemize}
        \item If  $-1<\tau_1<\tau_2<1$, we choose $P_A$ s.t. $P_A(s,\tau)$ is positive when $(s,\tau)\in N_A$ is sufficiently close to $M\times\{\tau_1\}$, and negative when $(s,\tau)$ is sufficiently close to $M\times\{\tau_2\}$. Moreover, with previous notations, we require there is precisely one $\tau_0\in(\tau_1,\tau_2)$ s.t. $N_{\tau_0}\times\{\tau_0\}=P_A^{-1}(0)\cap N_A$ - the \textbf{equilibrium state} for $A$, which must always includes $A_{\tau_0}\times\{\tau_0\}$.
        \item If $-1<\tau_1,\tau_2=1$, we require $P_A$ to be positive throughout the interior of $N_A$. Symmetrically, when $\tau_1=-1$ and $\tau_2<1$, we require $P_A$ to be negative throughout the interior of $N_A$.
        \item When $(\tau_1,\tau_2)=(-1,1)$, similarly to Definition \ref{def3}, we require $P_A$ to be  $0$ throughout $M\times(-1,1)$.
    \end{itemize}

Note that by the same arguments used to prove Proposition \ref{def1} and Corollary \ref{def2}, whenever $A$ and $N_A$ exist there also exist infinitely many admissible functions. Moreover, we can even choose these admissible functions to be arbitrarily $C^1$-small.
\end{definition}
\begin{remark}
    Note that in this definition we do not choose a correction term $V_A$. The reason is because there is no reason to assume $A$ has any submanifold structure in $M\times(-1,1)$, therefore it is far from clear that for all $(s,\tau)\in A$ we can find some correction term $V_A(s,\tau)$ for which the vector $(F_\tau(s)+V_A(s,\tau),P_A(s))$ is tangent to $A$. We will give a more physical interpretation of this observation towards the end of this Subsection.
\end{remark}
\begin{figure}[h]
\centering
\begin{overpic}[width=0.5\textwidth]{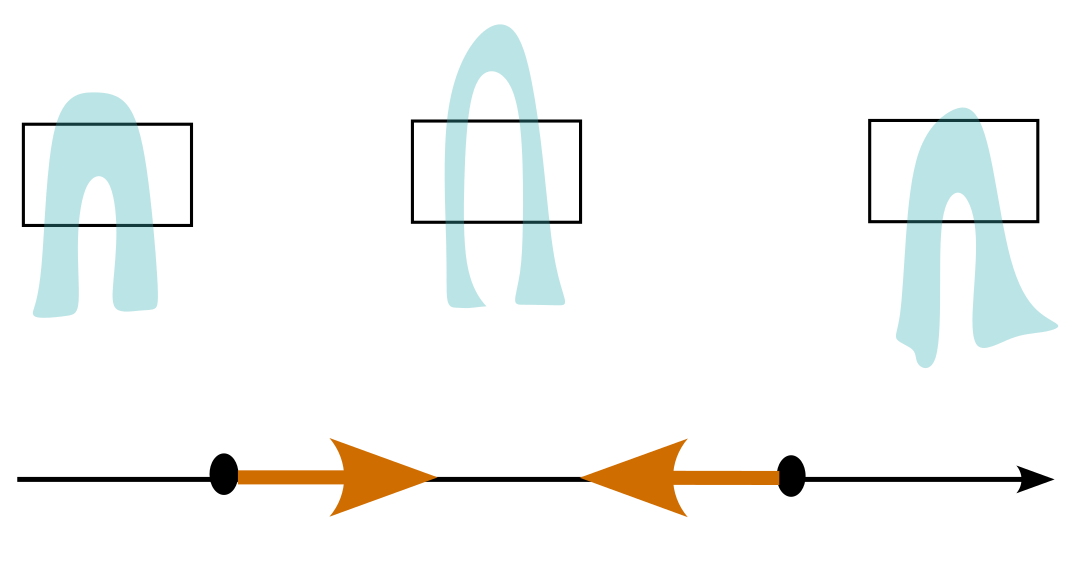}
\put(190,30){$\tau_1$}
\put(720,30){$\tau_2$}
\put(980,70){$\tau$}
\end{overpic}
\caption{\textit{A diagram representing the behavior of $E$ on an isolated invariant set. In this scenario, there exists a suspended Smale Horseshoe for all parameters inside $(\tau_1,\tau_2)$. As the said hyperbolic invariant set is isolated inside a suspended rectangle $R$, the admissible behavior for $(s,\tau)\in R\times(\tau_1,\tau_2)$ always points inside $R\times(\tau_1,\tau_2)$, represented by the orange arrows. By Definition \ref{def6} we know that in this scenario there must exist some equilibrium state in $R\times\{\tau_0\}$, for some $\tau_0\in(\tau_1,\tau_2)$.}}
\label{perbifurcation4}
\end{figure}

To justify our reasoning behind Corollary \ref{def6}, since we can choose $P_A$ to be arbitrarily $C^1$-small, when every $A_\tau$ is a hyperbolic invariant set for $F_\tau$, $\tau\in(\tau_1,\tau-2)$, by Fenichel's Theorem the flow of $(F_\tau(s),P_A(s))$ will behave "closely" to that of the flow generated by Equations \ref{lam1}. In other words, Corollary \ref{def6} allows us to both define directions on complex invariant sets, and in many cases also preserve at least some of their complex behavior. In particular, this allows us to think of the dynamics of the vector field $(s,\tau)\to (F_\tau(s),P_A(s,\tau))$ in $N_A$ as essentially a "stretched out" version of the dynamics of $F_\tau$ on $A_\tau$.\\

We are now almost ready to give a global definition for the Entropy flow, when the parameter space is the interval $I=(-1,1)$. To this end, given a $C^1$ one--parameter family $\dot{s}=F_\tau(s)$ parameterized by $\tau\in(-1,1)$, let $\{G_\alpha\}_\alpha$ denote the collection of all the components of $Per$, $Fix$, $Inv$ - where $Inv$ corresponds to the collection of completely isolated invariant sets. We now claim there exist open sets $\{N_\alpha\}_\alpha$ s.t. $G_\alpha\subseteq N_\alpha$, and when $\alpha\ne\beta$, $N_\alpha\cap N_\beta=\emptyset$. To see why, note that since we assume complete isolation, we can just construct the sets $N_\alpha$ recursively. In detail,  let $G_\alpha$ be as above - by definition, we can find some open neighborhood $N_\alpha$ of $G_\alpha$ s.t. ${N_\alpha}\cap (Per\cup Fix\cup Inv)=G_\alpha$. Since the closure of components in $Per\cup Fix\cup Inv$ intersects at most with $\partial N_\alpha$, given any other component $G_\beta\subseteq Per\cup C$, $\beta\ne\alpha$ define a similar isolating neighborhood $N_{\beta}$ s.t. $N_{\alpha}\cap N_\beta=\emptyset$. Given another component $G_\rho\subseteq Per\cup Fix$ s.t. $G_\rho\ne G_\alpha,G_\beta$, we again choose an isolating neighborhood $N_{\rho}$ s.t. both $N_{\rho}\cap N_{\beta}=\emptyset$ and $N_{\rho}\cap N_\alpha=\emptyset$. Repeating this argument ad infinitum (possibly uncountably many times), the collection $\{N_\alpha\}_\alpha$ is obtained. We will refer to this collection of open sets as an \textbf{isolating collection}.\\

 By the discussions immediately after Definitions \ref{admissiblefixed}, \ref{admissibleper} and \ref{def6} we know that every $N_\alpha$ includes the support of some admissible $C^1$-functions $P_\alpha:M\times(-1,1)\to \mathbb{R}$ and $V_\alpha:M\times(-1,1)\to TM$. We now refer to the collection $\{(N_\alpha,P_\alpha,V_\alpha)\}_\alpha$ as \textbf{admissible behavior subordinate to $\{N_\alpha\}_\alpha$}. We will adopt two conventions w.r.t. admissible behaviors:
 \begin{itemize}
     \item $V_\alpha$ is identically $0$ whenever $N_\alpha$ isolates a completely isolated invariant set in $Inv$.
     \item For all $\alpha$, whenever $P^{-1}_\alpha(0)\ne\emptyset$ then $P^{-1}_\alpha(0)=N_\alpha\cap M\times\{\tau_0\}$, for some $\tau_0\in(-1,1)$. Moreover, all the first derivatives of $P_\alpha$ and the differential of $V_\alpha$ both tend to $0$ when $(s,\tau)\to P^{-1}_\alpha(0)$.\\
 \end{itemize}
 
By Proposition \ref{def1}, Proposition \ref{def11} and Corollary \ref{def2} we know that given an isolating collection $\{N_\alpha\}_\alpha$, admissible behaviors subordinate to $\{N_\alpha\}_\alpha$ and satisfying the above exist. Given such an admissible behavior subordinate to $\{N_\alpha\}_\alpha$, let us define the following functions:

\begin{equation*}
    P(s,\tau)=\begin{cases}
        P_\alpha(s,\tau) &\text{when } (s,\tau)\in N_\alpha\\
        0 &\text{otherwise}
    \end{cases}
\end{equation*}
\begin{equation*}
    V(s,\tau)=\begin{cases}
        V_\alpha(s,\tau) &\text{when } (s,\tau)\in N_\alpha\\
        0 &\text{otherwise}
    \end{cases}
\end{equation*}

By definition, the functions $P$ and $V$ are $C^1$ throughout $M\times (-1,1)$, as $P_\alpha$ and $V_\alpha$ are supported in $N_\alpha$. Moreover, both $P$ and $V$ vanish outside of $\cup_\alpha N_\alpha$, and their differentials tend to $0$ as $(s,\tau)$ tends to either $\partial N_\alpha$ or $P^{-1}_\alpha(0)$ (the latter only when $P^{-1}_\alpha(0)\ne\emptyset$).  \\ 

Having defined the functions $V$ and $P$ we now prove the same definitions also extend to the case where $\dot{s}=F_\tau(s)$ is a $C^1$ one--parameter family parameterized by $\tau\in S^1$. That, however, is immediate - to see why, note Proposition \ref{def1}, Corollary \ref{def2} and Definition \ref{def3} trivially generalize to the case when $\tau$ varies in $S^1$, which implies Definitions \ref{admissiblefixed}, \ref{admissibleper} and \ref{def6} also trivially generalize to the same case. Similarly, consider a $C^1$ one--parameter family $\dot{s}=F_\tau(s)$ where $\tau$ varies in a parameter space $I$ diffeomorphic to either $[-1,1]$, $(-1,1]$ or $[-1,1)$. By imposing the (trivial) restriction that both $\lim_{\tau\to\partial I}P_\alpha(s,\tau)=0$, $\lim_{\tau\to\partial I}V_\alpha(s,\tau)=0$ for all $\alpha$, all our definitions and constructions above immediately extend to the case where $\tau$ varies in either a closed or a half closed interval. We may summarize this discussion as follows:

\begin{corollary}
\label{final}    Let $\dot{s}=F_\tau(s)$ be a $C^1$ one--parameter family of vector fields over $M$, parameterized by $\tau\in I$ - where $I$ is diffeomorphic to either $(-1,1),[-1,1],[-1,1),(-1,1]$ or $S^1$. Then, the sets $Per, Fix$ and $Inv$ are all well-defined, and there exists at least one isolating collection $\{N_\alpha\}_\alpha$ s.t. if $\{G_\alpha\}_\alpha$ are the components of $Per\cup Fix\cup Inv$ then for all $\alpha$, $N_\alpha\cap(Per\cup Fix\cup Inv)=G_\alpha$, and whenever $\beta\ne\alpha$, $N_\alpha\cap N_\beta=\emptyset$. Consequently, there exists at least one admissible behavior $\{(N_\alpha,P_\alpha,V_\alpha)\}_\alpha$ subordinate to $\{N_\alpha\}_\alpha$, hence the functions
\begin{equation*}
    P(s,\tau)=\begin{cases}
        P_\alpha(s,\tau) &\text{when } (s,\tau)\in N_\alpha\\
        0 &\text{otherwise}
    \end{cases}
\end{equation*}
\begin{equation*}
    V(s,\tau)=\begin{cases}
        V_\alpha(s,\tau) &\text{when } (s,\tau)\in N_\alpha\\
        0 &\text{otherwise}
    \end{cases}
\end{equation*}
are $C^1$ throughout the interior of $M\times I$ and satisfy the following:
\begin{itemize}
    \item $P$ has constant sign on $N_\alpha$, for all $\alpha$.
    \item Both $P$ and $V$ vanish outside of $\cup_\alpha N_\alpha$, and and their differentials tend to $0$ as $(s,\tau)$ tends to either $\partial N_\alpha$ or $P^{-1}_\alpha(0)$ (the latter whenever $P^{-1}_\alpha(0)\ne\emptyset$).
\end{itemize}
Moreover, when $I$ is not open and includes some of its boundary points (i.e., when it is either closed or a half closed interval), both $P$ and $V$ tend continuously to $0$ as $\tau\to\partial I$.
\end{corollary}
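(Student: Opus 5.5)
The plan is to assemble the statement from the pieces already established in this Section, in three steps. The first is well-definedness of $Per$, $Fix$ and $Inv$. The second is existence of an isolating collection. The third is existence of admissible behavior and the regularity of the glued functions $P$ and $V$.

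For the first step, nothing about $I$ beyond its being a one-dimensional manifold (possibly with boundary) is used in Definitions \ref{regular} and \ref{isolationdef}, nor in the definition of completely isolated invariant sets. The conclusions of Corollary \ref{regularitylemma} are local in $\tau$, so they make sense at interior points of $I$. For $I=S^1$ one works in local charts $(\tau_0-\epsilon,\tau_0+\epsilon)$. Hence $Reg_1$, $Reg_2$, $Per$, $Fix$ and $Inv$ are defined verbatim.

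For the second step, I would repeat the recursive argument given just before the statement, but make it precise. By complete isolation, each component $G_\alpha$ has an open neighborhood $U_\alpha$ with $U_\alpha\cap(Per\cup Fix\cup Inv)=G_\alpha$. Since $M\times I$ is metrizable, I would set
\begin{equation*}
N_\alpha=\{p\in U_\alpha \mid d(p,G_\alpha)<\tfrac12 d(p,(Per\cup Fix\cup Inv)\setminus G_\alpha)\}.
\end{equation*}
This is an open set, and it contains $G_\alpha$ because each point of $G_\alpha$ has positive distance from the other components by the choice of $U_\alpha$. Suppose a point $p$ lay in both $N_\alpha$ and $N_\beta$. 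Then $d(p,G_\alpha)<\frac12 d(p,G_\beta)$ and $d(p,G_\beta)<\frac12 d(p,G_\alpha)$, which is impossible. So the sets are pairwise disjoint without any transfinite recursion. \textbf{This is the step I expect to be the main obstacle:} one must check that the distance to the other components is positive along $G_\alpha$, so that $N_\alpha\supseteq G_\alpha$. That positivity is exactly what the complete isolation of Definition \ref{isolationdef}, and the condition $N_A\cap\overline{Fix}=\emptyset$ for $Inv$, provide. If that check fails, the fallback is to shrink each $U_\alpha$ using local compactness.

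For the third step, in each $N_\alpha$ I would invoke Proposition \ref{def1}, Proposition \ref{def11}, Corollary \ref{def2} and Definition \ref{def6}, according to whether $G_\alpha$ lies in $Per$, $Fix$ or $Inv$. These give admissible $P_\alpha$ and $V_\alpha$ in the sense of Definitions \ref{admissiblefixed}, \ref{admissibleper} and \ref{def6}, supported in $N_\alpha$, with the two conventions imposed. The bump functions from the proof of Proposition \ref{def1} can be multiplied by a cutoff so that all first derivatives also decay at $\partial N_\alpha$ and at $P_\alpha^{-1}(0)$. The $C^1$ regularity of $P$ and $V$ then follows from three facts: the supports are pairwise disjoint, each summand is $C^1$, and each summand has a $C^1$-vanishing boundary. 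Moreover, only finitely many supports meet a small ball, or, if infinitely many accumulate, their $C^1$ norms can be taken to decay like $2^{-i}$ as in the proof of Proposition \ref{def1}. That $P$ has constant sign on $N_\alpha$ away from the equilibrium slice is built into admissibility.

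Finally, for $I$ closed or half-closed, I would multiply every $P_\alpha$ and $V_\alpha$ by a smooth function $\chi(\tau)$ that is positive on the interior of $I$ and vanishes to first order at $\partial I$. This preserves signs, tangency (the correction $V_\alpha$ scales with $P_\alpha$ on $G_\alpha$) and admissibility, and it forces $P,V\to0$ as $\tau\to\partial I$.
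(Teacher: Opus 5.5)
Your proposal is correct at the paper's level of rigor, and it rests on the same inputs. These are: for each component $G_\alpha$ of $X=Per\cup Fix\cup Inv$, an open $U_\alpha$ with $U_\alpha\cap X=G_\alpha$; and Propositions~\ref{def1}, \ref{def11} and Corollary~\ref{def2}. Where you differ is at the three places the paper only sketches, and your route there is more careful.

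\begin{enumerate}
\item The paper builds the isolating collection by a recursion repeated ``ad infinitum (possibly uncountably many times)'', with no control at limit stages. Your distance-function definition of $N_\alpha$ produces all neighborhoods at once, and disjointness becomes a two-line inequality.
\item The paper asserts $P,V$ are $C^1$ ``by definition'' because the supports are disjoint. Your construction gives more: the $N_\alpha$ are pairwise disjoint nonempty open sets in the separable space $M\times I$, so there are only countably many. You use this implicitly when you index by $i$; state it, since the paper allows uncountably many. Countability is what makes your $2^{-i}$-weighted norms yield a $C^1$-convergent sum where infinitely many $N_\alpha$ accumulate (period doubling cascades, Theorem~\ref{heteroclinicnets}, the Shilnikov sets). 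The paper's ``by definition'' ignores this case.
\item The paper handles $\partial I$ by imposing vanishing limits; your cutoff $\chi(\tau)$ is an explicit construction.
\end{enumerate}

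All your rescalings are harmless for one reason. For any positive $C^1$ function $\rho$, $(F_\tau+\rho V_\alpha,\rho P_\alpha)=(1-\rho)(F_\tau,0)+\rho(F_\tau+V_\alpha,P_\alpha)$ is tangent to $G_\alpha$. This holds because $(F_\tau,0)$ is tangent to $G_\alpha$ when $G_\alpha\subseteq Per$ and vanishes when $G_\alpha\subseteq Fix$. The same identity covers constants, your $\chi(\tau)$, and a factor decaying fast at $\partial N_\alpha$ and at infinity. You need that last factor before the $2^{-i}$ step, so that the $C^1$ norms over non-compact $N_\alpha$ are finite.

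Two corrections.
\begin{itemize}
\item The step you flag as the main obstacle is immediate. If $B(p,r)\subseteq U_\alpha$ for some $p\in G_\alpha$, then $B(p,r)\cap X\subseteq G_\alpha$, so $d(p,X\setminus G_\alpha)\geq r>0$. The premise that actually carries the argument is the existence of $U_\alpha$, i.e.\ that every component is relatively open in $X$; the paper does not prove it either. Definition~\ref{isolationdef} only controls $\overline{Reg_1\cup Reg_2}$ orbit by orbit. The condition $N_A\cap Per=\emptyset$ keeps $A$ off $Per$ but does not stop $A$ from accumulating on $Per$.
\item Your sign claim is wrong as phrased. When an equilibrium exists, $P_\alpha$ has \emph{opposite} signs on the two sides of $P_\alpha^{-1}(0)$. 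So it is not of constant sign on $N_\alpha\setminus P_\alpha^{-1}(0)$. The statement's first bullet is itself inconsistent with the equilibrium convention in Definitions~\ref{admissiblefixed}, \ref{admissibleper} and \ref{def6}. What can be proved is constant sign on each component of $N_\alpha\setminus P_\alpha^{-1}(0)$, and on all of $N_\alpha$ when no equilibrium exists.
\end{itemize}
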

Therefore, all in all, for every connected one-dimensional parameter spaces $I$ and any $C^1$ family of vector fields $\dot{s}=F_\tau(s)$ defined on $M$ and parameterized by $\tau\in I$, the functions $P$ and $V$ are well-defined and are at least $C^1$ at the interior of $M\times I$. Having shown that, we finally arrive at the definition of the Entropy flow:
\begin{definition}
    \label{generalentropy} Consider a $C^1$ one-parameter of vector vector fields $\dot{s}=F_\tau(s)$ defined on $(s,\tau)\in M\times I$ - where $M$ is a connected, smooth, locally compact, metrizable manifold of finite dimension, and the parameter space $I$ is diffeomorphic to either $(-1,1),[-1,1),(-1,1]$ or $S^1$. Given any isolating collection $\{N_\alpha\}_\alpha$ and a corresponding $\{(N_\alpha,P_\alpha,V_\alpha)\}_\alpha$ subordinate to $\{N_\alpha\}_\alpha$, the corresponding \textbf{Entropy flow} is the flow generated by the vector field $E:M\times I\to T(M\times I)$, defined by the following equations:
    \begin{equation}
        \label{entropyflow} E(s,\tau)=\begin{cases}
            (F_\tau(s)+V(s,\tau),P(s,\tau))&\text{when } (s,\tau)\in \cup_\alpha N_\alpha,\\
            (F_\tau(s),0) &\text{otherwise},
        \end{cases}
    \end{equation}
    where $T(M\times I)$ is the tangent bundle, and both $P$, $V$ are as in Corollary \ref{final} - we will occasionally refer to $P$ as the \textbf{drift function in the parameter space} and to the function $V$ as the \textbf{energy transition function}. In particular, the vector field $E$ is tangent to $Per\cup Fix$m, and its dynamics in every $N_\alpha$ are admissible per either Definition \ref{admissiblefixed}, Definition \ref{admissibleper}, or Definition \ref{def6}.
\end{definition}
\begin{remark}
    As there are uncountably many isolating collections $\{N_\alpha\}_\alpha$, there also exist infinitely many Entropy flows - i.e., the Entropy flow is by no means unique. In the next Section and in Appendix \ref{quantitative} we will explore this theme further.
\end{remark}
At this point we make several remarks. The first is that our use of terms like "equilibrium state" or "Entropy flow" is not random or accidental. As stated at the Introduction, the Entropy flow as a concept is to a large extent inspired by the Second Law of Thermodynamics, i.e., that in a closed system the complexity (e.g. the Entropy) always increases. To see this heuristic connection, note that intuitively, one could interpret the vector field $F_\tau$, $\tau\in I$ as the law of motion guiding the trajectory of an initial condition $s\in M$. In addition, further note that by our construction above, the Entropy flow can be interpreted as a force gradually pushing the trajectory of $(s,\tau)$ towards $M\times\{\tau'\}$, where the motion induced by $F_{\tau'}$ is intuitively more complex  - at least when compared to $F_\tau$. As such, one could heuristically describe the Entropy flow as "the laws of motion change while in motion and become increasingly more complex as time goes" - or, put simply, the complexity can only increase. We will explore this further in Sections  \ref{basicqualitative} and \ref{turbulence}, where we will study how the Entropy flow behaves around routes to chaos and the Shilnikov homoclinic bifurcation scenario.\\

The second remark is that continuing on the above, the function $V$ can be thought of, in some sense, as the function "transferring" energy as the Entropy flow transitions between the different states of the system - where by the "states of the system" we mean the different laws of motion, $F_\tau$, present in the one-parameter family. It is for this reason we refer to it as the \textbf{energy transition function}. To illustrate, assume there is some energy function $\mathcal{E}:M\to\mathbb{R}$ defined on $M$ (for example: a Hamiltonian, a function describing the potential energy, etc). Such a function should be defined via points $s\in M$ alone and must be independent of the parameter $\tau$.  As such, for all fixed $\tau\in I$, if $s_\tau(t),t\in\mathbb{R}$ denotes the trajectory of $s$ in $M$ w.r.t. $F_\tau$, by the preservation of energy we must have $0=\frac{d\mathcal{E}(s_\tau(t))}{dt} = \nabla_s \mathcal{E}(s_\tau(t)) \cdot \frac{ds_\tau}{dt} = \nabla_s \mathcal{E} \cdot F_\tau(s)$, since this is independent of $\tau$,  $\frac{\partial\mathcal{E}(s_\tau(t))}{\partial \tau}=0$. If, however, we follow the definition of the Entropy flow and consider the additional terms $V(s,\tau), P(s,\tau)$, the result changes as follows:
\begin{equation*}
    \frac{d\mathcal{E}(s_\tau(t),\tau(t))}{dt} = \nabla_s \mathcal{E} \cdot (F_{\tau(t)}(s_\tau(t))+V(s_\tau(t),\tau(t))+\frac{\partial\mathcal{E}(s_\tau(t),\tau(t))}{\partial \tau}\cdot P(s_\tau(t),\tau(t))=\nabla_s\mathcal{E}\cdot V(s(t),\tau(t)).
\end{equation*}
In other words, $V$ controls the transition of energy. This further justifies our decision to make $V$ vanish identically on certain isolated invariant sets on which the dynamics are "fully formed" as in Definition \ref{def6} - as these sets are "complete" in some sense, no energy is needed to push them towards a greater state of complexity. That being said, at this point we emphasize the calculation above is true under the assumption such a function exists - in general, it is far from obvious it exists in a meaningful way for a general $C^1$ family $\dot{s}=F_\tau(s)$ defined over the product space $M\times I$.\\

A third remark is that one could also interpret the Entropy flow as a very specific type of a Slow-Fast system. To illustrate, let $\dot{s}=F_\tau(s)$ denote a $C^1$ one--parameter family of vector fields where the paramer $\tau$ varies in $ I$, and let us write the corresponding Entropy flow as follows:
\begin{equation*}
\begin{cases}
\dot{s} = F_\tau(s)+V(s,\tau) \\
 \dot{\tau} =P(s,\tau)
\end{cases}
\end{equation*}
Let us rewrite the equation above as follows for some $C^1$-functions $\overline{V}, \overline{P}$ and some $\epsilon>0$:

\begin{equation*}
\begin{cases}
\dot{s} = F_\tau(s)+\epsilon\overline{V}(s,\tau)=\mathcal{F}(s,\tau,\epsilon) \\
 \dot{\tau} =\epsilon\overline{P}(s,\tau)
\end{cases}
\end{equation*}
Making a change of variables, we get, again, for some $C^1$-function $Q$:
\begin{equation*}
\begin{cases}
\epsilon\dot{s} = \mathcal{F}(s,\tau,\epsilon) \\
 \dot{\tau} =Q(s,\tau)
\end{cases}
\end{equation*}
The above is the normal form for Slow-Fast systems. In particular, $\dot{s}$ would be the fast variable and $\tau$ would be the slow one. Even though we will not use tools from the theory of Slow-Fast systems to analyze the Entropy flow, one should often think of the drift function in the parameter space, $P$, as forcing a slow transition between the different states of the system, i.e., the different vector fields $\dot{s}=F_\tau(s)$, $\tau\in I$.\\ 

Before we conclude this Subsection we would like to state that in what follows next in this paper, we will only analyze the Entropy flow using qualitative tools. That being said, in the spirit of \cite{facet}, we believe the Entropy flow can also be analyzed using a more quantitative approach. We go back to this idea in Appendix \ref{quantitative}, where we attempt to indicate an approach to develop such a theory. That being said, before we begin the qualitative analysis of the Entropy flow in earnest in the next Section, we study two examples illustrating how the dynamics of the Entropy flow behave. The reason we do so is in order to show that despite its extremely technical definition, in practice describing the behavior of Entropy flows for one--parameter families $\dot{s}=F_\tau(s)$ is not as complicated as it may appear. The first example we will analyze is the van der Pol oscillator, after which we will analyze a more artificial example of a pitchfork bifurcation cascade. That being said, in addition to these examples, over the next sections we will also analyze the Entropy flows for other "real life" families of vector fields - including the Lorenz and the Rössler systems (among others).

\subsubsection{\textbf{An example - the van der Pol oscillator}}
\label{vpo}
In this Subsection we study the behavior of the Entropy flow via a simple example that showcases how its dynamics can be analyzed. Specifically, in this Subsection we will analyze the Entropy flow derived from the van der Pol oscillator. To begin, consider the following two-dimensional system originally introduced in \cite{vdp} and studied in \cite{cart}, often referred to as the van der Pol oscillator with a parameter $r$ varying in $\mathbb{R}$:
\begin{equation}
    \label{vanderpol}
    \begin{cases}
    \dot{x}=y\\
    \dot{y}=r(1-x^2)y-x
\end{cases}
\end{equation}
For simplicity, we will restrict $r$ to the interval $(-2,\infty)$ for reasons which will be soon clear. By direct computation, for all values of $r\in(-2,\infty)$ the corresponding flow has a single fixed point at the origin. Setting $J_r$ as the Jacobian matrix at the origin of the $r$-dependent vector field, it is easy to prove $J_r$ has two eigenvalues given by $\frac{r\pm\sqrt{r^2-4}}{2}$. Thus, when $r>0$, the fixed point at the origin is a source and when $-2<r<0$, it is a sink. Moreover, when $r=0$, the origin is a center which undergoes Hopf bifurcation at which the sink is opened up to a stable orbit (see the illustration in Figure \ref{vdp1}).\\

\begin{figure}[h]
\centering
\begin{overpic}[width=0.4\textwidth]{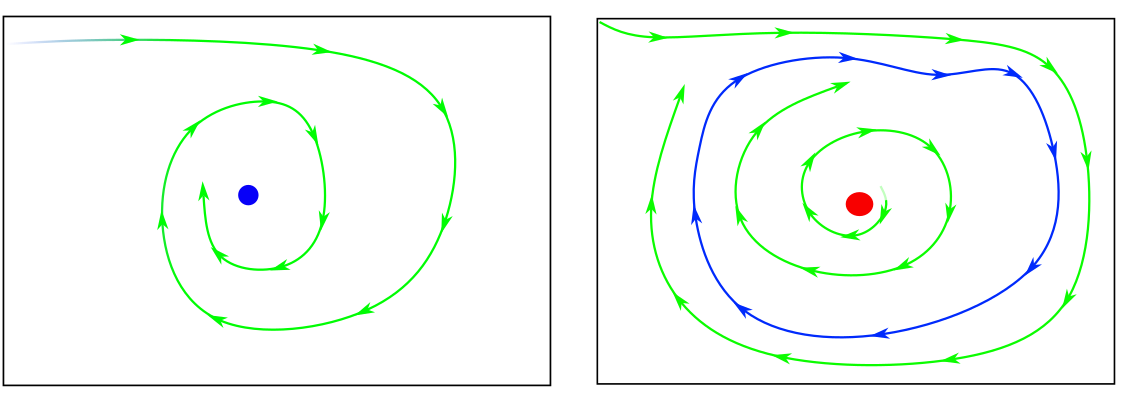}
\put(130,-25){$-2<r<0$}
\put(655,-25){$0<r<\infty$}
\end{overpic}
\caption{\textit{On the left - the situation when $r<0$, where the origin is a sink. On the right - the situation when $r>0$, after  Hopf bifurcation creates $T_r$ (the blue color).}}
\label{vdp1}
\end{figure}

Now, extend the van der Pol oscillator into an Entropy flow on $\mathbb{R}^2\times(-2,\infty)$, whose defining system of equations we denote as follows:

\begin{equation}
\label{vdpen}
    \begin{cases}
    \dot{x}=y+v_1(x,y,r)\\
    \dot{y}=r(1-x^2)y-x+v_2(x,y,r)\\
    \dot{r}=P(x,y,r)
\end{cases}
\end{equation}
We remark that in the previous notation, $V(x,y,r)=(v_1(x,y,r),v_2(x,y,r),0)$. Denote by $F_r$ the planar vector field $F_r(x,y)=(y,r(1-x^2)y-x)$. When $r>0$, by Lienard's Theorem we know that $F_r$ generates precisely one limit cycle, $T_r$ (see, for example, Theorem 3.8.1 in \cite{perko}). Moreover, as $T_r$ was created at supercritical Hopf bifurcation, we also know $T_r$ is an attracting cycle.\\

We will now proceed by quickly analyzing the dynamics of the van der Pol oscillator for $r>0$, which we will then use to analyze the Entropy flow. To this end, fix some $r>0$ - we claim that every initial condition in $\mathbb{R}^2\setminus\{0\}$ is attracted to $T_r$. To see why, note that there are no periodic orbits or fixed points in the unbounded component of $\mathbb{R}^2\setminus T_r$, which we denote by $C_u$. By Poincare-Bendixon Theorem (and Lienard's Theorem mentioned above) we conclude that since $T_r$ is attracting for the dynamics of $F_r$, the trajectory of every initial condition in $C_u$ w.r.t. $-F_r$ tends to $\infty$. Therefore, the trajectory of every initial condition in $C_u$ w.r.t. $F_r$ tends to $T_r$. Setting $C_b$ as the bounded component of $\mathbb{R}^2\setminus T_r$, since $T_r$ is stable and generated by an Hopf bifurcation, we already know that every initial condition in $C_b\setminus\{0\}$ is attracted to $T_r$ under the flow of $F_r$. All in all, when $r>0$ the orbit $T_r$ attracts all initial conditions in $\mathbb{R}^2\setminus\{0\}$.\\
\begin{figure}[h]
\centering
\begin{overpic}[width=0.4\textwidth]{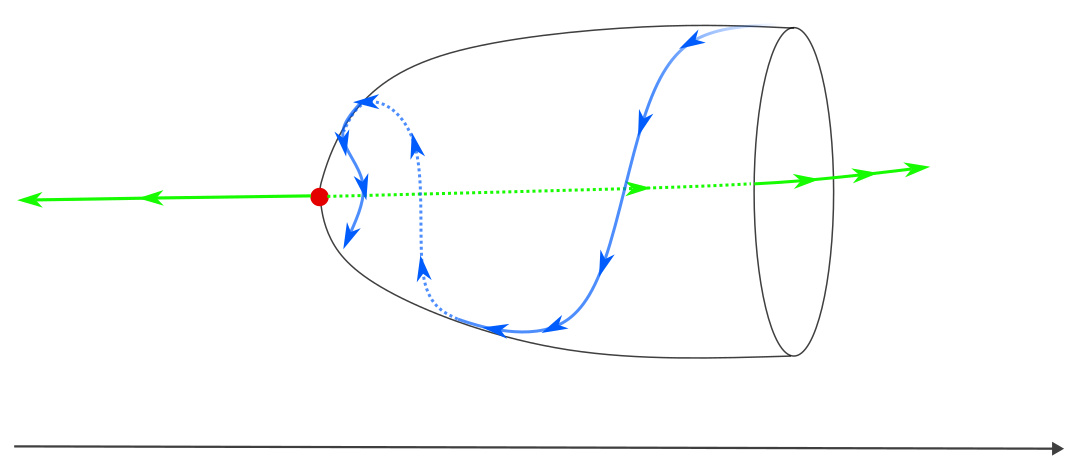}
\put(780,90){$\mathcal{T}$}
\put(850,230){$I_{+}$}
\put(-10,200){$I_{-}$}
\put(1000,10){$r$}

\end{overpic}
\caption{\textit{A scheme of the behavior of the Entropy flow for the van der Pol oscillator. On $\mathcal{T}$ the flow pushes towards $(0,0)\times\{0\}$ (the red dot), while on $I_-,I_+$ it pushes away.}}
\label{vdp2}
\end{figure}

We now use the above analysis to describe the behavior of the Entropy flow in $\mathbb{R}^2$. By definition, the set of completely isolated fixed points in $\mathbb{R}^2\times(-2,\infty)$ is given by $Fix=\{(0,0,r)|-2<r<0,0<r<\infty\}$. Similarly, set $\mathcal{T}=\cup_{r>0}T_r\times\{r\}$ - recalling $T_r$ is the only periodic orbit for $F_r$, $r>0$, as $T_r$ is an attractor for all $r>0$ it is easy to see $\mathcal{T}$ forms a surface lying in $\mathbb{R}^2\times(0,\infty)$ and that $\mathcal{T}=Per\cap \mathbb{R}^2\times(0,\infty)$. In other words, $\mathcal{T}$ is the collection of completely isolated periodic orbits for the flow in $\mathbb{R}^2\times(0,\infty)$. By the definition of the Entropy flow, we also know $(0,0)\times\{0\}$, the bifurcation point, is a fixed point for the Entropy flow. We now note the Jacobian matrix $J_0$ of $F_0$ has precisely two eigenvalues $\pm i$, which implies $|J_0|=1$. As $\mathcal{T}$ and $Fix\setminus(0,0)\times\{0\}$ are both isolated from all other periodic orbits and fixed points, by Definition \ref{admissiblefixed} (and in particular, by Definition \ref{def4}) and the definition of the Entropy flow, the following holds:
\begin{itemize}
    \item  As $(0,0)\times\{0\}$ is the only bifurcation point, it is the only fixed point for the Entropy flow in $\mathbb{R}^2\times(-2,\infty)$. 
    \item If $I_-,I_+$ are the two components of $Fix\setminus\{(0,0)\times\{0\}\}$ lying in $\mathbb{R}^2\times(-2,0)$, $\mathbb{R}^2\times(0,\infty)$ (respectively) - by the above we know fixed points on $I_-$ are sinks while those on $I_+$ are sources. Since both are composed of completely isolated fixed points, by $|J_0|>0$ we know that given any initial condition $(s,r)\in I_\pm$, the Entropy flow pushes $(s,r)$ away $(0,0)\times\{0\}$ while keeping its trajectory tangent to either $I_+$ or $I_-$ (see Figure \ref{vdp2}).
    \item Similarly, as $det|J_0|>0$ and because $\mathcal{T}$ is a surface of completely isolated periodic orbits, the trajectories of initial conditions on $\mathcal{T}$ spirals toward $(0,0)\times\{0\}$ while remaining tangent to $\mathcal{T}$ (see Figure \ref{vdp2}).\\
\end{itemize}

The above holds for all possible choices of Entropy flows for the van der Pol oscillator, as all Entropy flows must obey Definitions \ref{def3}-\ref{def6}. As such, Figure \ref{vdp2} gives a general scheme for how the Entropy flow for the van der Pol oscillator behaves around $I_-,I_+$ and $\mathcal{T}$. 
 \subsubsection{\textbf{An example - a pitchfork cascade} }
\label{pitchhh}
The dynamics of the Entropy flow for the van der Pol oscillator in the previous Subsection may appear to suggest the Entropy flow always has simple dynamics. In practice, this is not the case for most $C^1$ families of vector fields - in fact, we will see many examples throughout this paper for Entropy flows whose dynamics are anything but simple. That being said, intuitively, the reason the Entropy flow for the van der Pol oscillator is not too complicated is due to the corresponding sets $Per$ and $Fix$ having a relatively simple structure. In this Subsection we will give another example, where the sets $Per$ and $Fix$ have a more complex topology - as we will show, this would force the existence of a more complicated Entropy flow. Before we begin, we remark that despite the artificial nature of our example, it is motivated by the numerical analysis performed in \cite{KS} on the Lorenz-96 model, originally introduced in \cite{lo2}, where several consecutive pitchfork bifurcations were observed. To begin, a $C^1$ one--parameter family of vector fields $\dot{s}=F_\tau(s)$, $s\in\mathbb{R}^n$, $n>2$, $\tau\in[0,\infty)$ satisfying the following (see the illustration in Figure \ref{pitchcasc}):

 \begin{itemize}
     \item For $\tau=0$, the origin is a sink.
     \item There exists an increasing sequence $\{\tau_n\}_{n \in \mathbb{N}}$ of parameters, $\tau_n\to\infty$, defined as follows:
     \begin{enumerate}
         \item The origin persists as a sink throughout $[0,\tau_1)$. At $\tau_1$, the origin splits in a pitchfork bifurcation into one saddle, $x_{r,1}$, and two sinks, $x_{s,1}$, $x_{s,2}$. All of these fixed points persists, without bifurcating when $\tau$ varies in $(\tau_1,\tau_2)$.
         \item At $\tau_2$, the sinks $x_{s_1},x_{s_2}$ both split in a pitchfork bifurcation to two sinks and one saddle each, while the saddle $x_{r,1}$ splits into two saddles and one sink. All of these new fixed points persists without bifurcating when $\tau$ varies in $(\tau_2,\tau_3)$.
         \item At each $\tau_n$, $n>2$, the scenario repeats - i.e., all sinks split into two sinks and one saddle, and all sources split into two saddles and one sink. Again, at every consecutive stage, all these new fixed points persists without bifurcating at $(\tau_n,\tau_{n+1})$.\\
\end{enumerate}
\end{itemize}

\begin{figure}[h]
\centering
\begin{overpic}[width=0.4\textwidth]{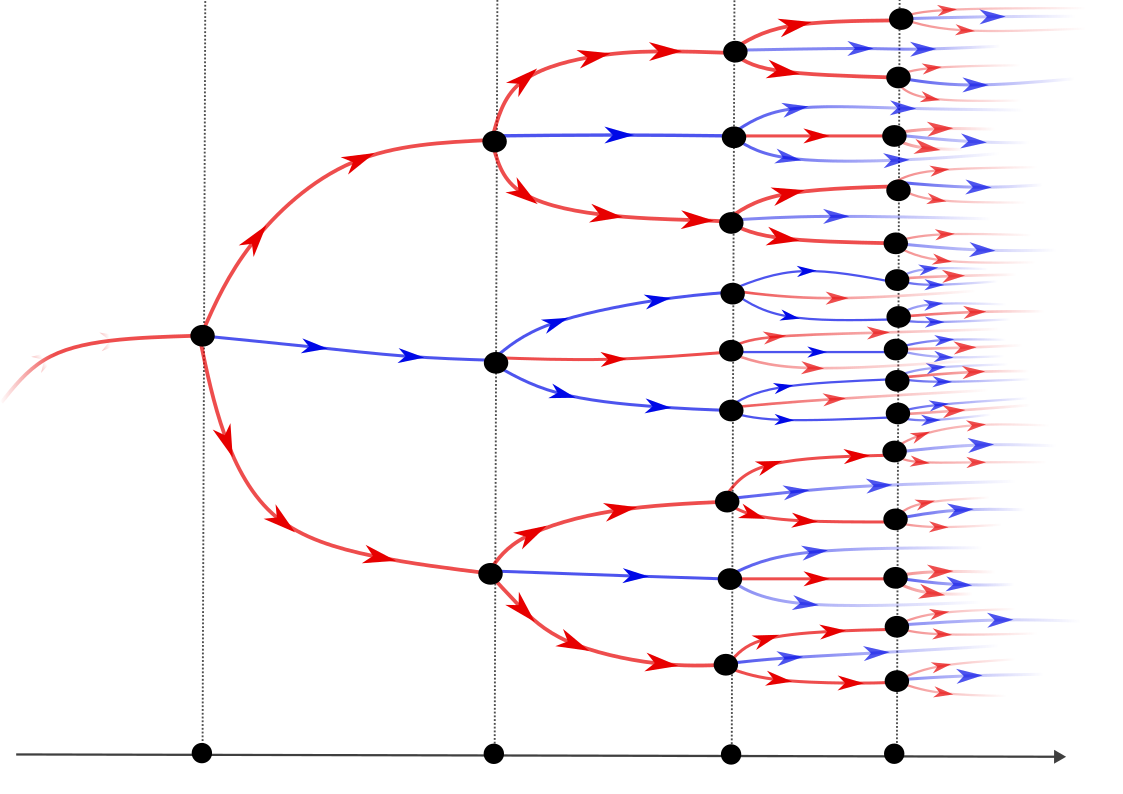}

\put(980,20){$\tau$}

\end{overpic}
\caption{\textit{A partial bifurcation diagram for $\dot{s}=F_\tau(s)$ and the pitchfork cascade. The arrows represent the directions $P$ is pointing on along the cascade (in this scenario we assume the saddles are all in $Fix$). Red denotes sinks, blue denotes sources. The dots denote the successive points,  where the bifurcations occur.}}
\label{pitchcasc}
\end{figure}

We refer to the scenario above as a \textbf{pitchfork cascade}. As all the fixed points created in this process are sinks and saddles, at least all the sinks created in the pitchfork cascade correspond to completely isolated fixed points in $Fix$. This implies that if we denote by $\mathcal{C}$ the subset of $\mathbb{R}^n\times[0,\infty)$ corresponding to the cascade above, then $\mathcal{C}\cap\mathbb{R}^n\times(\tau_n,\tau_{n+1})$ is composed of curves in $Fix$ of sinks (and possibly also saddles). By Definition \ref{admissiblefixed} (and in particular, Definition \ref{def4}) it follows that on each such curve of sinks the function $P$  is always positive - i.e., it always points towards $\mathbb{R}^n\times\{\tau_{n+1}\}$, the direction of creation of more fixed points. It is easy to see the resulting Entropy flow has complex dynamics, correlated with the complex structure of $\mathcal{C}$.

\section{Basic qualitative analysis}
\label{basicqualitative}

Having defined the Entropy flow for a $C^1$ one-parameter family of vector fields $\dot{s}=F_\tau(s)$, in this Section we perform basic qualitative analysis of the Entropy flow. In particular, we will study how the Entropy flow behaves in the three major routes to chaos - the period doubling route to chaos (see \cite{fei2} and \cite{fei}), the Ruelle-Takens-Newhouse route to chaos (see \cite{RT} and \cite{SRT}), and the Intermittency route to chaos (see \cite{PP}). This Section is organized as follows: we begin by discussing the general properties of the Entropy flow, and the behavior of its first return map around its periodic orbits. Following that, we study the Entropy flows behavior in the period doubling route to chaos (see Subsection \ref{perioddoubling}), the Ruelle-Takens-Newhouse route to chaos (see Subsection \ref{ruelletakens}), and finally, the Intermittency route to chaos (see Subsection \ref{intermittency}). As we will prove, in all of these routes to chaos the Entropy flow would have complex behavior which mirrors the original complex bifurcation structure of $\dot{s}=F_\tau(s)$. \\

To begin, let $M$ be a smooth, orientable, locally compact, metrizable manifold and let $\dot{s}=F_\tau(s)$ be a one--parameter $C^1$ family of vector fields, where $\tau$ varies in $I$ (recall that we always assume $I$ is either an open interval, a closed interval, a half closed interval, or $S^1$) . The vector field $E$ would always denote some corresponding Entropy flow, given by the formula $E(s,\tau)=(F_\tau(s)+V(s,\tau),P(s,\tau))$, $s\in M,\tau\in I$. We begin by proving the following Lemma, which connects the attracting and repelling invariant sets for $\dot{s}=F_\tau(s)$ with those of the Entropy flow:
\begin{lemma}
\label{trapping}  Assume there exists some open, connected subsets $J\subseteq I$ and $B\subseteq M$, satisfying the following:
\begin{itemize}
    \item  $\partial B$ is smooth.
    \item For all $\tau\in J$, $F_\tau$ is transverse to $\partial B$. 
\end{itemize}
Then, we can choose the Entropy flow s.t. it is transverse to $\partial (B\times J)$. In particular, for such an Entropy flow $B\times J$ would include either an attractor or a repeller, depending (respectively) on whether $F_\tau,\tau\in J$ points inside or outside of $B$ on $\partial B$.
\end{lemma}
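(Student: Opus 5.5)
The plan is to split $\partial(B\times J)$ into the lateral part $\partial B\times J$ and the two horizontal caps $\overline{B}\times\partial J$. Each part is handled by a different freedom in the construction of Definition \ref{generalentropy}: the smallness of $V$ for the lateral part, and the choice of sign of $P$ for the caps. Since $B$ and $J$ are only assumed open and connected, I will first read the statement with $\overline{B}$ compact, or at least with $F_\tau$ uniformly transverse to $\partial B$ over $\tau\in\overline{J}$. This is needed so that the transversality margin $m=\inf_{\partial B\times J}|\langle F_\tau(s),\nu(s)\rangle|>0$ is positive, where $\nu$ is a unit normal to the smooth hypersurface $\partial B$ (for a suitable auxiliary metric on $M$).

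\textbf{Step 1 (lateral boundary).} On $\partial B\times J$ the normal to $\partial(B\times J)$ is $(\nu,0)$, so the normal component of $E$ is $\langle F_\tau(s)+V(s,\tau),\nu(s)\rangle$. The $\tau$-component $P$ plays no role here. By the proofs of Propositions \ref{def1}, \ref{def11} and Definition \ref{def6}, every admissible $(P_\alpha,V_\alpha)$ may be chosen with arbitrarily small $C^1$ norm, via the scaling $g_\epsilon$ in the proof of Proposition \ref{def1}. The correction $V_\alpha$ is obtained as a projection onto $T G_\alpha$, so it is controlled by the size of $P_\alpha$. Since the $N_\alpha$ are pairwise disjoint, I choose each $\epsilon_\alpha$ so that $|V_\alpha|<m/2$ on $N_\alpha\cap(\partial B\times J)$. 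Then $|V|<m/2$ there, and the sign of the normal component equals the sign of $\langle F_\tau,\nu\rangle$, which is constant on $\partial B\times J$ by connectedness of $\partial B\times J$ components and transversality. Alternatively, one may simply shrink the $N_\alpha$ away from $\partial B\times J$ wherever $G_\alpha$ does not meet it; this is allowed because isolating collections are not unique (see the Remark after Definition \ref{generalentropy}).

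\textbf{Step 2 (the caps).} At $\tau=\tau_j\in\partial J$ the normal is $(0,\pm1)$, so I must arrange that $P$ has a fixed sign on $\overline{B}\times\{\tau_j\}$. In the inward case I want $P>0$ near the lower cap and $P<0$ near the upper cap; the outward case is the reverse. This is the main obstacle. $P$ vanishes off $\bigcup_\alpha N_\alpha$, and inside each $N_\alpha$ its sign is dictated by the bifurcation-type rules of Definitions \ref{def4}--\ref{def53}. So strict transversality of $E$ at the caps cannot be obtained by a free choice of $P$. My approach is to exploit the freedom in choosing the isolating collection, together with the remaining sign choices (cases with no prescribed behavior, and the sign choice in Proposition \ref{def1}). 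Where this fails, I will fall back to proving the weaker statement that $E$ is tangent or inward along the caps. Then $M\times\{\tau_j\}\cap(\overline{B}\times\{\tau_j\})$ is invariant wherever $P=0$, because there $E=(F_{\tau_j},0)$ and $F_{\tau_j}$ still points into $B$.

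\textbf{Step 3 (conclusion).} Steps 1 and 2 give a compact set $K=\overline{B}\times\overline{J}$, or a compact sub-box of it, on which the flow of $E$ is inward (respectively outward) or tangent. For the inward case I take $\omega(K)=\bigcap_{t\ge0}\phi_t(K)$. This set is nonempty, compact and invariant, and Step 1 makes it attracting in the $s$-directions. Hence $B\times J$ contains an attractor, or a repeller by applying the same argument to $-E$. A standard Conley-type argument then shows that a trapping region contains a nonempty attractor. The delicate step remains Step 2, reconciling the cap transversality with the admissibility constraints on the sign of $P$.
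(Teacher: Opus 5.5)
Your Step 1, in its second variant, is exactly the paper's proof. The paper notes that a transverse field with fixed sign has no fixed points or periodic orbits on $\partial B$. It then chooses the isolating collection so that $P$ and $V$ vanish on $\partial(B\times J)$, and reads off $E\cdot(N(s),0)=F_\tau(s)\cdot N(s)$. In this variant, pointwise transversality suffices, since $E=(F_\tau,0)$ exactly on $\partial B\times J$. One also uses that a limit of periodic orbits lying in $B$ (or in $M\setminus\overline{B}$) is $F_{\tau^*}$-invariant, so it cannot touch $\partial B$ when $\tau^*\in J$. Your margin $m>0$ is needed only for the small-$V$ variant. However, the paper stops there. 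It asserts that the normal to $\partial(B\times J)$ is $(N(s),0)$, which holds only on $\partial B\times J$, so the caps $\overline{B}\times\partial J$ are never treated.

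Your Step 2 is a genuine gap, and no cleverer choice of Entropy flow closes it. Wherever a component of $Per\cup Fix\cup Inv$ crosses a cap, admissibility dictates the sign of $P$, and where $P=0$ on a cap, $E=(F_{\tau_j},0)$ is tangent rather than transverse. Here is a concrete counterexample. Let $x_\tau$ be a hyperbolic sink for $\tau\in(-1,\tau^*)$ that splits in a pitchfork at $\tau^*$. Take a short $J=(a,b)$ with $b<\tau^*$, and let $B$ be a small Lyapunov ball around the sink, so that $F_\tau$ points into $B$ for all $\tau\in\overline{J}$. By Definitions \ref{def41} and \ref{admissiblefixed}, every Entropy flow has $P>0$ throughout $N_\alpha$, and in particular on the curve of sinks. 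Hence at $(x_b,b)$ on the upper cap, $E$ points \emph{out} of $B\times J$, whereas at cap points outside $N_\alpha$ it is tangent. So neither strict transversality nor your fallback (tangent or inward along the caps) can hold. Every point of the curve of sinks in the box, which is the obvious candidate for the attractor, flows out through the top cap. For the natural choice of $N_\alpha$ (a trapping tube as in the proof of Theorem \ref{major1}), $\tau$ is nondecreasing on $\overline{B}\times\overline{J}$. Any limit set of a complete orbit there would then be an $F_\tau$-invariant set in $\overline{B}$, i.e.\ $\{x_\tau\}$, where $P>0$. So the maximal invariant set in the box is empty, your Step 3 has no trapping region to work with, and the lemma's conclusion fails in the inward case.

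What is provable is transversality on $\partial B\times J$, which is your Step 1 and the paper's argument. The attractor/repeller conclusion holds only under an extra hypothesis controlling the caps. One option is $J=I$, so that the caps are absent or lie in $M\times\partial I$, where $P,V\to 0$; this is the setting of Lemma \ref{choice}. Another option requires three things: $\overline{Per\cup Fix\cup Inv}\cap(\overline{B}\times\partial J)=\emptyset$, transversality of $F_\tau$ for all $\tau\in\overline{J}$, and $\overline{B}$ compact. Then the $N_\alpha$ can avoid the caps, $E$ is tangent there, and $\overline{B}\times\overline{J}$ is forward invariant in the inward case. In that case $\bigcap_{t\ge 0}\varphi_t(\overline{B}\times\overline{J})$ is a nonempty compact invariant attracting set, and the repeller case follows by applying the same argument to $-E$. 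This is precisely your fallback, but it is valid only under that added hypothesis, not by any choice of the Entropy flow.
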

\begin{proof}
For all $s\in\partial B$, let $N(s)$ denote the normal vector for $s$ in $M$. By definition, the normal vector to $\partial( B\times J)$ is $(N(s),0)$. By our assumption of the transversality of $F_\tau$ to $\partial B$, we know the dot product $F_\tau(s)\cdot N(s)$ is either positive or negative throughout $(s,\tau)\in \partial B\times J$ - which implies there can be no periodic orbits or fixed points for $F_\tau$ in $\partial B$. Thus, by the construction of the Entropy flow in the previous section we can construct $E$ s.t. the subset on $M\times I$ in which $V,P$ do not vanish lies away from $\partial (B\times J)$. For such a choice of an Entropy flow we will have $E(s,\tau)\cdot(N(s),0)=F_\tau(s)\cdot N(s)$. Hence, $E$ is transverse to $\partial (B\times J)$ and its behavior there depends on whether $F_\tau,\tau\in J$ points on $s\in\partial B$ inside or outside of $B$. 
\end{proof}
As technical as it may appear, Lemma \ref{trapping} shows us that in some sense, in many cases we can always find one that is "correlated" with the original dynamics. More precisely, let us assume $B$ is a basin of attraction for some attractor and that $J=I=[-1,1]$, i.e., that the attractor persists throughout the parameter space. Then, Lemma \ref{trapping} implies $\partial B\times I$ is a trapping region for the Entropy flow - i.e., that $B\times I$ includes an attracting invariant set for the Entropy flow. This idea will be useful later on, when we will study the Entropy flow around several bifurcation scenarios in Section \ref{turbulence}, and also in Subsection \ref{lorenz}, where we will study the Entropy flow of the Lorenz system. Expanding on this theme, Lemma \ref{trapping} raises another question - how different are the dynamics of the Entropy flow in $M\times I$ are from those of the original system?\\

This question is somewhat ill posed, as the Entropy flow is defined on $M\times I$, while the vector fields $F_\tau$ are defined on $M$. To make this question precise, we first recall that the Entropy flow is given as a solution to the following system of Ordinary Differential Equations in $M\times I$:

\begin{equation*}
\begin{cases}
\dot{s} = F_\tau(s)+V(s,\tau) \\
 \dot{\tau} =P(s,\tau)
\end{cases}
\end{equation*}
Let us now also consider the flow on $M\times I$ given by the following system of equations:
\begin{equation}
\label{laminar}
\begin{cases}
\dot{s} = F_\tau(s) \\
 \dot{\tau} =0
\end{cases}
\end{equation}
From now on, we will refer to the flow on $M\times I$ given by Equations \ref{laminar} as the \textbf{Laminar flow} - with the name derived from the fact that the laminar flow keeps the layers $M\times\{\tau\}$, $\tau\in I$, invariant. The previous question can now be formally and rigorously stated: how different are the dynamics of the Entropy flow from those of the Laminar flow?\\

This question probably cannot be answered in full generality. The reason is because the collection of pairs $(P,V)$ that define an Entropy flow probably forms a very complex Banach Manifold, and in general, one has to add restrictions on both $P$ and $V$ to study it (we will discuss this approach in more detail at Appendix \ref{quantitative}). That being said, one can find some global restrictions depending on the bifurcation structure - for example, in the next Subsection we will prove that given a $C^1$ family $\dot{s}=F_\tau(s)$ undergoing a period doubling cascade of attractors, the corresponding Entropy flow would always have a certain invariant set, depending only on the bifurcation structure (see Theorem \ref{major1}). Before we do so however, we study the local dynamics of the first return map of the Entropy flow around some of its periodic orbits. To begin, recall that given $\dot{s}=F_\tau(s)$ as above, the corresponding Entropy flow should be expected to have periodic orbits of the type $T\times\{\tau_0\}$, $(x_0,\tau_0)$, where $T$ is a periodic orbit for $F_{\tau_0}$ and $x_0$ is a fixed point for $F_{\tau_0}$. In detail, in the previous Section we saw such periodic orbits and fixed points can arise either as equilibrium states, or as bifurcation orbits and fixed points connecting the components of $Per\cup Fix$ - In either case, we know that if the Entropy flow is given by $(s,\tau)\to(F_\tau(s)+V(s,\tau),P(s,\tau))$, then $P$ and $V$ would both vanish on $T\times\{\tau_0\}$ and $x$ (for the details, see Definition \ref{generalentropy}, as well as Definitions \ref{admissiblefixed} and \ref{admissibleper}). We now study the local dynamics of such objects under the Entropy flow:

\begin{proposition}
    \label{centerdim} Consider a $C^1$ one--parameter family $\dot{s}=F_\tau(s)$ parameterized by $\tau\in I$. Then, given any interior point $\tau_0\in I$ and any choice of Entropy flow, we have the following:
    \begin{itemize}
        \item Assume $T$ is a periodic orbit for the vector field $F_{\tau_0}$, s.t. $T\times\{\tau_0\}$ is either connecting two different components of $Per$, or an equilibrium periodic orbit (see Definition \ref{admissibleper}). Then, the following holds:
        \begin{enumerate}
            \item If $T$ has a $k_1$-dimensional stable manifold and a $k_2$-dimensional unstable manifold w.r.t. $F_{\tau_0}$, then $T\times\{\tau_0\}$ has a stable manifold of dimension $k_1$ and an unstable manifold of dimension $k_2$ w.r.t. the Entropy flow.
            \item  If $T$ has a center manifold of dimension $k_3$ w.r.t. $F_{\tau_0}$, then $T\times\{\tau_0\}$ has a center manifold of dimension $k_3+1$.
        \end{enumerate}

        \item Similarly, let $x_{0}$ be a fixed point for $F_{\tau_0}$ s.t. $x=(x_0,\tau_0)$ is either connecting two components of $Fix$ or $(x_0,\tau_0)$ is an equilibrium point. Denote by $J_E$ the Jacobian matrix of the Entropy flow at $x$, and by $J_0$ is the Jacobian matrix of $F_{\tau_0}$ at $x_0$. Then, $J_E$ includes $J_0$ as a minor, and consequently the following holds:
        \begin{itemize}
            \item  If $x_0$ has a $k_1$-dimensional stable manifold and a $k_2$-dimensional unstable manifold, so does $x$.
            \item If $x_0$ has a center manifold of dimension $k_3$, $x$ has center manifold of dimension $k_3+1$.
        \end{itemize}
        
    \end{itemize}

\end{proposition}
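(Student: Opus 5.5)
The plan is to linearize the Entropy flow at the invariant object and exploit the fact, built into the construction, that $P$, $V$ \emph{and their differentials} vanish there. At a bifurcation fixed point connecting components of $Fix$ this is required explicitly by Definitions \ref{def4}--\ref{def43}. At an equilibrium point it follows from the second convention on admissible behaviors stated before Corollary \ref{final}: the first derivatives of $P_\alpha$ and the differential of $V_\alpha$ tend to $0$ on $P_\alpha^{-1}(0)$. The same two sources give the vanishing on $T\times\{\tau_0\}$ in the periodic case, via Definitions \ref{def5}--\ref{def53} and Definition \ref{admissibleper}.

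For the fixed point case, I will compute $J_E$ at $x=(x_0,\tau_0)$ directly from $E(s,\tau)=(F_\tau(s)+V(s,\tau),P(s,\tau))$. Since $DV(x)=0$ and $DP(x)=0$, we get
\begin{equation*}
J_E=\begin{pmatrix} J_0 & \partial_\tau F_\tau(x_0)|_{\tau_0}\\ 0 & 0\end{pmatrix}.
\end{equation*}
This block upper-triangular matrix contains $J_0$ as a minor, and its spectrum is the spectrum of $J_0$ together with one extra eigenvalue $0$. The stable and unstable eigenspaces of $J_E$ therefore have the same dimensions $k_1$ and $k_2$ as those of $J_0$; they are in fact $E^s\times\{0\}$ and $E^u\times\{0\}$. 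The generalized center eigenspace has dimension $k_3+1$. The claims then follow from the stable, unstable and center manifold theorems applied to the $C^1$ vector field $E$.

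For the periodic orbit case, I will choose a cross-section $\Sigma=S_{\tau_0}\times(\tau_0-\epsilon,\tau_0+\epsilon)$ to $T\times\{\tau_0\}$. It is transverse to $E$ near $p_0$ because $E=(F_{\tau_0},0)$ on $T\times\{\tau_0\}$. I will then differentiate the first return map $\Phi$ of $E$ at $(p_0,\tau_0)$ using the variational equation along $T\times\{\tau_0\}$. Along that orbit the linearization of $E$ is again block triangular, with $\tau$-row identically zero, since $DP=0$ and $DV=0$ on $T\times\{\tau_0\}$. Consequently $D\Phi=\begin{pmatrix} D_0(T) & w\\ 0 & 1\end{pmatrix}$ for some vector $w$. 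The Floquet multipliers of $T\times\{\tau_0\}$ under $E$ are those of $T$ under $F_{\tau_0}$ together with an extra multiplier $1$. This gives stable and unstable dimensions $k_1$ and $k_2$ and center dimension $k_3+1$, counted with the same convention as for $T$.

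The step I expect to be the main obstacle is justifying that the differentials genuinely vanish at these objects, i.e.\ that $E$ is $C^1$ there with $DP=DV=0$, rather than merely that $P$ and $V$ themselves vanish. For equilibrium objects I rely on the stated convention. For bifurcation objects, the points where several $N_\alpha$ accumulate, I rely on the explicit requirement in the admissibility definitions that the differentials tend to $0$, together with the $C^1$ gluing in Corollary \ref{final}. A secondary technical point is that $T$ may be non-hyperbolic. In that case I would only assert the dimension count via the center manifold theorem, not uniqueness of the center manifold.
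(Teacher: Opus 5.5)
Your proposal is correct and follows essentially the same route as the paper: both arguments rest on the fact that $P$, $V$ and their differentials vanish on the bifurcation or equilibrium object. Hence the linearization (the Jacobian, resp.\ the derivative of the first-return map) coincides with that of the Laminar flow, and its spectrum is that of $J_0$ (resp.\ $D_{\tau_0}$) with one extra eigenvalue $0$ (resp.\ multiplier $1$). Your block upper-triangular forms, with the column $\partial_\tau F_\tau(x_0)|_{\tau_0}$ and the vector $w$, are in fact more accurate than the block-diagonal matrices in the paper's proof (Equation \ref{laminardiff} and the matrix $J$), which drop the $\tau$-derivative of $F_\tau$ without comment. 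This does not affect the eigenvalue count, and your direct use of the vanishing differentials also makes the paper's case split (an open set $O\subseteq P_0$ versus $P^{-1}_\alpha(0)\cup\partial N_\alpha$) unnecessary.
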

\begin{proof}

We prove the assertion under the assumption $M=\mathbb{R}^n$ - as this is a local assertion, this would be enough to imply the general case. We will first prove the assertion for periodic orbits. Let $S_0\subseteq M$ denote a cross section transverse to $T$ w.r.t. the vector field $F_{\tau_0}$ at some point $s_0$. Let $f_{\tau_0}:S_0\to S_0$ denote the corresponding first-return map, and write $f_{\tau_0}(s)=(g_1(s,0),...,g_{n-1}(s,0))$, where $s=(s_1,...,s_{n-1})$ is a coordinate in $S_0$ and $n$ is the dimension of $M$. We will denote the differential of $f_{\tau_0}$ in $s_0$ by $D_{\tau_0}$ expanded in matrix form as follows:

 \begin{equation*}
D_{\tau_0}=
\begin{pmatrix}

\frac{\partial g_1}{\partial s_1}(s_0) & ... &\frac{\partial g_{1}}{\partial s_{n-1}}(s_0)\\

... & ...&...\\
\frac{\partial g_{n-1}}{\partial s_{1}}(s_0)&...&\frac{\partial g_{n-1}}{\partial s_{n-1}}(s_0)

\end{pmatrix}
\end{equation*}
 
Now, set $S=S_0\times(\tau_0-\epsilon,\tau_0+\epsilon)$. As $T\times\{\tau_0\}$ is periodic for the Entropy flow, we know that when $\epsilon>0$ is sufficiently small, $E$ is transverse to $S$. When $P$ and $V$ vanish identically in some neighborhood of $T\times\{\tau_0\}$, the first-return map $f:S\to S$ coincides with that of the Laminar flow given by Equations \ref{laminar}, which would imply its differential $D_f$ at $(s_0,\tau_0)$ given by:

 \begin{equation}
 \label{laminardiff}
D_{f}=
\begin{pmatrix}

\frac{\partial g_1}{\partial s_1}(s_0) & ... &\frac{\partial g_{1}}{\partial s_{n-1}}(s_0)&0\\

... & ...&...&...\\
\frac{\partial g_{n-1}}{\partial s_{1}}(s_0)&...&\frac{\partial g_{n-1}}{\partial s_{n-1}}(s_0)&0\\
0&...&0&1

\end{pmatrix}
\end{equation}
That is, $D_{\tau_0}$ would be a minor of $D_f$. This proves that when $P$ and $V$ both vanish around $T\times\{\tau_0\}$, there is nothing to prove. We will therefore assume $P$ and $V$ do not vanish in a neighborhood of $T\times\{\tau_0\}$. We first prove that if $D_{\tau_0}$ has a $k_1-1$-dimensional stable eigenspace and $k_2-1$-dimensional unstable eigenspace, then $D_f$ has a stable eigenspace of dimension at least $k_1-1$ and an unstable eigenspace of dimension at least $k_2-1$. This would be enough to imply $T\times\{\tau_0\}$ has a stable manifold of dimension at least $k_1$ and an unstable manifold of dimension at least $k_2$. To do so, denote by $P_0=\{(s,\tau)|V(s,\tau)=P(s,\tau)=0\}$ - per our assumption that both $P$ and $V$ vanish on $T\times\{\tau_0\}$ we know $T\times\{\tau_0\}\subseteq P_0$. Write the first return map $f$ as $(s,\tau)\to(f_1(s,\tau),...,f_{n-1}(s,\tau),p(s,\tau))$, which implies the differential $D_f$ can be expanded as follows:

\begin{equation*}
\begin{pmatrix}

\frac{\partial f_1}{\partial s_1}(s_0,\tau_0) & ... &\frac{\partial f_{1}}{\partial s_{n-1}}(s_0,\tau_0) &\frac{\partial f_1}{\partial \tau}(s_0,\tau_0)\\

... & ...&...&...\\
\frac{\partial f_{n-1}}{\partial s_{1}}(s_0,\tau_0)&...&\frac{\partial f_{n-1}}{\partial s_{n-1}}(s_0,\tau_0)&\frac{\partial f_{n-1}}{\partial \tau}(s_0,\tau_0)\\
\frac{\partial p}{\partial s_1} (s_0,\tau_0)&...&\frac{\partial p}{\partial s_{n-1}}(s_0,\tau_0) &\frac{p}{\partial\tau}(s_0,\tau_0)

\end{pmatrix}
\end{equation*}

As $P,V$ do not vanish around $T\times\{\tau_0\}$, we must have $T\times\{\tau_0\}\subseteq\overline{Per}$. Therefore, there are exactly two possibilities - that $T\times\{\tau_0\}$ lies on the boundary of some open set $O\subseteq P_0$, or that it does not. In the first possibility, we know that inside $O$ the Entropy flow coincides with the Laminar flow, where the differential for the first return map w.r.t. the Laminar flow is given by Equation \ref{laminardiff}. Therefore, by the continuity of all partial derivatives of $f_i$ and $p$, $i=1,...,n-1$, and because in $O$ the Entropy flow coincides with the Laminar flow, in the first case the differential $D_f$ is given by Equation \ref{laminardiff} - hence it includes $D_{\tau_0}$ as a minor. Consequently, the spectrum of $D_f$ includes that of $D_{\tau_0}$, therefore, if $T$ has a $k_1$-dimensional stable manifold and a $k_2$ unstable manifold w.r.t. $F_{\tau_0}$, $T\times\{\tau_0\}$ has stable and unstable manifolds of respective dimensions $k_1$ and $k_2$ w.r.t. the Entropy flow.

We now study the second case, i.e., that $T\times\{\tau_0\}$ does not lie on the boundary of any open set in $P_0$. To study this case we first recall that as the Entropy flow is given by the vector field $(s,\tau)\to(F_\tau(s)+V(s,\tau),P(s,\tau))$ where $(s,\tau)\in M\times I$. We further recall that the functions $P$ and $V$ are given per Corollary \ref{final}, i.e., there exists some isolating collection $\{N_\alpha\}_\alpha$ of mutually disjoint open sets s.t. the support of $P$ and $V$ is contained in $\cup_\alpha N_\alpha$, and the derivatives $\frac{\partial P(s,\tau)}{\partial x}$, $\frac{\partial V(s,\tau)}{\partial x}$, $x=s,\tau$ tend to $0$ as $(s,\tau)\to\partial N_\alpha$. Moreover, for all $\alpha$, $P$ vanishes on $\partial N_\alpha$ and $P^{-1}_\alpha(0)$ (see Corollary \ref{final} and the discussion preceding it). This implies that as by assumption $P$ on $T\times\{\tau_0\}$ then $T\times\{\tau_0\}\subseteq P^{-1}_\alpha(0)\cup \partial N_\alpha$, for some $\alpha$. By the requirements imposed on $P$ and $V$ in Definition \ref{generalentropy} and Corollary \ref{final} it follows the differentials of both $V$ and $P$ vanish on $T\times\{\tau_0\}$. This again implies that $D_f$ is given by Equation \ref{laminardiff}, hence its includes the spectrum of $D_{\tau_0}$. This proves that if $T$ has a $k_1$-dimensional stable manifold and a $k_2$-dimensional unstable manifold then the same is true for $T\times\{\tau_0\}$ w.r.t. the Entropy flow. The above also teaches us that in whatever case, $D_f$ is given by Equation  \ref{laminardiff}. This immediately implies that if $T$ has a $k_3$-dimensional center manifold w.r.t. $F_{\tau_0}$, then $T\times\{\tau_0\}$ has a $k_3+1$-dimensional center manifold w.r.t. the Entropy flow.

We now prove the analogous assertion fixed points. Let us denote by $J$ as the Jacobian matrix of the Laminar flow at $x=(x_0,\tau_0)$, by $J_0$ as the Jacobian matrix of $F_{\tau_0}$ at $x_0$, and by $J_E$ the Jacobian matrix of the vector field $E$ giving the Entropy flow at $x$. By direct computation we get:

\begin{equation*}
J=
\begin{pmatrix}

J_0 & 0\\

0 &0

\end{pmatrix}
\end{equation*}

In other words, $J_0$ is a matrix of $J$. Using again the fact that $P,V$ and their differentials both vanish when $(s,\tau)\in M\times I$ tend to either $\partial N_\alpha$ and $P^{-1}_\alpha(0)$ (for some $\alpha$), the exact same arguments above when applied to $J_E,J_0$ and $J$ in place of $f$, $f_{\tau_0}$ and the first return map for the Laminar flow imply that $J=J_E$. This proves that whatever the case, $J_E$ includes $J_0$ as a minor, and the assertion follows.
\end{proof}
\begin{remark}
We remark that by Theorem 1 in \cite{centerp} one can further prove that if two (or more) components $G_\alpha$ and $G_\beta$ of $Per$ are connected to each other at some periodic orbit $T\times\{\tau_0\}$, then these two components would form invariant manifolds for $T\times\{\tau_0\}$ w.r.t. the Entropy flow. Similarly, if two (or more) components $G_\alpha$ and $G_\beta$ connect at $x$, both would form invariant manifolds for $x$ w.r.t. the Entropy flow. 
\end{remark}
Proposition \ref{centerdim} may appear at first somewhat technical and unmotivated, but in fact it encodes the topology of bifurcations. To explain, recall that the functions $P$ and $V$ are allowed to to be non-zero where "interesting and distinguishable dynamics happen" - i.e., in and around the sets $Per\cup Fix$, and on the completely isolated invariant sets in $Inv$. In other words, the different ways in which we can define $P$ and $V$ correspond to the topology of the sets $Per$ and $Fix$, and their configuration in $M\times I$. In particular, the configuration of the center Manifolds of the invariant sets of the Entropy flow must be correlated with the topology of $Per$ and $Fix$, as well as the bifurcation orbits and fixed points gluing them together - consequently, one should think of the collection of possible Entropy flow as some invariant of the topology of $Per$ and $Fix$. We will return to this approach in Subsections \ref{2dtheory}, \ref{lorenz}, where we will study this aspect using the Conley index Theory.\\

That being said, as far as general qualitative analysis of the Entropy flow goes, this is as far as we go - if only because to give more global results, we need to have more structure on the sets $Per$ and $Fix$. To clarify, by "structure" we mean information on how the bifurcation orbits and fixed points glue the components of $Per\cup Fix$ to one another. Therefore, in the next Subsections we will analyze the global behavior of the Entropy flow on period doubling cascades, the Ruelle-Takens route to chaos and the Intermittency phenomena around saddle node bifurcations. As we will prove, in each one of these the dynamics of any Entropy flow would be very complex.

\subsection{The Period Doubling route to chaos}
\label{perioddoubling}
In this Subsection we study the behavior of the Entropy flow around a period doubling cascade of attractors or repellers. As we will prove, in such cascades the qualitative behavior around the cascade itself are to a large degree independent of the functions $P$ and $V$ chosen. In detail, we will prove that around such cascades for all possible choices of Entropy flows both $P$ and $V$ do not vanish, and that their behavior can be roughly described as "a drift towards complexity" (see Theorem \ref{major1} for the precise details). Such cascades of attractors and repellers appear in many examples of smooth one-parameter families that undergo transition from order into chaos - for example, such bifurcations often appear when a Smale Horseshoe is formed (see \cite{Perd}), or alternatively, around the Shilnikov bifurcation scenario (see \cite{GKP}). The results of this Subsection should be interpreted as follows - that for a large class of "real life" flows the global dynamics of the Entropy flow would deviate from those of the Laminar flow. To illustrate, at the end of this Subsection we will use these ideas to describe the Entropy flow for the Rössler system.\\

To begin, let $M$ denote a smooth orientable manifold of dimension at least $3$, and let $\dot{s}=F_\tau(s)$ be a one--parameter $C^1$ family defined on it, where the parameter $\tau$ varies in $I$. We now define the notion of a period doubling cascade - for simplicity, we will do so under the assumption $I=(-1,1)$ (the generalization to the case where $I=S^1$, or when it is either a closed or a half closed interval is trivial). To begin, let $\tau_0$ be some interior point to $(-1,1)$. Let $T_0$ be a periodic orbit for $F_{\tau_0}$. Recall that $T_0$ undergoes a \textbf{period doubling cascade} if there is some bounded, increasing sequence $\{\tau_n\}_{n\geq0}\subseteq (-1,1)$ s.t. the periodic orbit $T_0$ undergoes successive period doubling bifurcations at every $\tau_n$ (see the illustration in Figure \ref{cascade}). To continue, let $\{G_n\}_{n\geq0}$ denote the subsets of $M\times I$ corresponding to the said cascade - in other words, the sets $\{G_n\}_n$ satisfy the following:
\begin{itemize}
    \item For all $n\geq0$, $G_n\cap M\times\{\tau\}\ne\emptyset$, when $\tau\in(\tau_n,\tau_{n+1})$, $n\geq0$. Moreover, writing $T_\tau\times\{\tau\}=G_n\cap M\times\{\tau\}$, the set $T_\tau$ forms a periodic orbit for $F_\tau$ that persists without bifurcating as $\tau$ is varied in $(\tau_n,\tau_{n+1})$.
    \item $G_n$ is connected to $G_{n+1}$ at $P_{n+1}\times\{\tau_{n+1}\}$, where $P_{n+1}$ is the period doubling orbit for $F_{\tau_{n+1}}$, $n\geq1$.
    \item  Similarly to the above, let $T_\tau$ denote the periodic orbit for $F_\tau$, $\tau\in(\tau_n,\tau_{n+1})$ given by $T_\tau\times\{\tau\}=M\times\{\tau\}\cap G_n$, $n\geq1$. Letting $p_n$ denote the period of $P_n$ w.r.t. $F_{\tau_n}$ and $p_\tau$ denote the period of $T_\tau$ w.r.t. $F_\tau$, then $\lim_{\tau\rightarrow\tau_n}p_\tau=2p_n$.
    \item Moreover, for each $n\geq1$, $P_n\times\{\tau_n\}$ is the meeting point between $G_n$, $G_{n+1}$ and $D_n$ - where $D_n\subseteq M\times (\tau_n,1)$ corresponds to a family of periodic orbits thar preserve the period of $P_n$. More precisely, for all $\tau>\tau_n$ sufficiently close to $\tau_n$, $S_\tau\times\{\tau\}=D_n\cap M\times\{\tau\}$ corresponds to a periodic orbit $S_\tau$ for $F_\tau$ with a period $m_\tau$ satisfying $\lim_{\tau\downarrow\tau_n}m_\tau=p_n$ (where $p_n$ is as above).\\
\end{itemize}

\begin{figure}[h]
\centering
\begin{overpic}[width=0.3\textwidth]{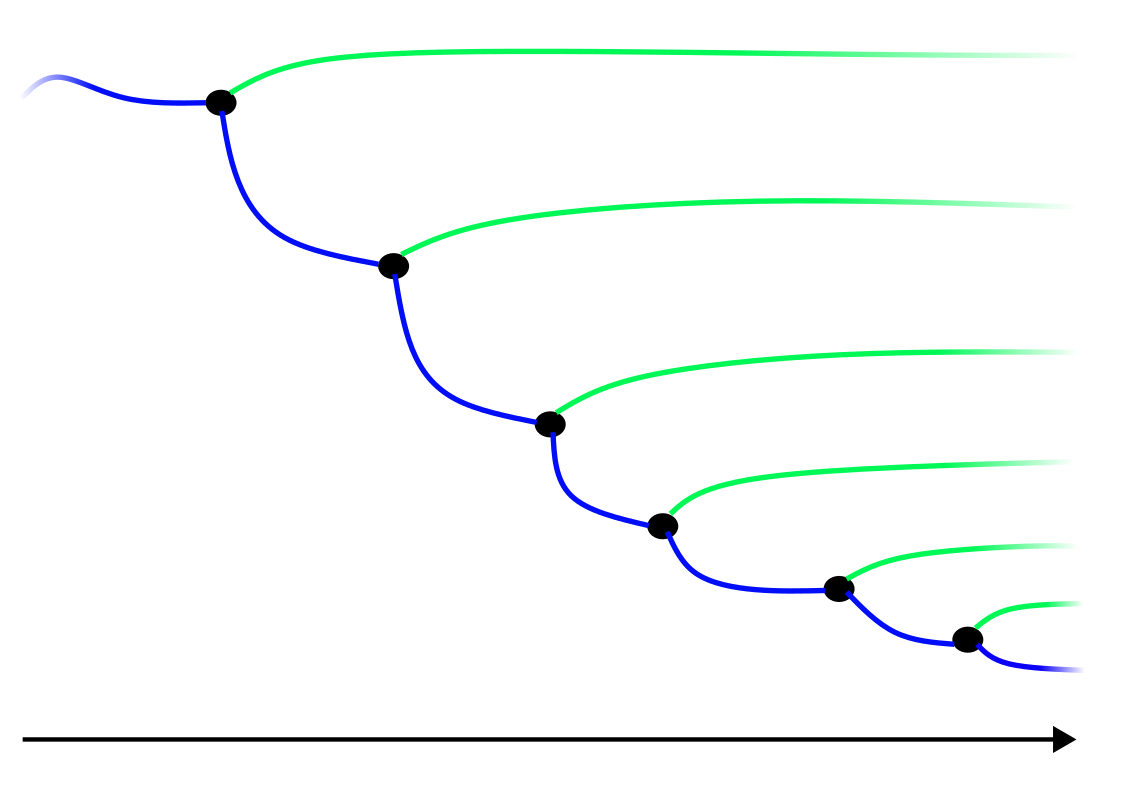}
\put(980,30){$\tau$}
\end{overpic}
\caption{\textit{The bifurcation diagram of a period doubling cascade. The successive black dots represent the period doubling orbits (i.e., $P_n\times\{\tau_n\}$), while the blue arcs represent the sets $G_n$ and the green arcs -- the sets $D_n$. The period doubling occurs on the blue arcs, along which the period is doubled successively.}}
\label{cascade}
\end{figure}

With this notation in mind, we say the cascade is a \textbf{period doubling cascade of attractors} if for every $n$ and all $\tau\in(\tau_n,\tau_{n+1})$, the corresponding orbit $T_\tau$ for $F_\tau$ on $G_n$ is an attractor. Similarly, we say the cascade is a \textbf{period doubling cascade of repellers} if the same orbits $T_\tau$ are all repellers for $F_\tau$. We now note that whenever $T_\tau$ is either an attracting or repelling orbit for $F_\tau$, there exists some fixed-point free neighborhood $O_\tau\subseteq M$ of $T_\tau$ that varies continuously with $\tau\in(\tau_n,\tau_{n+1})$, s.t. $T_\tau$ is the only periodic orbit in $O_\tau$. In other words, for all $n$ the set $O=\cup_{\tau\in(\tau_n,\tau_{n+1}}O_\tau\times\{\tau\}$ forms an open neighborhood of $G_n$ which isolates $G_n$ from all other components of $Per\setminus G_n$, $Fix\setminus O$ and $Inv\setminus O$. Since this is true for all $n$, by the definition of the set $Per$ in Definition \ref{isolationdef} we conclude that when the period doubling cascade is that of attractors or repellers, we have $\{G_n\}_n\subseteq Per$. We now prove the following universal properties of Entropy flows around period doubling cascades. To keep with the formalism above, we do so only for the case where the parameter $\tau$ varies in $I=(-1,1)$ and the sequence $\{\tau_n\}_n$ is increasing. That being said, this result trivially generalizes to the case when $I$ is either $S^1$, a closed interval, or a half closed interval, and the sequence $\{\tau_n\}_n$ is decreasing. 
\begin{theorem}
\label{major1}
Let $\dot{s}=F_\tau(s)$ be a $C^1$ one--parameter family where $(s,\tau)\in M\times (-1,1)$ and the dimension of $M$ is at least $3$. With previous notations, assume there exists a period doubling cascade of either attractors or repellers, corresponding to components $\{G_n\}_{n\geq0}\subseteq Per$, where for all $n\geq0$, $G_{n}$ is glued to $G_{n+1}$ at the period doubling bifurcation orbit $P_{n+1}\times\{\tau_{n+1}\}$. Then, for all possible Entropy flows, the following holds:

\begin{itemize}
    \item Both $G_n$ and $G_{n+1}$ lie in some invariant manifolds for $P_{n+1}\times\{\tau_{n+1}\}$.
    \item The trajectories of initial conditions on $G_n$ are  attracted to $P_{n+1}\times\{\tau_{n+1}\}$ under the Entropy flow, while those on $G_{n+1}$ are attracted under the Entropy flow to $P_{n+2}\times\{\tau_{n+2}\}$. 
\end{itemize}

Consequently, for any choice of vector field $E(s,\tau)=(F_\tau(s)+V(s,\tau),P(s,\tau))$ giving an Entropy flow, the function $P$ does not vanish identically in $M\times(-1,1)$ - i.e., there is a drift in the parameter space. Moreover, we can choose the Entropy flow s.t. for all $n\geq0$ the orbit $P_{n+1}\times\{\tau_{n+1}\}$ is a Milnor attractor for the flow on $M\times I$ in the sense that it attracts an open set $O_n\subseteq M\times I$ s.t. $P_{n+1}\times\{\tau_{n+1}\}\subseteq\partial O_n$.
\end{theorem}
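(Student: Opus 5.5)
The plan is to read the dynamics of $E$ on each $G_n$ directly off the admissibility rules, and to use Proposition \ref{centerdim} only for the local picture at the bifurcation orbits. Every $G_n$ is a component of $Per$. It is a cylinder of period-$2^n p_0$ orbits whose two ends in $\partial G_n$ are the multiplying (period doubling) orbits $P_n\times\{\tau_n\}$ and $P_{n+1}\times\{\tau_{n+1}\}$. We are therefore in the third case of Definition \ref{admissibleper}, with the local sign of $P$ at both ends prescribed by Definition \ref{def511}.

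First I apply Definition \ref{def511} at $P_{n+1}\times\{\tau_{n+1}\}$. The component $G_n$ approaches this orbit from the left, so $P_{G_n}>0$ near it, and $E$ points towards the bifurcation. The component $G_{n+1}$ is the branch on which the period is multiplied by $m=2$, so $P_{G_{n+1}}>0$ near $P_{n+1}\times\{\tau_{n+1}\}$, and $E$ points away from it, towards $\tau_{n+2}$.

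Next I look at $G_{n+1}$ as a whole. Near its left end $P$ is positive (the doubled branch), and near its right end it is again positive (it approaches $P_{n+2}\times\{\tau_{n+2}\}$ from the left). The signs are not opposed, so Definition \ref{admissibleper} forces $P$ to have constant positive sign on all of $G_{n+1}$, and no equilibrium periodic orbit occurs. Since $V$ makes $E$ tangent to $G_{n+1}$, the function $\tau(t)$ along a trajectory in $G_{n+1}$ is strictly increasing and bounded by $\tau_{n+2}$. It therefore converges, and the limit cannot be interior to $(\tau_{n+1},\tau_{n+2})$ because $P$ does not vanish there. Hence $\tau(t)\to\tau_{n+2}$. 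The cylinder $G_{n+1}$ closes up onto $P_{n+2}\times\{\tau_{n+2}\}$, since the periodic orbits converge to the doubling orbit. It follows that the $\omega$-limit set lies in $P_{n+2}\times\{\tau_{n+2}\}$, and since that set is a single periodic orbit of $E$, the $\omega$-limit equals it. The identical argument for $G_n$, with $n\ge 0$ and the case $n=0$ handled by the same rule at $\tau_1$, gives attraction to $P_{n+1}\times\{\tau_{n+1}\}$.

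That $G_n$ and $G_{n+1}$ lie in invariant manifolds of $P_{n+1}\times\{\tau_{n+1}\}$ then follows from the remark after Proposition \ref{centerdim}. Concretely, $G_n$ is an invariant set forward-asymptotic to the orbit, and $G_{n+1}$ is backward-asymptotic to it, since by the previous step backward trajectories in $G_{n+1}$ decrease in $\tau$ towards $\tau_{n+1}$. The orbit is non-hyperbolic, because at a period doubling there is a $-1$ multiplier together with the extra $\tau$-direction of multiplier $1$. So I would phrase the claim via center(-stable/unstable) manifolds, which Proposition \ref{centerdim} guarantees have dimension $k_3+1$. The non-vanishing of $P$ is immediate, since $P>0$ on every $G_n$ regardless of the choice of Entropy flow.

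For the Milnor attractor statement, I would use freedom in the construction. In the attracting cascade case, each $T_\tau$ on $G_n$ is an attractor of $F_\tau$ with a basin neighborhood $O_\tau$. I choose $N_{G_n}\supseteq\bigcup_\tau O'_\tau\times\{\tau\}$ with $O'_\tau$ a forward-invariant tubular neighborhood, and choose $P$ positive throughout $N_{G_n}$ and $C^1$-small. Then points of the open set $O_n=\bigcup_{\tau\in(\tau_n,\tau_{n+1})}O'_\tau\times\{\tau\}$ are attracted in the $s$-direction towards $G_n$ (a normal-hyperbolicity/Fenichel-type estimate for small $P,V$), while $\tau$ increases to $\tau_{n+1}$. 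Thus their $\omega$-limits lie on $P_{n+1}\times\{\tau_{n+1}\}\subseteq\partial O_n$. The repeller case is handled by the same construction, except that attraction then happens in $\tau$ only within the invariant cylinder; for an open set I would instead use reversed time or restrict the Milnor claim to the attractor case.

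The main obstacle is this last step. The contraction toward $G_n$ degenerates as $\tau\to\tau_{n+1}$, because the multiplier tends to $-1$, so I must ensure that trajectories reach the $\tau_{n+1}$-slice before escaping $O'_\tau$. I would handle this by shrinking $O'_\tau$ to zero width as $\tau\to\tau_{n+1}$, while keeping $P$ bounded below on compact sub-intervals.
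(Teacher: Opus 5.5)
Your analysis of the dynamics on the sheets $G_n$ matches the paper's argument. This covers the signs of $P$ read off Definition \ref{def511}, the constant sign on each sheet from Definition \ref{admissibleper}, monotone $\tau$ forcing convergence to $P_{n+1}\times\{\tau_{n+1}\}$, the center-manifold phrasing, and the non-vanishing of $P$. Your convergence step is in fact more careful than the paper's.

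The gap is in the Milnor-attractor part. You take $P>0$ on all of $N_{G_n}\supseteq\overline{O_n}$ and rely on slice-wise forward invariance of $O'_\tau$ plus a Fenichel-type contraction. But the lateral boundary of $O_n=\bigcup_\tau O'_\tau\times\{\tau\}$ is not a product. Write it as $\{g=0\}$ with $g<0$ inside; then $E$ points inward iff $\nabla_s g\cdot(F_\tau+V)+P\,\partial_\tau g<0$. The tube moves with $\tau$ and must shrink as $\tau\to\tau_{n+1}$, so $\partial_\tau g>0$ on part of the boundary, and there the drift term pushes trajectories out ($V$ may as well). The inward margin $-\nabla_s g\cdot F_\tau$ degenerates exactly as $\tau\to\tau_{n+1}$, since the multiplier tends to $-1$ and the width tends to $0$. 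Three consequences follow:
\begin{itemize}
\item a uniform $C^1$-smallness of $P,V$ does not give forward invariance;
\item shrinking $O'_\tau$ makes escape easier, not harder;
\item Fenichel is not available uniformly, since normal hyperbolicity of $G_n$ fails at both ends.
\end{itemize}
So the step ``trajectories stay in $O_n$ while $\tau$ increases to $\tau_{n+1}$'' is not established.

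The missing idea is the one the paper uses. Use your freedom in choosing $N_{G_n}$ and the admissible $P,V$: take $N_{G_n}=\mathbb T=\bigcup_\tau\mathbb T_\tau\times\{\tau\}$, with $\mathbb T_\tau$ a forward-invariant tube for $F_\tau$ shrinking onto the ends. Make $P$ and $V$ vanish on the lateral boundary, with $P>0$ inside. Then $E=(F_\tau,0)$ there and the $\partial_\tau g$ term drops out. The computation in the proof of Lemma \ref{trapping} then gives inward transversality however the tube varies with $\tau$, and no contraction toward $G_n$ is needed.

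Since $P$ is then not bounded below near the boundary, get $\tau\to\tau_{n+1}$ from $\omega$-limits instead. Suppose $\tau(t)\uparrow\tau^*<\tau_{n+1}$. The $\omega$-limit set is then a nonempty invariant subset of the $\tau^*$-slice of $\overline{\mathbb T}$, so it lies in $\{P=0\}$, i.e.\ on the lateral boundary. But there $E$ points strictly into the region where $P>0$, so such points leave the slice, a contradiction. Hence the $\omega$-limit lies in $\overline{\mathbb T}\cap(M\times\{\tau_{n+1}\})=P_{n+1}\times\{\tau_{n+1}\}$.

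Alternatively, you can keep $P>0$ on the boundary but force the outward contributions $P\,\partial_\tau g$ and $\nabla_s g\cdot V$ to decay faster than the inward margin as $\tau\to\tau_{n+1}$. Admissibility allows this, but you must arrange it explicitly.

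Your caution about repeller cascades is justified. There the same construction in reversed time gives a Milnor repeller at $P_n\times\{\tau_n\}$, not a Milnor attractor at $P_{n+1}\times\{\tau_{n+1}\}$. That is what the paper's ``symmetric'' case amounts to.
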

\begin{proof}
Consider some Entropy flow for the $C^1$ family $\dot{s}=F_\tau(s)$ given by some vector field $E$. We first note that for all $n>1$, $\overline{G_n}\setminus G_n$ is composed of two components - $P_n\times\{\tau_n\}$ and $P_{n+1}\times\{\tau_{n+1}\}$. By Definition \ref{generalentropy}, Definition \ref{admissiblefixed} and Definition \ref{def4} we already know that for all $n$, $P_n\times\{\tau_n\}$ is a periodic orbit for the Entropy flow generated by $E$. We now recall that $P_n\times\{\tau_n\}$ and $P_{n+1}\times\{\tau_{n+1}\}$ are period doubling bifurcation orbits, hence both are multiplying type bifurcations (see the discussion at the very beginning of Subsection \ref{sec2}). Moreover, recall that as discussed above, for all $n$, $G_n\subseteq Per$. Therefore, by Definition \ref{generalentropy} and Definition \ref{def4} we conclude that because the period is doubled as $\tau$ crosses from $(\tau_n,\tau_{n+1})$ into $(\tau_{n+1},\tau_{n+2})$, the trajectories of initial conditions on $G_n$ sufficiently close to $P_{n+1}\times\{\tau_{n+1}\}$ must be attracted by the Entropy flow directed by $E$ towards $P_{n+1}\times\{\tau_{n+1}\}$. Symmetrically, the same also implies that initial conditions on $G_n$ sufficiently close to $P_n\times\{\tau_n\}$ are repelled away from $P_n\times\{\tau_n\}$ w.r.t. the same flow.

This implies that if we write $E(s,\tau)=(F_\tau(s)+V(s,\tau),P(s,\tau))$, then the function $P$ is positive on all initial conditions on $G_n$ sufficiently close to both $P_n\times\{\tau_n\}$ and $P_{n+1}\times\{\tau_{n+1}\}$ - as such, by Definition \ref{generalentropy} we know $P$ is positive on all $(s,\tau)\in G_n$, hence $E$ defines a drift in the parameter space. As $G_n$ is invariant under the Entropy flow (as all Entropy flows are tangent to $Per$) it follows the Entropy flow generated by $E$ pushes all initial conditions on $G_n$ in forward time to $P_{n+1}\times\{\tau_{n+1}\}$, and in backwards time to $P_n\times\{\tau_n\}$ (see the illustration in Figure \ref{isot12}). All in all, we have proved the trajectory of every $(s,\tau)\in G_n$ under the Entropy flow tends to $P_{n+1}\times\{\tau_{n+1}\}$ in forward time, and to $P_n\times\{\tau_n\}$ in backwards time. By the characterization of center Manifolds in \cite{centerp} we conclude that $G_n$ is an invariant manifold w.r.t. the Entropy flow directed by $E$ for both periodic orbits $P_{n+1}\times\{\tau_{n+1}\}$ and $P_n\times\{\tau_n\}$. The same arguments applied to $G_{n+1}$, $P_{n+1}\times\{\tau_{n+1}\}$ and $P_{n+2}\times\{\tau_{n+2}\}$ implies $G_{n+1}$ also lies on some invariant manifold for $P_{n+1}\times\{\tau_{n+1}\}$ w.r.t. the vector field $E$. As $E$ is an arbitrary vector field generating an Entropy flow, the first part of the Theorem is proven.

\begin{figure}[h]
\centering
\begin{overpic}[width=0.35\textwidth]{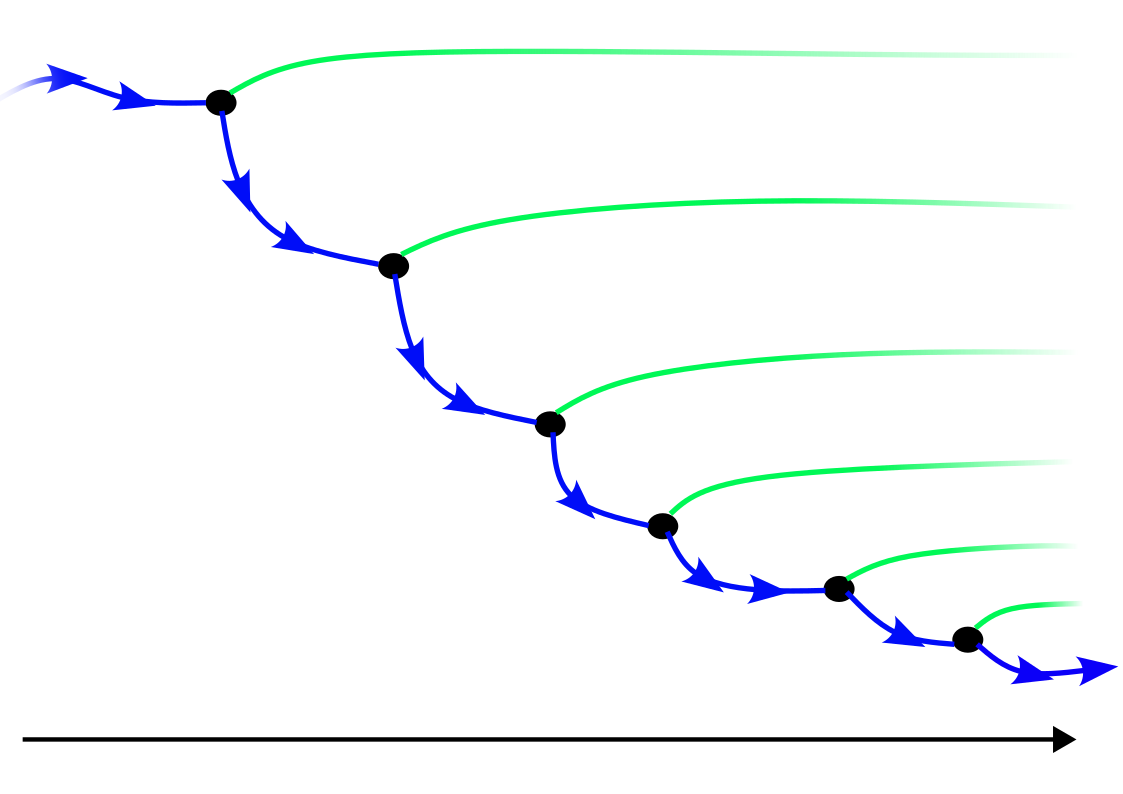}
\put(980,30){$\tau$}
\end{overpic}
\caption{\textit{A diagram showing the general behavior of an Entropy flow, sketched on the bifurcation diagram of a period doubling cascade. The arrows represent the motion along the component of orbits $G_n$ from $P_n\times\{\tau_n\}$ to $P_{n+1}\times\{\tau_{n+1}\}$, where each black dot represents $P_n\times\{\tau_n\}$, $n\geq1$.}}
\label{isot12}
\end{figure}

We now prove that we can choose the Entropy flow s.t. every $G_n$, $n\geq1$ defines an invariant set which attracts or repels an open set in $M\times I$ (in the Milnor sense, see \cite{Mil3}) - depending on whether the cascade is that of attractors or repellers. We prove this under the assumption the one--parameter family $\dot{s}=F_\tau(s)$ generates a cascade of attractors - the proof for a cascade of repellers is symmetric. To this end, given $\tau\in(\tau_n,\tau_{n+1})$, let us surround each periodic orbit $T_\tau$ s.t. $T_\tau\times\{\tau\}\subseteq G_n$ by a tubular neighborhood $\mathbb{T}_\tau\subseteq M$ homeomorphic to $S_1\times D$, where $D$ is a $d-1$-dimensional ball. We also assume $\mathbb{T}_\tau$ to satisfies the following:
\begin{itemize}
    \item $F_\tau$ is transverse to $\partial\mathbb{T}_\tau$ and points inside $\mathbb{T}_\tau$ throughout $\partial\mathbb{T}_\tau$.
    \item Every initial condition in $\mathbb{T}_\tau$ is attracted to $T_\tau$ w.r.t. $F_\tau$.
    \item $\mathbb{T}_\tau$ varies continuously with $\tau$, and the volume of $\mathbb{T}_\tau$ shrinks to $0$ whenever $\tau\to\tau_n$ or $\tau\to\tau_{n+1}$.

\end{itemize}

Note that as we assume $T_\tau$ is an attractor for $F_\tau$ all $\tau\in(\tau_n,\tau_{n+1})$, such neighborhoods exist. Note that $\mathbb{T}=\cup_{\tau\in(\tau_{n},\tau_{n+1})}\mathbb{T}_\tau\times\{\tau\}$ forms an open neighborhood of $G_n$ in $M\times I$. Per our construction, it is easy to see $\overline{Fix}\cap\overline{Per}$ lie away from $\mathbb{T}_\tau\times\{\tau\}$ for all $\tau\in(\tau_{n},\tau_{n+1})$, which implies we can choose an Entropy flow given by a vector field $E$ s.t. for all $(s,\tau)\in\partial\mathbb{T}_\tau\times\{\tau\}$ we have $E(s,\tau)=(F_\tau(s),0)$. By its construction we know $\partial \mathbb{T}$ is given by three sets: the first is $\mathcal{T}=\cup_{\tau\in(\tau_{n},\tau_{n+1})}\partial \mathbb{T}_\tau\times\{\tau\}$, while the other two are $P_{n}\times\{\tau_{n}\}$ and $P_{n+1}\times\{\tau_{n+1}\}$. The same argument used to prove Lemma \ref{trapping} now implies that $E$ is transverse to $\mathcal{T}$ and points inside $\mathbb{T}$ on it. As both $P_{n}\times\{\tau_{n}\}$ and $P_{n+1}\times\{\tau_{n+1}\}$ are periodic orbits for $E$, it is tangent to them. This proves every initial condition in $M\times I$ whose trajectory enters $\mathbb{T}$ under the Entropy flow directed by $E$ cannot escape it. As such, there exists some invariant set for the Entropy flo in $\overline{\mathbb{T}}$ - which, by definition, includes the set $G_n\cup P_n\times\{\tau_n\}\cup P_{n+1}\times\{\tau_{n+1}\}$. Moreover, this set attracts an open set $O_n$ in $M\times I$, i.e., all the initial conditions sufficiently close to $\mathcal{T}$.

We finish the proof by showing that we can choose the Entropy flow above s.t. every initial condition in $\mathbb{T}$ is attracted to $P_{n+1}\times\{\tau_{n+1}\}$. We already know this is true for initial conditions on $G_n$, so it remains to prove this for initial conditions in $\mathbb{T}\setminus G_n$. We now note that whenever $P$ is positive throughout the interior of $\mathbb{T}$, the $\dot{\tau}$ velocity is also positive. As such, given an Entropy flow $E$ as above which is transverse to $\mathcal{T}$ for which $P$ is some smooth function positive throughout $\mathbb{T}$ and tangent to $G_n$, the trajectory of $(s,\tau)\in\mathbb{T}$ would always increase in its $\tau$ coordinate. As $E$ is transverse to $\mathcal{T}$, the trajectories of such initial conditionds w.r.t. $E$ cannot escape the interior of $\mathbb{T}$. Therefore, by $\mathbb T\subseteq M\times(\tau_n,\tau_{n+1})$ we conclude the vector field $E$ forces these trajectories to accumulate on $\partial \mathbb T\cap M\times\{\tau_{n+1}\}$. Since $\partial \mathbb T\cap M\times\{\tau_{n+1}\}=P_{n+1}\times\{\tau_{n+1}\}$ we conclude $P_{n+1}\times\{\tau_{n+1}\}$ attracts all initial conditions interior to $\mathbb{T}$. As $\mathbb{T}$ is an open set in $M\times I$ which includes $P_{n+1}\times\{\tau_{n+1}\}$ on its boundary it follows $P_{n+1}\times\{\tau_{n+1}\}$ is a Milnor attractor, which concludes the proof.
\end{proof}
\begin{remark}
    It is well-known that period doubling cascades often appear together with universal scaling properties (see, for example, \cite{fei}, \cite{fei2}, \cite{Lyu}, and the references therein). Even though similar results are known for some two parameter families of flows (see \cite{GKP}), we avoid this question for the time being - we will return to it at the end of Section \ref{turbulence}, where we will discuss it in the context of the Shilnikov bifurcation scenario (see \cite{LeS}).
\end{remark}
Before we move on, we would like to note that one could ask what is the contribution of the families of periodic orbits $\{D_n\}_n$ defined above to the Entropy flow around period doubling cascades. Recall that if $G_{n+1}$ corresponds to the family of periodic orbits whose period is doubled as the parameter $\tau$ crosses from $(\tau_n,\tau_{n+1})$ to $(\tau_{n+1},\tau_{n+2})$, then $D_n$ corresponds to a family of periodic orbits in $M\times(\tau_{n+1},1)$ s.t. when we vary the periodic orbits from $G_n$ to $D_n$ via $P_{n+1}\times\{\tau_{n+1}\}$ the period changes continuously. Unfortunately, we do not have a good answer to this question. The reason for that is because $D_n$ may well correspond to a family of saddle periodic orbits, and such families need not correspond to components of $Per$. To the best of our knowledge, there is no general way to tell when $D_n\subseteq Per$ for some $n$ and when it does not. That being said, in Appendix \ref{periodappendix} we study the Conley index of the Entropy flow around period doubling cascades under the idealized assumption that $\{D_n\}_n\subseteq Per$.\\

We would now like to illustrate these ideas via a concrete example. To do so, recall that given any $a,b,c>0$, the Rössler system, originally introduced in \cite{Ross76}, is defined by the following three-dimensional vector field:
\begin{equation}
\label{rossler}
\begin{cases}
\dot{x} = -y-z \\
 \dot{y} = x+ay\\
 \dot{z}=b+z(x-c)
\end{cases}
\end{equation}
As shown numerically in \cite{Ross76} and later proven in both \cite{Zgli97} and \cite{XSYS03}, for $(a,b,c)=(0.2,0.2,5.7)$ the flow generates a chaotic attractor. As observed in many numerical studies, the Rössler system undergoes the period doubling route to chaos of attractors (see, for example, \cite{DW}, \cite{BBS}, \cite{MBKPS}, \cite{BSD} and the references therein). Based on these numerical studies, if one takes a smooth curve through the parameter space $\{(a,b,c)|a,b,c>0\}$ transverse to these period doubling cascades, every Entropy flow for the said curve would have complex behavior. Later in Section \ref{turbulence} we will return to this example as we give a global description for the dynamics of the Entropy flow associated with the Shilnikov homoclinic bifurcation scenario.

\subsection{The Ruelle-Takens-Newhouse route to chaos}

In this Subsection, we discuss the Entropy flow of the Ruelle-Takens-Newhouse route to chaos, originally introduced in \cite{RT} and \cite{SRT} (see also Chapter 6.3 of \cite{Gidea} for a survey of the topic). To introduce the Ruelle-Takens-Newhouse route to chaos, let $\dot{s}=F_\tau(s)$ be a $C^1$ one--parameter family of vector fields defined on some $n$-dimensional manifold $M$, where $n\geq3$. Without any loss of generality, throughout this Subsection we will assume the parameter $\tau$ varies smoothly in $\mathbb{R}$. We will also assume that for all $\tau\in\mathbb{R}$, $F_\tau(0)=0$, and that for $\tau\in(-\infty,0)$ the origin $0$ is a a sink. The Ruelle-Takens-Newhouse route to chaos is defined as follows (see the illustration in Figure \ref{RT}):
\begin{itemize}
    \item At $\tau=0$ the origin undergoes Hopf bifurcation and becomes an attracting periodic orbit, $T$, that persists without bifurcating until $\tau=1$. For simplicity, assume the origin persists as a source for $\tau>0$.
    \item At $\tau=1$, $T$ undergoes Neimark-Sacker bifurcation, which turns it into a two-dimensional attracting torus $\mathbb{T}^2$ on which the motion is a-periodic. The said attracting torus persists for all $\tau\in(1,2)$.
    \item As $\tau$ crosses into $(2,\infty)$ the said two-dimensional torus $\mathbb{T}^2$ eventually disintegrates at some $\tau_0\in(2,\infty)$ and becomes some bounded complex motion. \\
\end{itemize}

\begin{figure}[h]
\centering
\begin{overpic}[width=0.4\textwidth]{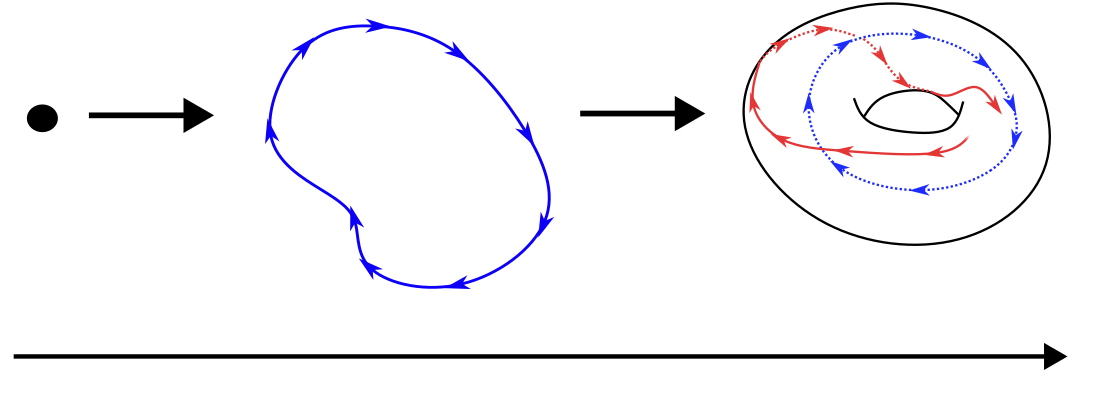}
\put(980,30){$\tau$}
\end{overpic}
\caption{\textit{The Ruelle-Takens-Newhouse scenario - a sink which bifurcates in  Hopf bifurcation into a stable orbit, which then undergoes Neimark-Sacker bifurcation and becomes an attracting torus on which the motion is aperiodic.}}
\label{RT}
\end{figure}

We now analyze the Entropy flow for this route to chaos. Let $E$ be some Entropy flow, let $\gamma_1=\{(0,\tau)|\tau<0\}$ and $\gamma_2=\{(0,\tau)|\tau\in(0,\infty)\}$ - as $0$ is a sink for $\tau<0$ and a source for $\tau>0$, it is easy to see that both $\gamma_1$ and $\gamma_2$ are components of $Fix$. Similarly, let $\mathcal{S}_1$ be the surface corresponding to the family of attracting periodic orbits created at Hopf bifurcation at $\tau=0$. As those periodic orbits undergo Neimark-Sacker bifurcation at $\tau=1$, it follows $\mathcal{S}_1\subseteq M\times(0,1)$. Moreover, as the periodic orbits $T_\tau$ given by $T_\tau\times\{\tau\}=\mathcal{S}_1\cap M\times\{\tau\}$, $\tau\in(0,1)$ are all attractors, $\mathcal{S}_1$ is a component of $Per$. The curves $\gamma_1,\gamma_2$ and the surface $\mathcal{S}_1$ are glued to one another at $(0,0)$, which is an Hopf bifurcation point (see the illustration in Figure \ref{RT2}). Let $\mathcal{T}$ denote the subset of $M\times (1,2)$ corresponding to the attracting invariant torus $\mathbb{T}^2$ - it is a three manifold isolated from $Per$ and $Fix$, as it corresponds to a family of attracting tori, which form completely isolated invariant sets. Therefore, from Definition \ref{def4}, Definition \ref{def5}, Definition \ref{def6}, and Definition \ref{generalentropy} we immediately conclude the following:
\begin{corollary}
    \label{ruelletakens} Consider a $C^1$ one--parameter family $\dot{s}=F_\tau(s)$ where $s\in M$, $\tau\in\mathbb{R}$, $n\geq3$ undergoing a Ruelle-Takens-Newhouse route to chaos. Then, any Entropy flow for the family above has the following behavior (see the illustration in Figure \ref{RT2}):
    \begin{itemize}
        \item Given any initial condition $(s,\tau)\in\gamma_i$, $i=1,2$, its trajectory w.r.t. the Entropy flow tends to the Hopf bifurcation point $(0,0)$.
        \item For all initial conditions $(s,\tau)\in\mathcal{S}_1$, their trajectory w.r.t. the Entropy flow is repelled away from $(0,0)$ and towards the Neimark-Sacker bifurcation orbit on $\partial \mathcal{S}_1$.
        \item On initial conditions $(s,\tau)\in \mathcal{T}$ for which $\tau$ is sufficiently close to $1$, the function $P$ is positive - i.e., on $\mathcal{T}$ the Entropy flow pushes away from Neimark-Sacker bifurcation.
    \end{itemize}
\end{corollary}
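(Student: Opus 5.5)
The plan is to read off the sign of $P$ on each of $\gamma_1,\gamma_2,\mathcal{S}_1,\mathcal{T}$ from the local admissibility rules at the two bifurcations: Definition \ref{def42} at the Hopf point $(0,0)$, Definition \ref{def52} at the Neimark-Sacker orbit $T_1\times\{1\}$, and Definition \ref{def6} on $\mathcal{T}$. The global sign rules in Definitions \ref{admissiblefixed} and \ref{admissibleper} then propagate these local signs along each component. Finally, I will use that $E$ is tangent to each component of $Per\cup Fix$, so that each component is invariant and the $\tau$-coordinate is monotone along it. This converts sign information on $P$ into convergence of trajectories.

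For the curves $\gamma_1,\gamma_2$, the point $(0,0)$ is an expansion-type bifurcation, so Definition \ref{def42} applies and everything hinges on the sign of $|J_0|$. The first bullet of the statement corresponds to case (1), $|J_0|<0$. There $P$ is positive on $\gamma_1$ and negative on $\gamma_2$ near $(0,0)$, and positive on $\mathcal{S}_1$ near $(0,0)$, so that $\mathcal{S}_1$ is pushed away from the Hopf point. I expect this to be the main obstacle. The Jacobian at the Hopf point has eigenvalues $\pm i\omega$ together with $n-2$ eigenvalues of negative real part inherited from the sink. Hence $\operatorname{sign}|J_0|=(-1)^{m}$, where $m$ is the number of negative real eigenvalues, and this is not forced to be negative; the van der Pol example in Subsection \ref{vpo} shows that $|J_0|>0$ reverses all arrows. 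I would first try to compute this sign for the standard three-dimensional Ruelle-Takens-Newhouse model, where $m=1$ gives $|J_0|<0$. In general I would state explicitly that the Hopf point is assumed to satisfy $|J_0|<0$ (for instance, an odd number of real stable eigenvalues), which is the case the corollary describes.

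Granting this, I argue as follows. The component $\gamma_1$ has only one bifurcation endpoint, so by the second case of Definition \ref{admissiblefixed} the sign of $P$ is constant on $N_{\gamma_1}$, hence positive. The curve $\gamma_1$ is invariant and $E$ has no zeros on it, so $\tau$ increases monotonically along trajectories and the $\omega$-limit lies in $\overline{\gamma_1}\setminus\gamma_1$ at the top, which is $(0,0)$. The symmetric argument for $\gamma_2$, whose other end at $\tau\to\infty$ is not a bifurcation, gives a constant negative sign and convergence to $(0,0)$.

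For $\mathcal{S}_1$, both ends are bifurcations. Near $(0,0)$ the sign of $P$ is positive by the above. Near $T_1\times\{1\}$ the orbits are attractors becoming repellers, since after the supercritical Neimark-Sacker bifurcation the cycle is repelling within the attracting torus. The first case of Definition \ref{def52} then makes $P$ positive, pointing toward the bifurcation. The signs agree, so by Definition \ref{admissibleper} $P$ is positive throughout $\mathcal{S}_1$ with no equilibrium periodic orbit. Monotonicity in $\tau$ then sends trajectories from $(0,0)$ toward $T_1\times\{1\}$. For $\mathcal{T}$, the set is a completely isolated invariant set on $(1,\tau_0)$ with $\tau_0<\infty$ the disintegration parameter. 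Definition \ref{def6} makes $P_{\mathcal{T}}$ positive near $M\times\{1\}$ in either of its first two cases, which gives the third bullet.
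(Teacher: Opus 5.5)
Your route matches the paper's. The paper simply says the three bullets follow immediately from the admissibility rules. Your choice of Definitions \ref{def42} and \ref{def52} is the right one: the creation-type rules \ref{def4} and \ref{def5} cited in the paper's text do not apply at a Hopf or Neimark--Sacker point, and they would give the opposite signs.

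You are also right that the first bullet needs case (1) of Definition \ref{def42}, and this can be sharpened. Non-real eigenvalues come in conjugate pairs, so a Hopf point born from a sink has $\operatorname{sign}|J_0|=(-1)^{n-2}=(-1)^n$. Thus $|J_0|<0$ is exactly the condition that $n$ is odd.

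Your saddle-focus picture for $\tau>0$ is also forced on you. Under the paper's literal assumption that the origin becomes a source, every eigenvalue lies on the imaginary axis at $\tau=0$. For odd $n$ one of them is then $0$, so the point is of mixed type, and Definition \ref{def43} makes $P<0$ on $\gamma_1$. You should therefore say explicitly that you replace the source by a saddle-focus, and that $\gamma_2$ is still assumed to be a single component of $Fix$.

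The genuine gap is at the Neimark--Sacker end. An attracting cycle in an $n$-manifold has $n-1$ nontrivial multipliers, and at a generic Neimark--Sacker bifurcation only one complex pair leaves the unit disc. So the continued cycle is a repeller only when $n=3$; for $n\geq4$ it is a saddle cycle. Your justification (``repelling within the attracting torus'') conflates being repelling inside the center manifold with being a repeller. The first case of Definition \ref{def52} requires a genuine repeller, as its remark that $G_\beta$ is then automatically in $Per$ indicates.

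For $n\geq4$ the third case of Definition \ref{def52} applies instead, and it makes $P<0$ on $\mathcal{S}_1$ near $T_1\times\{1\}$. Now take odd $n\geq5$, which your hypothesis $|J_0|<0$ permits. Then $P$ is positive near $(0,0)$ and negative near the Neimark--Sacker orbit. Definition \ref{admissibleper} therefore produces an equilibrium periodic orbit $T\times\{\tau_0\}\subseteq\mathcal{S}_1$, the trajectories in $\mathcal{S}_1$ accumulate on it, and the second bullet fails.

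So the extra hypothesis you need is $n=3$, not $|J_0|<0$ alone. Equivalently, you could assume $|J_0|<0$ together with the continued cycle being a genuine repeller. With that restriction, your sign bookkeeping and the monotonicity-in-$\tau$ argument on the invariant sets $\gamma_1$, $\gamma_2$, $\mathcal{S}_1$ are correct. The third bullet, via Definition \ref{def6}, holds in any dimension.
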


\begin{figure}[h]
\centering
\begin{overpic}[width=0.4\textwidth]{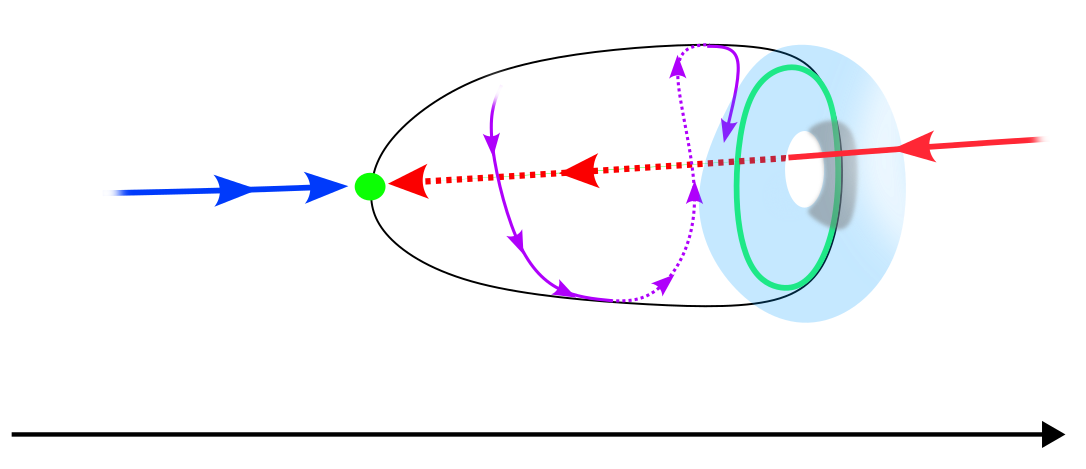}
\put(990,20){$\tau$}
\end{overpic}
\caption{\textit{A (partial) scheme describing the behavior of the Entropy flow on the Ruelle-Takens-Newhouse route to chaos, where the green dot denotes Hopf bifurcation $(0,0)$, the blue arc (and flowline) denotes $\gamma_1$ while the red one denotes $\gamma_2$. The purple flow line is tangent to $\mathcal{S}_1$ and tends to $\mathcal{T}$, which is glued  to $\mathcal{S}_1$ at  Neimark-Sacker bifurcation orbit.}}
\label{RT2}
\end{figure}

We now briefly discuss the dynamical meaning of Corollary \ref{ruelletakens}. We firstly recall the basic motivation behind the Ruelle-Takens-Newhouse route to chaos. As originally envisioned in \cite{LL}, one would expect that in an infinite dimensional dynamical system $\mathbb{T}^2$ would bifurcate again into an aperiodic motion on a $3$-torus $\mathbb{T}^3$, which would then bifurcate into an aperiodic motion on a $4$-torus, $\mathbb{T}^{4}$ and so on. In practice, as proven in \cite{RT}, in a finite dimensional manifold an aperiodic motion on $\mathbb{T}^k$, $k\geq3$ is extremely unstable and can be $C^k$-approximated by flows that are not Morse-Smale - and when $k\geq4$, by flows that are Axiom A (see Propositions 9.1, 9.2 in \cite{RT}, and \cite{SRT}). In other words, one would not expect a cascade of Hopf bifurcations to progress much beyond the Neimark-Sacker bifurcation. More precisely, one would expect a collapse of the attracting two-dimensional torus into a complex motion either before or shortly after the bifurcation expanding $\mathbb{T}^2$ to $\mathbb{T}^3$.\\

In this context, Corollary \ref{ruelletakens} should be interpreted as follows - the motion of the Entropy flow on $\mathcal{S}_1$ propels initial conditions near it towards Neimark-Sacker bifurcation, i.e., the place where $\mathbb{T}^2$ is formed. Similarly, on $\mathcal{T}$, the three manifold corresponding to the invariant torus, the Entropy flow also pushes away from Neimark-Sacker bifurcation, as illustrated in Figures \ref{RT} and \ref{RT2}. In light of the above, we may summarize our analysis by the heuristic that the Entropy flow pushes the Ruelle-Takens-Newhouse scenario towards the complex motion. At this point we remark one could probably continue this analysis and ask how exactly is the torus destroyed. In general, the mechanisms for the destruction of an invariant torus are given by the Afraimovich-Shilnikov Theorem (see either \cite{AS}, Chapter 11.7 of \cite{SSTC}, Chapter 2.1.6 of \cite{AAIS}). To the best of our knowledge, this result is not directly related to the Ruelle-Takens-Newhouse route to chaos - hence, as observed numerically in \cite{MuChu}, the Ruelle-Takens-Newhouse route to chaos could possibly be connected with the Afraimovich-Shilnikov Theorem. 

\subsection{The Intermittency route to chaos} 
\label{intermittency}

We now study behavior around another bifurcation phenomenon - the Intermittency route to chaos, originally discovered in \cite{PP}. To begin, we first recall the ideas of \cite{PP} and the description and the classification of Intermittency types, following \cite{TI} and Chapter 6.2 in \cite{Gidea}. To this end, let $\dot{s}=F_\tau(s)$ denote a $C^1$ one--parameter family of vector fields on a manifold $M$ of dimension $n\geq3$. Without any loss of generality, in this Subsection we will always implicitly assume the parameter $\tau\in\mathbb{R}$ - as our analysis is local in nature, it would apply in other scenarios as well. We say there is an \textbf{Intermittency route to chaos} if the following conditions are satisfied (see the illustration in Figure \ref{inter}):
    \begin{itemize}
        \item There exists a periodic orbit $T_0$ for $F_0$ bifurcating at $\tau=0$ in one of the three following ways (which we take as definitions for the Intermittency type):
    \begin{enumerate}
         \item \textbf{Type I Intermittency} - $T_0$ is a saddle node bifurcation, i.e., when $\tau$ crosses into $(0,\infty)$, the orbit $T_0$ splits into two periodic orbits $T_1$ and $T_2$ (generically, an attractor and a saddle). Similarly, when $\tau$ crosses into $(-\infty,0)$, $T_0$ disappears.
        \item \textbf{Type II} - $T_0$ is a Neimark-Sacker bifurcation orbit, s.t. an invariant Torus is created when $\tau$ crosses into $(0,\infty)$. $T_0$ persists as $\tau$ crosses into $(-\infty,0)$.
        \item \textbf{Type III} - $T_0$ is a "reverse" period doubling bifurcation. In other words, there exists a periodic orbit $T_\tau$ for $F_\tau$ which persists for $\tau\in(0,\epsilon)$ (for some $\epsilon>0$) and undergoes a period halving bifurcation as $\tau$ crosses into $(-\infty,0)$.
    \end{enumerate}
        
        \item Let $N$ be any small neighborhood of $T_0$ in $M$. In either one of the three cases above, when one considers the dynamics of $F_\tau$, $\tau\in(-\epsilon,0)$ in $N$ (for some small $\epsilon>0$), there are no periodic orbit in $N$, the trajectory of any initial condition $s\in N$ w.r.t. $F_\tau$ would forever alternate between the following phases:
        
        \begin{enumerate}
            \item \textbf{Laminar phases}, where the motion of $s$ w.r.t. $F_\tau$ is trapped in $N_\tau$ until leaving it and entering: 
            \item \textbf{Bursts of a-periodicity}, where the motion of the trajectory of $s$ w.r.t. $F_\tau$ moves aperiodically and erratically in $M\setminus N$ only to eventually return to $N$ and again reenter a laminar phase.\\
\end{enumerate}
    \end{itemize}
\begin{remark}
    Similarly to the period doubling route to chaos, scaling laws also appear around the Intermittency route to chaos. Such results were studied both analytically (see \cite{Kuz},  \cite{OU}) and numerically (see \cite{FKI}, \cite{PGOY}). 
\end{remark}

\begin{figure}[h]
\centering
\begin{overpic}[width=0.4\textwidth]{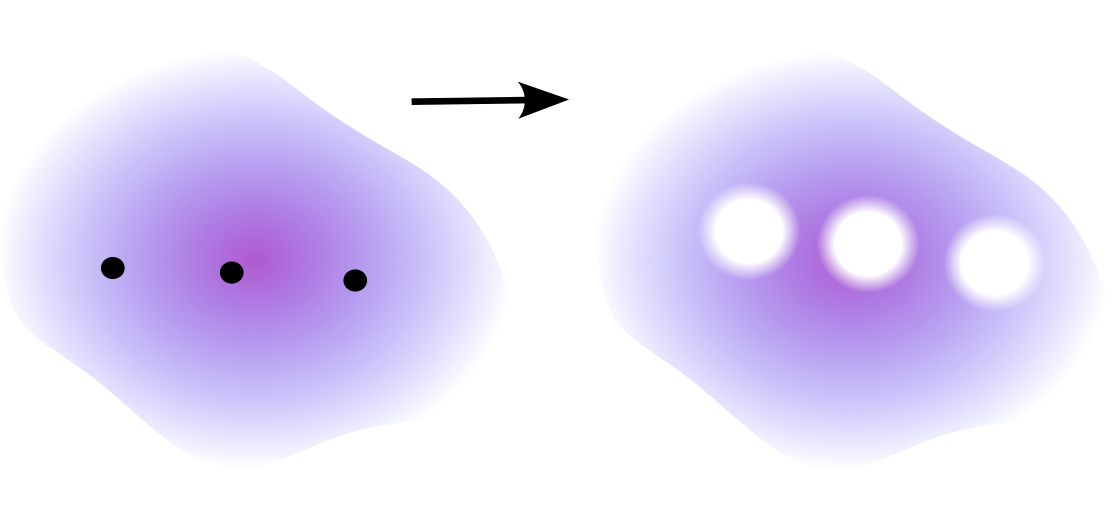}
\put(200,-30){$\tau>0$}
\put(700,-30){$\tau<0$}
\end{overpic}
\caption{\textit{An illustration representing Type $I$ intermittency - a periodic orbit (on the left) vanishes and leaves behind it a "hole" (on the right). The dynamics of initial conditions inside the "hole" are relatively ordered when they remain in the hole (the laminar phase) only to become erratic and disordered as they enter the purple region (a burst of aperiodicity).}}
\label{inter}
\end{figure}

Note the description above is not a mathematical definition per se, but rather a description of a numerical phenomenon - as such, there are several explanations to why disorder suddenly appears at saddle node bifurcations. Inspired by the results of \cite{Shil1}, \cite{den2}, \cite{kryz} and \cite{gool}, we will take the topological approach. In detail, we will study the Entropy flow at the bifurcation scenario described in \cite{Shil1} and \cite{den2} which, as we will discuss, can be used to explain how some Type I Intermittencies arise. Before moving on, we remark that even though we will not address Type II and Type III Intermittencies, provided they are generated by similar mechanisms an analogous results could potentially exist for them as well. We refer to Section 5 in \cite{den2} for the complete details, where the generalization of similar mechanisms to pitchfork and period doubling bifurcations are considered.  \\
\begin{figure}[h]
\centering
\begin{overpic}[width=0.6\textwidth]{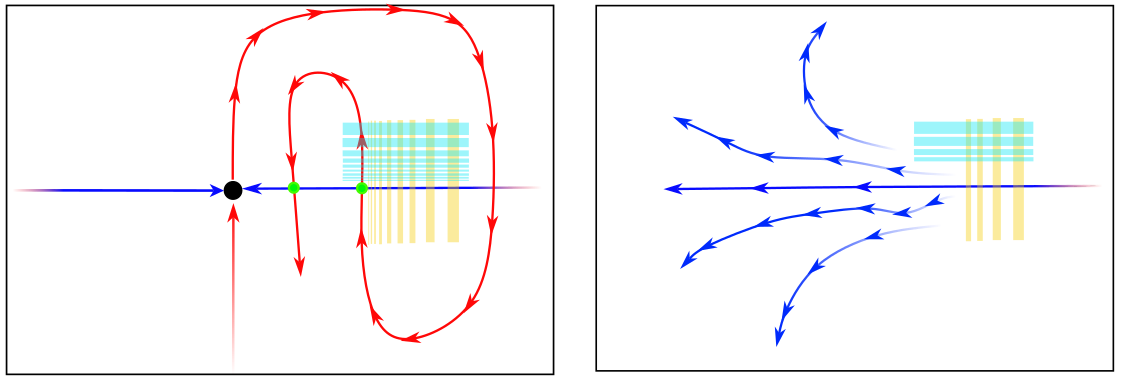}
\put(200,-20){$\tau=0$}
\put(700,-20){$\tau<0$}
\end{overpic}
\caption{\textit{On the left - a homoclinic intersection between $W^c$ and $W^s$ for the vector field $F_0$, represented on the cross-section $S_0$, where the dot represents $\{s_0\}=T_0\cap S_0$ (by Theorem 1.7 in \cite{den2}, Theorem 1 in \cite{Shil1} and \cite{gool}, this implies the existence of shift dynamics). On the right we see the remains of the invariant set after the periodic orbit $T_0$ disappears at $\tau<0$.}}
\label{homoclinic}
\end{figure}

We begin by recalling the results of \cite{Shil1}. Let $\dot{s}=F_\tau(s)$ be a $C^k$ one--parameter family of vector fields on some smooth manifold $M$, where $\tau$ varies in $\mathbb{R}$, $k\geq2$, and the dimension of $M$ is at least $3$. Let us further assume there exists a saddle node bifurcation orbit $T_0$ for the vector field $F_0$ s.t. the following occurs:
\begin{itemize}
    \item As $\tau$ crosses into $(0,\infty)$ the orbit $T_0$ splits into an attracting orbit $T_A$ and a saddle orbit $T_S$ for $F_\tau$. Conversely, $T_0$ vanishes as $\tau$ crosses into $(-\infty,0)$.
    \item $T_0$ has two invariant manifolds: a stable manifold $W^s$ and a center manifold $W^c$, which intersect transversely (see the illustration in Figure \ref{homoclinic}).
\end{itemize}

As proven by L. Shilnikov and V. Lukjanov in Theorem $1$ and Theorem $2$ of \cite{Shil1}, under these assumptions there exists some fixed neighborhood $U$ of $T_0$ in $M$ s.t. the dynamics of $F_\tau$, $\tau\in(-\infty,0)$ satisfy the following:
    \begin{itemize}
        \item There is $\epsilon>0$ s.t. for $\tau\in(-\epsilon,0]$ the invariant set of $F_\tau$ in $U$, denoted by $\Omega_\tau$, is topologically conjugate to a suspended subshift of finite type.
        \item There exist two increasing sequence of negative parameters $\{\tau_i\}_{i=0}^\infty$, $\{\tau'_i\}_{i=0}^\infty$ s.t. the following holds:
        \begin{enumerate}
            \item   $\tau_i\uparrow0$.
            \item $\tau'_i\uparrow0$, $\tau_{i}>\tau'_i>\tau_{i+1}$.
            \item The set $\Omega_\tau$ persists without bifurcating when $\tau$ varies in $[\tau_i,\tau'_{i}]$. Moreover, for such $\tau\in[\tau_i,\tau'_i]$ the set $\Omega_\tau$ is conjugate to some suspended subshift on $k_i$ symbols, denoted by $\Sigma_i$.
            \item For all $\tau\in(\tau'_i,\tau_{i+1}]$, the dynamics of $\Omega_\tau$ includes some invariant set, a strict subset of $\Omega_\tau$ conjugate to the suspension of $\Sigma_i$ - i.e., the dynamics of $\Omega_\tau$ are conjugate to the suspension of the subshift $\Sigma_i$ precisely when $\tau\in[\tau_i,\tau'_i]$.
            \item We have $\Sigma_i\subseteq\Sigma_{i+1}$, i.e., the complexity of $\Omega_\tau$ only increases as $\tau\to0$. In particular, the dynamics of $F_0$ in $\Omega_0$ are conjugate to some suspended subshift denoted by $\Sigma_\infty$ s.t. for all $i>0$ we have $\Sigma_i\subseteq\Sigma_\infty$. Moreover, the set $\Omega_0$ persists as $\tau$ is varied in $[0,\epsilon]$ (for some $\epsilon>0$).\\
        \end{enumerate}
            \end{itemize}

  From now on we will refer to such saddle node bifurcation scenarios as \textbf{homoclinic saddle nodes}. It is easy to see why Intermittency-like phenomena exist around homoclinic saddle node bifurcations: namely, as $T_A$ and its basin of attraction are destroyed at the saddle node bifurcation they are replaced with a suspended subshift of finite type. As proven in \cite{den2}, under certain genericity conditions the set $\Omega_\tau$, $\tau<0$, will be composed of a finite number of rectangles stretched on one another as illustrated in Figure \ref{homoclinic} (for a proof, see Theorem 1.7 in \cite{den2}) - in contrast, the set $\Omega_0$ would be generated by infinitely many rectangles stretched on one another and accumulating on the homoclinic intersection (and hence, also on $T_0$). In other words, as $\tau$ crosses into $(-\infty,0)$ only finitely many of these rectangles remain, which implies initial conditions $s\in U$ must have intermittent behavior as they flow close to $\Omega_\tau$ and then away from it, where $T_0$ used to be, i.e., the burst of aperiodicity and the laminar phase, respectively. With that image in mind, we now study how this behavior is seen by the Entropy flow:
\begin{theorem}
      \label{heteroclinicnets}
      Let $\dot{s}=F_\tau(s)$, $\tau\in(-\infty,\infty)$, be a $C^k$ one--parameter family where $(s,\tau)\in M\times\mathbb R$ and $k\geq2$. Assume there exists a homoclinic saddle node orbit $T_0$ for the vector field $F_0$. Then, given any neighborhood $N$ of $T_0\times\{0\}$ in $M\times\mathbb{R}$ and given any Entropy flow defined by the vector field $E(s,\tau)=(F_\tau(s)+V(s,\tau),P(s,\tau))$, the following holds:
      
      \begin{itemize}
          \item There exists a countable collection of open sets in $M\times\mathbb{R}$, $\{O_i\}_i$, accumulating on $T_0\times\{0\}$, s.t. for all $i$ the drift function $P$ takes both positive and negative values on $O_i$. In particular, the drift function $P$ is non-zero at every neighborhood of $T_0\times \{0\}$ in $M\times I$.
          \item $T_0\times\{0\}$ is the accumulation set of countably many mutually disjoint invariant sets for the Entropy flow, $\{\mathcal{I}_i\}_i$, such that on each $\mathcal{I}_i$ the dynamics of the Entropy flow are conjugate to a suspended subshift. Moreover, for all $i$ $\mathcal{I}_i\subseteq O_i$, and the closer $\mathcal{I}_i$ is $T_0\times\{0\}$ the more complicated are the dynamics on it.
      \end{itemize}
      \end{theorem}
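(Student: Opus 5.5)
The plan is to read the Shilnikov--Lukjanov structure of the homoclinic saddle node as a sequence of completely isolated invariant sets, and then apply Definition \ref{def6} to each of them. For every $i$ I would set
\[
A^i=\bigcup_{\tau\in(\tau'_{i+1},\tau_{i+1})}\Omega^{(i)}_\tau\times\{\tau\},
\]
or more precisely the maximal parameter window containing $[\tau_i,\tau'_i]$ on which the suspended subshift $\Sigma_i$ persists without bifurcating. By Theorems 1 and 2 of \cite{Shil1}, the dynamics on these slices are orbitally equivalent to the suspension of $\Sigma_i$. Using the hyperbolic rectangle structure of Theorem 1.7 in \cite{den2}, I would construct isolating neighborhoods $N_\tau\subseteq U$ that vary continuously in $\tau$, are disjoint from $Per$ and from $\overline{Fix}$, and whose maximal invariant set is exactly $\Omega_\tau$ restricted to the finitely many rectangles.

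The first step is to verify that each $A^i$ is a completely isolated invariant set in the sense of Subsection \ref{sec3}. This requires three checks:
\begin{itemize}
\item the window $(\tau^-_i,\tau^+_i)$ is a proper subinterval, since $\Sigma_i$ is strictly contained in $\Sigma_{i+1}$ and the dynamics therefore change at both ends;
\item the window is maximal;
\item $N_{A^i}$ misses $Per\cup\overline{Fix}$.
\end{itemize}
The third check is the main obstacle. Shift dynamics are densely filled with saddle periodic orbits, and these are regular, so they lie in $Reg_1$. The point is that they are not \emph{completely isolated}, because they accumulate on each other. Hence they are excluded from $Per$ by Definition \ref{isolationdef}. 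I would argue this carefully: every saddle orbit in $\Omega_\tau$ is approximated by other periodic orbits of the shift, so no neighborhood of it meets $\overline{Reg_1\cup Reg_2}$ only in its own component. The sets $N_{A^i}$ can then be taken pairwise disjoint and shrinking toward $T_0\times\{0\}$, because the windows accumulate at $0$ and the rectangles accumulate on the homoclinic intersection, and hence on $T_0$. I would take $O_i=N_{A^i}$, intersected with $N$ for large $i$.

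With both endpoints $\tau^\pm_i$ interior to $\mathbb{R}$, the first case of Definition \ref{def6} forces $P_{A^i}$ to be positive near $M\times\{\tau^-_i\}$ and negative near $M\times\{\tau^+_i\}$. This holds for every admissible behavior, so $P$ takes both signs on each $O_i$ for any Entropy flow. Since the $O_i$ accumulate on $T_0\times\{0\}$, $P$ is nonzero in every neighborhood of it, which gives the first bullet.

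For the second bullet, Definition \ref{def6} provides an equilibrium slice $\tau_0^{(i)}$ with $P^{-1}(0)\cap N_{A^i}=N_{\tau_0^{(i)}}\times\{\tau_0^{(i)}\}$. By the convention $V\equiv0$ on $Inv$, the Entropy flow on that slice coincides with $(F_{\tau^{(i)}_0},0)$. I would therefore set
\[
\mathcal{I}_i=\Omega_{\tau_0^{(i)}}\times\{\tau_0^{(i)}\}.
\]
This set is invariant, and on it the dynamics are conjugate to the suspension of $\Sigma_i$. The sets are mutually disjoint because the $O_i$ are, and they accumulate on $T_0\times\{0\}$. Finally, $\Sigma_i\subseteq\Sigma_{i+1}$, with growing topological entropy, shows that the dynamics become more complicated as $i\to\infty$, i.e.\ as $\mathcal{I}_i$ approaches $T_0\times\{0\}$.
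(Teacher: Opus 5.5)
This is essentially the paper's own argument. The Shilnikov--Lukjanov persistence windows give components of $Inv$, and the first case of Definition \ref{def6} forces $P$ to change sign on each isolating neighbourhood. The equilibrium slice, where $P=0$ and $V\equiv0$, then carries an invariant copy of the suspension of $\Sigma_i$, and complexity grows because $\Sigma_i\subsetneq\Sigma_{i+1}$.

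There are two small differences. First, on isolation from $Per$ you make explicit that the horseshoe's own saddle orbits are excluded from $Per$ because they are not completely isolated; the paper's step is instead to shrink the rectangles so that no outside periodic orbits accumulate on the boundary of the block, which you assert but do not argue. Second, you should not intersect $O_i$ with a small $N$: $\Omega_\tau$ follows the homoclinic excursions of $T_0$, so the $O_i$ only accumulate on $T_0\times\{0\}$ rather than shrinking into it, and the intersection would break $\mathcal{I}_i\subseteq O_i$.
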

\begin{proof}
  Consider the sets $\Omega_\tau$ described above, where $\tau\in[\tau_i,\tau'_i]$. These sets persist without bifurcating precisely in the parameter interval $[\tau_i,\tau'_i]$, and, moreover, at such parameters they form the invariant set of $F_\tau$ in $U$. We now claim the sets $\Omega_\tau\times\{\tau\}$,  $\tau\in(\tau_i,\tau'_i)$ are completely isolated from the sets $Per$ and $Fix$. As they do not include fixed points, the sets $\Omega_\tau$ cannot be approximated by fixed points in $Fix$. To prove the analogue for $Per$, we first recall that Theorem 2 in \cite{Shil1} was originally proven for some first return map w.r.t. $f_\tau$ of a cross-section $S\subseteq U$, where the invariant set was created by rectangles in $S$ being stretched over one another as in the illustration in Figure \ref{homoclinic}. By possibly replacing these rectangles by slightly smaller subrectangles inside the original ones (if necessary) we can ensure the first return map $f_\tau:S\to S$ has no periodic orbits accumulating on the boundary of the rectangles, all without changing the invariant set (see the illustration in Figure \ref{shrinking}). As $\Omega_\tau$ is trapped inside the suspension of these rectangles, all the periodic orbits intersecting $S$ for $F_\tau$ are in $\Omega_\tau$, hence for $\tau\in(\tau_i,\tau'_i)$ the set $\Omega_\tau\times\{\tau\}$ is isolated from $Per$ - all in all, it is isolated from both $Per$ and $Fix$ in $M\times I$. It  follows that for $i\geq1$ the set $\Omega_i=\cup_{\tau\in(\tau_i,\tau'_i)}\Omega_\tau\times\{\tau\}$ is a continuum of completely isolated invariant sets for $\dot{s}=F_\tau(s)$ (see the discussion before Definition \ref{def6}). 

\begin{figure}[h]
\centering
\begin{overpic}[width=0.5\textwidth]{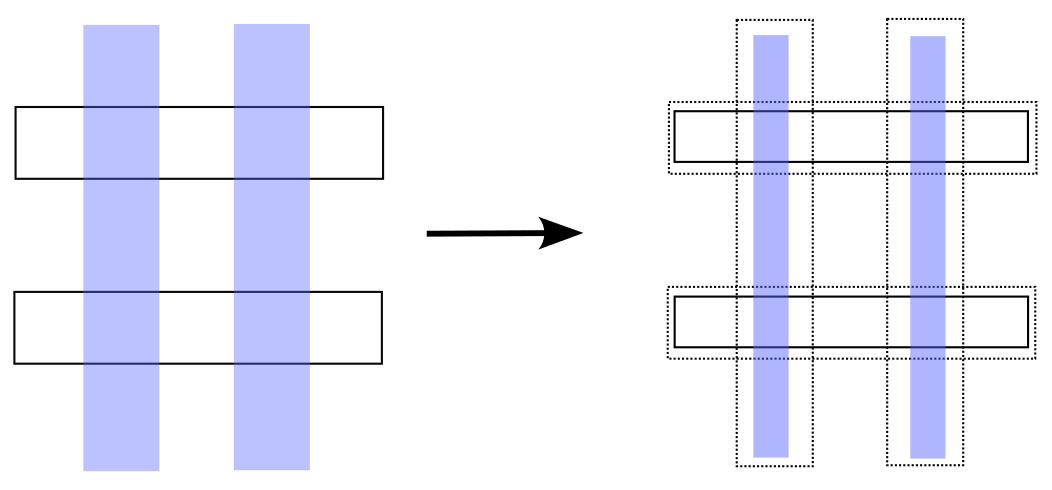}

\end{overpic}
\caption{\textit{Removing periodic orbits from the boundary - on the left we have a map of two rectangles stretched over one another, that has certain symbolic dynamics. On the right we have the same map restricted to subrectangles (the dashed lines on the right represent the original rectangles and their image). It is easy to see, the two maps have the same invariant set inside the rectangles, and if the left one has periodic orbits accumulating on the boundary of its rectangles, the one on the right does not.}}
\label{shrinking}
\end{figure}

To continue, choose some Entropy flow for $\dot{s}=F_\tau(s)$ in $M\times\mathbb{R}$ directed by the vector field $(s,\tau)\to(F_\tau(s)+V(s,\tau),P(s,\tau))$. By the definition of the Entropy flow (see Definition \ref{generalentropy}) and the definition of admissible behavior around completely isolated invariant sets (see Definition \ref{def6}) we know there exist open sets in $M\times I$, $O_i$, s.t. $\Omega_i\subseteq O_i$ and the drift function $P$ does not vanish identically in $O_i$ - conversely, by Definition \ref{def6} we also know the energy transition map, $V$, does vanish identically in $O_i$. Moreover, as $(\tau_i,\tau'_i)$ is a bounded interval on the real line we know that for $(s,\tau)\in O_i$ close to $M\times\{\tau'_i\}$ the function $P$ is negative, while on $(s,\tau)\in O_i$ close to $M\times\{\tau_i\}$ it is positive. By the definition of the Entropy flow we now conclude there exists some $\tau\in(\tau_i,\tau'_i)$ s.t. $P^{-1}(0)\cap O_i$ is non-empty, and forms an $n$-dimensional set inside the $n+1$-dimensional set $O_i$. 

As the sets $\Omega_i$, $i\geq0$ accumulate on $T_0\times\{0\}$ so do their neighborhoods $O_i$. Moreover, since the vanishing set of $P$ in $O_i$ is $n$-dimensional, it follows that on every neighborhood of $T_0\times\{0\}$ in $M\times\mathbb{R}$ the drift function $P$ is not identically $0$. This proves the first assertion of the Theorem - to conclude the proof it remains to show $T_0\times\{0\}$ is the accumulation set of countably many invariant sets $\mathcal{I}_i$ for the Entropy flow, on which its dynamics are conjugate to a suspended subshift. That, however, is immediate: consider $\mathcal{I}_i=\Omega_\tau\times\{\tau\}=P^{-1}(0)\cap\Omega_i$, for some appropriate $\tau\in(\tau_i,\tau'_i)$ (by the above it exists). Since $V$ vanishes identically in $O_i$ and because $P$ vanishes on $M\times\{\tau\}\cap O_i$, the set $\mathcal{I}_i$ is an invariant set for the Entropy flow, on which its dynamics coincide with the Laminar flow given by Equations \ref{laminar}. As such, since the dynamics of $F_\tau$ on $\Omega_\tau$ are conjugate to a suspended subshift $\Sigma_i$ w.r.t. to the flow of $F_\tau$ on $M$,  the dynamics of the Entropy flow on $\mathcal{I}_i$ are also conjugate to the suspension of $\Sigma_i$. Consequently, because the sets $\Omega_i$ accumulate on $T_0\times\{0\}$ (by their definitions) we conclude that the invariant sets $\{\mathcal{I}_i\}_i$ for the Entropy flow also accumulate on $T_0\times\{0\}$. 

Finally, as described above, the flow of $F_\tau$ on $\Omega_\tau$, $\tau\in[\tau_i,\tau'_i]$ is conjugate to the suspension of the subshift $\Sigma_i$ and  $\Sigma_i$ is a strict subset of $\Sigma_{i+1}$. This yields that the dynamics on $\Omega_\tau$ is conjugate to some strict subset of $\Omega_{\tau'}$, $\tau'\in[\tau_{i+1},\tau'_{i+1}]$. Extending this reasoning to the Entropy flow, we conclude that the dynamics of the Entropy flow $\mathcal{I}_i$ is conjugate to its dynamics on some strict subset of $\mathcal{I}_{i+1}$, which implies that the dynamical complexity of $\mathcal{I}_i$ increases as $i\to\infty$. In other words, $T_0\times\{0\}$ is the accumulation point of invariant sets for the Entropy flow of increasing dynamical complexity. As we have chosen the Entropy flow for $\dot{s}=F_\tau(s)$ arbitrarily, the Theorem now follows.
\end{proof}
Before we conclude this Subsection, we remark that Theorem \ref{heteroclinicnets} also establishes a hierarchy of complexity for the invariant sets $\{\mathcal{I}_i\}_i$ based on how close they are to $T_0\times\{0\}$, i.e., to the saddle node bifurcation orbit. We will return to the idea of hierarchies of complexity of invariant sets for the Entropy flow in Section \ref{turbulence}, where we will see it arising naturally in the Shilnikov homoclinic scenario (see Corollary \ref{turbulententropy}). Heuristically, it is easy to see why such hierarchies arise, at least in the context of homoclinic saddle nodes: namely, because certain dynamics become persistent in some parameter sub-interval (i.e., $[\tau_i,\tau'_i]$), until bifurcating and becoming more complex when $\tau$ leaves the said interval. We further remark that by the results of \cite{PY2} and Theorem 3 in \cite{Shil1}, one should generically expect the periodic orbits added as $\tau$ is varied in $(\tau'_i,\tau_{i+1})$ to arise via period doubling and saddle node bifurcations. As such, based on the above, we would expect $P$ to take non-zero values also in $\mathcal{U}=U\times(\tau'_i,\tau_{i+1})$. That being said, this would also very much depend on how the set $Per$ intersects $\mathcal{U}$, as the bifurcations there can unfold more than one way -  for details, see Theorems 3, 4 and 5 in \cite{Shil1}.

 \section{Describing the dynamics of bifurcations}
\label{bifdyn}
Having studied the behavior of the Entropy flow in the three canonical routes to chaos, in this Section we study the topological connections between bifurcations and the dynamics of the Entropy flow. Our motivation for doing so is to study how the topology, dimension of a given manifold $M$ and the parameter space constrain the behavior of the Entropy flow. To motivate why this question is interesting, let $\dot{s}=F_\tau(s)$ denote a $C^1$ one--parameter family of vector fields defined on, say, $S^2$, where $\tau$ varies in $S^1$. For discussion's sake, let us further assume that for all $\tau\in S^1$ the vector field $F_\tau$ has repelling orbits $T^\tau_1$ and $T^\tau_2$ that persist without bifurcating as we vary $\tau$ in $S^1$. Let $C_1$ and $C_2$ denote the components of $Per$ corresponding to these orbits - as we will show later on, these sets act as repellers for any Entropy flow on $S^2\times S^1$ (see Proposition \ref{decomposition}). Note that $S^2\times S^1$ is a closed three-manifold and that $C_1$ and $C_2$ are homeomorphic to two-dimensional tori embedded in $S^2\times S^1$. This heuristically implies that the dynamics of the Entropy flow on each component $C$ of $(S^2\times S^1)\setminus(C_1\cup C_2)$ is constrained by the topology of $C$ and the behavior of the Entropy flow around $C_1$ and $C_2$. In other words, the behavior of the Entropy flow on $C_1$ and $C_2$ affects the possible bifurcations the family $\dot{s}=F_\tau(s)$ can undergo inside $C$. In Subsection \ref{2dtheory} we will make this heuristic precise using the Conley index Theory (see Proposition \ref{decomposition} and Theorem \ref{bifcodesth1}).\\

This Section is organized as follows. We begin in Subsection \ref{1d}, where, for completeness, we study the Entropy flow for $C^1$ families of first-order ordinary differential equations on the real line (see Proposition \ref{general1}). Following that, in Subsection \ref{2dtheory} we study in depth how topology forcing bifurcations - which we do by applying the Conley index Theory to describe the dynamics of the Entropy flow (see Proposition \ref{decomposition} and Theorem \ref{bifcodesth1}). We conclude with Subsection \ref{lorenz}, where the Conley index is used for  the Entropy flow of the Lorenz system.

\subsection{The curious case of dimension $1$}
\label{1d}
In this Subsection, for completeness, we study the Entropy flow in the case when $M$ is a connected one-dimensional smooth manifold - i.e., $M$ is either $\mathbb{R}$ or $S^1$. Our goals here are to show the interplay between the topology of $M\times I$ to the dynamics of the Entropy flow - as such, we will restrict ourselves to the specific case when the product space $M\times I$ is $\mathbb{R}^2$, i.e., $M=I=\mathbb{R}$. In addition, we will consider the Entropy flow only for a very specific collection of $C^1$ families. To introduce it, let $\mathcal{F}$ denote the collection of $C^1$ one--parameter families $\dot{s}=F_\tau(s)$ defined for $(s,\tau)\in\mathbb{R}^2$ where the following is satisfied:
\begin{itemize}
    \item Every component $G_\alpha\subseteq Fix$ can be parameterized by $\{(x_\tau,\tau)|\tau\in J\}$ for some open connected set $J\subseteq I$. In addition, we also require $\frac{dF_\tau(x_\tau)}{ds}\ne0$, i.e., $x_\tau$ is either a sink or a source.
    \item Whenever the set $\{(x_\tau,\tau)|F_\tau(x_\tau)=0\}\setminus Fix$ is non-empty, it is composed only of saddle node and pitchfork bifurcations of attractors and repellers. In other words, all the fixed points for $\dot{s}=F_\tau(s)$ are sinks and sources that are created and/or destroyed at either saddle node or pitchfork bifurcations.\\
\end{itemize}

To (partially) heuristically justify our motivation in restricting our attention to such one--parameter families, we first recall a particular case of Thom's Theorem (see \cite{zeeman} for a proof), which we state here in a way that is suitable for our needs:
\begin{theorem}
    \label{thom} Let $F:\mathbb{R}^2\to\mathbb{R}$, $F(s,\tau)=F_\tau(s)$ be $C^\infty$. Then generically the set $M_f=\{(x,\tau)|\nabla F=0\}$ is a manifold. Moreover, setting $\pi:M_f\to\mathbb{R}$ as the projection $\pi(x,\tau)=\tau$, every singularity of $\pi$ in $M_f$ would be a fold (see the illustration in Figure \ref{fold}).
\end{theorem}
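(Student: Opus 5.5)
The statement is the special case of Thom's transversality/catastrophe theorem for one-parameter families of gradient-type scalar equations on the line. The plan is to read ``$\nabla F=0$'' as the vanishing of the $s$-derivative: consider $G(s,\tau)=\partial_s F(s,\tau)$, so that $M_f=G^{-1}(0)$. We then show two things: generically $0$ is a regular value of $G$, and generically the projection $\pi|_{M_f}$ has only Morse-type (fold) critical points.

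The first step is the manifold structure. For the regular value claim we will use the parametric transversality theorem (Thom's jet transversality in its simplest form). Embed $F$ in the finite-dimensional perturbation family $F_{a,b,c}(s,\tau)=F(s,\tau)+as+bs^2/2+c\tau s$. Then $\partial_s F_{a,b,c}=G+a+bs+c\tau$. The map $(s,\tau,a,b,c)\mapsto G+a+bs+c\tau$ is a submersion, since its derivative in $a$ is $1$. Hence for almost every $(a,b,c)$, $0$ is a regular value of $G_{a,b,c}$, and $M_f$ is a smooth $1$-manifold. Genericity in the $C^\infty$ Whitney topology would then follow from the standard openness and density argument on compact pieces, using a partition of unity as in the proof of Proposition \ref{def1}.

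The second step is to identify the singularities of $\pi$. A point $(x,\tau)\in M_f$ is singular for $\pi$ exactly when the tangent line of $M_f$ is vertical, that is, when $\partial_s G=\partial^2_s F=0$ there. At such a point the regular value condition forces $\partial_\tau G\neq0$. By the implicit function theorem, $M_f$ is then locally a graph $\tau=h(s)$ with $h'(x)=0$. A fold means $h''(x)\neq0$. Differentiating $G(s,h(s))=0$ twice gives
\begin{equation*}
h''(x)=-\frac{\partial_s^3F(x,\tau)}{\partial_\tau\partial_sF(x,\tau)},
\end{equation*}
so the fold condition is exactly $\partial_s^3F\neq0$ at points where $\partial_sF=\partial_s^2F=0$.

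The third step, which we expect to be the main obstacle, is to show this cusp-avoidance condition generically. We will use transversality in the $3$-jet space: the set $\{\partial_sF=\partial_s^2F=\partial_s^3F=0\}$ imposes three independent conditions on the $s$-jets. This defines a codimension-$3$ submanifold $\Sigma$ of the jet bundle over the $2$-dimensional base $\mathbb{R}^2$. Thom's jet transversality theorem gives that for generic $F$ the $3$-jet extension $j^3F$ is transverse to $\Sigma$, and since $3>2$ this means $j^3F$ misses $\Sigma$. Concretely, we can again verify it with the polynomial perturbation $F+as+bs^2/2+cs^3/6$. The map $(s,\tau,a,b,c)\mapsto(\partial_sF,\partial_s^2F,\partial_s^3F)$ is a submersion onto $\mathbb{R}^3$, so for almost every $(a,b,c)$ the preimage of $0$ has dimension $2-3<0$, i.e.\ it is empty. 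Intersecting the two residual sets from the first and third steps gives a generic set of $F$ for which both conclusions hold. This reproduces the picture in Figure \ref{fold}, where $M_f$ is a curve whose only turning points over the $\tau$-axis are quadratic folds, i.e.\ saddle-node bifurcations.
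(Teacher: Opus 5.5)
Your proof is correct. It takes a different route from the paper only in the sense that the paper gives no argument here: it quotes the one-state-variable, one-parameter case of Thom's classification theorem and refers to \cite{zeeman}.

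You instead prove exactly this case directly by parametric jet transversality. Explicit polynomial unfoldings show two things: generically $0$ is a regular value of $\partial_sF$, and the codimension-$3$ locus $\partial_sF=\partial_s^2F=\partial_s^3F=0$ is missed over the $2$-dimensional base. The formula $h''=-\partial_s^3F/\partial_\tau\partial_sF$ then shows that every vertical tangency of $M_f$ is a nondegenerate turning point of $\pi$.

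This is the same mechanism as the general proof, namely transversality of $j^kF$ to the $A_k$ strata. In fact your two generic conditions together are equivalent to a single one: $0$ is a regular value of $(s,\tau)\mapsto(\partial_sF,\partial_s^2F)$. At points where $\partial_sF=\partial_s^2F=0$, the Jacobian of this map has determinant $-\partial_\tau\partial_sF\cdot\partial_s^3F$.

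The cited theorem gives more than the statement needs. It yields local equivalence of the unfolding to $s^3+\tau s$ near each fold, local stability of the catastrophe map under perturbation of $F$, and the higher-codimension classification. Your route is self-contained and needs no determinacy or Mather-type stability theory. It also makes explicit that $\nabla F$ has to be read as $\partial_sF$; the full gradient in $(s,\tau)$ would make $M_f$ generically discrete.

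Two minor points. First, the polynomial perturbations are not small in the Whitney $C^\infty$ topology on $\mathbb{R}^2$. So the localization step you mention (bump functions equal to $1$ on compacta, openness on compacta, and the Baire property) is genuinely needed, not cosmetic. Second, the single family $F+as+bs^2/2+cs^3/6$ already delivers both generic properties at once, since its $s$-derivative is a submersion in $a$ alone.
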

\begin{figure}[h]
\centering
\begin{overpic}[width=0.3\textwidth]{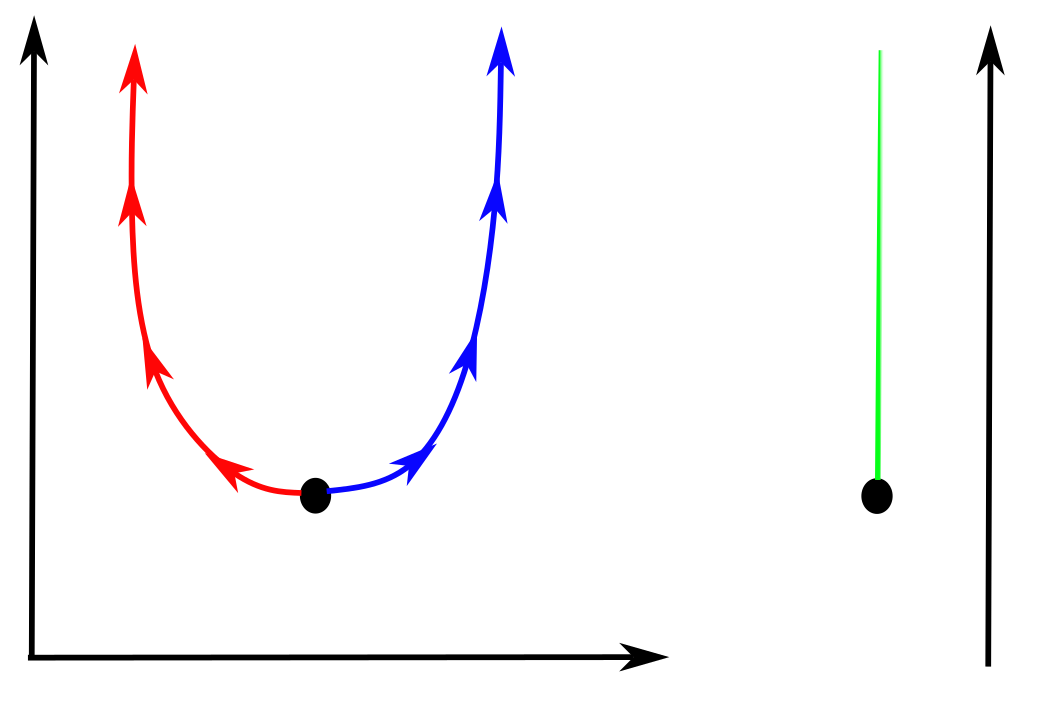}
\put(650,30){$x$}
\put(930,660){$\tau$}
\put(20,670){$\tau$}
\end{overpic}
\caption{\textit{On the left - a saddle node bifurcation and the direction of the Entropy flow on it. On the right - the projection of this set to the $\tau$ line. The black dot on the left denotes the bifurcation point, and the black dot on the right denotes the singularity.}}
\label{fold}
\end{figure}

When instead of $F:\mathbb{R}^2\to\mathbb{R}$ we write $\dot{s}=F_\tau(s)$, a fold catastrophe would correspond to a saddle node bifurcation where two fixed points collide and vanish (see Figure \ref{fold}). In addition, the condition that there exists an arc $\gamma$ parameterized by $\{(x_\tau,\tau)|\tau\in J\}$ s.t. for all $\tau\in J$ we have both $F_\tau(x_\tau)=0$ and $\frac{dF_\tau(x_\tau)}{ds}=0$ is unstable, i.e., can be removed by arbitrarily small smooth perturbations. These two facts imply that the family of $C^\infty$ families in $\mathcal{F}$ are $C^\infty$-dense in $C^\infty(\mathbb{R}^2,\mathbb{R})$. As such, because we allow $C^1$ families in $\mathcal{F}$ to have both pitchfork and fold singularities (and because $C^\infty$ maps are dense in $C^1$ maps), we conclude by Thom's Theorem that any $C^1$ one--parameter family $\dot{s}=F_\tau(s)$ where $(s,\tau)\in\mathbb{R}^2$ can be $C^1$-approximated by sequences in $\mathcal{F}$ (we will not address the question whether the set $\mathcal{F}$ is also $C^1$ generic in $C^1(\mathbb{R}^2,\mathbb{R})$). We now prove the main result of this Subsection:
\begin{proposition}
\label{general1} Consider a $C^1$ one--parameter family $\dot{s}=F_\tau(s)$ where $(s,\tau)\in\mathbb{R}^2$ and the map $F(s,\tau)=F_\tau(s)$ is in $\mathcal{F}$. Then, we can choose an Entropy flow which has no periodic orbits in $\mathbb{R}^2$. Let us denote the vector field generating the said Entropy flow by $E$ - then, given any initial condition $(s,\tau)$ its trajectory w.r.t. $E$ either tends to some fixed point for $E$ or to $\infty$ (see the illustration in Figure \ref{1dim}).
\end{proposition}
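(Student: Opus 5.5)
Since $M=\mathbb{R}$, there are no periodic orbits of any $F_\tau$, so $Per=\emptyset$, and the only relevant sets are the arcs of $Fix$ together with whatever completely isolated invariant sets $Inv$ exist. On $\mathbb{R}$, a connected invariant set is a fixed point or an interval between fixed points. I would use the freedom in the isolating collection (as in the proof of Lemma \ref{trapping}) to take the neighborhoods $N_\alpha$ as thin tubular strips $\{(s,\tau)\,:\,|s-x_\tau|<\delta(\tau),\ \tau\in J_\alpha\}$ around the arcs $G_\alpha=\{(x_\tau,\tau)\}$ of $Fix$. I would also choose the admissible data so that $P_A\equiv 0$ on any component of $Inv$ not already inside such a strip; if Definition \ref{def6} forbids this, I would shrink $N_A$ into a region where $F_\tau\neq 0$, which gives the same conclusion. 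Then $E=(F_\tau(s),0)$ outside $\cup_\alpha N_\alpha$.

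To exclude closed orbits I would use a Lyapunov/monotonicity argument rather than Poincar\'e--Bendixson directly. Outside $\cup_\alpha N_\alpha$ the flow is laminar, so $\tau$ is constant and $s$ is strictly monotone, because $F_\tau(s)\ne 0$ away from the fixed points, which all lie in $\overline{Fix}$ by the definition of $\mathcal{F}$. Inside a strip $N_\alpha$ two facts hold. First, $P_\alpha$ has a constant sign on each of at most two pieces separated by the equilibrium slice (Definition \ref{admissiblefixed}). Second, because $x_\tau$ is a sink or source ($\partial_sF_\tau(x_\tau)\neq0$), the component $F_\tau+V_\alpha$ can be chosen to keep the sign of $F_\tau$ on each side of $G_\alpha$ once $\delta$ and $V_\alpha$ are small; this is possible since $V_\alpha$ may be taken $C^1$-small and to vanish on $G_\alpha$ up to the tangency correction. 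A closed orbit $\Gamma$ of $E$ in $\mathbb{R}^2$ would bound a disk $D$, and $D$ must contain a fixed point of $E$ of index $+1$. The fixed points of $E$ are exactly the bifurcation points (folds/pitchforks) and the equilibrium points $(x,\tau_0)$. At these, the Jacobian is computed by Proposition \ref{centerdim}: it is $\mathrm{diag}(\partial_sF_{\tau_0},0)$ plus degenerate terms. I would instead argue directly: $\Gamma$ cannot be contained in a single strip, since there $\tau$ is monotone on each side of the equilibrium slice. So $\Gamma$ must cross the laminar region. There $\tau$ is frozen and $s$ moves monotonically toward the nearest strip in the direction of $F_\tau$; a sink strip only absorbs, and a source strip only emits. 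Following $\Gamma$ gives a sequence of transitions, each from a source strip to an adjacent sink strip. Ordering the arcs of $Fix$ at a fixed level, and using that sinks and sources alternate on each horizontal line, this yields a strictly monotone quantity (for instance, the index of the strip in the left-to-right order together with the sign constraints of $P$), and such a quantity cannot return to its initial value.

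For the second assertion, once periodic orbits are excluded, I would apply Poincar\'e--Bendixson on $\mathbb{R}^2$. A bounded forward trajectory has an $\omega$-limit set that is a fixed point, a periodic orbit (impossible), or a cycle of fixed points joined by connecting orbits. Such cycles also need to be ruled out, and I would do this with the same monotonicity/ordering argument applied to heteroclinic loops. Hence every trajectory either converges to a fixed point of $E$ or leaves every compact set, i.e. tends to $\infty$.

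The main obstacle is making the no-cycle argument rigorous near the bifurcation points. There several strips meet, the strips $N_\alpha$ are disjoint but accumulate, and the signs of $P$ prescribed by Definitions \ref{def4} and \ref{def41} (creation and splitting) must be checked case by case to exclude a loop that circles a fold or pitchfork point. I would first treat a single fold and a single pitchfork locally, verifying from the prescribed arrows that the local flow is gradient-like near the point. I would then patch these local pictures with the strip-ordering argument, using local finiteness of bifurcation points, which follows from the genericity description of $\mathcal{F}$.
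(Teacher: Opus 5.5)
Your construction and your exclusion of periodic orbits match the paper: thin strips around the arcs of $Fix$, with $E=(F_\tau,0)$ near their boundaries so that sink strips trap and source strips repel as in Lemma \ref{trapping}, and laminar horizontal motion elsewhere. That part is fine.

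The gap is in the cycle step. You plan to rule out heteroclinic cycles by applying the strip-ordering monotone quantity to the loops themselves, with a local gradient-like check at folds and pitchforks. That claim is false, so the local pictures cannot be patched into it. Take $F_\tau(s)=s\bigl(s^4-2s^2+\tfrac13(\tau^2-1)\bigr)$, which lies in $\mathcal{F}$. It has:
\begin{itemize}
\item sinks on $\sigma=\{s=0,\ |\tau|<1\}$;
\item pitchforks at $Q_\pm=(0,\pm1)$;
\item sources on $\mu=\{s=0,\ \tau<-1\}$;
\item for $s>0$, a sink branch from $Q_-$ meeting the source arc $\rho=\{s>0,\ s^2=1+\sqrt{(4-\tau^2)/3},\ |\tau|<2\}$ in a fold at $B=(1,-2)$.
\end{itemize}
Definitions \ref{def4}, \ref{def41} and \ref{admissiblefixed} force $P<0$ on $\sigma$ below its equilibrium point $e$, and $P<0$ on $\mu$. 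They also force $P>0$ on $\rho$ below its own equilibrium point, whose height is free; put it above $e$. The cycle then arises as follows.
\begin{itemize}
\item $e\to Q_-$ along $\sigma$.
\item $Q_-\to B$: the boundary point of $N_\mu$ at level $\tau=-2$ with $s>0$ flows backward inside $N_\mu$ (with $\tau$ increasing) to $Q_-$. It flows forward horizontally to $B$, because $F_{-2}(s)=s(s^2-1)^2>0$ on $(0,1)$.
\item $B\to e$: the boundary point of $N_\rho$ facing $\sigma$ at the level of $e$ flows backward to $B$. It flows forward horizontally into the slice $P^{-1}(0)\cap N_\sigma$, hence to $e$.
\end{itemize}
So $e\to Q_-\to B\to e$ is a genuine heteroclinic cycle of such an Entropy flow. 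Your monotone quantity breaks exactly at $Q_-$, where the cycle passes from the closure of a sink strip into a source strip through a fixed point.

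This does not contradict the proposition, which only needs that no trajectory has such a cycle as its $\omega$-limit set. You get that by applying your one-way observations to the trajectory rather than to the loop.
\begin{itemize}
\item A trajectory can leave a source strip at most once and never enters one.
\item Once inside a sink strip, it stays there.
\item Hence it is eventually confined to a single strip or a single horizontal line of the laminar region.
\end{itemize}
In either case $\tau$ is eventually monotone or constant. So if the trajectory does not escape, its $\omega$-limit set is a compact invariant set in one level $\tau=\tau^*$ on which $P$ vanishes. In the closure of a strip this means the equilibrium slice or a pinched endpoint, which forces a single fixed point. On a laminar line it means a zero of $F_{\tau^*}$. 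This is essentially the paper's last step: an accumulating trajectory would have to meet $\{P\neq0\}$ infinitely often. It also makes Poincar\'e--Bendixson unnecessary.
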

\begin{proof}

Assume first that we can find an Entropy flow on $\mathbb{R}^2$ corresponding to $\dot{s}=F_\tau(s)$ s.t. it has no periodic orbits. Let us now denote the vector field generating the said Entropy flow by $E$. By the Poincare-Bendixon Theorem we know every bounded trajectory for $E$ would either tend to a fixed point, a homoclinic or a heteroclinic cycle, or to a periodic orbit. As $E$ has no periodic orbits per our assumption,  every orbit must tend to either some fixed point $(x_0,\tau_0)$ for $E$, a homoclinic or a heteroclinic cycle, or to $\infty$. This shows that to prove the assertion there are two things to do: first, prove that we can always find some Entropy flow that has no periodic orbits, and second, prove that for this specific Entropy flow nothing can be attracted to either a homoclinic or a heteroclinic cycle.

We begin by using a topological argument. Given a component $G_\alpha\subseteq Fix$, let us parameterize it by $\{(x_\tau,\tau)|F_\tau(x(\tau))=0, \tau\in(a,b)\}$ (for some $(a,b)\subseteq\mathbb{R}$). Per our assumption, for all $\tau\in(a,b)$, all $x_\tau$ are either sinks or sources, and as we vary $\tau\in(a,b)$, no fixed point in $x_\tau$ changes its type, i.e., no sink becomes a source or vice versa. Without any loss of generality, assume all fixed points on $G_\alpha$ are sinks, and for any $\tau\in(a,b)$ let $I_\tau=(x_\tau-\epsilon_\tau,x_\tau+\epsilon_\tau)$ denote some interval s.t. initial conditions $s\in I_\tau$ are attracted to $x_\tau$ w.r.t. the ODE $\dot{s}=F_\tau(s)$. Conversely, when all the $x_\tau$ are sources we define $I_\tau$, $\tau\in(a,b)$ analogously s.t. all initial conditions $s\in I_\tau\setminus \{x(\tau)\}$ escape $I_\tau$ under the ODE $\dot{s}=F_\tau(s)$. In both cases, if, say, $a$ is finite, we choose $\epsilon_\tau$ s.t. $\lim_{\tau\to a}\epsilon_\tau=0$. Similarly, whenever $b$ is finite, we also require $\lim_{\tau\to b}\epsilon_\tau=0$. We define $N_\alpha=\cup_{\tau\in(a,b)}I_\tau\cup\{\tau\}$ - in particular, we construct $N_\alpha$ s.t. $Fix\cap N_\alpha=G_\alpha$.

We now write $Fix=\{G_\alpha\}_{\alpha}$ and let us consider the isolating collection $\{N_\alpha\}_\alpha$ constructed as described above (note that since $\dot{s}=F_\tau(s)$ is a one-dimensional ODE for all $\tau$, the sets $Per$ and $Inv$ are empty by definition). We now define an Entropy flow directed by a vector field $E$ for which the support of $P$ and $V$ is in $\cup_\alpha N_\alpha$, and on $\partial N_\alpha$ we have $E(s,\tau)=(F_\tau(x),0)$. By Lemma \ref{trapping} and the construction of $N_\alpha$ above, we know that when $G_\alpha$ is an arc of sinks, the trajectory of every initial condition $(s,\tau)\in\mathbb{R}^2$ that enters $N_\alpha$ cannot escape it. In contrast, when $G_\alpha$ is an arc of sources, the trajectory of any initial condition $(s,\tau)\in \mathbb{R}^2$, $(s,\tau)\not\in G_\alpha$ is repelled away from $N_\alpha$. This already proves that there can be no periodic orbits for $E$ intersecting the sets $\{(x,\tau)|V(x,\tau)\ne0\}$ and $\{(x,\tau)|P(x,\tau)\ne0\}$.

Consequently, if $E$ has a periodic orbit in $\mathbb{R}^2$, it must be trapped in the sets $\{(s,\tau)|E(s,\tau)=(F_\tau(s),0)\}$. By Theorem 1 in \cite{Kris} we know that if $T$ is a periodic orbit for $E$, the velocity $\{\dot{s}=0\}$ must switch signs on it - or, in other words, $T$ intersects transversely with the set $\{(s,\tau)|F_\tau(s)+V(s,\tau)=0\}$. As by the above $T$ must be trapped inside the set $\{(s,\tau)|V(s,\tau)=0\}$ we conclude that if $T$ exists, it must intersect transversely with the set $\{(s,\tau)|F_\tau(s)=0, V(s,\tau)=0, P(s,\tau)=0\}$, i.e., it must intersect a fixed point for the vector field $E$. This is a contradiction, hence we conclude the Entropy flow directed by $E$ has no periodic orbits. Therefore, to complete the proof we need to show no trajectory can accumulate on a homoclinic or a heteroclinic cycle.

\begin{figure}[h]
\centering
\begin{overpic}[width=0.4\textwidth]{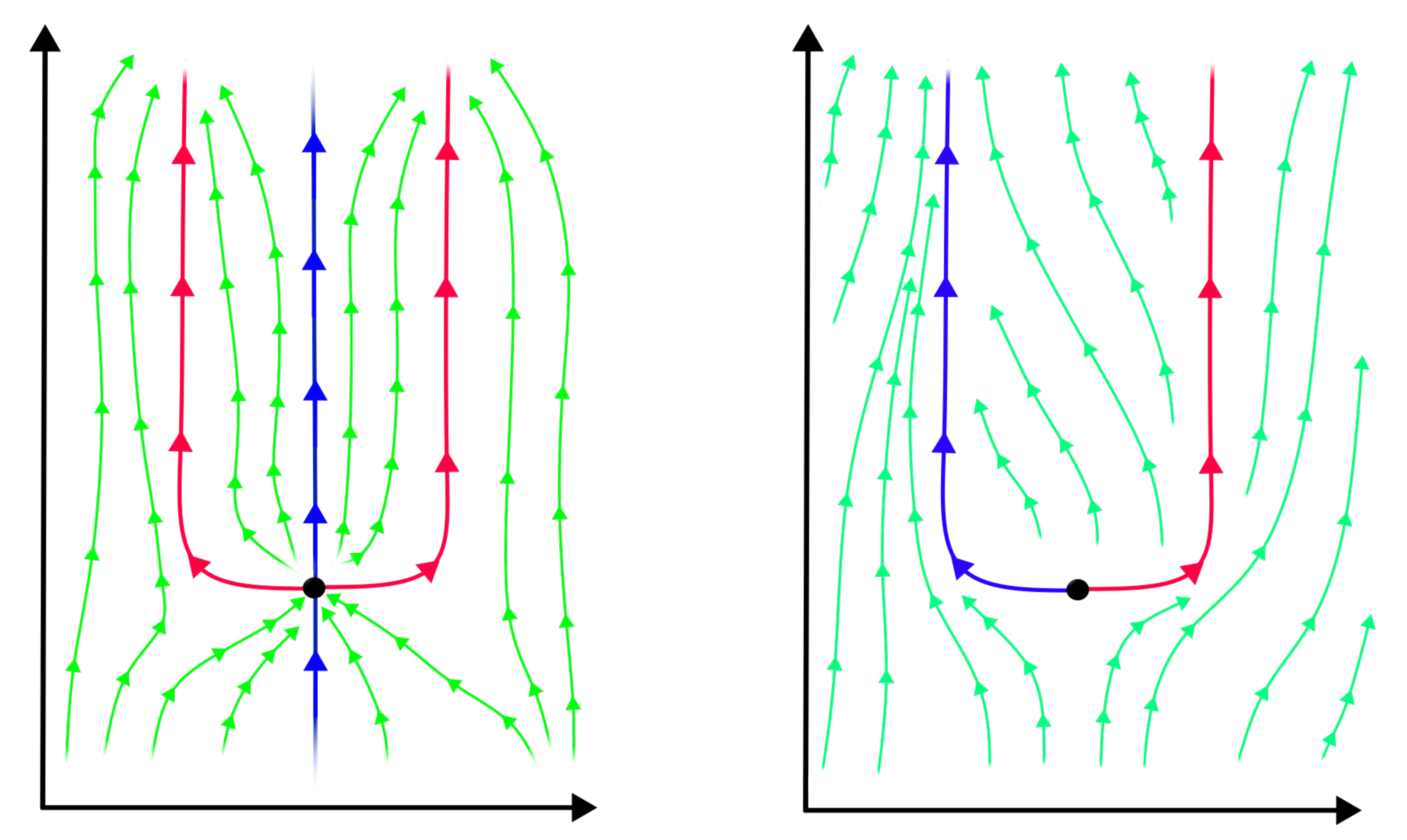}
\put(1000,10){$x$}
\put(450,10){$s$}
\put(570,590){$\tau$}
\put(20,590){$\tau$}
\end{overpic}
\caption{\textit{Diagrams of one-dimensional Entropy flows with a pitchfork bifurcation on the left and saddle node on the right. }}
\label{1dim}
\end{figure}

To do so, consider some initial condition $(s,\tau)\in\mathbb{R}^2$ that is not a fixed point for $E$, which remains bounded under $E$. By the Poincare-Bendixon Theorem and the above we know its trajectory must pass arbitrarily close to some fixed point for $E$, $(x_0,\tau_0)$, as it either tends to a fixed point or accumulates on a homoclinic/heteroclinic cycle - we now prove the trajectory of such an $(s,\tau)$ must actually tend to $(x_0,\tau_0)$. To begin,  note that by the above, if the trajectory of $(s,\tau)$ enters some $N_\alpha$ corresponding to some $G_\alpha$ that is an arc of sinks, by our construction of $N_\alpha$ we know that it cannot escape it - in which case its trajectory would tend to a fixed point. Similarly, it can only escape (and never enter) $N_\alpha$ constructed around $G_\alpha$ that are arcs of sources. Therefore, if there exists a homoclinic or a heteroclinic trajectory $\Gamma$ s.t. the trajectory of $(s,\tau)$ can accumulate on $\Gamma$, it must be located inside the set $O=\{(s,\tau)|E(s,\tau)=(F_\tau(s),0), (s,\tau)\not\in\cup_\alpha\partial N_\alpha\}$. By definition, the velocity $\dot{\tau}$ vanishes on this set - moreover, it includes all the fixed points of $E$. Since homoclinic trajectories in $\mathbb{R}^2$ cannot be confined to subsets of the type $\mathbb{R}\times\{\tau\}$, $E$ cannot have any homoclinic trajectories.

As such, if $\Gamma$ exists, it can only be a heteroclinic trajectory connecting (at least) two fixed points for $E$. This implies $\Gamma$ can be parameterized by $\{(f(t),\tau_0)|t\in[0,1]\}$ for some fixed $\tau_0$ and some monotone function $f(t)$. It also implies that in order for the trajectory of $(s,\tau)$ to accumulate on $\Gamma$, the $\tau$ velocity must change - i.e., if $(s,\tau)\not\in\Gamma$, in order for its trajectory to accumulate on $\Gamma$ it must intersect the set $\{(s,\tau)|P(s,\tau)\ne0\}$ infinitely many times. As the support of $P$ is contained in $\cup_\alpha N_\alpha$, by the argument above this cannot occur. Therefore, there does not exist a heteroclinic trajectory $\Gamma$ on which the trajectory of $(s,\tau)$ w.r.t. $E$ can accumulate. All in all, it follows that if the trajectory of $(s,\tau)$ is bounded, it must tend to some fixed point for $E$, i.e., some saddle node or pitchfork bifurcation orbit for the original family $\dot{s}=F_\tau(s)$.
\end{proof}

\subsection{When topology forces bifurcations}
\label{2dtheory}

The main takeaway from the proof of Proposition \ref{general1} is that topology can and will constrain the dynamics of the Entropy flow. In this Section we will study the same idea, but in higher dimensions. As we will see this will lead us to conclude that topology can force bifurcations. More precisely, we will show that the topological constrains imposed on the Entropy flow for $\dot{s}=F_\tau(s)$ can and will constrain its dynamics - this, in turn, constrains the bifurcations the family $\dot{s}=F_\tau(s)$ can undergo in $M$ as $\tau$ is varied. Even though our main motivation to study this problem arose from $C^1$ one--parameter families of two-dimensional vector fields on a closed surface $M$, our arguments also hold when $M$ is a closed manifold of higher dimension. This Subsection is organized as follows: we begin by making some general topological observations. Following that, we discuss how (and when) one can decompose Entropy flows by their Conley indices (see Theorem \ref{decomposition}), and use these Conley indices to describe the bifurcations the family undergoes as $\tau$ is varied (see Theorem \ref{bifcodesth1}).\\

\begin{figure}[h]
\centering
\begin{overpic}[width=0.3\textwidth]{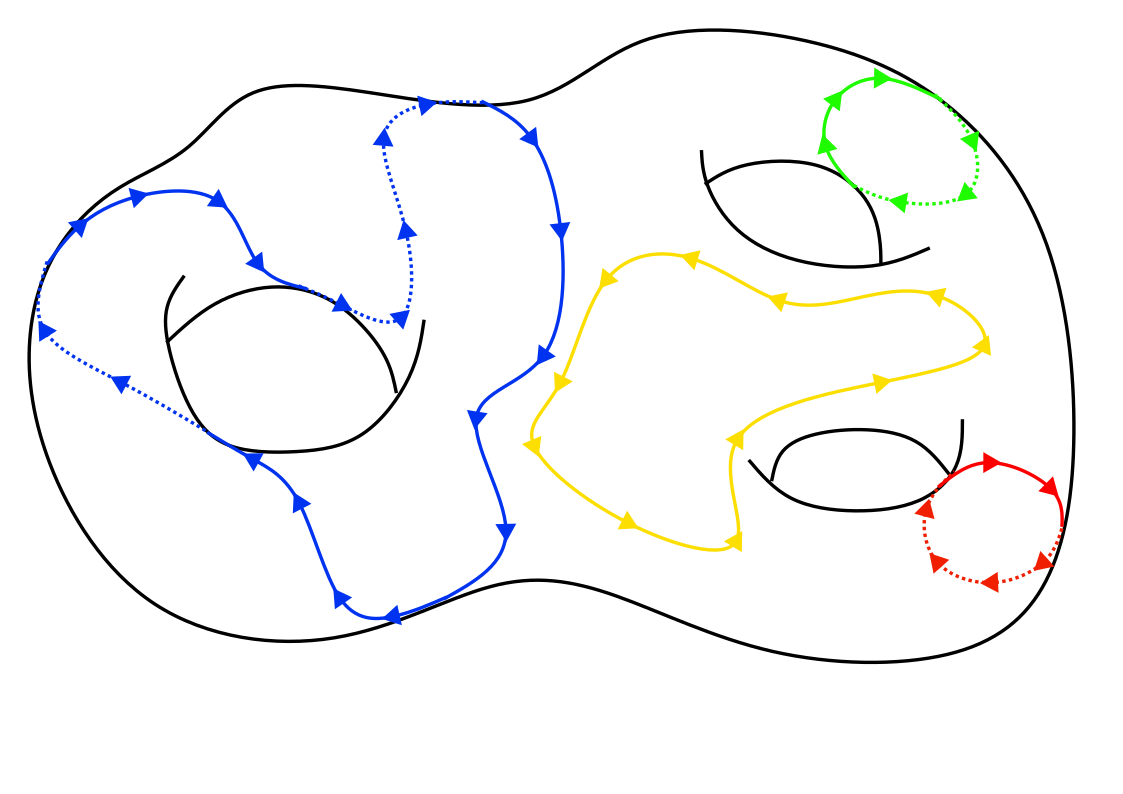}

\end{overpic}
\caption{\textit{Different periodic orbits on a sphere with three handles. No pair of these periodic orbits can be isotoped one to another - consequently, they cannot be destroyed by a bifurcation which collides them with one another. }}
\label{wind}
\end{figure}

We will go with a motivating example. Consider the scenario illustrated in Figure \ref{wind}, where $M$ is a sphere with three handles. In this setting there exist four different loops that wind differently around the handles, making them non-isotopic in $M$. Given any $C^1$ family of vector fields $\dot{s}=F_\tau(s)$, $\tau\in [-1,1]$ that realizes these four loops as periodic orbits, as we vary $\tau$, these periodic orbits cannot be destroyed by colliding with one another. One could, of course, replace the language of ambient isotopies with that of the Entropy flow. By the Kupka-Smale and Peixoto's Theorems (see \cite{Pei}, \cite{Smale} and \cite{Kup}) and one should generically expect these periodic orbits to be attractors and repellers - or, in other words, to correspond to components in $Per$, $G_1,G_2,G_3$, $G_4$.\\

Now, let $C$ be a component of $M\times[-1,1]\setminus(\cup_{i=1}^4 G_i)$ - the Entropy flow on $C$ would be a solution to differential equation with boundary condition, imposed by the behavior of the Entropy flow on $G_1,G_2, G_3$, $G_4$. Boundary values can (and often will) constrain the behavior of solutions to ordinary differential equations, and dynamical systems in general. That being said, due to the definition of the Entropy flow, one should expect these constrains to force the bifurcations of $\dot{s}=F_\tau(s)$ to behave in a certain way. We will now make this idea precise. To do so, we first recall the definition of the Conley index, following \cite{Conley1978isolated}, \cite{Salamon1985ConnectedSimpleSys}, \cite{Mischaikow}, \cite{Mischaikow2002ConleyIndex}:

\begin{definition}
\label{index} Let $S$ be an isolated invariant set for a flow $\varphi:M\times\mathbb{R}\to M$.  A pair of compact sets $(N,L)$ with $L\subset N$ are called an \emph{index pair} for $S$ if the following conditions hold:
\begin{enumerate}
    \item $N\setminus L$ is a neighborhood of $S$ and $S=\Inv_\varphi \bigl(\overline{N\setminus L}\bigr)$, where $\Inv_\varphi$ is the invariant set for $\varphi$ in $\overline{N\setminus L}$.
    \item $L$ is positively invariant relative to $N$: if $x\in L$ and $\varphi(x,[0,t])\subset N$, then $\varphi(x,t)\in L$.
    \item Every trajectory that exits $N$ in positive time first meets $L$. The set $L$ is called the \emph{exit set}.
\end{enumerate}

The \textbf{\emph{homotopy Conley index} of $S$} is $h(S):=[N/L]$, i.e., the pointed homotopy type of the quotient space $N/L$.  The \emph{\textbf{homological Conley index}} is $ CH_*(S;\mathbb{F}) := H_*(N,L;\mathbb{F})
   \cong \widetilde H_*(N/L;\mathbb{F})$, where by $H_*(N,L;\mathbb{F})$ we will always mean the sequence of homology groups $\{H_n(N,L;\mathbb{F})\}$. Moreover, when the coefficient field $\mathbb{F}$ is fixed and clear from context, we write simply $CH_*(S)$. 
\end{definition}

Our idea in this Subsection would be as follows: by construction, the Entropy flow for the one--parameter family $\dot{s}=F_\tau(s)$ is a dynamical system whose invariant sets are designed to be the bifurcations of $\dot{s}=F_\tau(s)$. Therefore, provided we can isolate these invariant sets and compute their Conley index, we can infer the bifurcation structure of $\dot{s}=F_\tau(s)$. We begin by proving the following:
\begin{proposition}
 \label{decomposition}   Let $M$ be a closed manifold of dimension $n\geq2$ and let $\dot{s}=F_\tau(s)$ denote a $C^1$ one--parameter family of vector fields on $M$, where $\tau$ varies in $S^1$. Assume there is some finite collection of periodic orbits and fixed points, $\gamma_1,...,\gamma_n$ satisfying the following:
 \begin{itemize}
     \item $\gamma_1,...,\gamma_n$ do not bifurcate as $\tau$ is varied in $S^1$.
     \item For all $i$, when $\gamma_i$ is a fixed point it is either source or a sink. Conversely, when it is a periodic orbit it is either an attractor or a repeller.
 \end{itemize}
 
Then, there exists some $r$ s.t. given any Entropy flow, its dynamics can be encoded using a finite number of homotopy Conley indices $h_1,...,h_r$ and homological Conley indices $c_1,...,c_r$. Moreover, $h_1,...,h_r$ and $c_1,...,c_r$ are independent of the choice of Entropy flow.
\end{proposition}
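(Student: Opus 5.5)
The plan is to decompose $M\times S^1$ using the invariant sets $\Gamma_i=\cup_{\tau\in S^1}\gamma_i(\tau)\times\{\tau\}$, and then to compute Conley indices only for pieces whose isolating neighborhoods and exit sets are fixed by the laminar part $(F_\tau(s),0)$, so that no choice of $P$ or $V$ enters.

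\textbf{Step 1 (the sets $\Gamma_i$ and the drift on them).} Since no $\gamma_i$ bifurcates and each is a sink, source, attractor or repeller, each $\Gamma_i$ is a full component of $Fix$ or $Per$ meeting every slice $M\times\{\tau\}$. This is the case $J=I$ in Definitions \ref{admissiblefixed} and \ref{admissibleper}, and the $S^1$ version of Definition \ref{def3}. Consequently every admissible $P_{\Gamma_i}$ and $V_{\Gamma_i}$ vanishes identically on $N_{\Gamma_i}$, and every Entropy flow coincides with the laminar flow (\ref{laminar}) on $\cup_i N_{\Gamma_i}$.

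\textbf{Step 2 (isolating blocks).} For each $i$, choose tubular neighborhoods $B_\tau$ of $\gamma_i(\tau)$ that vary continuously in $\tau\in S^1$ and are transverse to $F_\tau$, pointing inward for attractors and sinks and outward for repellers and sources. This is possible by hyperbolicity together with compactness of $S^1$. Lemma \ref{trapping}, in its $S^1$ form, then shows that $\mathcal{B}_i=\cup_\tau B_\tau\times\{\tau\}$ is an isolating block for every Entropy flow. Moreover, the exit set of $\mathcal{B}_i$ is empty or equal to $\partial\mathcal{B}_i$, independently of the flow. Because $P$ and $V$ vanish near $\mathcal{B}_i$, the invariant set inside it is exactly $\Gamma_i$.

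\textbf{Step 3 (the indices).} Take $r=n+1$. For $i\le n$, set $h_i=[\mathcal{B}_i/L_i]$ and $c_i=H_*(\mathcal{B}_i,L_i)$. For attractors, $L_i=\emptyset$, so $h_i$ is the homotopy type of $\Gamma_i$ with a disjoint basepoint: a circle for a fixed point, a torus for a periodic orbit. For repellers, $L_i=\partial\mathcal{B}_i$, giving a Thom-space type of a disc bundle over $\Gamma_i$. For $i=n+1$, let $K=M\times S^1\setminus\cup_i\mathrm{int}\,\mathcal{B}_i$. This is compact since $M$ is closed, and its boundary is transverse to every Entropy flow. Its maximal invariant set $S_K$ therefore has index pair $(K,L_K)$, where $L_K$ is the union of those $\partial\mathcal{B}_i$ belonging to attractors (through which the flow leaves $K$). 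This pair depends only on $F$ and the choice of blocks. The family $\{\Gamma_i\}\cup\{S_K\}$, together with the attractor–repeller ordering, is a Morse decomposition of the whole space. Its indices "encode the dynamics" in the sense of Conley: the Morse inequalities and the connection matrix constrain what happens inside $K$.

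\textbf{Step 4 (independence of the flow).} Independence from the Entropy flow follows because every index pair above is built from $F_\tau$ alone. Independence from the choice of blocks follows from Conley's continuation theorem, applied along a homotopy of blocks.

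\textbf{Expected main obstacle.} The delicate point is $S_K$. One must check that the Entropy-flow modifications inside $K$, which arise from bifurcations, never touch $\partial K$. This requires choosing the isolating collection so that every other $N_\alpha$ lies in $\mathrm{int}\,K$. If some $N_\alpha$ accumulates on $\partial\mathcal{B}_i$, I would first shrink the blocks $\mathcal{B}_i$; the complete isolation of $\Gamma_i$ guarantees a positive distance at which this shrinking works.
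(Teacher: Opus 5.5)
Your proposal is correct and follows essentially the same route as the paper: thicken the non-bifurcating sets $\Gamma_i$ into blocks on which $P$ and $V$ vanish, use the transversality argument of Lemma \ref{trapping} to fix the exit sets by attractor/repeller type, and observe that the resulting index pairs do not depend on $P$ or $V$. The only difference is bookkeeping. The paper assigns one index pair $(B_j,L_j)$ to each connected component of the complement of the thickened $\Gamma_i$, so $r$ is the number of such components (which can exceed one when $\dim M=2$, as in the $S^2\times S^1$ example), and it does not list indices for the $\Gamma_i$ themselves; your single block $K$ instead has index equal to the wedge (in homology, the direct sum) of those component indices, supplemented by the indices of the $\Gamma_i$ blocks.
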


\begin{proof}

Recall, $Per$ denote the collection of points in $M\times I$ which lie on periodic orbits for some $F_\tau$, $\tau\in I$. Similarly, let $Fix$ denote the collection of fixed points for the vector fields on the curve. Since all the periodic orbits and fixed points $\gamma_1,...,\gamma_n$ are either attractors, repellers, sources, or sinks, each $\gamma_i$ corresponds to its own component $\mathcal{S}_i$ in $Per\cup Fix$. As $\tau$ varies in $S^1$, it follows that for all $1\leq i\leq n$, when $\gamma_i$ is a periodic orbit, the component $\mathcal{S}_i$ is a closed surface embedded in the $n+1$-dimensional manifold $M\times I$. To see why, first note that $M\times I$ is a closed manifold, being the product of two closed manifolds. Therefore, given $\{(s_n,\tau_n)\}_n\subseteq\mathcal{S}_i$ a Cauchy sequence in $M\times I$, we know $(s_n,\tau_n)$ tends to some $(s,\tau)\in M\times I$. As $\gamma_i$ does not bifurcate, it is immediate that $(s,\tau)\in\mathcal{S}_i\cap M\times I$,  which proves $\mathcal{S}_i$ is closed. A similar argument proves that for $i$ s.t. $\gamma_i$ is a fixed point, $\mathcal{S}_i$ is an embedding of $S^1$ in $M\times I$.

To continue, consider $\mathcal{M}=M\times I\setminus (\cup_{i=1}^n \mathcal{S}_i)$ (note, $\mathcal{M}$ is a compact manifold with boundary) - it is immediate that there are only finitely many components in $\mathcal{M}$. Since all the sets $\mathcal{S}_i$ are compact manifolds of dimension $1$ or $2$, by our assumption that all the $\gamma_1,...,\gamma_n$ are either sinks, sources, attractors or repellers, the same argument used to prove Lemma \ref{trapping} shows we can thicken $\mathcal{S}_i$  to open sets, $O_i$, $1\leq i\leq n$, satisfying the following:
\begin{itemize}
    \item For all $\tau$, $F_\tau$ is transverse to $\partial (M\times\{\tau\}\cap O_i)$.
    \item The number of components in $\mathcal{M}$ is the same as that in $M\times I\setminus(\cup_{i=1}^n O_i)$.
\end{itemize}

Given any Entropy flow directed by a vector field $E$, as $\gamma_1,...,\gamma_n$ do not bifurcate, we also know that the functions $P$ and $V$ both vanish around $\mathcal{S}_1,...,\mathcal{S}_n$ (see Section \ref{defsect}). This proves we can always find some $O_i$, $i=1,...,n$ s.t. $E$ is transverse to $\partial O_i$ - namely, by choosing the sets $O_i$ to be arbitrarily small thickenings of the corresponding $\mathcal{S}_i$. It is easy to see that  when we replace $E$ by a vector field $E'$ directing another Entropy flow with corresponding sets $O'_i$, the sets $O_i$ and $O'_i$ are ambient isotopic in $M\times I$, and if $N(p)$ and $N'(p')$ are the respective normal vector to $\partial O_i$ and $\partial O'_i$ at some $p,p'$, then $E(p)\cdot N(p)$ and $E'(p')\cdot N'(p')$ have the same sign for all choices of $p\in\partial O_i$ and $p'\in\partial O'_i$. Therefore, assuming $\mathcal{M}_1$ is some component of $\mathcal{M}$ bounded by $\mathcal{S}_1,...,\mathcal{S}_k$, $1\leq k\leq n$, there exists a closed manifold with boundary, $B_1$ such that $B_1=\mathcal{M}_1\setminus(\cup_{i=1}^k O_i)$. Moreover, for any other choice of Entropy flow, $E'$, there exist an analogous set $B'_1$ given by $\mathcal{M}_1\setminus(\cup_{i=1}^kO'_i)$ and  satisfying the following:
\begin{itemize}
    \item $B_1$ and $B'_1$ are isotopic in $M\times I$, and both are isotopic to $\mathcal{M}_1$.
    \item $E$ and $E'$ point the exact same way throughout $\partial B_1$ and $\partial B'_1$ (respectively).
\end{itemize}

Let us number these sets $B_1,...,B_r$ created by $M\times I\setminus(\cup_{i=1}^n\mathcal{S}_i)$.  For each \(B_j,\) $1\leq j\leq r$, define the next sets $L_j=\{(s,\tau)\in\partial B_j:\ E(s,\tau)\text{ points out of }B_j\}$. By construction, \(E\) is transverse to \(\partial B_j\), so \(B_j\) is an isolating block for the set $S_j$, the invariant set of $E$ in $B_j$. As such, the pair \((B_j,L_j)\) satisfies the index-pair axioms, which implies the Conley indices $ h(S_j)=[B_j/L_j]$ and $ CH_*(S_j;R)=H_*(B_j,L_j;R)$ are well-defined (see Definition \ref{index}). Since by the above different sufficiently small choices of \(O_i\) are ambient isotopic and the entrance and exit signs on their boundaries are unchanged, the quotient pairs $(B_j,L_j)$ are independent of our choice of $O_i$ and have the same homotopy type (as long as we define them as above). Moreover, given another Entropy flow $E'$ with corresponding quotient pairs $(B'_j,L'_j)$  defined analogously, by the above it is clear that the homotopy type is unchanged - which implies Conley indices are independent of the choice of an Entropy flow for $\dot{s}=F_\tau(s)$.
\end{proof}

\begin{figure}[h]
\centering
\begin{overpic}[width=0.27\textwidth]{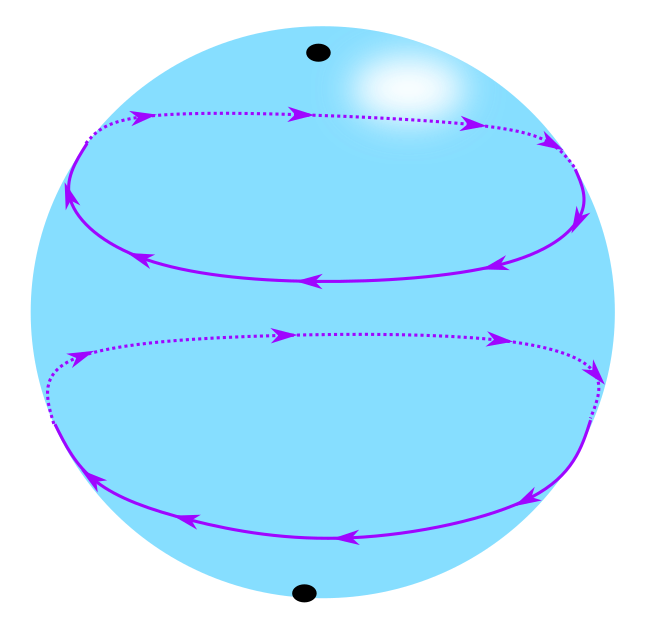}
\put(900,200){$\gamma_R$}
\put(960,630){$\gamma_A$}
\put(460,-10){$s$}
\put(460,830){$n$}
\end{overpic}
\caption{\textit{Two periodic orbits - an attractor $\gamma_A$ near the North pole $n$ and a repeller $\gamma_R$ near the South pole $s$.}}
\label{exsphere}
\end{figure}

The above Propositions can be interpreted as saying the dynamics of the Entropy flow is a finite collection of Conley indices, "glued together" at the stable periodic orbits and fixed points. In addition, it shows us the role of stable non-bifurcating dynamics. Similarly to how the braid type can force the dynamics of a homeomorphism of a punctured disc to behave in a certain way (see \cite{Bo} and \cite{CH} for a survey), so do the persistent attracting and repelling sets constrain the bifurcations of $\dot{s}=F_\tau(s)$. We now give a simple example illustrating how this plays out. To this end, consider a $C^\infty$ one--parameter family of vector fields $\dot{s} = F_\tau(s), s \in S^2, \tau\in S^1$. Assume there exist two periodic orbits corresponding to two disjoint simple closed curves $\gamma_A, \gamma_R \subset S^2$, where $\gamma_A$ is an attracting cycle and $\gamma_R$ a repelling cycle s.t. both persist without bifurcating for all parameters $\tau\in S^1$ (see Figure \ref{exsphere}). Setting $\mathcal{F}=(\gamma_A\times S^1)\cup(\gamma_R\times S^1)$, we now consider components of $\mathcal{M} = (S^2\times S^1)\setminus \mathcal{F}$. After deleting the two latitude lines $\gamma_A$ and $\gamma_R$ from $S^2$, three connected components remain (see Figure \ref{exsphere}):
\begin{enumerate}
    \item  A disk region $D_N$ near the North Pole $n$, bounded by $\gamma_A$.
    \item The annular region $A$ between the two latitudes.
    \item A disk region $D_S$ near the South Pole $s$, bounded by $\gamma_R$.\\
\end{enumerate}

With a slight abuse of notation, after replacing the closed surfaces \(\gamma_A\times S^1\) and \(\gamma_R\times S^1\) by sufficiently small thickened neighborhoods transverse to the Entropy flow we denote the three complementary blocks by $\mathcal M_1=D_N\times S^1$, $\mathcal M_2=A\times S^1$ and $\mathcal M_3=D_S\times S^1$. Thus, $\mathcal M_1\simeq D^2\times S^1$, $\mathcal M_2\simeq \mathbb T^2\times[0,1]$ and $\mathcal M_3\simeq D^2\times S^1$ where $\mathbb T^2 = S^1 \times S^1$ denotes the two dimensional torus. We now compute their Conley indices - by Proposition \ref{decomposition} we know the results would apply to all possible choices of Entropy flows defined by $\dot{s}=F_\tau(s)$. To begin, since \(\gamma_A\) is attracting the boundary component corresponding to \(\gamma_A\times S^1\), it is an exit boundary for the adjacent complementary blocks w.r.t. all possible Entropy flow. Similarly, since \(\gamma_R\) is repelling the boundary component corresponding  to \(\gamma_R\times S^1\), it is an entrance boundary w.r.t any choice of Entropy flow. Therefore, because $\mathcal M_1=D_N\times S^1\simeq D^2\times S^1$ and since \(\partial\mathcal M_1\simeq S^1\times S^1\) corresponds to the attracting cycle \(\gamma_A\), $\partial\mathcal M_1$ is an exit set. Hence, by Proposition \ref{decomposition}, its homological Conley index w.r.t. every Entropy flow is given by $CH_*(\mathcal M_1) = H_*(D_N\times S^1,S^1\times S^1)$, and in more detail:
    \[
    CH_q(\mathcal M_1)
    \cong
    \begin{cases}
    \mathbb Z, & q=2,3,\\
    0, & \text{otherwise}.
    \end{cases}
    \]
Note this can also be rewritten as $H_*(D_N\times S^1,S^1\times S^1)
    \cong
    \widetilde H_*(S^2\vee S^3),$ where \(\vee\) denotes the wedge sum. We now compute the same Conley index for \(\mathcal M_2\). Recall, $\mathcal M_2=A\times S^1\simeq \mathbb T^2\times[0,1]$, hence the exit set is the boundary component corresponding to the attracting cycle \(\gamma_A\), say $L_2=\mathbb T^2\times\{1\}.$ Since the inclusion $T^2\times\{1\}\hookrightarrow \mathbb T^2\times[0,1]$ is a homotopy equivalence, the pair $(\mathbb T^2\times[0,1],\mathbb T^2\times\{1\})$ has trivial relative homology. Therefore, the Conley index of $\mathcal{M}_2$ w.r.t. every Entropy flow is given by $CH_*(\mathcal M_2)=H_*(\mathbb T^2\times[0,1],\mathbb T^2\times\{1\})=0.$    Finally, for \(\mathcal M_3\), recall $\mathcal M_3=D_S\times S^1\simeq D^2\times S^1.$ Since the boundary component corresponds to the repelling cycle \(\gamma_R\), it is entrance boundary for $\mathcal{M}_3$ w.r.t. any choice of Entropy flow. Thus, the exit set is empty and $CH_*(\mathcal M_3) = H_*(D_S\times S^1)$, and in detail:
    \[
    CH_q(\mathcal M_3)
    \cong
    \begin{cases}
    \mathbb Z, & q=0,1,\\
    0, & \text{otherwise}.\\
    \end{cases}
    \]

By the Ważewski principle (see \cite{waz}), we conclude the existence of invariant sets for the Entropy flow in both $\mathcal{M}_1$ and $\mathcal{M}_3$. Moreover, the fact  $CH_*(\mathcal M_2)=0$ has the following implications for the bifurcations the dynamics of the $C^\infty$ family $\dot{s}=F_\tau(s)$ can undergo in $A$. Intuitively, by Sotomayor's Theorem (see \cite{soto}) we would expect all periodic orbits and fixed points in $A$ to be added (or destroyed) when $\tau$ is varied by saddle node bifurcations of either periodic orbits or fixed points, where an attractor-repeller pair or a source-sink pair would be created (or annihilated). The fact $CH_*(\mathcal{M}_2)=0$ makes this intuition precise, as it says the invariant set of the Entropy flow in $\mathcal{M}_2$ is expected to be symmetric - that is, for every attractor we would expect a repeller, and vice versa. As such, $CH_*(\mathcal{M}_2)=0$ essentially encodes how anything created in $\mathcal{M}_2$ w.r.t. $\dot{s}=F_\tau(s)$ must be created "in pairs", which annihilate one another as $\tau$ is varied.\\

\begin{figure}[h]
\centering
\begin{overpic}[width=0.4\textwidth]{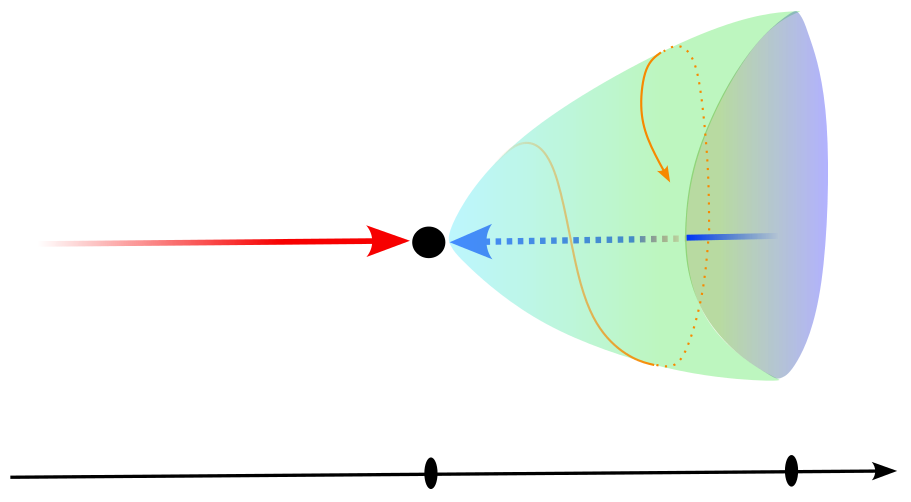}
\put(1000,30){$\tau$}
\put(460,-30){$1$}
\put(860,-30){$2$}
\end{overpic}
\caption{\textit{The red curve corresponds to the sinks, the blue -- to the sources, the surface -- to the periodic orbits generated at  Hopf bifurcation, where the arrows denote the motion of the Entropy flow on them. }}
\label{hopf}
\end{figure}

The idea behind Proposition \ref{decomposition} is that stable phenomena w.r.t. a $C^1$ family $\dot{s}=F_\tau(s)$ constrains the dynamics (and hence the bifurcations) of the Entropy flow. This leads us to ask the following - does something similar occurs when bifurcations are allowed? Specifically, does the existence of some bifurcation, for example, an Hopf bifurcation at some parameter $\tau\in I$ force another bifurcation, say, a saddle node, to appear at some $\tau'\in I$? And also, can we remove the requirement in Proposition \ref{decomposition} that the parameter space $I$ is $S^1$ and replace it with something more natural, like an interval? As we will see below, using the properties of the Entropy flow one can, to a certain extent, encode some bifurcations using the Entropy flow. Before stating these precisely, we begin with another  motivating example. Let $\dot{s}=F_\tau(s)$ denote a $C^1$ one--parameter family of vector fields on $S^2$, where, say, $\tau\in [0,2]$. We further assume the following:
\begin{itemize}
    \item For $\tau\in[0,1)$ there exists a stable sink at the origin which varies with $\tau$ (see Figure \ref{hopf}). At $\tau=0$ the origin is asymptotically stable.
    \item At $\tau=1$ the origin undergoes an Hopf bifurcation, where it becomes a source, and an attracting periodic orbit bifurcates from it, as illustrated in Figure \ref{hopf}. For simplicity, we assume the said orbit persists as an attractor for all $\tau\in(1,2]$. 
    \item For all $\tau\in[1,2]$ the periodic orbit is trapped inside some disc $\mathbb{D}\subseteq\mathbb{R}^2$ of bounded radius. \\
\end{itemize}

\begin{figure}[h]
\centering
\begin{overpic}[width=0.5\textwidth]{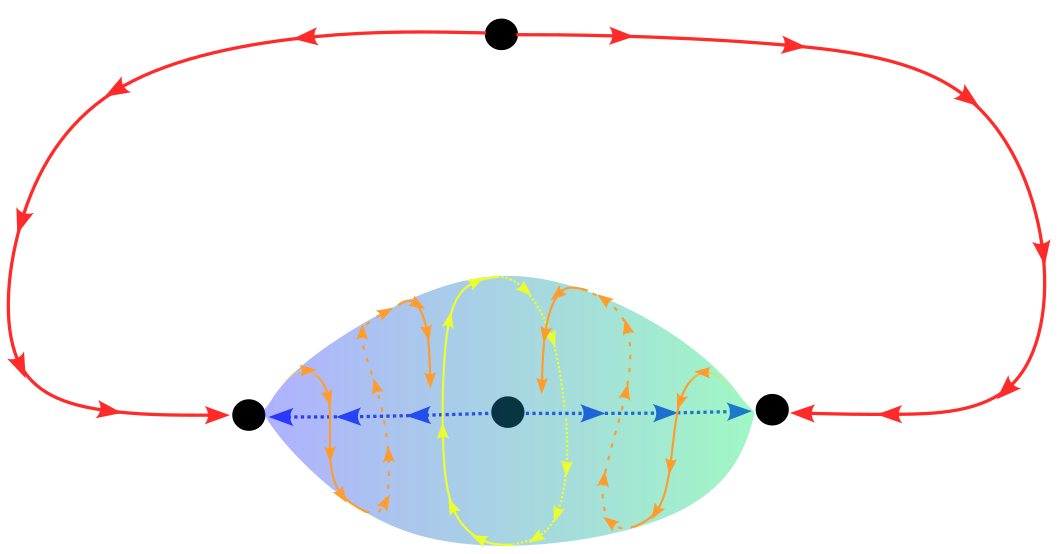}
\put(460,530){$p_3$}
\put(460,270){$T$}
\put(30,270){$W_1$}
\put(900,270){$W_2$}
\put(460,90){$p_4$}
\put(210,90){$p_1$}
\put(710,90){$p_2$}
\put(330,150){$W'_1$}
\put(550,150){$W'_2$}
\end{overpic}
\caption{\textit{The derived Entropy flow on $S^2\times S^1$, with the directions on $W=W_1\cup W_2$ and $W'_1\cup W'_2$. The orange flow lines denote flow lines on $S$, tending to the orbit $T$. The blue-green surface corresponds to $S$. }}
\label{hopf2}
\end{figure}

To continue, let $C$ denote the collection of components in $Per\cup Fix$ which includes the surface of periodic orbits bifurcating from the origin together with the arcs of sinks and sources - by definition, the said surface is in $Per$ while the curves are in $Fix$. Ideally, we would like to compute the Conley indices associated with these three components of $C$ for the Entropy flow. Unfortunately, this naive approach cannot work, if only because the parameter space $I=[0,2]$, an interval, makes it hard (if not impossible) to isolate $C$. This shows that in order to describe the Entropy flow in this setting using the Conley index, we must first somehow compactify everything. To this end, set $I_1\simeq[0,1]$, $I_2\simeq[0,2]$ correspond to arcs on $S^1$, and let us reparameterize $\dot{s}=F_\tau(s)$  s.t. $\tau$ now varies in $I_2$. We now smoothly extend the $C^1$ family $\dot{s}=F_\tau(s)$ to the parameter space $S^1\setminus I_2$ and create a one-parameter family on $S^2\times S^1$. We do so in a way that  everything is symmetric - i.e., for $\theta\in S^1\setminus I_2$ there exists a unique parameter $\tau\in[0,2]$ s.t. $F_\tau$ and $F_\theta$ have orbitally equivalent dynamics on $S^2$. This "reflection" makes $C$ into a subset of $\overline{C}$, where $\overline{C}$ is the set illustrated in Figure \ref{hopf2}. By its definition, the set $\overline{C}$ is composed of the following subsets:
\begin{itemize}
    \item Points $p_1=0\times\{1\}$, $p_2=0\times\{\theta_1\}$, corresponding to Hopf bifurcations, and a curve $W$ in $S^2\times (S^1\setminus I_2)$ connecting them. The curve $W$ is a curve of sinks in $Fix$ w.r.t. the described reflection. 
    \item Choose some $p_3\in W$ to serve as an equilibrium fixed point for the Entropy flow as in Definition \ref{admissiblefixed}, and let $W_1,W_2$ denote the components of $W\setminus p_3$ connecting $p_3$ to $p_1$ and $p_2$ (respectively). 
    \item Let $W'$ denote the line of fixed points connecting $p_1,p_2$, i.e., $W'$ is the curve of sources in $Fix$ w.r.t. this reflection. Again, choose some $p_4\in W'$ to serve as an equilibrium fixed point for the Entropy flow, and let $W'_1,W'_2$ denote the subarcs of $W'\setminus\{p_4\}$ connecting $p_4$ to $p_1,p_2$ (respectively).
    \item Let $S$ denote a surface homeomorphic to $S^2$ punctured at $p_1$ and $p_2$, i.e., $S$ is the surface corresponding to the periodic orbit created at the Hopf bifurcation at $p_1$ which is closed back to a fixed point at $p_2$. $S$ is glued to $W$ at $p_1$ and $p_2$, while the arc $W'$ lies strictly inside the topological ball bounded by $S$. Again, choose some closed loop $T\times\{r\}\subseteq S$ s.t. $T$ is periodic for $F_r$, $r\in S^1$, to serve as an equilibrium periodic orbit for the Entropy flow (see Definition \ref{admissibleper}).\\
\end{itemize}

By the definition of the Entropy flow we have a large degree freedom in the choice of the points $p_3$ on $W$, $p_4$ on $W'$, and the loop $T\times\{r\}$ on $S$ that serve as equilibrium states for the Entropy flow. That being said, this is where the freedom ends: per the definition of the Entropy flow, regardless of which points in $W,W'$ we choose as equilibrium fixed points or which loop we choose on $S$ to serve as an equilibrium orbit, any Entropy flow would behave on $\overline{C}$ to be as follows (see the illustration in Figures \ref{hopf2} and \ref{hopf3}):
\begin{itemize}
    \item  The point $p_3$ repels initial conditions on $W_1$, $W_2$ towards $p_1$, $p_2$ (respectively). As $p_3$ is originally a sink for some $F_\theta$, by Lemma \ref{trapping} we conclude that there exists a three-dimensional set, $C_1\subseteq M\times S^1$, homeomorphic to a ball, s.t. the Entropy flow points out of $C_1$ on two caps on $\partial C_1$ corresponding to the intersection with $W_1,W_2$. Similarly, there exists a band separating the two caps on $\partial C_1$ where the Entropy flow points into $C_1$. 
    \item The point $p_4$ is a source that repels initial conditions on $W'_1,W'_2$ asymptotically towards $p_1,p_2$ (respectively) - by Lemma \ref{trapping} we can choose some ball $A$ around it, s.t. $A$ lies strictly inside the region enclosed by $S$ and $E$ points out of $A$. Similarly, as every orbit on ${S}$ is attracting, by Lemma \ref{trapping} we can find a three-dimensional neighborhood $N$ of $S$ in $S^2\times S^1$ with $p_1,p_2$ on $\partial N$ s.t. $E$ points inside $N$ throughout $\partial N\setminus\{p_1,p_2\}$. Finally, since we assumed our original Hopf bifurcation points are asymptotically stable so are $p_1$ and $p_2$. As such, we can enclose the set $B=W_1\cup W_2\cup S\cup\{p_1,p_2,p_3\}$ in some isolating block, $C_2$, s.t. the Entropy flow points inside $C_2$ throughout $\partial C_2$ (note that $B$ is homotopic to $S^2\cup\gamma$, where $\gamma$ is a curve in $S^3\setminus S^2$ connecting the North and South poles on $S^2$).\\
\end{itemize}

The above is true for all choices of an Entropy flow in $S^2\times S^1$ chosen in this way, i.e., by using Lemma \ref{trapping} as described above. As such, we conclude the Conley indices of the invariant sets within the isolating blocks $C_1,C_2$ are well-defined. Moreover, similarly to the proof of Proposition \ref{decomposition}, these Conley indices are independent of the choice of Entropy flow - again, provided it is chosen using Lemma \ref{trapping} as above. Let us denote the respective Conley indices by $\mathcal{C}_1,\mathcal{C}_2$. We note that $\mathcal{C}_1$ can be "embedded" into $\mathcal{C}_2$, i.e., as a topological space: this embedding should be interpreted as encoding the change in the dynamics of the Entropy flow when the parameter $\tau$ varies from $[0,1)$ to $[1,2]$. More precisely, $\mathcal{C}_1$ and $\mathcal{C}_2$ should be interpreted as constraints the Hopf bifurcation induces on the dynamics of Entropy flows chosen as above. We are now going to discuss this example in more detail, after which we will generalize it to a wider setting.\\

\begin{figure}[h]
\centering
\begin{overpic}[width=0.5\textwidth]{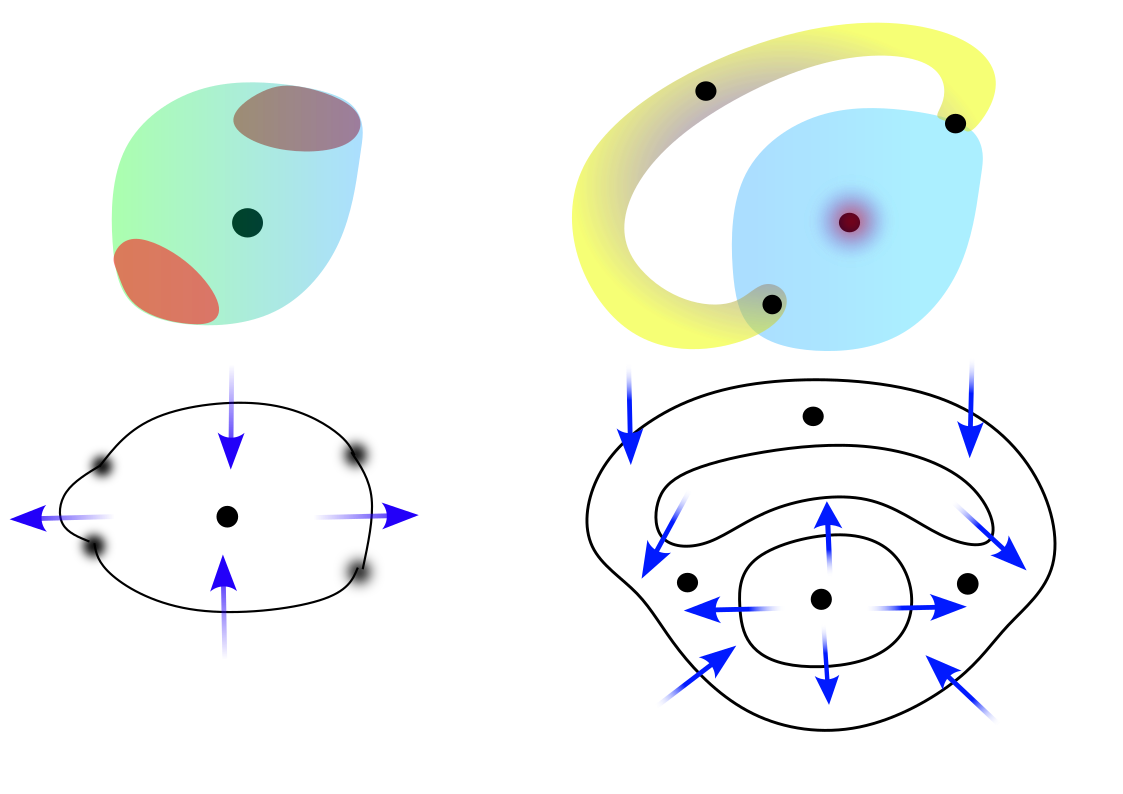}

\end{overpic}
\caption{\textit{On the upper left, $C_1$ - the red caps denote regions on $\partial C_1$, where the Entropy flow points outwards, while the cyan-green regions correspond to where it points inwards. On the downer left, there is a cross-section showing how the flow points on the boundary. Similarly, on the upper right, we see $C_2$, where the red ball in the middle centered at $p_4$ \textbf{is not} a part of $C_2$. And, on the downer right, there is a cross-section, showing the flow points inwards throughout the boundary. }}
\label{hopf3}
\end{figure}

To begin, consider the topological sphere $\Sigma = S  \cup \{p_1\} \cup \{p_2\}$ and recall $W=W_1\cup W_2\cup\{p_3\}, \; W^{'}=W^{'}_1 \cup  W^{'}_2\cup\{p_3\}$. Let $H$ denote the three-dimensional ball bounded by $\Sigma$. By definition, $p_4\in H, \; W^{'} \subset H$ and $ \; T\subset \Sigma.$ Similarly, let $B = W\cup S \cup \{p_1, p_2, p_3\}$, then $B \cong S^2 \cup \gamma$, where $\gamma$ is a curve connecting the North pole and the South pole of the $S^2$, i.e., $B \cong S^2 \vee S^1.$ To continue, let $N_0$ be the neighborhood isolating the periodic orbit $T$, let $N_1$ be the neighborhood isolating $B$ and let and $N_2$ be the neighborhood isolating $B \cup H$. By the above, $p_4 \in \operatorname{Inv}(N_2),\;  W_1^{'} \subset N_2, \;  W_2^{'} \subset N_2$ where the invariant set is w.r.t. the Entropy flow chosen as above (in previous notation, $N_1=C_2$ and $N_2=N_1\cup H$).  By our choice of Entropy flow, if it is given by a vector field $E$ then $E$ always points inside $N_i$ throughout $\partial N_i$, where $i=0,1,2$.\\

We now compute the respective Conley indices of $N_1$ and $N_2$, and study their relation to one another. We note the neighborhood \(N_0\) may be chosen to be a small solid-torus neighborhood of the periodic orbit \(T\), hence it deformation retracts onto the phase circle \(T\), i.e., $N_0$ can be collapsed homotopically to $T$, and hence to $S^1$ The neighborhood \(N_1\) deformation retracts onto \(B\), and therefore $N_1\simeq B\simeq S^2\vee S^1$, where by $\simeq$ we mean "homotopic". Here the two-dimensional homology class of \(S^2\) is represented by the Hopf sphere \(\Sigma\), while the one-dimensional homology class of \(S^1\) is represented by the loop obtained from the branch \(W\) together with a path on \(\Sigma\).  Finally, passing from \(N_1\) to $N_2$, by \(N_2=N_1\cup H\) the ball \(H\) fills the sphere $\Sigma$, it follows all the loop classes on $N_2$ persist as we map $N_1$ by the inclusion to $N_2$. It is trivial to see that since $N_2$ is obtained by "filling up" the sphere $\Sigma$ then $N_2\simeq S^1$.\\

We now recall the Conley index is computed from an index pair \((N,L)\) by collapsing the exit set \(L\) w.r.t. the flow, or, equivalently, by computing the relative homology groups \(H_*(N,L)\) (see Definition~\ref{index}). In the present example, since we chose the Entropy flow s.t. its generating vector field \(E\) points strictly into each \(N_i\) throughout its boundary, $i=0,1,2$, there is no exit set. Hence, writing the index pair $P_0=(N_0,\emptyset)$, $ P_1=(N_1,\emptyset)$ and $ P_2=(N_2,\emptyset)$ we get $CH_*(P_i)=H_*(N_i;\mathbb Z),  \;i=0,1,2.$ As stated above, there are well-defined inclusion maps $\iota_{ij}:(N_i,\emptyset)\hookrightarrow (N_j,\emptyset), \; 0\leq i<j\leq 2,$ which therefore induce homomorphisms $\Phi^*_{ij}:CH_*(P_i)\longrightarrow CH_*(P_j)$ for all $*$. Moreover, by the by functoriality of singular homology for all \(i<j<k\) these maps satisfy $\Phi^*_{ik}=\Phi^*_{jk}\circ \Phi^*_{ij}$. Using the homotopy types above, we now obtain:  $$CH_*(P_0)=H_*(S^1),\; CH_*(P_1)=H_*(S^2\vee S^1), \; CH_*(P_2)=H_*(S^1).$$
And more explicitly (where $\cong$ in the diagram below represents an isomorphism):
\[
\begin{tikzcd}[row sep=3.2em, column sep=5.5em]
CH_0(P_0)=\mathbb{Z}
  \arrow[d, "\Phi^{0}_{01}\cong"]
&
CH_1(P_0)=\mathbb{Z}\langle t\rangle
  \arrow[d, "\Phi^{1}_{01}=0"]
&
CH_2(P_0)=0
  \arrow[d]
\\
CH_0(P_1)=\mathbb{Z}
  \arrow[d, "\Phi^{0}_{12}\cong"]
&
CH_1(P_1)=\mathbb{Z}\langle w\rangle
  \arrow[d, "\Phi^{1}_{12}\cong"]
&
CH_2(P_1)=\mathbb{Z}\langle q\rangle
  \arrow[d, "\Phi^{2}_{12}=0"]
\\
CH_0(P_2)=\mathbb{Z}
&
CH_1(P_2)=\mathbb{Z}\langle w_2\rangle
&
CH_2(P_2)=0,
\end{tikzcd}
\]
where \(\mathbb{Z}\langle \eta\rangle=\{n\eta:n\in\mathbb Z\}\) denotes the free abelian group generated by some class \(\eta\). In our case, the class \(t\) represents the oriented phase circle of the periodic orbit \(T\), i.e., $S^1$. The class \(q\)  is represents the Hopf sphere \(\Sigma\), while the class \(w\) represents the loop obtained by following the curve \(W\) and closing it by a path on \(\Sigma\).  The loop represented by $w$ is not the phase circle of a single periodic orbit, but rather it is a global product-space loop created by the way the fixed-point curve $W$ and the Hopf sphere are glued together in \(M\times I\) - its image in \(H_1(N_2)\) is denoted by \(w_2\). We now justify our assertions about the maps in the diagram above:
\begin{itemize}
    \item The degree-zero maps are isomorphisms because all three neighborhoods are connected, and for any connected space \(X\) one has \(H_0(X;\mathbb Z)\cong\mathbb Z\). The inclusion of one connected neighborhood into another maps the generator (represented by any point) to the generator of the larger connected space.
    \item The map $\Phi_{01}^1:H_1(N_0)\to H_1(N_1)$ is zero because the circle \(T\) bounds a two-chain in the Hopf caps contained in \(N_1\) in the chain group \( C_2 (N_1; \mathbb Z) \), or equivalently its image is null-homologous in \(N_1\).  
    \item   The class \(w\), on the other hand, is not inherited from \(N_0\), rather, it is the new one-dimensional class of \(N_1\simeq S^2\vee S^1\). As such the map $\Phi_{12}^1:H_1(N_1)\to H_1(N_2)$ is an isomorphism, since adjoining the ball \(H\) fills only the sphere \(\Sigma\) and does not fill the loop represented by \(w\). Thus it follows  $\Phi_{12}^1(w)=\pm w_2$, where the sign depends on the orientation convention.
    \item Finally, $\Phi_{12}^2:H_2(N_1)\to H_2(N_2)$ is zero since  \(q=[\Sigma]\) becomes the boundary of the ball \(H\) in \(N_2\).\\
\end{itemize}

The kernel and cokernel of these transport maps have the following interpretation.  For a given map $\Phi_{ij}^q:CH_q(P_i)\to CH_q(P_j)$ the kernel consists of old degree \(q\) Conley classes that become trivial after the neighborhood is enlarged from \(N_i\) to \(N_j\). The cokernel is the quotient of \(CH_q(P_j)\) by the classes inherited from \(CH_q(P_i)\), so it records degree \(q\) Conley classes in the larger block that were \textbf{not} inherited from the smaller one. In our example, this can be explicitly interpreted as follows:
\[
\begin{aligned}
\ker \Phi_{01}^{1}=\mathbb{Z}\langle t\rangle
\;&:\;
\text{the periodic phase class dies;}\\
\operatorname{coker}\Phi_{01}^{1}
=
\mathbb{Z}\langle w\rangle
\;&:\;
\text{the global product-space loop is born;}\\
\operatorname{coker}\Phi_{01}^{2}
=
\mathbb{Z}\langle q\rangle
\;&:\;
\text{the Hopf sphere is born;}\\
\ker \Phi_{12}^{2}
=
\mathbb{Z}\langle q\rangle
\;&:\;
\text{the ball }H\text{ fills the Hopf sphere;}\\
\Phi_{12}^{1}(w)=\pm w_2
\;&:\;
\text{the global loop survives.}
\end{aligned}
\]
These maps between the nested pairs don't only record the appearance or disappearance of individual orbits, they also record the topology of the Hopf mechanism in the product space. We can conclude the following topological signature as follows: \textit{\textbf{$t$ dies, $q$ is born and dies, $w$ is born and survives.}} This signature expresses the collapse of the periodic orbit phase, the formation of a mirrored Hopf sphere, its filling by the interior source, and the survival of a global parameter-loop class.\\

The example above shows us that in some cases one can weaken the assumptions of Proposition \ref{decomposition} and analyze the dynamics of the Entropy flow forced by the topology of bifurcation (although that being said, unlike Proposition \ref{decomposition} this analysis does depend to some extent on the choice of the Entropy flow). We now generalize this discussion. Recall that the Entropy flow for a $C^1$ one--parameter family $\dot{s}=F_\tau(s)$, $s\in M,\tau\in I$ is a dynamical system defined on $M\times I$. Let us denote by $E$ the vector field generating the said Entropy flow, and by  $\varphi^t_E$  the corresponding flow function at time $t$. Given a compact set $N\subset M\times I$, recall we write $
\operatorname{Inv}_E(N) =\{x\in N:\varphi_E^t(x)\in N\ \text{for all }t\in\mathbb R\}$ for the maximal invariant set of \(E\) in \(N\). We now define:

\begin{definition} \label{def:entropy-index-ladder}
Let $\dot{s}=F_\tau(s)$ be a $C^1$ one parameter family on $M$ parameterized by $\tau\in I$, and let $E$ be some corresponding Entropy flow on $M\times I$. An {\textbf{entropy-adapted index filtration}} is a finite sequence of Conley index pairs $\mathcal P=\{P_i=(N_i,L_i)\}_{i=0}^m$  s.t. the following conditions hold:
\begin{enumerate}
    \item $N_i,L_i\subseteq M\times I$ for all $0\leq i<m$.
    \item $N_i\subset N_{i+1}$ for all $0\leq i<m$.
    \item $L_i=L_{i+1}\cap N_i$ for all $0\leq i<m$.
    \item Each $P_i=(N_i,L_i)$ is an index pair for the isolated invariant set $  S_i=\operatorname{Inv}_E\bigl(\overline{N_i\setminus L_i}\bigr)$, where $\operatorname{Inv}_E$ is as above.
\end{enumerate}
We write $CH_q(P_i;R):=H_q(N_i,L_i;R)$ for the degree \(q\) Conley index of index pair \(P_i\) with coefficients in a commutative ring \(R\).
\end{definition}

\begin{remark}\label{rem:index-filtration-morse}
The terminology above is meant in the classical Conley sense. The adjective ``entropy-adapted'' refers to the way the filtration is selected by the Entropy flow in the product space $M\times I$, not to a new object beyond index filtrations (see Definition 3.4 of \cite{Franzosa1986IndexFiltration}). The existence of an index filtration for a Morse decomposition is proven in Theorem 3.8 of \cite{Franzosa1986IndexFiltration}.\end{remark}

The condition \(L_i=L_{i+1}\cap N_i\) would be called \emph{\textbf{exit-set compatibility}}. It ensures that, for every $i<j$, \(L_i=L_j\cap N_i,\) and hence the inclusion of pairs $(N_i,L_i)\hookrightarrow (N_j,L_j)$ identifies the complex $C_*(N_i,L_i;R)$ with a subcomplex of $C_*(N_j,L_j;R)$. When each \(N_i\) is a trapping set for the flow (as in the example above), by definition $L_i=\emptyset$ for all $i$, so the exit-set compatibility condition is automatic. In this case an entropy-adapted index filtration is simply a nested sequence $N_0\subset N_1\subset\cdots\subset N_m$ of nested trapping sets (which are also by definition isolating neighborhoods). From now on, $E$ would always denote the vector field for some Entropy flow on $M\times I$, defined w.r.t. some $C^1$ one parameter family $\dot{s}=F_\tau(s)$ defined over some manifold $M$ of dimension at least $2$ and parameterized by $\tau\in I$. To simplify, we will often refer to $E$  as an \textbf{Entropy vector field}. We begin by proving how a Morse decomposition can be obtained from entropy-adapted index filtrations:
\begin{lemma}
Let $\mathcal P=\{P_i=(N_i,L_i)\}_{i=0}^m$ be an entropy-adapted index filtration and set $S_i=\operatorname{Inv}_E\bigl(\overline{N_i\setminus L_i}\bigr)$, where $E$ is a vector field for some Entropy flow. Then, the following holds:
\begin{itemize}
    \item $S_i\subset S_j$ for every $i<j$, and $S_i$ is an attractor in $S_j$. Consequently, $\emptyset=S_{-1}\subset S_0\subset S_1\subset\cdots\subset S_m$ is an attractor filtration.
    \item  If $M_i=S_i\cap (S_{i-1})^*_{S_i} =\{x\in S_i:\omega_E(x)\cap S_{i-1}=\emptyset\}, \;i=0,\ldots,m,$
where $(S_{i-1})^*_{S_i}$ denotes the dual repeller of $S_{i-1}$ in $S_i$ and $\omega_E$ denotes the $\omega$ limit set w.r.t. $E$, then, after deleting empty terms, the sets $M_i$ form a Morse decomposition of $S_m$ with admissible order  $M_m>M_{m-1}>\cdots>M_0$.
\end{itemize}

In particular, the difference $S_i\setminus S_{i-1}$ should not be identified with the Morse set $M_i$ defined above, as it could contain connecting orbits from $M_i$ to $S_{i-1}$.
\end{lemma}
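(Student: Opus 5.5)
The plan is to follow the classical Conley--Franzosa argument, using only the index pair axioms of Definition \ref{index} and the exit-set compatibility condition $L_i=L_{i+1}\cap N_i$ of Definition \ref{def:entropy-index-ladder}. Nothing specific to the Entropy vector field $E$ should be needed beyond $E$ being $C^1$, so that $\varphi_E^t$ is a continuous flow.

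\textbf{Step 1: the sets $S_i$ are nested.} Iterating exit-set compatibility gives $L_i=L_j\cap N_i$ for $i<j$. Hence $N_i\setminus L_i=N_i\setminus L_j\subseteq N_j\setminus L_j$. Taking closures gives $\overline{N_i\setminus L_i}\subseteq\overline{N_j\setminus L_j}$. Since $\operatorname{Inv}_E$ is monotone under inclusion, $S_i\subseteq S_j$.

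\textbf{Step 2: $S_i$ is an attractor in $S_j$.} This is the key step. First I would record that $S_j\cap L_j=\emptyset$. This follows from axiom (1): $N_j\setminus L_j$ is a neighborhood of $S_j$, so $S_j\subseteq\operatorname{int}(N_j\setminus L_j)$.

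Next I would show that $U:=S_j\cap\operatorname{int}N_i$ is forward invariant inside $S_j$. Take $x\in U$ and suppose its forward orbit leaves $N_i$. By axiom (3) for the pair $(N_i,L_i)$, the orbit first meets $L_i\subseteq L_j$. But the whole orbit of $x$ stays in $S_j$, which is disjoint from $L_j$, a contradiction. (If "exit" must be read as leaving $N_i$ while remaining in $N_j$, this holds automatically, since the orbit stays in $S_j\subseteq N_j$.)

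It follows that $\omega_E(U)$ is a compact invariant subset of $N_i\cap S_j$. It is disjoint from $L_j$, hence from $L_i$, so it lies in $\operatorname{Inv}_E(\overline{N_i\setminus L_i})=S_i$. Conversely, $S_i\subseteq\operatorname{int}(N_i\setminus L_i)\cap S_j\subseteq U$ by axiom (1) for $P_i$ together with Step 1. Together these give $\omega_E(U)=S_i$ with $U$ a neighborhood of $S_i$ in $S_j$, so $S_i$ is an attractor in $S_j$. Taking $S_{-1}=\emptyset$, which is trivially an attractor, yields the attractor filtration.

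I expect Step 2 to be the main obstacle. The index pair axioms as stated in Definition \ref{index} are the relative versions, so the care goes into checking that an orbit of $S_j$ cannot leave $N_i$. That argument rests entirely on $S_j\cap L_j=\emptyset$ and on $L_i\subseteq L_j$.

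\textbf{Step 3: the Morse decomposition.} For each $i$, $S_{i-1}$ is an attractor in $S_i$, so it has a dual repeller $(S_{i-1})^*_{S_i}=\{x\in S_i:\omega_E(x)\cap S_{i-1}=\emptyset\}$. This makes $(S_{i-1},(S_{i-1})^*_{S_i})$ an attractor--repeller pair in $S_i$. I would then invoke the standard correspondence between attractor filtrations and Morse decompositions (Conley; Franzosa, Definition 3.4 and Theorem 3.8 in \cite{Franzosa1986IndexFiltration}). Under it, the sets $M_i=S_i\cap(S_{i-1})^*_{S_i}$ are pairwise disjoint compact invariant sets. Moreover, every $x\in S_m$ not in $\bigcup M_i$ has $\alpha$-limit set in some $M_k$ and $\omega$-limit set in some $M_l$ with $k>l$. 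To check this, let $l$ be the least index with $x\in S_l$ and trace the $\omega$-limit of $x$ down the filtration and its $\alpha$-limit up through the dual repellers. Deleting empty $M_i$ gives the admissible order $M_m>\cdots>M_0$.

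The final remark needs no separate argument. A connecting orbit from $M_i$ to $S_{i-1}$ lies in $S_i\setminus S_{i-1}$ but has $\omega$-limit set meeting $S_{i-1}$, so it is not in $M_i$.
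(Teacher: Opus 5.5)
Your proposal is correct and follows essentially the same route as the paper: the $S_i$ are nested because $L_i=L_j\cap N_i$, and the exit-set axiom together with $S_j\cap L_j=\emptyset$ traps forward orbits of $S_j$ inside $N_i$, after which the standard attractor-filtration/Morse-decomposition correspondence finishes the proof. Your version is slightly more careful than the paper's: you keep track of the closures in Step 1, and you prove $\omega_E(U)=S_i$ for a relative neighborhood $U$ rather than only $\omega_E(x)\subset S_i$ pointwise; note that the exit-set argument only shows forward orbits from $U=S_j\cap\operatorname{int}N_i$ stay in $N_i\cap S_j$, not that $U$ itself is forward invariant, but that is all you actually use.
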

\begin{proof}
The exit-set compatibility condition implies, for every $i<j$, that $L_i=L_j\cap N_i$. This yields $N_i\setminus L_i =N_i\setminus (L_j\cap N_i) \subset N_j\setminus L_j$, hence every complete orbit contained in $N_i\setminus L_i$ is contained in $N_j\setminus L_j$, and so $S_i\subset S_j$. We now show that $S_i$ is an attractor in $S_j$. Since $(N_i,L_i)$ is an index pair, we have $S_i\subset \operatorname{int}(N_i\setminus L_i)$. Choose an open neighborhood $U$ of $S_i$ s.t. $S_i\subset U\subset N_i\setminus L_i.$ Let $x\in U\cap S_j$. Since $x\in S_j$, both its forward and backwards trajectories w.r.t. $E$ are trapped inside $N_j\setminus L_j$. If the forward trajectory of $x$ left $N_i\setminus L_i$, then either it would hit $L_i$, or it would exit $N_i$. In the second case, the exit-set property of $(N_i,L_i)$ implies it must hit $L_i$ before leaving $N_i$. In either case the orbit hits $L_i\subset L_j$, contradicting the fact that it is contained in $N_j\setminus L_j$. All in all, the trajectory of $x$ w.r.t. $E$ is trapped in $N_i\setminus L_i$.

It follows that $\omega_E(x)\subset S_i$. Since $S_i\subset U\cap S_j$ is invariant, this proves $S_i$ attracts $U\cap S_j$ inside $S_j$, hence $S_i$ is an attractor in $S_j$, hence $\emptyset=S_{-1}\subset S_0\subset\cdots\subset S_m$ is an attractor filtration. For each $i$, the attractor-repeller decomposition of $S_i$ associated to the attractor $S_{i-1}$ gives the dual repeller $M_i=(S_{i-1})^*_{S_i}.$ Each nonempty $M_i$ is compact, invariant and isolated, therefore the standard construction of a Morse decomposition from an attractor filtration gives the family $\{M_i\}$ with order $M_m>\cdots>M_0$. In particular, any point not in one of the $M_i$ belongs to a connecting region whose alpha-limit lies in a higher layer and whose omega-limit lies in a lower layer. By applying the Lyapunov functions of the attractor-repeller pairs $(S_{i-1},M_i)$ inside $S_i$ we conclude no recurrent orbit remains outside the union of the $M_i$, i.e., the repeller pushes towards the attractor. Thus after omitting those indices \(i\) for which \(M_i = \emptyset\) (if necessary), the sets $M_i$ form a Morse decomposition of $S_m$.
\end{proof}
Motivated by the example of the Hopf bifurcation above, we now introduce the following definition:
\begin{definition}
\label{transportmap}
Given an entropy-adapted index filtration \(\mathcal P=\{P_i\}_{i=0}^m\) w.r.t. to an Entropy vector field $E$, the inclusion of pairs $P_i=(N_i,L_i)\hookrightarrow P_j=(N_j,L_j)$, $i\leq j$ induces a homomorphism $\Phi_{ij}^q:CH_q(P_i;R)\longrightarrow CH_q(P_j;R)$ for every homological degree \(q\). We refer to \(\Phi_{ij}^q\) as the {\textbf{degree \(q\) Entropy--Conley transport map}}. The collection
\[
        \mathsf{ECH}_q(E,\mathcal P;R)
        :=\big(CH_q(P_i;R),\Phi_{ij}^q\big)_{i\le j}
\]
will be defined as the \textbf{degree \(q\) Entropy--Conley transport module}.
\end{definition}

Exactly like in the case of the Hopf bifurcation above, for a fixed degree \(q\), the transport module records how degree \(q\) Conley classes behave as the Entropy flow block is enlarged along the filtration. Recalling how in our example above and recalling the kernels, images and cokernels encode the bifurcations themselves, we also introduce the following definition:
\begin{definition}
\label{birthdeathcode}
Let $E$ be some Entropy vector field with an associated degree $q$ Entropy--Conley transport module $\mathsf{ECH}_q(E,\mathcal P;R)$ as above. For a transport map $\Phi_{ij}^q:CH_q(P_i;R)\to CH_q(P_j;R)$ we define:

\[
\begin{aligned}
\operatorname{Death}_q(i,j)&:=\ker\Phi_{ij}^q,\\
\operatorname{Inherit}_q(i,j)&:=\operatorname{im}\Phi_{ij}^q,\\
\operatorname{Birth}_q(i,j)&:=\operatorname{coker}\Phi_{ij}^q.
\end{aligned}
\]

Thus \(\operatorname{Death}_q(i,j)\) consists of old degree \(q\) classes that become zero in the larger block, \(\operatorname{Inherit}_q(i,j)\) consists of classes that persist into the larger block, and \(\operatorname{Birth}_q(i,j)\) consists of new degree \(q\) classes in \(P_j\) modulo those inherited from \(P_i\).  
\end{definition}
Having generalized the ideas introduced in the example of the Hopf bifurcation, we now prove the following Lemma:
\begin{lemma}
Let \(\mathcal P=\{P_i\}_{i=0}^m\) be an entropy-adapted index filtration w.r.t. some Entropy vector field $E$. Then, for every \(q\), the assignment 
\[
        i\longmapsto CH_q(P_i;R),
        \qquad
        (i\le j)\longmapsto \Phi_{ij}^q
\]
defines a functor from the ordered set \(\{0<1<\cdots<m\}\) to \(R\)-modules.  In particular, we have $\Phi_{ii}^q=\operatorname{id}$ and $\Phi_{jk}^q\circ \Phi_{ij}^q=\Phi_{ik}^q$ for all $i\le j\le k$.
\end{lemma}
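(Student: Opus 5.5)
The lemma is functoriality of relative singular homology applied to a chain of inclusions of pairs, so the proof is formal. I will first record that for $i\le j$ the inclusion $\iota_{ij}:(N_i,L_i)\hookrightarrow(N_j,L_j)$ is a genuine map of pairs. We need $N_i\subseteq N_j$, which follows by iterating condition (2) of Definition \ref{def:entropy-index-ladder}. We also need $\iota_{ij}(L_i)\subseteq L_j$. For this I use the exit-set compatibility, already iterated in the remark after that definition to give $L_i=L_j\cap N_i$, so in particular $L_i\subseteq L_j$. For $i=j$ I take $\iota_{ii}=\operatorname{id}_{(N_i,L_i)}$. This also clarifies that $\Phi^q_{ij}$ in Definition \ref{transportmap} is precisely $(\iota_{ij})_*$ on $H_q(-,-;R)$.

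Next I check the composition law at the level of maps of pairs. For $i\le j\le k$ one has $\iota_{jk}\circ\iota_{ij}=\iota_{ik}$, since all three are restrictions of the identity of $M\times I$. Singular homology with coefficients in $R$ is a functor from the category of pairs of spaces to graded $R$-modules. More concretely, the induced chain maps $C_*(N_i,L_i;R)\to C_*(N_j,L_j;R)$ compose strictly, and the identity pair map induces the identity chain map. Hence $(\iota_{ii})_*=\operatorname{id}$ and $(\iota_{jk})_*\circ(\iota_{ij})_*=(\iota_{ik})_*$ in each degree $q$. This is exactly $\Phi^q_{ii}=\operatorname{id}$ and $\Phi^q_{jk}\circ\Phi^q_{ij}=\Phi^q_{ik}$.

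Finally, I will phrase this as a functor. View $\{0<1<\cdots<m\}$ as a category with a unique morphism $i\to j$ whenever $i\le j$. The assignment on objects is $i\mapsto CH_q(P_i;R)$, and on morphisms it is $(i\le j)\mapsto\Phi^q_{ij}$. The two identities just proved are the functor axioms, and well-definedness on morphisms is automatic by uniqueness. Since $R$ is commutative, each $\Phi^q_{ij}$ is $R$-linear, so the target is indeed $R$-modules.

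The only step needing care is the pair-map condition $L_i\subseteq L_j$ for non-adjacent indices. I will prove $L_i=L_j\cap N_i$ by induction on $j-i$, starting from $L_i=L_{i+1}\cap N_i$. The inductive step is $L_i=L_{j-1}\cap N_i=(L_j\cap N_{j-1})\cap N_i=L_j\cap N_i$, using $N_i\subseteq N_{j-1}$. Everything else is standard.
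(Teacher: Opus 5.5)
Your proof is correct and follows the paper's argument. Both reduce the lemma to functoriality of relative singular homology for the inclusions of pairs $(N_i,L_i)\hookrightarrow(N_j,L_j)$, checking the identity and composition laws at the chain level. Your induction showing $L_i=L_j\cap N_i$ (so the inclusions are genuine maps of pairs) only spells out a point the paper takes from the remark following the definition of an entropy-adapted index filtration.
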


\begin{proof}
We first note the inclusion of pairs $(N_i,L_i)\hookrightarrow (N_j,L_j)$ induce an inclusion of relative singular chain complexes $C_*(N_i,L_i;R)\longrightarrow C_*(N_j,L_j;R).$ This chain map induces the homomorphism \(\Phi_{ij}^q\) on homology. By definition, the identity inclusion induces the identity chain map and the composition of inclusions induces the composition of the corresponding chain maps.  Passing to homology now gives $\Phi_{ii}^q=\operatorname{id},\;
        \Phi_{jk}^q\circ \Phi_{ij}^q=\Phi_{ik}^q$ - to see why,  this is just the usual functoriality of singular homology for pairs (see Section~2.1 in \cite{hatcher2002algebraictopology}).
\end{proof}

We continue by introducing the relative groups which record the bifurcation topology added between two levels of the filtration:
\begin{definition}
\label{transitiongroup}
Let \(P_i=(N_i,L_i)\subset P_j=(N_j,L_j)\) be two levels of an entropy-adapted index filtration defined w.r.t. some Entropy vector field $E$.  Define the \textbf{transition group} by
\[
        TR_q(i,j;R)
        :=
        H_q\big(C_*(N_j,L_j;R)/C_*(N_i,L_i;R)\big),
\]
where the \(C_*(N_j,L_j;R)/C_*(N_i,L_i;R)\) refers to the full graded quotient chain complex. Equivalently, this is the relative homology of the inclusion of pairs $(N_i,L_i)\subset (N_j,L_j)$.
\end{definition}
\begin{remark}
    Notice that when \(L_i=L_j=\emptyset\), the transition group reduces to the usual relative homology group $TR_q(i,j;R)=H_q(N_j,N_i;R).$
\end{remark}
The transition group records the relative Conley homology added as we move between two nested index pairs \(\mathcal P_i\subset \mathcal P_j\).  Algebraically, it is the homology of the quotient chain complex $C_*(\mathcal P_j;R)/C_*(\mathcal P_i;R).$ Dynamically, it represents the homological contribution of the part of the Entropy flow block which appears when passing from the smaller isolating block to the larger one. It records both the newly appearing classes in $\mathcal{P}_j$, and how these new classes are attached to the classes already present in \(\mathcal P_i\). In particular, the transition group measures the relative change in the isolated dynamics as the Entropy flow passes through the bifurcation region. It follows, the connecting homomorphism in the transition exact sequence, given by
\[
\cdots\to
CH_q(P_i;R)
\xrightarrow{\Phi^q_{ij}}
CH_q(P_j;R)
\to
TR_q(i,j;R)
\xrightarrow{\partial^q_{ij}}
CH_{q-1}(P_i;R)
\to\cdots,
\]
is a boundary operator, as it identifies which relative transition classes kill old Conley classes. We now prove the following Lemma:

\begin{lemma}
\label{exactlemma} For every inclusion \(P_i\subset P_j\) in an entropy-adapted index filtration w.r.t. some Entropy vector field $E$, there is a long exact sequence:
\[
\cdots
\to
CH_q(P_i;R)
\xrightarrow{\Phi_{ij}^q}
CH_q(P_j;R)
\to
TR_q(i,j;R)
\xrightarrow{\partial_{ij}^q}
CH_{q-1}(P_i;R)
\xrightarrow{\Phi_{ij}^{q-1}}
CH_{q-1}(P_j;R)
\to
\cdots,
\]
where $\operatorname{im}\partial_{ij}^q = \ker\Phi_{ij}^{q-1}$. That is, if a class \(\alpha\in CH_{q}(P_i;R)\) is mapped to zero by the transport map \(\Phi_{ij}^{q-1}\), then there exists a class \(\beta\in TR_q(i,j;R)\) s.t. $\partial_{ij}^q(\beta)=\alpha$. In other words, the transition class \(\beta\) records the homological mechanism by which the old class \(\alpha\) is canceled when it moves from \(P_i\) to \(P_j\).
\end{lemma}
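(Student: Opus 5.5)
This is the long exact homology sequence of a short exact sequence of chain complexes, so the plan is to write down that short exact sequence and then apply the zig-zag lemma. Let $A_*=C_*(N_i,L_i;R)$ and $B_*=C_*(N_j,L_j;R)$. The first step is to use the exit-set compatibility $L_i=L_j\cap N_i$, valid for all $i<j$ as noted after Definition \ref{def:entropy-index-ladder}, to check that the map $A_*\to B_*$ induced by the inclusion of pairs is injective.

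For injectivity, a chain in $C_*(N_i)$ that becomes zero in $C_*(N_j)/C_*(L_j)$ is a combination of simplices lying in $L_j$. Since these simplices also lie in $N_i$, they lie in $L_j\cap N_i=L_i$, so the chain is already zero in $A_*$. Thus $A_*$ is a subcomplex of $B_*$, and we obtain a short exact sequence
\[
0\to A_*\to B_*\to B_*/A_*\to 0 .
\]
By Definition \ref{transitiongroup}, the homology of the quotient is exactly $TR_*(i,j;R)$.

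The next step is to apply the zig-zag (snake) lemma, e.g.\ Theorem 2.16 in \cite{hatcher2002algebraictopology}. It gives a long exact sequence in homology whose maps are the induced inclusion, the induced projection, and a connecting map $\partial_{ij}^q$. The induced inclusion is precisely $\Phi_{ij}^q$ by Definition \ref{transportmap}, and the connecting map is defined by the usual diagram chase: lift a relative cycle of $B_*/A_*$ to $B_*$, apply the boundary, and observe that the result lies in $A_{q-1}$. Exactness at $CH_{q-1}(P_i;R)$ then reads $\operatorname{im}\partial_{ij}^q=\ker\Phi_{ij}^{q-1}$. In words, every old class killed by transport is the boundary of some transition class $\beta$, which is the asserted reformulation (reading the degree of $\alpha$ as $q-1$).

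The only step needing real care is the injectivity, since it is where exit-set compatibility is used; without it $A_*$ need not be a subcomplex. I would also remark that the index-pair axioms of Definition \ref{index} are not needed for the algebra, only the compatibility of the pairs. Finally, I would note that when $L_i=L_j=\emptyset$ the sequence reduces to the long exact sequence of the pair $(N_j,N_i)$, which is consistent with the Hopf example.
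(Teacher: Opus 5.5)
Your proposal is correct and follows the paper's proof. Both construct the short exact sequence $0\to C_*(N_i,L_i;R)\to C_*(N_j,L_j;R)\to C_*(N_j,L_j;R)/C_*(N_i,L_i;R)\to 0$ from exit-set compatibility and read off $\operatorname{im}\partial_{ij}^q=\ker\Phi_{ij}^{q-1}$ from exactness of the induced long exact sequence. Your simplex-level injectivity check is exactly the point the paper only explains in the remark after the lemma, and you are right that the class $\alpha$ in the statement should live in degree $q-1$.
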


\begin{proof}
The inclusion of relative chain complexes and exit-set compatibility defines a short exact sequence:
\[
0
\to
C_*(N_i,L_i;R)
\to
C_*(N_j,L_j;R)
\to
C_*(N_j,L_j;R)/C_*(N_i,L_i;R)
\to
0.
\]
Every short exact sequence of chain complexes induces a long exact sequence in homology. Recall the definitions $CH_q(P_i;R)=H_q(N_i,L_i;R)$, $CH_q(P_j;R)=H_q(N_j,L_j;R)$,  $TR_q(i,j;R)=H_q\big(C_*(N_j,L_j;R)/C_*(N_i,L_i;R)\big)$. By plugging them in the above we obtain the stated sequence. To conclude, note the equality $\operatorname{im}\partial_{ij}^q = \ker\Phi_{ij}^{q-1}$ follows immediately from the exactness of this sequence at \(CH_{q-1}(P_i;R)\). (this is the standard long exact sequence associated to a short exact sequence of chain complexes - see \cite{hatcher2002algebraictopology}).
\end{proof}
\begin{remark}\label{rem:exit-set-compatibility}
Put simply, Lemma \ref{exactlemma} states that whenever a Conley class becomes zero under the transport map the transition exact sequence detects this disappearance by exhibiting a relative transition class whose boundary is the old class. We further remark the short exact sequence above is not derived automatically from $N_i\subset N_j, \; L_i\subset L_j$. To see why, note the relative chain complex is defined by $C_*(N_i,L_i;R)=C_*(N_i;R)/C_*(L_i;R)$. If a chain is supported in $N_i\cap L_j$ but not in $L_i$, it could potentially define a nonzero class in $C_*(N_i,L_i;R)$ while becoming zero in $C_*(N_j,L_j;R)$, which would make the induced map $C_*(N_i,L_i;R)\rightarrow C_*(N_j,L_j;R)$ non-injective. The condition $L_i=L_j\cap N_i$ rules out this possibility, as it guarantees the inclusion of pairs identifies $C_*(N_i,L_i;R)$ with a subcomplex of $C_*(N_j,L_j;R)$, which makes the quotient complex $C_*(N_j,L_j;R)/C_*(N_i,L_i;R)$ well-defined.
\end{remark}

Lemma \ref{exactlemma} gives a criterion for detecting homologically visible bifurcations - namely, a change of Conley homology between two nested Entropy flow blocks is detected by the failure of \(\Phi_{ij}^q\) to be an isomorphism. Note this was seen in the Hopf example earlier this Section -  for example, in the map \(\Phi_{12}^2\) which send the class \(q = [\Sigma] \in CH_2 (P_1)\) to zero in \(CH_2 (P_2)\). As the transition class represented by the ball \(H\) has boundary \(\partial[H]=[\Sigma]=q\), so the boundary map records how the Hopf sphere is killed. Moreover, similarly to the said example, a nonzero kernel implies an old class becomes null-homologous in the larger block, while a nonzero cokernel means that the larger block contains a class that is not inherited from the smaller block. Consequently, the boundary map $\partial_{ij}^q:TR_q(i,j;R)\to CH_{q-1}(P_i;R)$ records how a relative transition class attaches to the older block. If the boundary of such a relative class is an old Conley class, then that old class becomes null-homologous after the transition. Summarizing our ideas, we now introduce the following definition:

\begin{definition}
\label{bifurcationcode}
Let \(E\) be an Entropy vector field on \(M\times I\) for some $C^1$ one parameter family \(\dot{s}=F_\tau(s)\) parameterized by $\tau\in I$. Let $Y\subset M\times I$ be a compact set satisfying the following:
\begin{itemize}
    \item $Y$ is an isolated invariant w.r.t. $E$.
    \item $Y\subseteq\overline{Per\cup Fix}$, i.e., $Y$ is composed of components of $Per$ and $Fix$ glued together at bifurcation orbits and fixed points.
\end{itemize}

A \textbf{bifurcation code} for \(Y\) over a coefficient ring \(R\) (typically, $\mathbb{Z}$) is an entropy-adapted index filtration $\mathcal P_Y  = \{P_{Y,i}=(N_{Y,i},L_{Y,i})\}_{i=0}^{m_Y}$ adapted to \(Y\)  (i.e., for all $i$, $N_{Y,i}\cap Y\ne\emptyset$ and $Y = \Inv_E (\overline{N_Y,m_Y \backslash L_Y,m_Y})$, together with its induced transport maps and transition groups
\[
\mathfrak B_Y(E;R)
=
\left(
\mathcal P_Y,\,
\{CH_*(P_{Y,i};R)\}_{i},\,
\{\Phi^*_{ij}\}_{i\le j},\,
\{TR_*(i,j;R),\partial_{ij}\}_{i<j}
\right),
\]
where $\Phi^q_{ij}:CH_q(P_{Y,i};R)\longrightarrow CH_q(P_{Y,j};R)$ are the maps induced by the inclusion of index pairs $P_{Y,i}\hookrightarrow P_{Y,j}.$ The ordered list $P_{Y,0}\subset P_{Y,1}\subset\cdots\subset P_{Y,m_Y}$ will be called the \textbf{bifurcation sequence} associated to \(Y\).
\end{definition}

Intuitively, one could think of a bifurcation code as a Conley-level resolution of the Entropy flow near \(Y\) (needless to say, the code depends on both the choice Entropy flow, and on the chosen index filtration). Thus it should not be interpreted as an intrinsic classification of all bifurcations in the original parameter family $\dot{s}=F_\tau(s)$, but rather, it is a tool for studying how a given Entropy flow organizes the isolated bifurcation structure near \(Y\). We now prove the following analogue of Proposition \ref{decomposition} for bifurcation codes:

\begin{theorem}
\label{bifcodesth1}
Let \(E\) be an Entropy vector field on \(M\times I\), let $Y_1,\dots,Y_k\subseteq \overline{Per\cup Fix}$ be finitely many compact subsets, and let \(R\) be a coefficient ring. Assume that each \(Y_\alpha\), \(\alpha=1,\dots,k\), admits a bifurcation code given by
\[
\mathfrak B_\alpha(E;R)
=
\left(
\mathcal P_\alpha,\,
\{CH_*(P_{\alpha,i};R)\}_{i},\,
\{\Phi_{\alpha,ij}^*\}_{i\le j},\,
\{TR_{\alpha,*}(i,j;R),\partial_{\alpha,ij}\}_{i<j}
\right),
\]
where $\mathcal P_\alpha = \{P_{\alpha,i}=(N_{\alpha,i},L_{\alpha,i})\}_{i=0}^{m_\alpha}$ is an entropy-adapted index filtration adapted to \(Y_\alpha\). Further, assume that the last isolating blocks in each filtration, $N_{\alpha,m_\alpha}, \alpha=1,\dots,k,$ are pairwise disjoint, and that their union isolates the part of the Entropy flow generated by $Y_1\cup\cdots\cup Y_k.$ Then the collection $\mathfrak B(E;R) =\{\mathfrak B_\alpha(E;R)\}_{\alpha=1}^k$ encodes the Conley-level behavior of \(E\) on this isolated region in the following sense:

\begin{itemize}
\item For every \(\alpha\), every degree \(q\), and every \(i\le j\), the transport map $\Phi_{\alpha,ij}^q:
        CH_q(P_{\alpha,i};R)
        \longrightarrow
        CH_q(P_{\alpha,j};R)$ records the degree \(q\) Conley classes inherited from the smaller block \(P_{\alpha,i}\) to the larger block \(P_{\alpha,j}\).

\item The kernel $\ker\Phi_{\alpha,ij}^q$ records the degree \(q\) Conley classes in \(P_{\alpha,i}\) which become zero after transport to \(P_{\alpha,j}\). Equivalently, these are the degree \(q\) classes which disappear when the index filtration is enlarged from \(P_{\alpha,i}\) to \(P_{\alpha,j}\).

\item The cokernel  $\operatorname{coker}\Phi_{\alpha,ij}^q$ records the degree \(q\) Conley classes in \(P_{\alpha,j}\) which are \textbf{not} inherited from \(P_{\alpha,i}\). These are the new degree \(q\) classes created by the transition from \(P_{\alpha,i}\) to \(P_{\alpha,j}\).

\item The connecting homomorphism $\partial_{\alpha,ij}^q: TR_{\alpha,q}(i,j;R) \longrightarrow CH_{q-1}(P_{\alpha,i};R)$ identifies how relative transition classes attach to the older block. More precisely, by exactness of the transition sequence, we have $\operatorname{im}\partial_{\alpha,ij}^q = \ker\Phi_{\alpha,ij}^{q-1}$. Thus the image of \(\partial_{\alpha,ij}^q\) consists precisely of the older degree \(q-1\) Conley classes which become null-homologous as we move from \(P_{\alpha,i}\) to \(P_{\alpha,j}\).
\end{itemize}

In particular, the filtration detects a Conley-level transition in degree \(q\) whenever the transport map \(\Phi_{\alpha,ij}^q\) fails to be an isomorphism.  The transition boundary maps further record which older classes become boundaries of relative transition classes in the enlarged block.
\end{theorem}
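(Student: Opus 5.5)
The argument works one block at a time. For a fixed $\alpha$ it reduces to the functoriality lemma for $\mathsf{ECH}_q$ and to Lemma \ref{exactlemma}. The global statement then follows from disjointness of the final blocks.

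First, for each $\alpha$, I would check that $\mathcal P_\alpha$ is an entropy-adapted index filtration in the sense of Definition \ref{def:entropy-index-ladder}, so exit-set compatibility $L_{\alpha,i}=L_{\alpha,j}\cap N_{\alpha,i}$ holds for all $i<j$. By Remark \ref{rem:exit-set-compatibility}, this makes $C_*(N_{\alpha,i},L_{\alpha,i};R)$ a subcomplex of $C_*(N_{\alpha,j},L_{\alpha,j};R)$. Hence the maps $\Phi^q_{\alpha,ij}$ of Definition \ref{transportmap} are well defined, and by the functoriality lemma they form a functor from $\{0<\dots<m_\alpha\}$ to $R$-modules. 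The first three bullets then amount to unwinding Definition \ref{birthdeathcode}:
\begin{itemize}
\item $\operatorname{im}\Phi^q_{\alpha,ij}$ is exactly the set of classes of $CH_q(P_{\alpha,j};R)$ represented by chains supported in $N_{\alpha,i}$ modulo $L_{\alpha,i}$, which are the inherited classes;
\item $\ker\Phi^q_{\alpha,ij}$ consists of the classes that become relative boundaries in $(N_{\alpha,j},L_{\alpha,j})$;
\item $\operatorname{coker}\Phi^q_{\alpha,ij}$ is the quotient by the inherited classes.
\end{itemize}
Functoriality ($\Phi_{\alpha,jk}\circ\Phi_{\alpha,ij}=\Phi_{\alpha,ik}$) guarantees that these notions are consistent along the filtration. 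For instance, a class dead at level $j$ stays dead at every later level $k$.

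For the fourth bullet, I would apply Lemma \ref{exactlemma} to the inclusion $P_{\alpha,i}\subset P_{\alpha,j}$. This gives the long exact sequence and the equality $\operatorname{im}\partial^q_{\alpha,ij}=\ker\Phi^{q-1}_{\alpha,ij}$. The final sentence is an exactness argument. If $\Phi^q_{\alpha,ij}$ is not an isomorphism, then it has a nonzero kernel or a nonzero cokernel. A nonzero kernel in degree $q$ is $\operatorname{im}\partial^{q+1}_{\alpha,ij}$, so some transition class is nonzero. A nonzero cokernel injects into $TR_q(\alpha;i,j)$. Either way $TR_*\neq0$, and the boundary maps single out which old classes die.

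Finally, to pass from single blocks to the whole region, I would use that the $N_{\alpha,m_\alpha}$ are pairwise disjoint and that their union isolates the flow generated by $Y_1\cup\dots\cup Y_k$. Under these hypotheses, $(\bigsqcup_\alpha N_{\alpha,m_\alpha},\bigsqcup_\alpha L_{\alpha,m_\alpha})$ is an index pair for the union. Since singular chains decompose over disjoint open pieces, relative homology splits as a direct sum:
\[
CH_*\Bigl(\bigsqcup_\alpha P_{\alpha,m_\alpha};R\Bigr)\cong\bigoplus_\alpha CH_*(P_{\alpha,m_\alpha};R).
\]
The same splitting holds for transport maps and transition sequences, which act componentwise.

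The main obstacle is this last step, specifically checking that disjoint compact blocks are actually separated. Each $N_{\alpha,m_\alpha}$ should be clopen in the union, which requires the blocks to have positive distance from one another; compactness gives this. I also need the additivity of the index pair axioms. Exit behaviour is local to each block, so this should hold.
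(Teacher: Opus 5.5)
Your proposal is correct and follows the paper's proof of the four bullets. Per block, functoriality of relative singular homology gives the transport functor, the long exact sequence of the quotient complex $C_*(P_{\alpha,j};R)/C_*(P_{\alpha,i};R)$ (Lemma \ref{exactlemma}) gives $\operatorname{im}\partial^q_{\alpha,ij}=\ker\Phi^{q-1}_{\alpha,ij}$, and the kernel/image/cokernel readings are definitional. Your final step, splitting over the disjoint terminal blocks, is not part of the paper's proof of this theorem, which handles that direct sum separately in Corollary \ref{cor:independent-direct-sum}. Your argument for it is nonetheless sound, since connected orbit segments and singular simplices cannot pass between disjoint compact blocks.
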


\begin{proof}
Fix an isolated set \(Y_\alpha\) - by assumption, \(Y_\alpha\) has an entropy-adapted index filtration which we denote by $\mathcal P_\alpha =\{P_{\alpha,i}=(N_{\alpha,i},L_{\alpha,i})\}_{i=0}^{m_\alpha}.$ Note that for every \(i\le j\), the inclusion of index pairs $P_{\alpha,i}\hookrightarrow P_{\alpha,j}$ induces a map on relative homology, given by
\[
        \Phi_{\alpha,ij}^q:
        H_q(N_{\alpha,i},L_{\alpha,i};R)
        \longrightarrow
        H_q(N_{\alpha,j},L_{\alpha,j};R).
\]
Or equivalently:
\[
        \Phi_{\alpha,ij}^q:
        CH_q(P_{\alpha,i};R)
        \longrightarrow
        CH_q(P_{\alpha,j};R).
\]
By the functoriality of singular homology, these maps satisfy both $\Phi_{\alpha,ii}^q=\operatorname{id}$ and $\Phi_{\alpha,j\ell}^q\circ \Phi_{\alpha,ij}^q = \Phi_{\alpha,i\ell}^q$ for all $i\le j\le \ell$. Thus, for each fixed degree \(q\), the assignments $i\longmapsto CH_q(P_{\alpha,i};R)$ and $(i\le j)\longmapsto \Phi_{\alpha,ij}^q$ is a functor from the finite ordered category \(\{0<1<\cdots<m_\alpha\}\) to the category of \(R\)-modules. 

To continue, consider a transition from $P_{\alpha,i}$ to $ P_{\alpha,j}$, where $P_{\alpha,i}\subset P_{\alpha,j}$. The long exact sequence of the triple of chain complexes, equivalently of the quotient complex $C_*(P_{\alpha,j};R)/C_*(P_{\alpha,i};R)$ yields the exact sequence:
\[
\cdots
\to
CH_q(P_{\alpha,i};R)
\xrightarrow{\Phi_{\alpha,ij}^q}
CH_q(P_{\alpha,j};R)
\to
TR_{\alpha,q}(i,j;R)
\xrightarrow{\partial_{\alpha,ij}^q}
CH_{q-1}(P_{\alpha,i};R)
\xrightarrow{\Phi_{\alpha,ij}^{q-1}}
CH_{q-1}(P_{\alpha,j};R)
\to
\cdots .
\]
While the exactness implies
\[
        \operatorname{im}\partial_{\alpha,ij}^q
        =
        \ker\Phi_{\alpha,ij}^{q-1}.
\]
Therefore, whenever an old Conley class dies under the transport map in degree \(q-1\), it appears as the boundary of a relative transition class in degree \(q\).  This proves that the boundary map identifies the transition layer that attaches to, or kills, older Conley classes.  The interpretations of kernels, images, and cokernels as death, inheritance, and birth are then precisely the algebraic meanings of the transport maps. The final claim follows immediately.
\end{proof}

\begin{remark}
Note Theorem \ref{bifcodesth1} is graded, i.e., a bifurcation code may be trivial in one degree $q$ and nontrivial in another degree $q'$. Thus, if all degree \(q\) transport maps are isomorphisms, this means the chosen filtration detects no Conley-level transition in degree \(q\).  It does not rule out births, deaths, or transition attachments in a different degree \(q'\). Therefore, we say a bifurcation is detected by the bifurcation code if such a nontrivial event occurs in at least one degree.
\end{remark}
Put simply, Theorem \ref{bifcodesth1} establishes the connection between the topology of certain bifurcations and how the Entropy flow sees them. In addition to giving concrete description how bifurcations are seen by the Entropy flow, this Theorem (much like Proposition \ref{decomposition}) should be thought of as a Sharkovskii-like result for bifurcations. To illustrate, recall the Sharkovskii Theorem (see \cite{Shar}) and the Li-Yorke Theorem (see \cite{Yor}) force the existence of certain periodic dynamics for interval maps provided certain topological conditions are met. As Theorem \ref{bifcodesth1} and Proposition \ref{decomposition} allow us to associate Conley indices with some bifurcations and analyze their structure, they induce topological constrains on the Entropy flow - and in doing so, they impose constrains on the bifurcations of periodic orbits in $Per$ and fixed points in $Fix$.\\

That being said, we would like to stress Theorem \ref{bifcodesth1} \textbf{does not} claim every bifurcation is detected, nor that the underlying invariant set for an Entropy flow is uniquely determined by its Conley index. Rather, once an isolated Entropy flow block and an adapted index filtration have been chosen, the code records how Conley classes are inherited, born, killed, or attached along the bifurcation sequence. This is the sense in which bifurcation codes provide a criterion for detecting bifurcation structure through the Entropy flow - in particular, the bifurcation code depends on the choice of the Entropy flow. This Theorem is best seen as a bifurcation analogue to the way braid types force prescribed dynamics to appear for planar homeomorphisms, in the sense that once a braid type is detected we can study what dynamics are forced by it (see \cite{CH}). Similarly, provided we can verify a certain component in $\overline{Per\cup Fix}$ exists and can be isolated, we can associate a bifurcation code with it w.r.t. some Entropy flow and study how it constrains its dynamics. This discussion leads us to:

\begin{definition}
\label{dynamicallyindependent}
Let $P_\alpha=(N_\alpha,L_\alpha), \alpha=1,\ldots,k,$ be terminal index pairs as in Theorem \ref{bifcodesth1}, and set $S_\alpha = \operatorname{Inv}_E\big(\overline{N_\alpha\setminus L_\alpha}\big)$, where $E$ is an Entropy vector field. We now define the following notions:

\begin{itemize}
    \item  Let \(N_{\mathrm{amb}}\) be an ambient isolating neighborhood containing all sets \(S_\alpha\).  A \textbf{complete \(E\)-orbit in \(N_{\mathrm{amb}}\)} is a solution curve for $E$ given by \(\gamma:\mathbb R\to N_{\mathrm{amb}}\) s.t. 
$\gamma(t)=\varphi_E(t,\gamma(0))$ for all $t\in\mathbb R$, and \(\gamma(t)\in N_{\mathrm{amb}}\) for all \(t\in\mathbb R\) (where $\varphi_E$ denotes the corresponding flow function).  
\item For \(x\in \operatorname{Inv}_E(N_{\mathrm{amb}})\), we write $\alpha_E(x) = \bigcap_{T>0}\overline{\{\varphi_E(t,x):t\le -T\}}$ and $\omega_E(x)  =  \bigcap_{T>0}\overline{\{\varphi_E(t,x):t\ge T\}}$ for the alpha- and omega-limit sets of the complete orbit through \(x\). For two isolated invariant sets \(S_\alpha,S_\beta\subset\operatorname{Inv}_E(N_{\mathrm{amb}})\), define the set of connecting points from \(S_\alpha\) to \(S_\beta\) inside \(N_{\mathrm{amb}}\) by
$$
C_E(S_\alpha,S_\beta;N_{\mathrm{amb}})
=
\left\{
 x\in \operatorname{Inv}_E(N_{\mathrm{amb}})
 \setminus(S_\alpha\cup S_\beta)
 :
 \alpha_E(x)\subset S_\alpha,
 \omega_E(x)\subset S_\beta
\right\}.$$
We say the terminal blocks \(P_1,\ldots,P_k\) are \textbf{dynamically independent} in \(N_{\mathrm{amb}}\) if for all indices $\alpha\neq\beta$ we have $C_E(S_\alpha,S_\beta;N_{\mathrm{amb}})=\emptyset$.
\item If, in addition,  $\operatorname{Inv}_E(N_{\mathrm{amb}})= \bigsqcup_{\alpha=1}^k S_\alpha$ (where \(\bigsqcup\) denotes disjoint union), then the terminal blocks are said to give a \textbf{complete independent decomposition} of the isolated Entropy flow dynamics in \(N_{\mathrm{amb}}\).
\end{itemize}

\end{definition}

We conclude our discussion of bifurcation codes for Entropy flows with the following criterion, describing when component bifurcation codes can be assembled by direct sum:

\begin{corollary}
\label{cor:independent-direct-sum}
Let \(\mathfrak B_\alpha(E;A)\), \(\alpha=1,\ldots,k\), be bifurcation codes associated to entropy-adapted index filtrations $\mathcal P_\alpha=  \{P_{\alpha,i}\}_{i=0}^{m_\alpha}$ w.r.t. some Entropy vector field $E$. Write the terminal pair of the \(\alpha\)-th filtration as $P_{\alpha,m_\alpha}=(N_\alpha,L_\alpha)$ and set $S_\alpha =
        \operatorname{Inv}_E\big(\overline{N_\alpha\setminus L_\alpha}\big).$ Assume the terminal sets \(S_1,\ldots,S_k\) give a complete independent decomposition inside an ambient isolating neighborhood \(N_{\mathrm{amb}}\), that is, $C_E(S_\alpha,S_\beta;N_{\mathrm{amb}})=\emptyset$ for all $\alpha\neq\beta,$ and $\operatorname{Inv}_E(N_{\mathrm{amb}}) = \bigsqcup_{\alpha=1}^k S_\alpha.$
Then, the Conley homology of the global isolated invariant set decomposes as follows:
\[
        CH_q\big(\operatorname{Inv}_E(N_{\mathrm{amb}});R\big)
        \cong
        \bigoplus_{\alpha=1}^k CH_q(S_\alpha;R).
\]
Equivalently, the terminal part of the global bifurcation code is the direct sum of the terminal component codes. Consequently, if there exists a connecting orbit from some \(S_\alpha\) to some \(S_\beta\) inside \(N_{\mathrm{amb}}\), the component codes alone do not encode the full isolated Entropy flow dynamics and must be enlarged to a transition block containing the connecting orbit and include the corresponding transition group and boundary map. 
\end{corollary}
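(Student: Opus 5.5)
The plan is to reduce the claim to additivity of the Conley index over disjoint isolated invariant sets. The first step is to show that, under the hypotheses, each $S_\alpha$ is an isolated invariant set, and that the global set $S:=\operatorname{Inv}_E(N_{\mathrm{amb}})$ is a disjoint union of finitely many compact invariant pieces which are pairwise separated. Since $S=\bigsqcup_\alpha S_\alpha$ and each $S_\alpha$ is compact (being the maximal invariant set of a compact set), the $S_\alpha$ are at positive distance from one another. We can therefore choose pairwise disjoint compact neighborhoods $U_\alpha\supseteq S_\alpha$. The terminal blocks $N_\alpha$ are already pairwise disjoint by the standing assumption of Theorem \ref{bifcodesth1}, so we may simply take $U_\alpha=N_\alpha$, after shrinking them inside $N_{\mathrm{amb}}$ if necessary.

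The second step is to produce an index pair for $S$ from the index pairs $(N_\alpha,L_\alpha)$. We set $N:=\bigcup_\alpha N_\alpha$ and $L:=\bigcup_\alpha L_\alpha$, and verify the three axioms of Definition \ref{index} componentwise. Because the $N_\alpha$ are disjoint and closed, a trajectory segment contained in $N$ stays in a single $N_\alpha$ by connectedness. Positive invariance of $L$ relative to $N$, and the exit property, then follow from the corresponding properties of each $(N_\alpha,L_\alpha)$. Moreover, $\operatorname{Inv}_E(\overline{N\setminus L})=\bigcup_\alpha\operatorname{Inv}_E(\overline{N_\alpha\setminus L_\alpha})=\bigcup_\alpha S_\alpha=S$, and $N\setminus L$ is a neighborhood of $S$. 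Since the Conley index is independent of the choice of index pair, this gives $CH_q(S;R)=H_q(N,L;R)$.

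The third step is the computation itself. Relative singular homology is additive over disjoint unions of pairs whose pieces are open and closed in the total space, so
\[
H_q(N,L;R)\cong\bigoplus_{\alpha=1}^k H_q(N_\alpha,L_\alpha;R)=\bigoplus_{\alpha=1}^k CH_q(S_\alpha;R).
\]
This is the claimed decomposition. It is also compatible with the transport maps, since the inclusions of filtration levels respect the splitting into components; the terminal global code is therefore the direct sum of the terminal component codes.

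For the final ``consequently'' clause, the argument runs as follows. If $C_E(S_\alpha,S_\beta;N_{\mathrm{amb}})\neq\emptyset$, then $\operatorname{Inv}_E(N_{\mathrm{amb}})$ strictly contains $\bigsqcup S_\alpha$, so no union of the terminal blocks isolates it. Any isolating neighborhood of the larger set must then contain the connecting orbit, so one has to pass to a larger pair. Lemma \ref{exactlemma} then shows that the discrepancy is measured by the transition group and the boundary map $\partial$ attached to that enlargement.

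The main obstacle is the second step: reconciling the ambient neighborhood $N_{\mathrm{amb}}$ with the union of the terminal blocks. Independence of connecting orbits alone does not force $\bigcup N_\alpha$ to be an isolating neighborhood of $\operatorname{Inv}_E(N_{\mathrm{amb}})$. What closes the gap is the equality $\operatorname{Inv}_E(N_{\mathrm{amb}})=\bigsqcup S_\alpha$, which says the global set has no extra invariant points. Using it, I would argue via the continuation/homotopy invariance of the Conley index between index pairs for the same isolated invariant set: $N_{\mathrm{amb}}$ and $\bigcup N_\alpha\cap N_{\mathrm{amb}}$ are both isolating neighborhoods of $S$, so they yield isomorphic indices.
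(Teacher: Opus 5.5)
Your proposal is correct and is essentially the paper's argument: form an index pair for $\bigsqcup_\alpha S_\alpha$ as the disjoint union of componentwise index pairs, then use that the relative singular chain complex splits as a direct sum. The only difference is cosmetic. The paper chooses fresh index pairs $(U_\alpha,K_\alpha)$ inside pairwise disjoint isolating neighborhoods $U_\alpha\subset N_{\mathrm{amb}}$ rather than reusing the terminal blocks. Your closing worry is unnecessary: the Conley index depends only on the isolated invariant set $\operatorname{Inv}_E(N_{\mathrm{amb}})=\bigsqcup_\alpha S_\alpha$, so no continuation argument or shrinking of index pairs is needed.
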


\begin{proof}
Since  $\operatorname{Inv}_E(N_{\mathrm{amb}}) = \bigsqcup_{\alpha=1}^k S_\alpha$ and as there are no connecting orbits between distinct \(S_\alpha\)'s in \(N_{\mathrm{amb}}\), the isolated invariant set decomposes as a disjoint union of independent isolated invariant sets. Therefore, choose pairwise disjoint isolating neighborhoods \(U_\alpha\subset N_{\mathrm{amb}}\) for \(S_\alpha\), with index pairs $Q_\alpha=(U_\alpha,K_\alpha)$, and define $U=\bigsqcup_{\alpha=1}^k U_\alpha, \; K=\bigsqcup_{\alpha=1}^k K_\alpha.$ It follows $Q=(U,K)$ is an index pair for the disjoint union \(\bigsqcup_{\alpha=1}^k S_\alpha\).  Now, the relative singular chain complex splits as
\[
        C_*(U,K;R)
        \cong
        \bigoplus_{\alpha=1}^k C_*(U_\alpha,K_\alpha;R).
\]
While passing to homology yields:
\[
CH_q\left(\bigsqcup_{\alpha=1}^k S_\alpha;R\right)
\cong
\bigoplus_{\alpha=1}^k
CH_q(S_\alpha;R),
\]
which proves the direct sum decomposition. Consequently, if $E$ generates a connecting orbit from \(S_\alpha\) to \(S_\beta\), then \(\operatorname{Inv}_E(N_{\mathrm{amb}})\) is not the disjoint union of the terminal sets. As the connecting orbit belongs to a transition region in the ambient isolated dynamics, the global code contains additional transition data encoded by an enlarged index filtration and its transition groups and boundary maps. Thus, whenever such a connecting orbit exists the component codes alone do not encode the full Entropy flow dynamics.
\end{proof}
Before concluding this Subsection, we remark the entropy-adapted index filtration is very much in lines with the classical Conley index Theory. To see why, recall that in classical Conley index Theory one starts with a Morse decomposition of an isolated invariant set, chooses an admissible ordering, and obtains an index filtration. The associated homology index and the connection matrix then encode the homology indices of the Morse sets and the connecting orbits between them (see \cite{Franzosa1989connectionmatrixtheory}). Moreover, nested index filtrations also appear in the continuation theory of connection matrices (see \cite{nestedindexpair}), while related persistence-based approaches to changing Conley data and combinatorial bifurcations have also appeared in recent works  \cite{Mrozek2020PersistenceoftheConleyIndex}, \cite{barcodepreprint}. That being said, the point of the entropy-adapted index filtration is different, as the objects in the filtration are compatible index pairs $ P_0\subset P_1\subset\cdots\subset P_m$ chosen from the geometry of the route through bifurcations, hence it differs in the interpretation of its maps as topological records of bifurcation transitions. In other words, the connection-matrix formalism is not replaced by our construction, but on the contrary, applied to study the bifurcation structure via the Entropy flow.  \\

Finally, we remark that even though our results hold for all dimensions, in practice, our motivating examples in this Subsection arose from $C^1$ one--parameter families $\dot{s}=F_\tau(s)$ defined on closed surfaces $M$. The reason these examples work so well is because due to the Kupka-Smale Theorem, Peixoto's Theorem and Sotomayor's Theorem, for such families the structure of the set $\overline{Per\cup Fix}$ is generically "well behaved" (see \cite{Pei}, \cite{Ku}, \cite{Smale} and \cite{soto}). This leads the following question: are these ideas also well behaved in higher dimensions? Say, for flows that are \textbf{not} Morse-Smale? Unfortunately, the answer is likely to be extremely case dependent. That being said, for the sake of completeness, in the next Subsection we study the Entropy flow for the Lorenz system using the Conley index Theory. Moreover, in Appendix \ref{periodappendix} we  discuss specifically how, under certain idealized assumptions, one can apply the Conley index method to describe the behavior of the Entropy flow near period doubling bifurcations. As such, the analysis in Appendix \ref{perioddoubling} proves that even though our theory is likely to take a different shape in each dimension, it can be meaningful also for higher dimensional flows.

\subsection{Case study: the Lorenz system}
\label{lorenz}

In this Subsection we will study the Entropy flow for the Lorenz system using the Conley index. We will not use the theory we developed in the previous Subsection for reasons which will become apparent later on - that being said, our results will illustrate how one can study Entropy flows in the absence of concrete information. To begin, recall the Lorenz system (see \cite{Lo}) is given by the following equations, where $\sigma,\rho,\beta$ are positive parameters:
\begin{equation} \label{Vect}
\begin{cases}
\dot{x} = \sigma(y-x) \\
 \dot{y} = x(\rho-z)-y\\
 \dot{z}=xy-\beta z
\end{cases}
\end{equation}

Now, let $\dot{s}=F_\tau(s)$, $s\in \mathbb{R}^3$, $\tau\in  I$, denote a $C^1$ family of Lorenz systems, where $\tau$ varies in $I$, some $C^1$-curve in the parameter space. It is well-known that for all such parameters there exists an attracting ellipsoid $\Delta_\tau$ for $F_\tau$, which attracts every initial condition and includes all the fixed points and periodic orbits for the flow \cite{Lo}. We first prove  the following Lemma:
\begin{lemma}
\label{choice} Let $\dot{s}=F_\tau(s)$ be a $C^1$ one--parameter family of Lorenz systems as above and let $E$ be an Entropy vector field generating some Entropy flow corresponding to it. Then, whenever $I$ is compact, there exists a trapping set $\Delta\subseteq \mathbb{R}^3\times I$ for $E$, which attracts every initial conditions in $M\times I$. Moreover, the set $\Delta$ includes all the periodic orbits and fixed points for $E$, and we can decompose $\Delta=\cup_{\tau\in I}\Delta_\tau\times\{\tau\}$.
\end{lemma}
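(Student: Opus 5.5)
The natural approach is to lift the classical Lorenz trapping ellipsoid to the product space, mirroring the argument of Lemma \ref{trapping}. For each $\tau\in I$ I will take the standard Lyapunov-type function $\mathcal{L}_\tau(x,y,z)=\rho_\tau x^2+\sigma_\tau y^2+\sigma_\tau(z-2\rho_\tau)^2$. For $C>0$ large, $\dot{\mathcal{L}}_\tau<0$ along $F_\tau$ outside a compact set, so $\Delta_\tau=\{\mathcal{L}_\tau\le C_\tau\}$ is a trapping ellipsoid transverse to $F_\tau$ on its boundary. Since $I$ is compact and $(\sigma,\rho,\beta)$ vary $C^1$ along the curve, the constants $C_\tau$ can be chosen uniformly, say $C_\tau=C$, and the ellipsoids $\Delta_\tau$ then vary continuously, indeed $C^1$, with $\tau$. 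Set $\Delta=\bigcup_{\tau\in I}\Delta_\tau\times\{\tau\}$ and $\mathcal{L}(s,\tau)=\mathcal{L}_\tau(s)$.

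Next, because every fixed point and periodic orbit of $F_\tau$ lies strictly inside $\Delta_\tau$, and these sets are uniformly bounded, I will enlarge $C$ so that $\overline{Per\cup Fix\cup Inv}$ is at positive distance from $\partial\Delta$. The isolating collection $\{N_\alpha\}_\alpha$ can then be chosen inside the interior of $\Delta$, which forces $E=(F_\tau,0)$ near $\partial\Delta$. The main obstacle is that the lemma is stated for \emph{any} Entropy flow. If the given $N_\alpha$ reach out of $\Delta$, I would argue that $P$ and $V$ are supported in $N_\alpha$, and that the $N_\alpha$ may be shrunk: any $N_\alpha$ meeting $\partial\Delta$ can be replaced by $N_\alpha\cap\{\mathcal{L}<C'\}$. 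Failing that, one can simply enlarge $C$ past the supports, which works when the supports are bounded. I expect this to be the delicate point, and I would likely phrase it as choosing the Entropy flow as in Lemma \ref{trapping}.

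With $E=(F_\tau,0)$ on $\partial\Delta$, the derivative there satisfies $\frac{d}{dt}\mathcal{L}=\nabla_s\mathcal{L}_\tau\cdot F_\tau+\partial_\tau\mathcal{L}\cdot 0<0$. So $E$ points strictly into $\Delta$ on the lateral boundary, while on the faces $\tau\in\partial I$ we have $P\to0$ by Corollary \ref{final}, so those faces are invariant. Hence $\Delta$ is positively invariant.

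For the attraction claim, outside $\Delta$ we again have $E=(F_\tau,0)$, so $\tau$ is constant along any trajectory until it enters $\Delta$. The classical Lorenz estimate $\dot{\mathcal{L}}_\tau\le-c\mathcal{L}_\tau+K$ then gives entry in finite time, uniformly on compacta. Finally, any periodic orbit or fixed point of $E$ has bounded $\mathcal{L}$ along it, so it cannot lie outside $\Delta$, where $\mathcal{L}$ strictly decreases. Therefore all periodic orbits and fixed points of $E$ lie in $\Delta$, which completes the decomposition $\Delta=\bigcup_\tau\Delta_\tau\times\{\tau\}$.
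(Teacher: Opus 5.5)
Your argument is essentially the paper's: lift the Lorenz trapping ellipsoids $\Delta_\tau$ and make them uniform over the compact $I$. Then observe that $E=(F_\tau,0)$ off $\Delta$, so that transversality (as in Lemma \ref{trapping}) makes $\Delta$ trapping, every trajectory enters it, and all periodic orbits and fixed points lie in $\Delta$; your explicit Lyapunov function and finite-time entry estimate only make those steps quantitative. On your ``delicate point'', the paper uses exactly your second option: it asserts that $P$ and $V$ vanish outside $\Delta_\tau$ once $\Delta_\tau$ is large enough, tacitly assuming bounded supports. Your first option, shrinking the $N_\alpha$, would replace $E$ by a different Entropy flow, so it would only prove the weaker ``we can choose the Entropy flow'' version rather than the statement for a given $E$.
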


\begin{proof}
Let us write $E(s,\tau)=(F_\tau(s)+V(s,\tau),P(s,\tau))$ for all $(s,\tau)\in\mathbb{R}^3\times I$. We first recall that for every $\tau$, its periodic orbits are trapped in $\Delta_\tau$. By the definition of the Entropy flow, this implies that provided $\Delta_\tau$ is sufficiently large, both $P$ and $V$ vanish in $\mathbb{R}^3\times\{\tau\}\setminus\Delta_\tau\times\{\tau\}$. By the compactness of $I$ it follows we can choose $\Delta_\tau$ to vary smoothly with $\tau$ s.t. $P$ and $V$ both vanish on $X=(\mathbb{R}^3\times I)\setminus\Delta$, where $\Delta=\cup_{\tau\in I}\Delta_\tau\times\{\tau\}$. This yields that for all $(s,\tau)\in X$ we have $E(s,\tau)=(F_\tau(s),0)$, i.e., on $X$ the Entropy flow coincides with the Laminar flow (see Equations \ref{laminar}). Consequently, as $F_\tau$ is transverse to $\partial \Delta_\tau$ and points inside it, by Lemma \ref{trapping} we conclude $E$ is transverse to $\partial \Delta$ and points inside it. Moreover, since every initial condition $s\in\mathbb{R}^3$ is attracted into $\Delta$ by $F_\tau$ we conclude that for every initial condition $(s,\tau)\in X$ its trajectory eventually enters $\Delta$ and cannot escape it. Finally, since for any $\tau\in I$ all the periodic orbits of $F_\tau$ are in $\Delta_\tau$ it follows $\overline{Per\cup Fix}\subseteq\Delta$, and the conclusion follows.\end{proof}

From Lemma \ref{choice} it follows that in order to analyze the behavior of the Entropy flow for the Lorenz system, it is enough to analyze its behavior strictly inside $\Delta$. Unfortunately, as the bifurcation structure of $\overline{Per\cup Fix}$ inside $\Delta$ is likely to be extremely complicated (see, for example, \cite{BS}, \cite{BSS}, and the references therein), a direct analytic approach as we did in \ref{vanderpol} is unlikely to work. Instead, we will take an alternative approach and give a global description of the dynamics of the Entropy flow in $\Delta$ using the Conley index.\\

To begin, from now on we will always assume $I=S^1$, i.e., that the family of Lorenz systems $\dot{s}=F_\tau(s)$ is parameterized by some closed loop in the parameter space of the Lorenz system. Fix some Entropy flow for $\dot{s}=F_\tau(s)$, and as before, let $E$ denote the vector field directing it. As $\infty$ is a repeller for the Lorenz system, without any loss of generality we can extend each vector field $F_\tau$ to $S^3$ by setting a repelling fixed point at $\infty$. This allows us to trivially extend the vector field $E$ to $S^3\times S^1$. In addition, to comply with the numerical studies of the Lorenz system, from now on we will assume that we choose our parameter space s.t. for all $\tau\in S^1$ the fixed point at the origin is a real saddle with a two-dimensional stable manifold and a one-dimensional unstable manifold (see Figure \ref{b0}). We proceed by introducing the following notations:
\begin{itemize}
    \item $(S^3\times S^1)\setminus\Delta=S_\infty$. It is easy to see we can decompose $S_\infty=B_\infty\times S^1$, where $B_\infty$ is a small neighborhood of $\infty$ in $S^3$ defined s.t. for all parameters $\tau\in S^1$, $F_\tau$ points on $\partial B_\infty$ into $\Delta=S^3\setminus B_\infty$. 
    \item Similarly, let $B_0$ be a neighborhood of the saddle at the origin, s.t. on $\partial B_0$ the vector field $F_\tau,\tau\in I$ has the following behavior: there exist two caps, $B_1,B_2\subseteq \partial B_0$ on which $F_\tau$ points into $S^3\setminus B_0$, separated by an annulus $B_3$ on which $F_\tau$ points inside $B_0$ (see Figure \ref{b0}). Writing $S_0=B_0\times S^1$, it is immediate $\partial S_0$ can be decomposed into $\partial S_0=\mathbb{T}_1\cup\mathbb{T}_2\cup \mathbb{T}_3$, where $\mathbb{T}_i=B_i\times S^1$, $i\in\{1,2,3\}$.\\
\end{itemize}

\begin{figure}[h]
\centering
\begin{overpic}[width=0.45\textwidth]{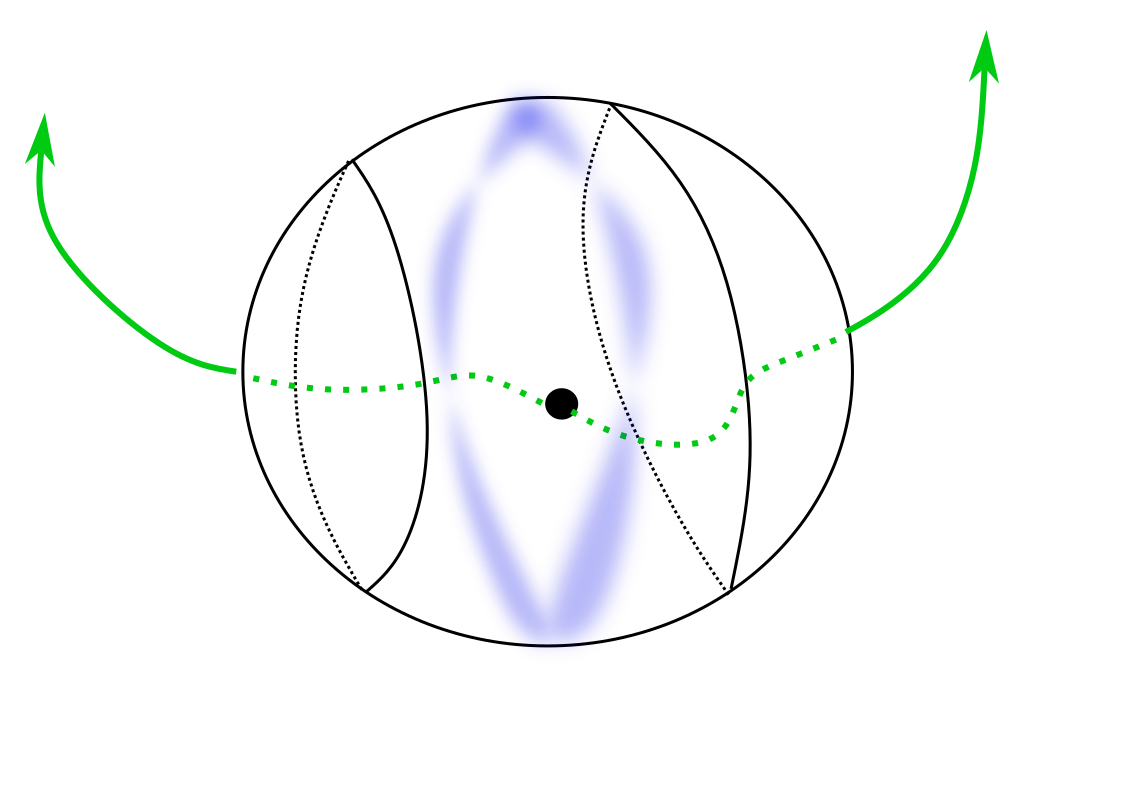}
\put(280,430){$B_1$}
\put(450,430){$B_3$}
\put(690,430){$B_2$}
\end{overpic}
\caption{\textit{The ball $B_0$ surrounding the saddle at the origin. Flow lines (including the separatrices) escape it via the caps $B_1$ and $B_2$, while flow lines enter $B_0$ via the strip $B_3$ (the blue ring on $B_3$ denotes the intersection with the two-dimensional stable manifold of the origin).}}
\label{b0}
\end{figure}

We now compute the Conley index of the set $\mathcal{B}=S^3\times S^1\setminus(S_0\cup S_\infty)$ - by decreasing the volume of the ball $B_0$ (thus decreasing the volume of $S_0$), we can ensure the invariant set of the Entropy flow in $\mathcal{B}$ includes an arbitrarily large subset of the dynamics in $\Delta$. By its definition, the space $\mathcal{B}$ is just $S^3 \times S^1$ from which we delete two four-dimensional tubes defined by $S_\infty=B_\infty \times S^1$  and $S_0 = B_0 \times S^1$. Consequently, $S^3 \setminus  (B_\infty \cup B_0) $ is homeomorphic to $ S^2 \times [0,1]$, which implies $\mathcal{B} $ is homeomorphic to $ S^2 \times [0,1] \times S^1$. Before we move on, we add an underlying assumption, inspired by the numerical studies of the Lorenz system: for all $\tau\in S^1$, the periodic orbits for $F_\tau$ are dense around the origin. We will refer to this assumption as \textbf{the saddle assumption}.\\ 

To justify this assumption, recall that as proven in \cite{TUC}, there exists an open range of parameters, denoted by $O$, where the dynamics of the Lorenz attractor are orbitally equivalent to those of the Geometric Lorenz Model (see \cite{SAB}). Moreover, recall that as proven in \cite{CI} there exists an open set of parameters where the dynamics of the first-return map for Lorenz system in $\Delta_\tau$ can be semiconjugated to Lorenz maps. Both of these conditions imply that one should expect the periodic orbits of $F_\tau$ to be dense around the saddle at the origin. It now follows by the definition of the Entropy flow in Section \ref{defsect} that under the saddle assumption, for any Entropy vector field both $P$ and $V$ vanish around $\{(0,\tau)|\tau\in S^1\}$. In particular, with previous notations, provided we choose $B_0$ sufficiently small, $P$ and $V$ would vanish on $\overline{S}_0$. We now prove the following: 

\begin{theorem}
    \label{lorenzindex} Consider a one-parameter family $\dot{s}=F_\tau(s)$ of Lorenz systems s.t. $\tau\in S^1$. Then, under the saddle assumption, for any Entropy flow there will be an attractor for the Entropy flow in $\mathbb{R}^3\times S^1$ whose homological Conley index is
        \[
CH_q(S_{\mathcal B};\mathbb Z)
\cong
\begin{cases}
\mathbb Z^2, & q=2,\\
\mathbb Z^2, & q=3,\\
0, & \text{otherwise}.
\end{cases}
\]
Consequently, every such Entropy flow has a non-trivial invariant set.
\end{theorem}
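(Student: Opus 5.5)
The plan is to exhibit $\mathcal B=S^3\times S^1\setminus(S_0\cup S_\infty)\cong S^2\times[0,1]\times S^1$ as an isolating block for $E$ with an explicit exit set, and then compute $H_*(\mathcal B,L;\mathbb Z)$. Let $S_{\mathcal B}$ denote the maximal invariant set of $E$ in $\mathcal B$.

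First, the boundary behaviour. By Lemma \ref{choice} we may take $\Delta$ so that $P,V$ vanish outside it, and by the saddle assumption $P$ and $V$ also vanish on $\overline S_0$. Hence $E=(F_\tau,0)$ near $\partial\mathcal B$, so $E$ is transverse to $\partial\mathcal B$ exactly where $F_\tau$ is transverse to $\partial B_\infty$ and $\partial B_0$; this is the argument of Lemma \ref{trapping}. Concretely:
\begin{itemize}
\item on the component $\partial B_\infty\times S^1$, $E$ points into $\mathcal B$;
\item on $\mathbb T_3=B_3\times S^1$, $E$ points into $S_0$, i.e.\ out of $\mathcal B$;
\item on $\mathbb T_1\cup\mathbb T_2$, $E$ points out of $S_0$, i.e.\ into $\mathcal B$.
\end{itemize}
The exit set is therefore $L=\mathbb T_3\cong(S^1\times[0,1])\times S^1$. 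This is not a smooth-boundary block: there are tangencies along $\partial B_3\times S^1$. I would handle this as in the standard treatment of isolating blocks with corners, slightly tilting $B_0$ so that $(\mathcal B,L)$ satisfies the index-pair axioms of Definition \ref{index}. This is routine but needs care.

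Second, the homology computation. Since $L\simeq S^1\times S^1=\mathbb T^2$ and $\mathcal B\simeq S^2\times S^1$, use the long exact sequence of the pair. The inclusion $L\hookrightarrow\mathcal B$ factors through $\partial B_0\times S^1\cong S^2\times S^1$. On $H_1$ the $S^1$ factor of $\mathbb T^2$ coming from the parameter maps isomorphically, while the band's core circle bounds a cap in $S^2$ and so maps to $0$. On $H_2$, $H_2(\mathbb T^2)\to H_2(S^2\times S^1)$ is zero, because the torus is band$\times S^1$ inside $S^2\times S^1$ and bounds cap$\times S^1$. On $H_0$ the map is an isomorphism.

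Feeding this into the sequence gives:
\begin{itemize}
\item $H_0(\mathcal B,L)=0$;
\item $H_1(\mathcal B,L)$: the cokernel of $H_1(L)\to H_1(\mathcal B)$ is $0$, and since $H_0(L)\to H_0(\mathcal B)$ is injective, $H_1(\mathcal B,L)=0$;
\item $H_2(\mathcal B,L)\cong H_2(\mathcal B)\oplus\ker(H_1L\to H_1\mathcal B)\cong\mathbb Z\oplus\mathbb Z$;
\item $H_3(\mathcal B,L)\cong H_3(\mathcal B)\oplus\ker(H_2L\to H_2\mathcal B)=0\oplus\mathbb Z$.
\end{itemize}
This last count gives only $\mathbb Z$ in degree 3, so the computation has to be redone carefully. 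The likely resolution is that the exit set is actually two-component, or that one should use the equivalent description via $\mathcal B/L$ as a suspension-type quotient. I expect reconciling this degree-3 rank with the claimed $\mathbb Z^2$ to be the main obstacle. The first thing I would try is to recompute via excision: $H_*(\mathcal B,L)\cong H_*(\mathcal B\cup S_0,\,S_0\cup\ldots)$, i.e.\ treat the pair as $(S^3\setminus B_\infty)\times S^1$ relative to a thickened $B_3\times S^1$, and use the Künneth formula with the $S^1$ factor. Any class in degree $q$ of the $S^3$-slice pair then yields classes in degrees $q$ and $q+1$, which naturally produces the pattern $\mathbb Z^2$ in degrees 2 and 3 if the slice pair has rank 2 in degree 2.

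Finally, conclude. Since $CH_*(S_{\mathcal B})\neq0$, the Ważewski principle gives $S_{\mathcal B}\neq\emptyset$. To see that it is an attractor-type set, combine it with the trapping region $\Delta$ from Lemma \ref{choice}: $S_{\mathcal B}$ together with the saddle curve $\{0\}\times S^1$ and its unstable manifold forms the maximal invariant set in $\Delta$, which attracts $\mathbb R^3\times S^1$. Independence of the choice of Entropy flow follows as in Proposition \ref{decomposition}, because $E$ agrees with the Laminar flow near $\partial\mathcal B$ for every admissible choice.
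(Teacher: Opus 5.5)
Your strategy is right, but as written the proposal does not reach the statement. A single slip in degree $3$ creates the ``contradiction'' you flag. You set $H_3(\mathcal B)=0$, yet $\mathcal B\cong S^2\times[0,1]\times S^1\simeq S^2\times S^1$, so $H_3(\mathcal B;\mathbb Z)\cong H_2(S^2)\otimes H_1(S^1)\cong\mathbb Z$, generated by $[S^2]\times[S^1]$. Since $L\simeq\mathbb T^2$ has $H_3(L)=0$, and you correctly showed that $H_2(L)\to H_2(\mathcal B)$ vanishes, the relevant segment of the long exact sequence is
\[
0\longrightarrow H_3(\mathcal B)\cong\mathbb Z\longrightarrow H_3(\mathcal B,L)\longrightarrow H_2(L)\cong\mathbb Z\longrightarrow 0,
\]
which gives $H_3(\mathcal B,L)\cong\mathbb Z^2$. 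Your computations in degrees $0,1,2$ are already correct.

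None of your proposed ``resolutions'' are needed. In particular, the exit set really is the single band $\mathbb T_3=B_3\times S^1$ and must not be altered. The K\"unneth fallback you sketch is exactly the paper's proof: deform away the interval factor to get $(\mathcal B,L)\simeq(S^2,B_3)\times S^1$, observe that $H_*(S^2,B_3)$ is $\mathbb Z^2$ concentrated in degree $2$ (e.g.\ $S^2/B_3\simeq S^2\vee S^2$), and apply the relative K\"unneth formula. Once corrected, your long-exact-sequence route is an equally valid alternative. It requires identifying the maps $H_*(L)\to H_*(\mathcal B)$, which the paper's product decomposition avoids. Your remark about the tangency circles $\partial B_3\times S^1$ is a point where you are more careful than the paper, which simply asserts that $(\mathcal B,\mathbb T_3)$ is an index pair.

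Drop your final paragraph, since it conflates two different sets. $\operatorname{Inv}_E(\Delta)$ is indeed attracting, but $(\Delta,\emptyset)$ is an index pair for it, so its index is $H_*(\Delta)\cong H_*(S^1)$, not the one you computed. Conversely, $S_{\mathcal B}$ cannot be an attractor in the Conley sense. An asymptotically stable isolated invariant set admits an index pair with empty exit set and hence has $CH_0\neq0$, whereas $CH_0(S_{\mathcal B})=0$. The paper's proof only concludes, via the Wa\.zewski principle, that $S_{\mathcal B}\neq\emptyset$, and that is all the index computation supports.
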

\begin{proof}
We begin by determining the exit set $\mathcal{L}$ for the Entropy flow on $ \partial \mathcal{B}$, where $\mathcal{B}$ is as defined above. By Lemma \ref{choice} we know that on $\partial S_\infty$ the Entropy flow pushes trajectories in $S_\infty$ inside $\mathcal{B}$, as it pushes them inside $\Delta$ via its boundary $\partial\Delta=\partial S_\infty$. Similarly, since $P$ and $V$ vanish on $\partial S_0$ per the saddle assumption, on $\mathbb{T}_1$ and $\mathbb{T}_2$ the Entropy flow pushes initial conditions inside $\mathcal{B}$. Finally, it follows that on $\mathbb{T}_3=B_3\times S^1$ the Entropy flow pushes initial conditions outside of $\mathcal{B}$ and into $S_0$. Summarizing our discussion, we conclude that $\mathcal{L}= \mathbb{T}_3 = B_3 \times S^1$, which implies the pair
\(
(\mathcal B,\mathcal L)
=
\big(S^3\times S^1\setminus(S_0\cup S_\infty),\; \mathbb T_3\big)
\) is an index pair for the isolated invariant set $S_{\mathcal B}=\operatorname{Inv}_E(\mathcal B\setminus \mathcal L)$. Therefore, by the definition of the homological Conley index we have:
\[
CH_q(S_{\mathcal B};\mathbb Z)
=
H_q(\mathcal B,\mathcal L;\mathbb Z).
\]
For brevity, from now on we denote this group by $CH_q(\mathcal B;\mathbb Z)$ - we now continue by computing its relative homology. Since \(
\mathcal B
\cong
S^2\times[0,1]\times S^1
\) and \(
\mathcal L
=
B_3\times S^1,
\) the interval factor is contractible, which yields $H_q(\mathcal B,\mathcal L;\mathbb Z) \cong H_q(S^2\times S^1,\;B_3\times S^1;\mathbb Z)$. This is the relative homology of the product pair $(S^2,B_3)\times S^1$, therefore, by the relative K\"unneth theorem for product pairs over $\mathbb Z$ (see, for instance, Chapter 17.2 in \cite{May1999AConciseCourseinAlgebraicTopology}), we obtain:
\[
H_q(\mathcal B,\mathcal L;\mathbb Z)
\cong
\bigoplus_{i+j=q}
H_i(S^2,B_3;\mathbb Z)\otimes H_j(S^1;\mathbb Z)
\]
with no $\operatorname{Tor}$ contribution, since the homology groups involved are free abelian. It therefore remains to compute $H_*(S^2,B_3;\mathbb Z)$. Since $B_3$ is an annulus, we have $H_0(B_3;\mathbb Z)\cong\mathbb Z$, $H_1(B_3;\mathbb Z)\cong\mathbb Z$ and $H_i(B_3;\mathbb Z)=0$ for all $i\geq2$. Hence, the long exact sequence of the pair $(S^2,B_3)$ gives:
\[
0
\longrightarrow
H_2(S^2;\mathbb Z)
\longrightarrow
H_2(S^2,B_3;\mathbb Z)
\longrightarrow
H_1(B_3;\mathbb Z)
\longrightarrow
H_1(S^2;\mathbb Z)
\longrightarrow
0.
\]

Thus, since $H_2(S^2;\mathbb Z)\cong\mathbb Z$, $H_1(B_3;\mathbb Z)\cong\mathbb Z$ and $H_1(S^2;\mathbb Z)=0$ we get $H_2(S^2,B_3;\mathbb Z)\cong\mathbb Z^2$. Moreover, the same long exact sequence proves both $H_1(S^2,B_3;\mathbb Z)=0$ and $H_0(S^2,B_3;\mathbb Z)=0$. This gives:
\[
H_i(S^2,B_3;\mathbb Z)
\cong
\begin{cases}
\mathbb Z^2, & i=2,\\
0, & i\neq 2.
\end{cases}
\]
Finally, from $H_0(S^1;\mathbb Z)\cong \mathbb Z$, $H_1(S^1;\mathbb Z)\cong \mathbb Z$ and $H_j(S^1;\mathbb Z)=0$ for all $j\geq2$, the K\"unneth formula yields:
\[
H_q(\mathcal B,\mathcal L;\mathbb Z)
\cong
\begin{cases}
\mathbb Z^2, & q=2,\\
\mathbb Z^2, & q=3,\\
0, & \text{otherwise}.
\end{cases}
\]
And consequently:
\[
CH_q(S_{\mathcal B};\mathbb Z)
\cong
\begin{cases}
\mathbb Z^2, & q=2,\\
\mathbb Z^2, & q=3,\\
0, & \text{otherwise}.
\end{cases}
\]
All in all, since the Conley index of $\mathcal B$ is nontrivial, the Wa\.zewski principle (see \cite{waz}) implies that there exists a non-empty maximal invariant set for the Entropy flow inside $\mathcal B$.
\end{proof}
\begin{figure}[h]
\centering
\begin{overpic}[width=0.4\textwidth]{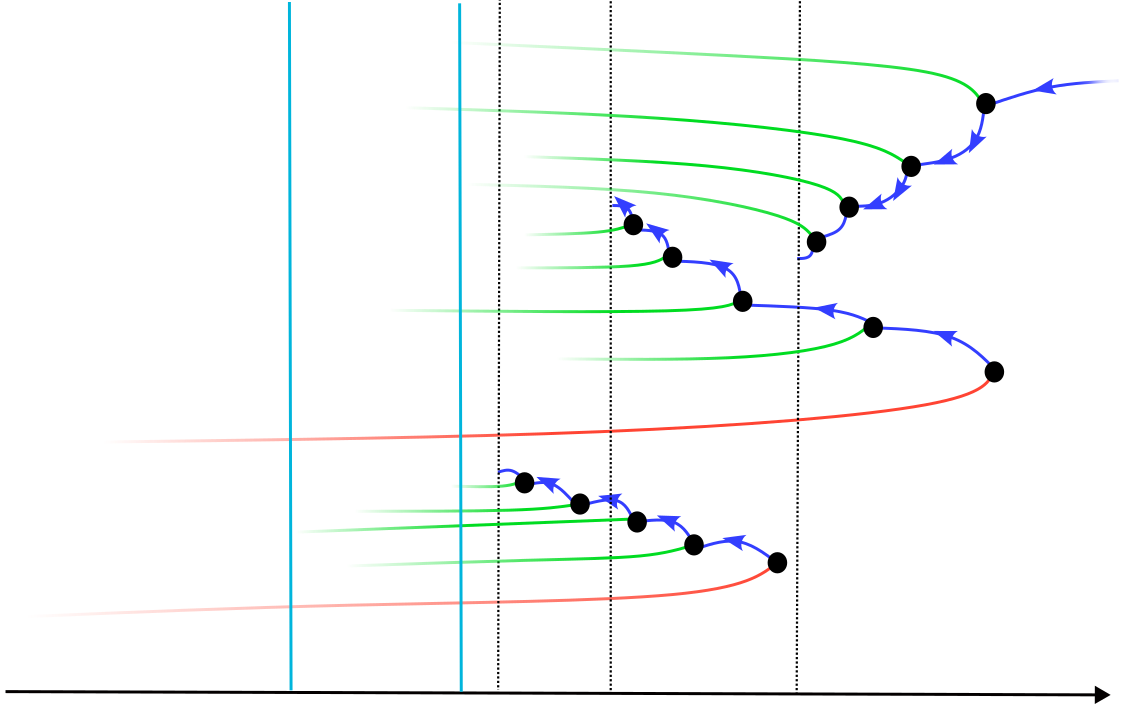}
\put(1000,0){$\rho$}
\put(380,-30){$99$}
\put(220,-30){$30.1$}

\end{overpic}
\caption{\textit{The cyan lines denotes $\rho=30.1$ and $\rho=99$. The dotted lines denote the accumulation points of the cascades on the $\rho$ line (all terminate before $\rho=99$). Blue denotes attracting orbits, while red and green arcs denote saddles with differing Mallet-Yorke Indices. The arrows on the blue arcs denote the direction $P$ points to w.r.t. $\rho$, while the gray vertical lines denote termination points for the cascades.}}
\label{largerho}
\end{figure}

At this point we note that heuristically, the two generators in the relative group $H_2(S^2,B_3;\mathbb Z)\cong \mathbb Z^2$ should be viewed as the two local exit directions from the saddle neighborhood, corresponding to the two caps $B_1$ and $B_2$. Under the usual geometric picture of the Lorenz system, these two directions are compatible with the left and right lobes of the Lorenz butterfly attractor. As such, the degree-three classes are obtained by crossing these two degree-two relative classes with the generator $[S^1]\in H_1(S^1;\mathbb Z)$ represented by one full turn around the parameter circle. That being said, we stress this interpretation is purely heuristic: while the computation above detects the relative topology of the exit geometry, it is not a canonical dynamical decomposition into the left and right lobes suspended with $S^1$.\\

As a final remark, we note that given a $C^1$ one--parameter family of Lorenz systems $\dot{s}=F_\tau(s)$ and an associated Entropy flow directed by $E(s,\tau)=(F_\tau(s)+V(s,\tau),P(s,\tau))$, the calculations above give clear answer on the question whether $P$ and $V$ vanish identically or not in $\mathbb{R}^3\times S^1$. Hence, let us first recall that as observed numerically, when the $\rho$ parameter varies in $(99.5,\infty)$ the Lorenz system exhibits period doubling cascades of attractors  (see Chapters 4, 5 in \cite{Sparbook} and the illustration in Figure \ref{largerho}). In detail, period doubling cascades of attractors appear as $\rho\downarrow 99$, while the saddles created in such bifurcations are destroyed by homoclinic bifurcations occurring in the range $\rho\in(30.1,99.5)$ - for an illustration, see Figure 5.12 in \cite{Sparbook} and Figure \ref{largerho} (for a theoretical study on the breakup of the Lorenz attractor due to these bifurcations, see \cite{BS}). Recalling Theorem \ref{major1}, we conclude (from the numerical evidence too) that there exists a $C^1$ family $\dot{s}=F_\tau(s)$ of Lorenz systems s.t. for every choice of Entropy flow for this family the functions $P$ and $V$ do not vanish. In particular, as these period doubling cascades of attractors terminate as $\tau\rightarrow \{\rho=99.5\}$, it follows that the Entropy flow pushes $(s,\tau)\in\mathbb{R}^3\times S^1$ towards the set $\mathbb{R}^3\times\{\rho=99.5\}$ around those cascades. Of course, proving such "attractors of complexity" actually occur in Entropy flows for the Lorenz system requires more information than we currently have on the bifurcation structure. That being said, in the next Section we will show that not only such attractors of complexity can occur for Entropy flows, but they actually arise naturally in the Shilnikov homoclinic bifurcation scenario too.

\section{Turbulent Entropy flows}
\label{turbulence}

In the previous two Sections we considered mostly abstract theory of Entropy flows. Most of that discussion was local in nature, as we were interested in describing the local dynamics of Entropy flows around certain bifurcation scenarios. In this Section, inspired by \cite{shil3} and \cite{turaev}, we prove just how complex the global dynamics of the Entropy flow can be. Specifically, we do so by studying the behavior of the Entropy flow around the Shilnikov bifurcation scenario. As we will prove, the complexity of the bifurcations leading up to the Shilnikov homoclinic scenario is seen by the Entropy flow qualitatively, and surprisingly, possibly also statistically. As such, this Section is organized as follows: we begin by recalling the Shilnikov homoclinic scenario, after which we discuss the bifurcation structure it induces on the periodic orbits around it (see Proposition \ref{generalizedgkp}). Following that we prove Theorem \ref{shilnikoventropy} and Corollary \ref{turbulententropy}, which together give a global description of the Entropy flow around the Shilnikov bifurcation. These two results would lead us to conclude the dynamics of the Entropy flow around the Shilnikov homoclinic bifurcation always include a certain hierarchy of invariant sets. Strangely enough, this hierarchy bears similarities (even if superficial) to complex motion in fluids, i.e., turbulence. Therefore, we conclude this Section with a brief and informal discussion of these similarities.\\

\begin{figure}[h]
\centering
\begin{overpic}[width=0.5\textwidth]{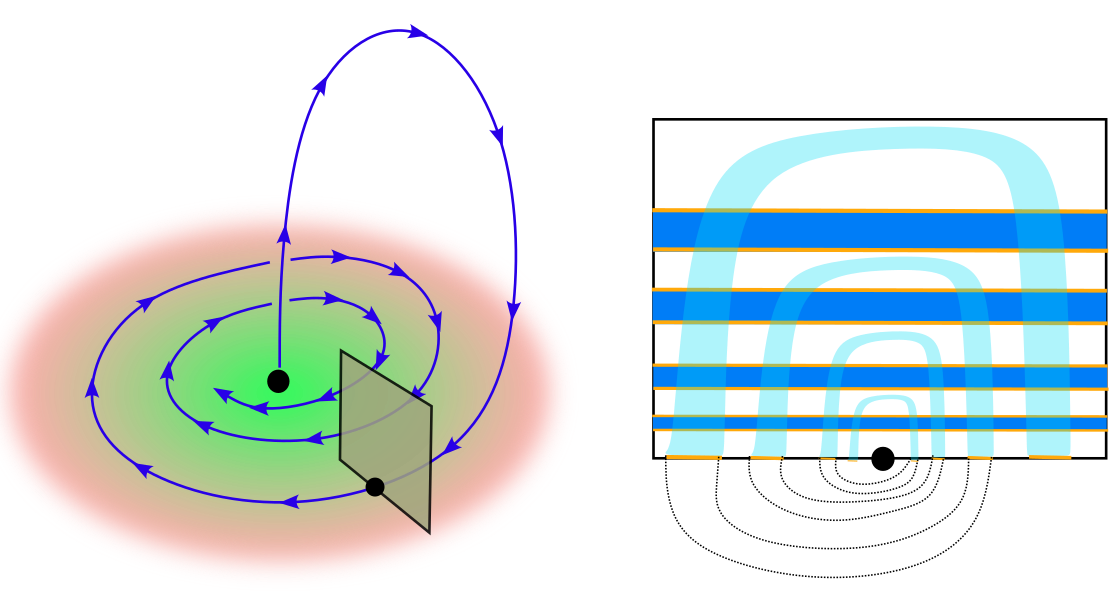}

\end{overpic}
\caption{\textit{The Shilnikov homoclinic scenario on the left and the cross-section $S$ (and its image under the first-return map) on the right (in this illustration, $n=3$). The blue rectangles denote $R_i$ and the orange arcs denote the horizontal sides mapped by the first-return map to $S\cap W^s$. The cyan Horseshoes are the images of the rectangles.}}
\label{shilnikovth}
\end{figure}

We begin by recalling the Shilnikov homoclinic scenario, originally introduced in \cite{LeS} - for simplicity, we will consider the case $M=\mathbb{R}^n$ (the generalization to manifolds will be trivial). Assume $F$ is a $C^2$ vector field of $\mathbb{R}^n$, $n\geq3$, with a fixed point at the origin. Let $J_0$ denote the Jacobian matrix of $F$ at the origin, and further assume the following holds:
\begin{itemize}
    \item $J_0$ has only one positive eigenvalue, $\gamma>0$, corresponding to one-dimensional unstable manifold $W^u$.
    \item There exist $n-1$ eigenvalues with negative real part spanning the $n-1$ dimensional stable manifold $W^s$, $\lambda_1,...,\lambda_{n-1}$, s.t.:
    \begin{enumerate}
        \item $\lambda_1=\overline{\lambda_2}$ and $\lambda_{1,2}=-\rho\pm i\omega$, both have strictly negative real part and non-zero imaginary part.
        \item For $j=3,...,n-1$, $Re(\lambda_j)<-\rho<0$.
    \end{enumerate}
    \item The \textbf{Shilnikov condition}: we have $\nu=\frac{\rho}{\gamma}<1$, i.e., the repelling force is stronger than the attracting force. $\nu$ will be referred to as the \textbf{saddle index}.
    \item There exists a homoclinic loop $\Gamma$ in $W^u$ that tends to the fixed point at the origin via the invariant manifold corresponding to the complex-conjugate pair $-\rho\pm i\omega$.\\
\end{itemize}

As proven by L. Shilnikov, we have the following result (see Theorem 13.8 in \cite{SSTC} or  \cite{LeS} and \cite{shil2}):
\begin{theorem}
    \label{shilnikov} Under the assumptions above, there exists a countable collection of hyperbolic invariant sets for the flow, $\{H_i\}_{i=1}^\infty$, accumulating on $\Gamma$ (see Figure \ref{shilnikovth}). For all $i$, $H_i$ includes countably many saddle periodic orbits. Moreover, for any $n$ there exists some $\epsilon$ s.t. if $||F-F'||_{C^1}<\epsilon$, the sets $H_1,...,H_n$ survive as we $C^2$-perturb $F$ to $F'$.
\end{theorem}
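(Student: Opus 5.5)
The plan is to reduce the statement to a first-return map on a cross-section $S$ transverse to $\Gamma$, placed near the origin inside the local stable manifold region. Both the invariant sets and their persistence will be read off from horseshoes for this map.

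First I will linearize, or at least put into Shilnikov normal coordinates, the flow near the origin. Because $F$ is $C^2$ and the eigenvalues are as assumed, I will choose $C^1$ coordinates in a small box $U$. In these coordinates the local flow is close to the linear one, $\dot{x}=-\rho x-\omega y$, $\dot{y}=\omega x-\rho y$, $\dot{z}=\gamma z$, with the strong stable directions decoupled and contracting faster than $\rho$. I take $S=\{x^2+y^2=r^2\}$ near $\Gamma\cap W^s_{loc}$ and an exit section $S'=\{|z|=h\}$.

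The local passage map $S\to S'$ is then explicit. A point at height $z$ spends time $T=\gamma^{-1}\ln(h/z)$ in $U$, so it rotates by the angle $\omega T$ and contracts by the factor $(z/h)^{\nu}$. The global map $S'\to S$ along $\Gamma$ is a $C^1$ diffeomorphism close to affine. Composing the two, the strip $\{0<z<\delta\}\subseteq S$ is sent onto a spiral that winds infinitely many times around $\Gamma\cap S$.

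I will decompose the strip into rectangles $R_i=\{z\in[z_{i+1},z_i]\}$, where $z_i=h e^{-2\pi\gamma i/\omega}$, so that each $R_i$ corresponds to one extra turn of the spiral. Here the Shilnikov condition $\nu<1$ enters: the spiral's radial size $z^{\nu}$ decays more slowly than $z$ itself. Hence for all large $i$ the image of $R_i$ is a horseshoe that crosses $R_j$ fully for all $j\geq i$, in the manner of Figure \ref{shilnikovth}.

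Next I will verify Conley–Moser cone conditions on these crossings. Vertical cones should be expanded in the $z$-direction at rate roughly $z^{\nu-1}$, which is large because $\nu<1$, and horizontal cones should be contracted. Given these, the maximal invariant set of the first-return map in $R_i\cup R_{i+1}$ (or in finitely many consecutive rectangles) is a hyperbolic set conjugate to a full shift. I call its suspension $H_i$. Periodic points of the shift give countably many saddle periodic orbits in each $H_i$. Since $R_i\to\Gamma\cap S$ as $i\to\infty$, the sets $H_i$ accumulate on $\Gamma$.

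\textbf{Main obstacle.} The hardest step is the derivative estimate for the local passage map. In particular, the $\partial/\partial z$ derivative of the rotation angle contains the term $\omega/(\gamma z)$, and I must show that the stretching it produces dominates the contraction uniformly on each $R_i$. My first attempt will use the normal form with $C^1$-small nonlinear corrections, following Shilnikov's estimates. For $n>3$ I will also have to absorb the strong stable coordinates into the horizontal cones using $\mathrm{Re}(\lambda_j)<-\rho$.

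Finally, the persistence claim follows from hyperbolicity. For fixed $n$, the sets $H_1,\dots,H_n$ are finitely many compact hyperbolic sets, each defined by strict cone conditions on finitely many rectangles. These conditions are open in the $C^1$ topology of the return map, and the return map depends $C^1$-continuously on the vector field on the compact region away from the origin. Structural stability of hyperbolic sets therefore yields an $\epsilon>0$ such that each $H_k$, $k\le n$, continues under any perturbation to $F'$ with $\|F-F'\|_{C^1}<\epsilon$. This holds even though $\Gamma$ itself need not survive. The uniformity is lost only as $k\to\infty$, which is why $\epsilon$ depends on $n$.
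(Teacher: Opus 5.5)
Your proposal is correct and is essentially the classical Shilnikov argument the paper relies on: the paper does not prove this theorem itself but cites \cite{LeS} and Theorem 13.8 of \cite{SSTC}. The ingredients it recalls afterwards and in the proof of Proposition \ref{generalizedgkp} are the ones you plan to establish: the local map onto a logarithmic spiral composed with a global map back to $S$, rectangles $R_i$ on which the return map is a horseshoe, and one expanding eigenvalue of order $c_i^{\nu-1}$ against contraction of order $c_i^{\nu}$. One small precision: with fixed angular width, the rectangle $R_i$ does not itself converge to $\Gamma\cap S$, so what gives the accumulation on $\Gamma$ is that the invariant set inside $R_i$ lies in $f(R_i)\cap R_i$, hence within $O(z_i^{\nu})$ of $\Gamma\cap S$.
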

\begin{remark}
    For completeness, we remark the assumption that $W^u$ is one-dimensional can be relaxed to some extent - see \cite{shil2}, \cite{shil4} and Chapter 13.5 in \cite{SSTC} for the details.
\end{remark}
\begin{figure}[h]
\centering
\begin{overpic}[width=0.4\textwidth]{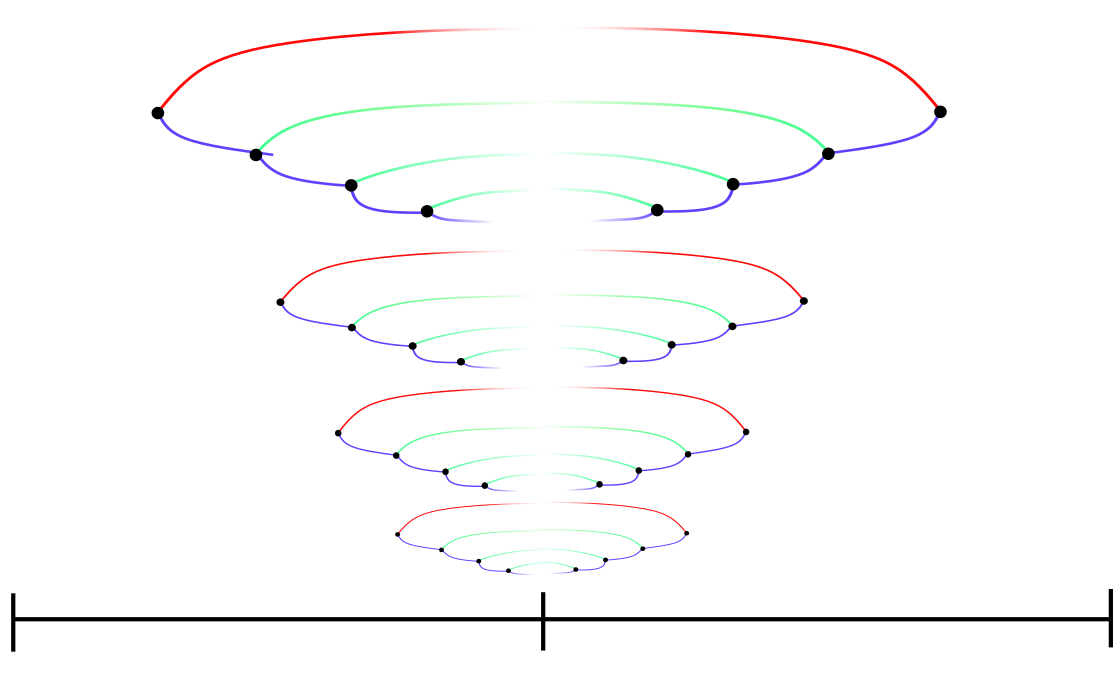}
\put(980,-10){$1$}
\put(470,-10){$0$}
\put(-40,-10){$-1$}

\end{overpic}
\caption{\textit{A (partial) bifurcation diagram around the Shilnikov parameter $0$. The black dots represent bifurcation orbits, while the red and green arcs represent saddles. The blue arcs represent periodic orbits undergoing a period doubling cascade.}}
\label{cascadeshil}
\end{figure}

We now recall several facts from the proof of Shilnikov's Theorem. Specifically, we recall there exist a cross-section $S$, transverse to $\Gamma$, a collection of $n-1$-dimensional topological cubes $\{R_i\}_i\subseteq S$ and a first-return map $f:\cup_iR_i\to S$ such that:
\begin{itemize}
    \item There exists a boundary region $W\subseteq \partial S$ that lies on $W^s$, the stable manifold for $O$ (see Figure \ref{shilnikovth}).
    \item For all $j,k,i>0$, $R_{i+j}$ is separated from $R_{i-k}$ in $S$ by $R_i$.
    \item For each $i$, $\partial R_i$ includes two $n-2$-dimensional connected regions, the \textbf{horizontal sides}, $l_{i,u}$, $l_{i,d}$ that are mapped to $W$ under $f$. Moreover, $R_i$ is a component of $S\setminus(l_{i,u}\cup l_{i,d})$ (see Figure \ref{shilnikovth}).
    \item $f(R_i)\cap R_i$ is composed of two components, that the set $H_i$ is interior to. In particular, $f:H_i\to S$ is a topological Horseshoe in the sense of \cite{KY}.
    \item Assume $T$ is a periodic orbit for $f$ lying in $H_i$ for some $i$. Then, for all sufficiently large $i$ we have:
    \begin{enumerate}
        \item There exists some $c_i\in(0,1)$ s.t. the   differential of $f$ at $T\cap S$ has one real eigenvalue with absolute value in $(1,\infty)$ of order $O(c_i^{\nu-1})$, and all other eigenvalues have absolute value of order $O(c_i^{\nu})$.
        \item The sequence $\{c_i\}_i$ can be chosen to be monotonically decreasing, i.e., for all $i>j$ we have $c_i>c_j$ and $\lim_{i\to\infty}c_i=0$.
    \end{enumerate}
\end{itemize}

For the proof, see either \cite{LeS} or Theorem 13.8 in \cite{SSTC}. To continue, let us consider some $C^2$ family of vector fields $\dot{s}=F_\tau(s)$, $s\in\mathbb{R}^3$, $\tau\in(-1,1)$ s.t. at $\tau=0$ there exists a saddle focus $O$ undergoing a Shilnikov bifurcation. As proven in \cite{GKP}, under these assumptions, as $\tau\to0$ from either $(-1,0)$ or $(0,1)$,  the periodic orbits in $\{H_i\}_i$ emerge via saddle node bifurcations and period doubling cascades, as illustrated in Figure \ref{cascadeshil}. We will need a higher dimensional version of this fact, which we now prove. Before doing so, we remark that this fact is probably well-known - but as we could not find a proof of it in the known literature, for completeness we provide an argument generalizing the results of \cite{GKP} to higher dimensions. However, our argument uses the Mallet-Yorke Index instead of the Shilnikov map used in \cite{GKP}.

\begin{proposition}
\label{generalizedgkp}    Let $\dot{s}=F_{a_1,a_2}(s)$, $a_1,a_2\in\mathbb{R}$. Denote a $C^2$ two-parameter family of vector fields on $\mathbb{R}^n$, $n\geq3$, with a fixed point at the origin. Assume $\gamma:(0,1)\to\mathbb{R}^2$ is an arc s.t. $\gamma(t)$ is a parameter at which $F_{\gamma(t)}$ generates a homoclinic trajectory to the origin s.t. for all $t\in(0,1)$ the assumptions of Shilnikov Theorem are verified for some saddle index $\nu(t)$, varying continuously in $t$ - in particular, for all $t$, $\nu(t)\in(0,1)$. Then, the following is true:
    \begin{itemize}
        \item $\gamma$ is the accumulation set of one-dimensional sets in the parameter space $\mathbb{R}^2$ corresponding to both period doubling and saddle node bifurcations.
        \item When $\frac{1}{2}<\nu(t)<1$, these period doubling cascades are period doubling cascades of attractors, and when $0<\nu(t)<\frac{1}{2}$, these are period doubling cascades of repellers.
    \end{itemize}  
\end{proposition}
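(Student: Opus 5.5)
The plan is to reduce everything to the first-return map $f$ on the cross-section $S$ near $\Gamma$, using the rectangle structure $\{R_i\}_i$ recalled after Theorem \ref{shilnikov}. There are three steps: first the local structure of each horseshoe $H_i$, then how that structure is born under parameter change, and finally a Mallet-Yorke index count.

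\textbf{Step 1 (uniform structure along $\gamma$).} Fix $t\in(0,1)$. By Theorem \ref{shilnikov} and its $C^1$-robustness, for parameters $a$ near $\gamma(t)$ the horseshoes $H_1,\dots,H_N$ persist for any fixed $N$. Since $\nu(t)$ varies continuously, the eigenvalue asymptotics of the return map also hold uniformly on compact subarcs of $\gamma$. On $H_i$ the differential of $f$ has one expanding eigenvalue of order $c_i^{\nu-1}$, and all remaining eigenvalues are of order $c_i^{\nu}$.

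\textbf{Step 2 (splitting the homoclinic loop).} Move transversally off $\gamma$ by a parameter $\mu$ that measures the splitting of the homoclinic loop. Each $R_i$ then sits at a distance $c_i$ from $W\subseteq W^s$, and its image is shifted by $\mu$. The horseshoe $f(R_i)\cap R_i$ consequently exists only for $|\mu|\lesssim c_i$ and is destroyed as $\mu$ leaves a window of width $O(c_i)$. Inside each window the periodic orbits of $H_i$ are created as $\mu$ enters, first a fixed point of $f|_{R_i}$, then its iterates. A one-parameter family crossing the window is therefore a $C^2$ family undergoing a horseshoe-creation route, as in \cite{Perd}.

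\textbf{Step 3 (index argument and the threshold $\nu=\tfrac12$).} Apply Mallet-Yorke orbit index theory \cite{PY,PY2} to this family. For a generic such family, periodic orbits bifurcate only through saddle node and period doubling bifurcations, the Mallet-Yorke index is conserved along branches of the bifurcation graph, and orbits created at a saddle node must connect through period doubling cascades to the hyperbolic horseshoe orbits. This yields, in each window, curves of saddle node and period doubling bifurcations. Their windows shrink with $c_i\to0$, so these curves accumulate on $\gamma$, which is the first claim.

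For the second claim, look at the product of all multipliers at a fixed point of $f$ in $H_i$. From the asymptotics this product is of order $c_i^{\nu-1}\cdot c_i^{\nu}=c_i^{2\nu-1}$ (with $n=3$; in higher dimensions the strongly stable directions only contract further). As $i\to\infty$ this product goes to $0$ when $\nu>\tfrac12$, so the map is area/volume contracting and the orbits born at saddle nodes and period doublings are stable. It goes to $\infty$ when $\nu<\tfrac12$, in which case the orbits are unstable, and applying the same argument to $f^{-1}$ shows they are repellers. Since the cascades are index-linked to orbits near $H_i$ for large $i$, they are cascades of attractors, respectively of repellers.

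\textbf{Main obstacle.} The hardest part is making the genericity assumptions of Mallet-Yorke theory hold uniformly along $\gamma$, and turning the multiplier-product estimate for the hyperbolic orbits into a stability statement for the non-hyperbolic orbits born at the bifurcations. I would first try a rescaling of $f|_{R_i}$ in the style of \cite{GKP}, which reduces it to a near-Hénon map with Jacobian of order $c_i^{2\nu-1}$. For such maps, dissipative (respectively expanding) cascades of attractors (respectively repellers) are standard. In dimension $n>3$ this additionally requires a center-manifold reduction onto the two leading directions.
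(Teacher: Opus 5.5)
Your outline matches the paper's proof except at one step. Both reduce to the return map on the rectangles $R_i$ and note that splitting the loop creates each horseshoe $H_i$ gradually along an isotopy $f_\tau:R_i\to S$. Both get saddle node and period doubling curves from the Mallet--Yorke index argument of Theorem 4.A in \cite{Perd}, and read the threshold $\nu=\frac12$ off the exponent $2\nu-1$.

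Your genericity worry is handled there without uniformity along $\gamma$. The paper takes a one-parameter family crossing $\gamma$ at one point and $C^3$-approximates it by families in $\mathcal K$. It runs the argument of \cite{Perd} on the approximations, passes to the limit, and then uses that the saddle node and flip sets have codimension one.

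Where you diverge is your ``main obstacle.'' Instead of a H\'enon rescaling as in \cite{GKP}, the paper verifies the strong contraction condition of \cite{Perd}: $|\lambda_j\lambda_k|<\theta<1$ for every pair of multipliers of every periodic point of every $f_\tau$ along the isotopy. Theorem 4.A of \cite{Perd} then gives attracting cascades. The obstacle disappears once you read your estimate not as an eigenvalue asymptotic at horseshoe orbits but as a pointwise bound $\|\Lambda^2 Df_\tau\|\lesssim c_i^{2\nu-1}$ on $2$-dimensional volume distortion. That bound uses only the column sizes in \eqref{shilmat}: the first column is $O(c_i^{\nu-1})$ and the others are $O(c_i^{\nu})$. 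These hold at every point of $R_i$ and for nearby $\tau$. No lower bound on $a$ is needed, which matters because $a$ degenerates exactly at the folds where the bifurcations happen. Being submultiplicative along orbits, the bound controls $|\lambda_j\lambda_k|$ at every periodic point. So when one multiplier is $\pm1$, all others lie inside the unit circle. For $n>3$ it is this pairwise bound, not the determinant, that matters, and strong stable directions only help.

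There is, however, a genuine gap in the repeller half for $n\geq4$, which the paper's ``inverse flow'' shortcut shares. Running the argument on $f_\tau^{-1}$ needs the strong contraction condition for $f_\tau^{-1}$, for instance $\|\Lambda^2 Df_\tau^{-1}\|<1$. For $n\geq4$ the section has dimension at least $3$, so $Df_\tau$ has at least two singular values $\lesssim c_i^{\nu}$. Hence $\|\Lambda^2 Df_\tau^{-1}\|\gtrsim c_i^{-2\nu}\to\infty$.

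The conclusion itself also fails for generic families. The eigenvalues $\lambda_3,\dots,\lambda_{n-1}$ with $\mathrm{Re}\,\lambda_j<-\rho$ induce an invariant strong stable foliation of the return map. This gives every periodic orbit near $\Gamma$ at least $n-3$ multipliers inside the unit circle. So the cascade orbits are saddles with two expanding and $n-3$ contracting multipliers, repelling only within the leading directions. That is all your reduction onto the two leading directions could yield. Passing to $-F_\tau$ does not help either: the reversed equilibrium has an $(n-1)$-dimensional unstable manifold, which leaves the Shilnikov setting. Your $f^{-1}$ argument is correct for $n=3$, where $\det Df_\tau^{-1}\sim c_i^{1-2\nu}\to0$. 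For $n\geq4$, the $\nu<\frac12$ claim has to be weakened to cascades of such saddles.
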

\begin{proof}
Consider a $C^2$ one-parameter family $\dot{s}=F_\tau(s)$, $\tau\in(-1,1)$ with a saddle focus at $0$, s.t. at $\tau=0$ there exists a Shilnikov homoclinic bifurcation - i.e., the vector field $F_0$ generates a homoclinic trajectory $\Gamma$ to $0$ s.t. the assumptions of Theorem \ref{shilnikov} are satisfied (in particular, $\Gamma$ is destroyed as $\tau$ is varied into $(-1,0)$ or $(0,1)$). It would suffice to prove the existence of an increasing sequence $\{\tau_n\}_n\subseteq(-1,0)$ and a decreasing sequence $\{\tau'_n\}_n\subseteq(0,1)$, $\tau_n,\tau'_n\to0$, s.t. these sequences include countably many period doubling and saddle node bifurcation parameters. As period doubling and saddle node bifurcation orbits form a codimension one bifurcation sets in a two-dimensional parameter space, this will imply Proposition \ref{generalizedgkp}. Moreover, we will only prove the existence of $\{\tau_n\}_n\subseteq(-1,0)$ - the proof of existence for $\{\tau'_n\}_n\subseteq(0,1)$ is symmetric. We will also consider only when $\frac{1}{2}<\nu<1$, where $\nu$ denotes the saddle index at the origin, and prove it forces the existence of period doubling cascades of attractors. The case for period doubling cascades of repellers is proven (almost) symmetrically by considering the $C^2$ family of the inverse flows, i.e., $\dot{s}=-F_\tau(s)$.

\begin{figure}[h]
\centering
\begin{overpic}[width=0.6\textwidth]{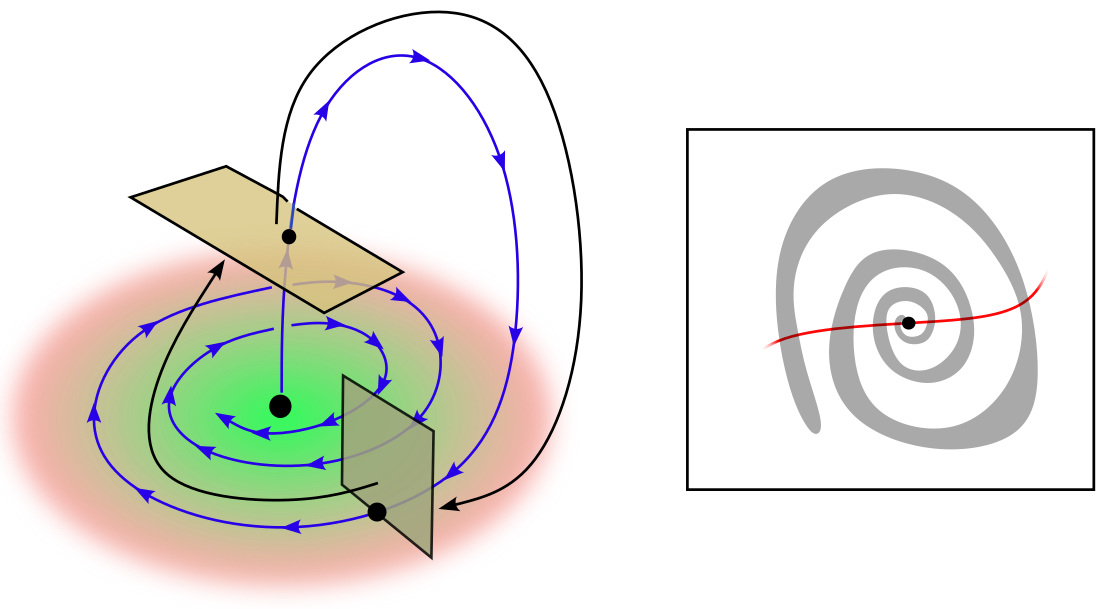}
\put(850,70){$S_1$}
\put(400,50){$S$}
\put(120,325){$S_1$}
\put(490,300){$g_1$}
\put(100,200){$g_0$}
\put(800,405){$g_0(S)$}
\end{overpic}
\caption{\textit{On the left - a depiction of the local and global maps, $g_0$ and $g_1$ (respectively), in the three-dimensional case. On the right - the cross-section $S_1$. The red curve corresponds to the intersection of $W^s$ with $S_1$, the black dot is the intersection of the homoclinic loop $\Gamma$ with $S_1$ and the shaded spiral is the image of $g_0(S)$. The map $g_1$ is defined only on the components of the spiral above the red curve.}}
\label{local}
\end{figure}

To begin, let $\Gamma$ denote the homoclinic trajectory to $0$ and let $H_i$, $i\in\mathbb{N}$, denote some hyperbolic set suspended around $\Gamma$, as given by Theorem \ref{shilnikov}. Let $T$ denote some periodic orbit inside $H_i$ and with the same notations above, let $f:R_i\to S$ denote the first-return map. To begin, for all $\tau\in(-1,0]$ write $f_\tau:R_i\to S$, as the first-return map for $R_i$ w.r.t. the vector field $F_\tau$ (for $\tau$ where it is defined) and $f_0$ corresponds to $f$, the first-return map for $F_0$. By Theorem \ref{shilnikov}, we know there exist some $\tau_i\in(-1,0)$ s.t. for all $\tau\in(\tau_i,0)$ the map $f_{\tau_i}:R_i\to S$ is a topological Horseshoe map in the sense of \cite{KY}, and that all the components of the hyperbolic invariant set $H_i$ (i.e., its periodic and aperiodic orbits) are created before $\tau$ approaches $\tau_i$ from below (note that by definition, there exists some $\tau_i\leq \tau'_i\leq0$ s.t. for all $\tau\in(\tau'_i,0)$ $H_i$ persists as a hyperbolic invariant set for $F_\tau$). Our idea is the following: we first prove there exists some $\tau^0_i<\tau_i$ s.t. $f_t:R_i\to S$, $\tau\in[\tau^0_i,0]$ is an isotopy. By a standard approximation argument from the Mallet-Yorke Orbit Index Theory, this would imply the existence of period doubling cascades. Following that, by verifying sufficient conditions from  \cite{Perd} we will conclude these period doubling cascades are period doubling cascades of attractors.

To begin, we first prove that for $\tau^0_i$ sufficiently close to $-\tau_i$, the first-return map $f_\tau:R_i\to S$, $\tau\in(\tau^0_i,0]$ is an isotopy of $C^1$-maps. We first recall that per the proof of Shilnikov's Theorem the map $f_0:\cup_i R_i\to S$, i.e., the first-return map at the Shilnikov parameter, can be written as the composition of two maps $g_1\circ g_0$ - where $g_0:S\to S_1$, the \textbf{local map}, is defined throughout $S$, and $g_1:S_1\to S$, the \textbf{global map}, is defined only on some subsets $g_0(S)$ in $S_1$ (where $S_1$ is the cross-section transverse to the homoclinic curve $\Gamma$ sketched in Figure \ref{local}). Recalling $S$ is a cross-section whose side lies on the invariant manifold corresponding to $-\rho\pm i\omega$, we know it persists as long as $0$ remains a saddle focus. Moreover, for all $\tau$ sufficiently close to $0$, $g^\tau_0:S\to S_1$, the corresponding map for $F_\tau$, is well-defined (where $g_0=g^0_0$ - see Figure \ref{local}).

\begin{figure}[h]
\centering
\begin{overpic}[width=0.6\textwidth]{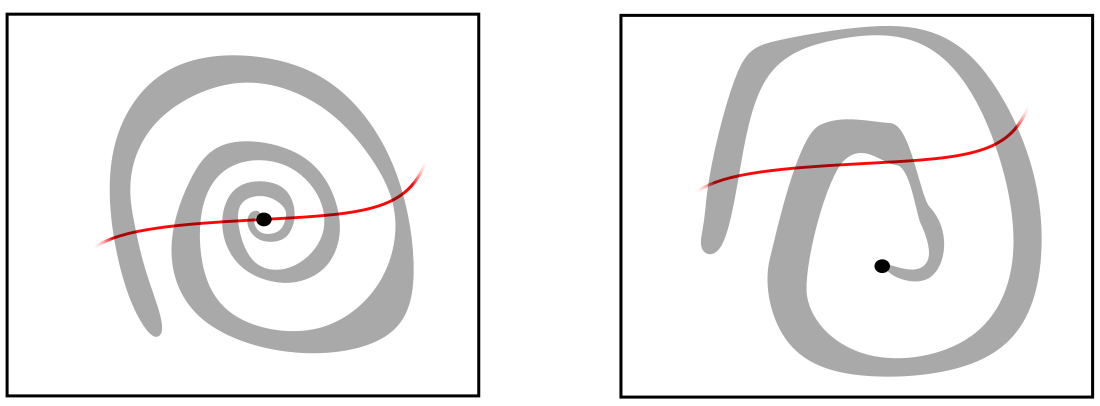}
\put(780,80){$g^\tau_0(S)$}

\put(120,325){$g_0(S)$}
\end{overpic}
\caption{\textit{On the left - the image of $g_0(S)$ in $S_1$. On the right - the image of $g^\tau_0(S)$ in $S_1$. The black dot represents $p_0$, the intersection of the separatrix of $W^u$ with $S_1$, while the red curve corresponds to the transverse intersection $W^s\cap S_1$. The black dot always represents $p_0$.}}
\label{transitionspiral1}
\end{figure}

We now recall that $g_0:S\to S_1$ maps $S$ to an $n-1$-dimensional logarithmic spiral inside $S_1$, centered at some $p_0\in S_1$ corresponding to the intersection of the homoclinic trajectory $\Gamma$ with $S_1$. In addition, for the vector field $F_0$, the point $p_0$ lies at the transverse intersection between the $n-1$ dimension stable manifold $W^s$ with $S_1$, denoted by $\gamma$ (see Figure \ref{local}). As such, when $F_0$ is perturbed to $F_\tau$, $\tau<0$, $\gamma$ persists (at least for $\tau$ sufficiently close to $0$) while $p_0$ persists as a point on $W^u$ and disconnects from $\gamma$ (for a possible scenario, see Figure \ref{transitionspiral1}). That being said, as the origin is a saddle focus, $p_0$ remains the center of the logarithmic spiral $g^\tau_0(S)$. As only finitely many of the Horseshoes survive (say, all $H_i$, $i<i_0$), we know $g^\tau_0(S)$ intersects $\gamma$ transversely in finitely many arcs, as illustrated in Figure \ref{transitionspiral1}. This implies that as $\tau\to0$, $f_\tau=g^\tau_1\circ g^\tau_0:R_i\to S$ is well-defined and maps the horizontal sides of $R_i$ to $\partial S\cap W^s$, while pushing $f_\tau(R_i)$ upwards, which eventually creates the intersection with the rectangle $R_i$ (see the illustration in Figure \ref{transitionspiral2}). This proves the suspended Horseshoe corresponding to $H_i$ is created gradually, by an isotopy $f_\tau:R_i\to S$, where $\tau$ varies in $(\tau^i_0,0)$, for some $\tau^i_0<\tau_i$. As we began with a $C^2$ one--parameter family, it follows that this isotopy is also $C^2$ (and in particular, $C^1$).

\begin{figure}[h]
\centering
\begin{overpic}[width=0.6\textwidth]{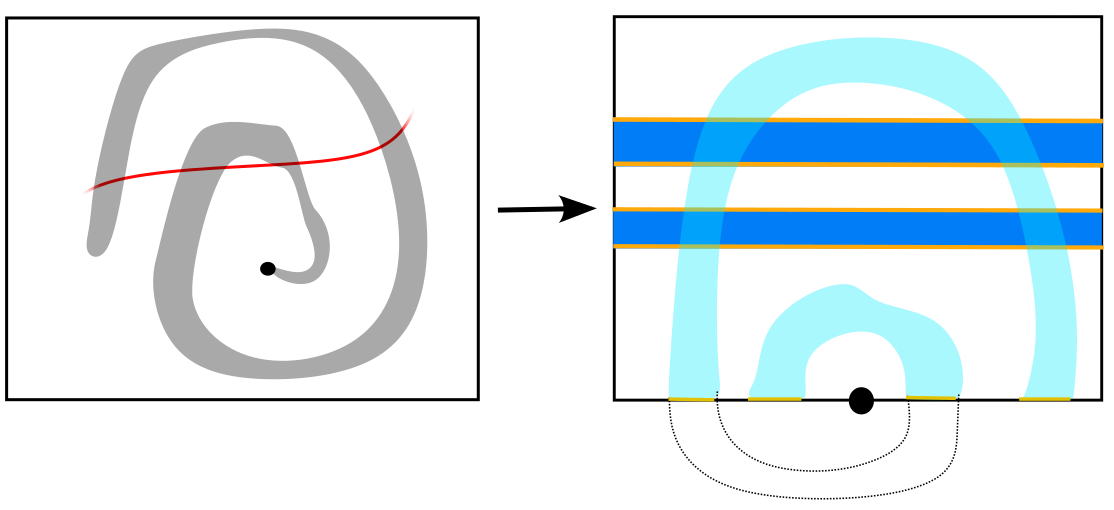}

\end{overpic}
\caption{\textit{On the left - the image of $g^\tau_0(S)$ in $S_1$, where the red curve corresponds to the transverse intersection $W^s\cap S_1$. On the right - the image under $f_\tau$ causing the formation of a Horseshoe as $\tau\to0$.}}
\label{transitionspiral2}
\end{figure}

We now $C^3$-approximate $\dot{s}=F_\tau(s)$ by a families $\dot{s}=G_\tau(s)$ that lies in the family $\mathcal{K}\subseteq C^3(\mathbb{R}^n\times(-1,1),\mathbb{R}^n)$, characterized by the following property: all the periodic orbits for $\dot{s}=G_\tau(s)$ that are created away from the fixed points are created only by period doubling and saddle node bifurcations (we can do so by the $C^3$-density of $\mathcal{K}$ - see the Appendix in \cite{PY2}). In detail, recall that given a $C^3$ family in $\mathcal{K}$, $\dot{s}=G_\tau(s)$, $\tau\in(0,1)$ and some bounded region $C\subseteq\mathbb{R}^n$ s.t. $G_\tau(s)\ne0$ for all $\tau\in(0,1)$, all the periodic orbits of $\dot{s}=G_\tau(s)$ in $C$ can be created and/or destroyed only by saddle node and period doubling bifurcations. Therefore, let $h_\tau:R_i\to S$, $\tau\in(\tau^0_i,0]$ be a $C^3$-isotopy which $C^2$-approximates $f_{\tau}:R_i\to S$ - that is, $h_\tau$ are the first-return maps for some family $\dot{s}=G_\tau(s)$ approximating $\dot{s}=F_\tau(s)$. As $R_i$ lies uniformly away from the fixed point at $0$ for all $\tau\in[\tau^0_i,0]$, all the periodic orbits that appear in it w.r.t. the isotopy $h_\tau:R_i\to S$ are created in a bounded region - as such, they can only be created by saddle node bifurcations and period doubling cascades. We now conclude the exact same argument used to prove Theorem 4.A in \cite{Perd} proves the periodic orbits for the original isotopy $f_\tau:R_i\to S$, $\tau\in[\tau^0_i,0]$ are also created by period doubling cascades. 

We now prove these cascades are all cascades of attractors - we do so by verifying a sufficient condition used to prove the same in Theorem 4.A in \cite{Perd}. To this end, recall we assume $\frac{1}{2}<\nu<1$, and that $f_0:R_i\to S$, $f_0(y,x,u)=(f_1(y,x,u),f_2(y,x,u),f_3(y,x,u))$, $u\in R^{n-2}$ can be written in the following form (see the proof of Theorem 13.8 in \cite{S}):

$$ f_1(y,x,u)=Axy^\nu\cos(\omega \log(\frac{1}{y})+\theta_{1})+o(y^\nu),$$
$$f_2(y,x,u)=C_2+Bxy^\nu\cos(\omega \log(\frac{1}{y})+\theta_{2})+o(y^\nu),$$
$$ f_3(y,x,u)=C_3\overline{u}+Cxy^\nu\cos(\omega \log(\frac{1}{y})+\theta_{3})+o(y^\nu),$$
where $A,B,C,C_2,C_3, \theta_{j}$, $j=1,2,3$ are constants dependent on the fixed point at the origin and $H_i$, and $\overline{u}$ is a vector of dimension $n-2$ that is independent of the variables $x,y$ - where $|y|\to0$ as $i$ increases. Moreover, recall that as shown in the course of proof of Shilnikov's Theorem, the differential of the first-return map $f_0:R_i\to S$ at $(x,y,u)$ is given by the following matrix around periodic orbits, at least when $i$ is sufficiently large:
\begin{equation}
\label{shilmat}
\begin{pmatrix}
    a y^{\nu-1}(1+o(1)) & o(y^\nu) & o(y^\nu)\\
  o(y^{\nu-1}) & o(y^\nu) & o(y^\nu)\\
      o(y^{\nu-1}) & o(y^\nu) & o(y^\nu)
\end{pmatrix}
\end{equation}

Where $a=a(x,y)$ is some continuous function, s.t. $|a(x,y)|$ is independent of $i$ on all sufficiently large $i$. For the details, see the proof of Theorem 13.8 in \cite{SSTC}. Thus, when $y$ is sufficiently small (i.e., when $i$ is sufficiently large), there exists just one eigenvalue of absolute value strictly larger than $1$, denoted by $\lambda_1\approx  a y^{\nu-1}(1+o(1))$, while all other eigenvalues $\lambda_2,...,\lambda_{n-1}$ have eigenvalues with absolute values in $(0,1)$ (in particular, it follows $\lambda_1=o(y^{\nu-1})$). As such,  if $T$ is a periodic orbit for $F_0$ intersecting $R_i$ at a periodic orbit of minimal period $r$, the eigenvalues of its differential are $\lambda_1=o(y^{r\nu-r})$ and $\lambda_j=o(y^{r\nu})$, $j=2,...,n-1$. As a consequence, $\lambda_1\lambda_j=o(y^{2r\nu-r})$ for all $j=2,...,n-1$. The same is true for $\lambda_j\lambda_k$ trivially when $j,k\ne1$. Now, consider the inequality $2r\nu-r>0$, that is satisfied precisely when $\nu>\frac{r}{2r}=\frac{1}{2}$, which is satisfied by $\frac{1}{2}<\nu<1$. This implies that for all $j\ne k$, and $|y|<1$, we have $\lambda_j\lambda_k=o(y^{2r\nu-r})=o(y^{2\nu-1})$. As such, since as $i\to\infty$ the coordinate $y\to 0$, there exists some $\theta_i\in(0,1)$ s.t. for all $1\leq j< k\leq n-1$, we have $|\lambda_j\lambda_k|<\theta_i<1$. 

Iterating this argument, we see it remains true for the differential $D_{f^r}$ around all periodic orbits in $R_i$ of minimal period $r$ - for all $r$. Finally, since $|y|$ and $|a(x,y)|$ are both uniformly bounded on $R_i$ independently of $i$ (at least for sufficiently large $i$), we can choose $\theta=\theta_i$ uniformly for all sufficiently large $i$. Moreover, note that for sufficiently large $i$, this inequality persists as $f_0$ is perturbed to $f_\tau$. More precisely, by the formula for the Shilnikov map and its differential above we conclude there exists some $\epsilon>0$ s.t. whenever $\tau^0_i\in(-\epsilon,0)$ the isotopy $f_\tau:R_i\to S$ satisfies that if $p$ is a periodic point of minimal period $r$ for $f_\tau$, $\tau\in[\tau^0_i,0)$ with eigenvalues $\lambda^\tau_1,...,\lambda^\tau_{n-1}$, then $|\lambda^\tau_i\lambda^\tau_j|<\theta$. In other words, we have a strong contraction condition in the Horseshoe map $f_0:R_i\to S$ for all sufficiently large $i$ (if we were to use the terminology of \cite{Perd}), with the constant $\theta$ being independent of the $R_i$. The exact same argument used to prove Theorem 4.A in \cite{Perd} now implies that all the periodic orbits along the said isotopy arise via a period doubling cascade of attractors.

In more detail (and using previous notation), as proven in Theorem 4.A in \cite{Perd} the uniform contraction condition implies all the periodic orbits for the isotopy $h_\tau:R_i\to S$, $\tau\in(\tau^0_i,0)$ that undergo the period doubling cascade are attractors, with eigenvalues uniformly bounded (in absolute value) by $\sqrt{\theta}$ - at least when the $C^3$-isotopy $h_\tau:R_i\to S_i$ is sufficiently $C^2$-close to the isotopy $f_\tau:R_i\to S$. The exact same limiting argument used at Theorem 4.A now yields that the eigenvalues for the periodic orbits undergoing the cascades for $C^2$-isotopy $f_\tau:R_i\to S$ are also bounded (in absolute value) by $\sqrt{\theta}$. As $\theta<1$ this proves these periodic orbits are attractors. In other words, we have proven that for all sufficiently large $i$, the dynamics of $F_0$ on $H_i$ are created by period doubling cascades of attractors. 
\end{proof}

\begin{remark}
In \cite{OS} it was proven that the perturbation of the Shilnikov scenario when $\nu\in(0,\frac{1}{2})$ in general leads to the creation of repelling orbits. Conversely, when $\nu\in(\frac{1}{2},1)$ the same perturbation in general leads to the creation of attracting orbits (for the precise statement - \cite{OS}). 
\end{remark}

Having proven Proposition \ref{generalizedgkp}, we see that period doubling cascades of either attractors or repellers are a common phenomenon around Shilnikov bifurcations - in all dimensions. We now recall that given a homeomorphism $f:X\to X$ on some topological space $X$, we say an invariant set $I\subseteq X$ is a \textbf{Milnor attractor} if it attracts all initial conditions $x\in O$, where $O$ has positive measure w.r.t. the Lebesgue measure (see \cite{Mil3}). $O$ is a \textbf{Milnor repeller} if it is a Milnor attractor w.r.t. $f^{-1}$. Combining this definition with Proposition \ref{generalizedgkp} and the results of Subsection \ref{perioddoubling}, we now prove:
\begin{theorem}
    \label{shilnikoventropy} Let $\dot{s}=F_t(s)$, $t\in (-1,1)$ be a $C^2$ one-parameter family of vector fields on $\mathbb{R}^n$, $n\geq3$, s.t. $F_{0}$ has a fixed point $O$ with a Shilnikov homoclinic trajectory $\Gamma$ and a saddle index $\nu\ne\frac{1}{2}$. Then, writing $E(s,\tau)=(F_\tau(s)+V(s,\tau),P(s,\tau))$ as the corresponding Entropy vector field on $\mathbb{R}^n\times(-1,1)$, the following holds:
    \begin{itemize}
        \item $P$ takes both positive and negative values in $M\times (-1,1)$ arbitrarily close to $\Gamma\times\{0\}$.
        \item When $\nu\in(\frac{1}{2},1)$, we can choose the Entropy flow s.t. the set $\Gamma\times\{0\}$ is the accumulation of countably many periodic orbits for the Entropy flow given by $\{T^\pm_i\times\{\tau^\pm_i\}\}_i$ satisfying the following:
        \begin{enumerate}
            \item  $\{\tau^+_i\}\subseteq(0,-1)$ and $\{\tau^-_i\}_i\subseteq(-1,0)$.
            \item $\lim_{i\to\infty}\tau^+_i=\lim_{i\to\infty}\tau^-_i=0$.
            \item $T^\pm_i$ is a periodic orbit for the vector field $F_{\tau^\pm_i}$, $i\in\mathbb{N}$.
            \item  Every $T^\pm_i\times\{\tau^\pm_i\}$ defines some Milnor attractor for $E$.
        \end{enumerate}    
        \item When $\nu\in(0,\frac{1}{2})$ the same holds, with the only difference being that the periodic orbits above become Milnor repellers.
        \item Let $\{H_i\}_i$ denote the hyperbolic invariant sets given by Theorem \ref{shilnikovth}. Then, we can choose the Entropy flow s.t. for all $i$, all $\epsilon>0$ and all $(s_0,0)\in H_i$, there exist initial conditions $(s,\tau)\in M\times(-1,0)$ and $(s',\tau')\in M\times(0,1)$ whose trajectory under the Entropy flow travels arbitrarily close to $(s_0,0)$. 
      
    \end{itemize}
\end{theorem}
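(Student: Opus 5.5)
The plan is to reduce everything to Proposition \ref{generalizedgkp} and Theorem \ref{major1}. First I will restrict the one-parameter family $\dot{s}=F_\tau(s)$ to a transverse slice of the two-parameter setting of Proposition \ref{generalizedgkp}. This gives, for every sufficiently large $i$, period doubling cascades creating the periodic orbits of $H_i$. They accumulate on $\tau=0$ from both sides: an increasing sequence of cascade parameters in $(-1,0)$ and a decreasing one in $(0,1)$. By that Proposition, when $\nu\in(\frac12,1)$ these are cascades of attractors, and when $\nu\in(0,\frac12)$ they are cascades of repellers. The hypothesis $\nu\neq\frac12$ is exactly what makes one of the two cases apply. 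In either case the components $G_n$ of each cascade lie in $Per$, as argued before Theorem \ref{major1}.

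For the first bullet, Theorem \ref{major1} forces $P$ to be nonzero and of definite sign on each $G_n$, pointing toward the accumulation point of its cascade. A cascade on the left of $0$ must therefore have its accumulation parameter to the right of its components. I will argue that, since the $H_i$ must also be destroyed as $\tau$ moves away from $0$, each $H_i$ lies between a creation region (saddle node, pushing one way) and its cascade. Near $\Gamma\times\{0\}$ there are then $Per$-components where $P>0$ and others where $P<0$. This mirrors the mechanism of Theorem \ref{heteroclinicnets}. I expect this sign-mixing to be the most delicate point. If it cannot be extracted cleanly, the fallback is to use the cascades on the two sides of $0$: cascades in $(0,1)$ accumulating at parameters below their components, so that the drift pushes toward smaller $\tau$ there, give the needed negative values, while those in $(-1,0)$ give positive values.

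For the second and third bullets, I will take $T^\pm_i\times\{\tau^\pm_i\}$ to be the period doubling orbits $P_{n+1}\times\{\tau_{n+1}\}$ of the $i$-th cascade on each side. The last part of Theorem \ref{major1} lets us choose the Entropy flow so that each such orbit is a Milnor attractor, attracting the open tube $\mathbb{T}$ built around $G_n$. Because the isolating neighborhoods for different cascades are disjoint, these choices can be made simultaneously, as in the construction of the isolating collection in Subsection \ref{sec3}. Accumulation on $\Gamma\times\{0\}$ follows because the $H_i$, and hence their cascades, accumulate on $\Gamma$ as $i\to\infty$ and the parameters tend to $0$. The repeller case $\nu<\frac12$ is the same argument applied to the time-reversed flow.

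For the last bullet, I will fix $(s_0,0)\in H_i$. Each orbit of $H_i$ is approximated by periodic orbits of $H_i$, which by Proposition \ref{generalizedgkp} are born in cascades on both sides of $0$ and continue to $\tau=0$. I will choose $P$ on the relevant component of $Per$ (or on the tube around it) to flow toward the parameter $0$ from each side, which is admissible by Definition \ref{admissibleper}. Trajectories starting on these surfaces at $\tau<0$ and at $\tau>0$ then pass arbitrarily close to $(s_0,0)$. The main obstacle is verifying that this admissible sign choice is compatible with the forced signs at the adjacent bifurcations.
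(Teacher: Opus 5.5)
Your handling of the first three bullets is essentially the paper's. The paper proves the sign statement by your fallback: the cylinders $C^-_{i,j}\subseteq Per$ of the cascades in $(-1,0)$ carry $P>0$, the cylinders $C^+_{i,j}$ of the cascades in $(0,1)$ carry $P<0$, and both families accumulate on $\Gamma\times\{0\}$. The Milnor attractors come, as in your plan, from Lemma \ref{trapping} and the last part of Theorem \ref{major1}, with $-F_\tau$ used for $\nu\in(0,\frac12)$. Your primary one-sided sign-mixing route is unnecessary and, as described, would not work. By Definition \ref{def5}, a saddle node pushes into the parameter region where the created orbits exist. That is the same direction as the cascade those orbits then undergo, so it does not supply the opposite sign.

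The last bullet has a genuine gap, and the obstacle you flag comes from using the wrong object. The periodic orbits of $H_i$ that continue through $\tau=0$ are hyperbolic saddles, accumulated by the other regular periodic orbits of the horseshoe. They are therefore not completely isolated. Their cylinders are not components of $Per$, Definition \ref{admissibleper} does not apply to them, and no drift is attached to them through $Per$.

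The cylinders near $H_i$ that do lie in $Per$ are the attracting (or repelling) cascade branches. These exist only at parameters where $H_i$ is not yet complete, so for fixed $i$ they stay a bounded distance away from $\tau=0$. By Theorem \ref{major1}, the flow on them runs into the period doubling orbits at the ends of their windows, so their trajectories cannot approach $(s_0,0)$.

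The missing idea is to use $Inv$ rather than $Per$. The continuation $H_i\times(-\epsilon_i,\epsilon_i)$ of $H_i$ over its maximal persistence interval is a completely isolated invariant set, with an isolating neighborhood disjoint from $Per\cup\overline{Fix}$. Definition \ref{def6} then gives an admissible drift that is positive near the left end, negative near the right end, and vanishes only on a single equilibrium slice, which you may place at $\tau=0$. By the convention of Subsection \ref{sec3}, $V\equiv 0$ there. Near $H_i$ the Entropy flow is therefore the laminar flow plus a drift toward the slice $\tau=0$ from both sides. This produces the required trajectories starting in $M\times(-1,0)$ and in $M\times(0,1)$, and no sign forced by a bifurcation enters.
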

\begin{proof}
We begin with the first assertion. By Proposition \ref{generalizedgkp} we already know that when $\nu\ne\frac{1}{2}$ and $\tau\to0$, the periodic orbits for the Horseshoes suspended around $\Gamma$ are created by period doubling bifurcations of either attractors and repellers - depending on whether $\nu\in(0,\frac{1}{2})$ or $\nu\in(\frac{1}{2},1)$ (see Figure \ref{cascadeshil}). Moreover, the same Proposition also tells us that there are two periodic sequences $\{\tau^\pm_i\}_i$, $\{\tau^+_i\}_i\subseteq(0,1)$ and $\{\tau^-_i\}_i\subseteq(-1,0)$, satisfying the following:
\begin{itemize}
    \item $\lim_{i\to\infty}\tau^+_i=\lim_{i\to\infty}\tau^-_i=0$.
    \item At $\tau^+_i$ the vector field $F_{\tau^+_i}$ has a period doubling bifurcation orbit $T^+_i$. Similarly, $F_{\tau^-_i}$ has a period doubling bifurcation orbit $T^-_i$. These period doubling bifurcations are where the period of either attractors or repellers is doubled, depending on the size of $\nu$. 
\end{itemize}

We assume these periodic orbits correspond to all period doubling bifurcations leading to the creation of the Shilnikov sets $\{H_i\}_i$ for $F_0$ - we can do so by Proposition \ref{generalizedgkp}. By definition of Entropy flow, the sets $T^\pm_i\times\{\tau^\pm_i\}\subseteq M\times\{\tau_i^\pm\}$ are all periodic orbits for any Entropy flow we choose. As these orbits correspond to period doubling bifurcations of either attractors or repellers, for each $\tau^+_i$ there is some $\tau^+_j$, for some $j$, s.t. $\tau^+_i>\tau^+_j$ and $T^+_i\times\{\tau^+_i\}$ is connected to $T^+_j\times\{\tau^+_j\}$ by a cylinder of periodic orbits $C^+_{i,j}\subseteq Per$. In particular, $C^+_{i,j}\subseteq M\times(\tau^+_j,\tau^+_i)$ and by the definition of the Entropy flow, we know $P$ is negative on $C^+_{i,j}$ (see Figure \ref{cascadeshil2}). Similarly, for each $\tau^-_i$ there is some $\tau^-_j$ for some $j$, s.t. $\tau^-_i<\tau^-_j$ and $T^-_i\times\{\tau^0_i\}$ is connected to $T^-_j\times\{\tau^0_j\}$ by a cylinder of periodic orbits $C^-_{i,j}\subseteq Per$. Moreover, $C^-_{i,j}\subseteq M\times(\tau^-_i,\tau^-_j)$ and $P$ is positive on $C^-_{i,j}$, as illustrated in Figure \ref{cascadeshil2}. As the cascades are arbitrarily close to $\Gamma\times\{0\}$, the first assertion now follows.

\begin{figure}[h]
\centering
\begin{overpic}[width=0.45\textwidth]{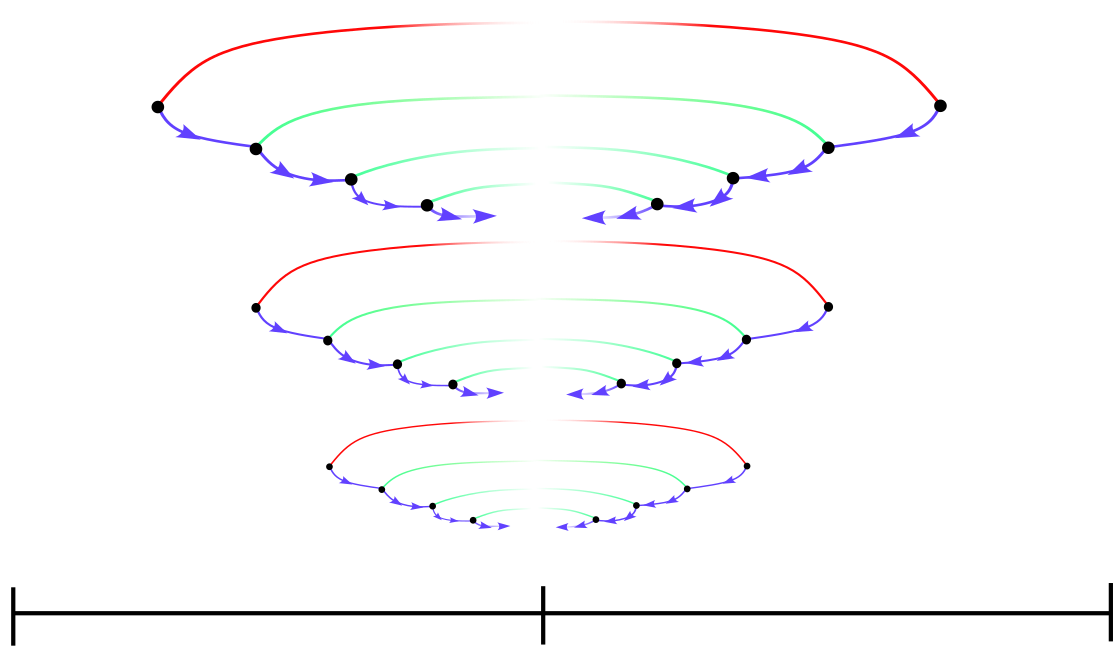}
\put(980,-15){$1$}
\put(470,-15){$0$}
\put(-40,-15){$-1$}

\end{overpic}
\caption{\textit{The directions of the Entropy flow imposed on the (partial) bifurcation diagram of period doubling cascades around the Shilnikov bifurcation, where $\nu\ne\frac{1}{2}$.}}
\label{cascadeshil2}
\end{figure}
We now prove the second assertion. To this end, assume $\nu\in(\frac{1}{2},1)$ and choose some $C^r_{i,j}$, where $r\in\{+,-\}$. By Proposition \ref{generalizedgkp}, we know that whenever $C^r_{i,j}\cap \mathbb{R}^n\times\{\tau\}\ne0$, the intersection is of the form $T\times\{\tau\}$, where $T$ is an attracting orbit for $F_\tau$. By Lemma \ref{trapping}, we know we can choose $E$ s.t. $\overline{C^r_{i,j}}$ includes some attracting invariant set, that includes $T^r_{j}\times\{\tau^r_j\}$ - and by Theorem \ref{major1}, we can choose $E$ s.t. for all $i,j$ the orbit $T^r_j\times\{\tau^r_j\}$ attracts an open neighborhood $O_{i,j}$ of $C_{i,j}$ satisfying $T^r_j\times\{\tau^r_j\}\subseteq\partial O_{i,j}$. This implies $T^r_j\times\{\tau^r_j\}$ is a Milnor attractor for all $j\in\mathbb{N}$, $r\in\{+,-\}$ as it lies on the boundary of $O$. Similarly, when $\nu\in(0,\frac{1}{2})$ a similar argument applied to the $C^2$ family $\dot{s}=-F_\tau(s)$  implies $T^r_j\times\{\tau^r_j\}$ is a Milnor repeller, and the third assertion follows.

We conclude the proof by verifying the fourth assertion, i.e., by proving we can choose the Entropy flow such that there exist initial conditions in $\mathbb{R}^n\times (-1,1)$ arbitrarily close to $H_i\times\{0\}$ that flow arbitrarily close to $H_i\times\{0\}$. To this end, choose some Entropy flow for $\dot{s}=F_\tau(s)$ and recall $H_i$ is a hyperbolic invariant set on which the dynamics are orbitally equivalent for all $F_\tau$ with $\tau\in(-\epsilon_i,\epsilon_i)$ (for some $\epsilon_i>0$, depending only on $H_i$). By the definition of the Entropy flow (see Definition \ref{entropyflow} and Definition \ref{def6}), there exist two open neighborhoods $N^-_i,N^+_i\subseteq \mathbb{R}^n\times (-1,1)$ satisfying the following:
\begin{itemize}
    \item $N^-_i\subseteq M\times(-\epsilon_i,0)$ and $N^+_i\subseteq M\times(0,\epsilon_i)$.
    \item $H_i\times(-\epsilon_i,0)\subseteq N^-_i$ and $H_i\times(0,\epsilon_i)\subseteq N^+_i$.
\end{itemize}

It is easy to see that the set $H_i\times(-\epsilon_i,\epsilon_i)$ is completely isolated in the sense of Subsection \ref{sec3}, which implies we can choose $N^r_i$, $r\in\{+,-\}$ s.t. $P$ does not vanish on both. In detail, $P$ will be strictly positive on $N^+_i$, and negative on $N^-_i$, while it would also vanish on $\partial N^+_i,\partial N^-_i$. This immediately implies some initial conditions in $N^+_i$ and $N^-_i$ flow arbitrarily close to $H_i\times\{0\}$ under the Entropy flow and for each $(s_0,0)$ we can find initial conditions in both $N^+_i$ and $N^-_i$ flowing arbitrarily close to it. \end{proof}
\begin{remark}
    Note the final assertion of Theorem \ref{shilnikoventropy} is independent of the assumption $\nu\ne\frac{1}{2}$, as it only needs the existence of sets $\{H_i\}_i$, i.e., it holds for all $\nu\in(0,1)$. 
\end{remark}
To illustrate our ideas, before moving on we consider two examples of dynamical systems where the Shilnikov scenario occurs. We begin with the following example, derived from \cite{champ}. As shown in \cite{champ}, PDEs governing the motion of water in pipes can be reduced to the following system of second order equations, where $G,u,r>0$:

\begin{equation*}
\begin{cases}
8\frac{d^2\theta}{dt}+3\frac{d^2\phi}{d^t}\cos(\phi-\theta)-3(\frac{d\theta}{dt})^2\sin(\phi-\theta)+3u^2\sin(\phi-\theta) + 3u\frac{d\theta}{dt}+ \\+ 6u\frac{d\phi}{dt} cos(\phi-\theta)+6G\sin(\theta)+18\theta-2\phi+2r+8\theta^3-(\phi-\theta-r)^3=0,\\
3\frac{d^2\theta}{d^2t}\cos(\phi-\theta)+2\frac{d^2\phi}{d^2t}+3(\frac{d\theta}{dt})^2\sin(\phi-\theta)+3u\frac{d\phi}{dt}+2G\sin(\phi)+\\
+2(\phi-\theta-r)+(\phi-\theta-r)^3=0
\end{cases}
\end{equation*}
It is easy to see these two equations can be recast as a dynamical system on $\mathbb{R}^4$. As observed numerically in \cite{champ}, there exists a unique fixed point for the flow and for $G=0,r=0.6, u\approx5.6607$ there exists a homoclinic trajectory to it, associated with period doubling cascades of attractors (see Section 4.1 in \cite{champ}).\\

We now consider the second example, which illustrates that Theorem \ref{shilnikoventropy} remains partially true also when the saddle index $\nu$ is $\frac{1}{2}$, is the Michelson system, originally introduced in \cite{Michh} as a traveling wave solution to the Kuramoto-Sivashinsky PDE. Specifically, recall that given any $c\in\mathbb{R}$, the Michelson system is defined by the following equations:

\begin{equation*} 
\begin{cases}
\dot{x} = y\\
 \dot{y} = z\\
 \dot{z}=c^2-y-\frac{x^2}{2}
\end{cases}
\end{equation*}
It is direct to show that for all parameters $c$ the flow is divergence free - i.e., the flow is volume-preserving. Now, recall that as proven in \cite{DW}, there are countably many parameters $c$ at which the Michelson system undergoes a Shilnikov homoclinic bifurcations. As the Michelson system cannot have either attractors or repellers, by Proposition \ref{generalizedgkp} it follows that at each such Shilnikov parameter we have $\nu=\frac{1}{2}$. Because of the absence of attractors and repellers we cannot apply Theorem \ref{shilnikoventropy}, or, in other words, we cannot conclude the existence of either Milnor attractors or repellers for its Entropy flow. In fact, by Theorem 6.7 in \cite{BaWil} we already know there are intervals of parameters where elliptic periodic orbits surrounded by KAM tori exist, which further complicates matters (i.e., the set $Per$ is likely to be empty due to the Birkhoff Fixed Point Theorem holding around said elliptic orbits). That being said, whenever the Michelson system undergoes a Shilnikov homoclinic bifurcation all the suspended Horseshoes $\{H_i\}_i$ exist. This implies the last implication of Theorem \ref{shilnikoventropy} (and consequently, also the first) still holds. In other words, consider an Entropy flow for the Michelson system with an associated drift function $P$. Then, if $\Gamma$ is a Shilnikov homoclinic trajectory for the Michelson system at some $c_0\in\mathbb{R}$, we can choose the Entropy flow so $P$ would take both positive and negative values in every neighborhood of $\Gamma\times\{c_0\}$. In other words, the Michelson system proves that dissipativeness is not a necessary condition for the Entropy flow to have complex behavior.\\ 

We continue our study of the Entropy flow around the Shilnikov scenario. We first expand the definition of the Shilnikov scenario in order to include the case of the inverse flow. In detail, let $\dot{s}=F_\tau(s)$ denote a $C^2$ family of vector fields on $\mathbb{R}^n\times (-1,1)$, and assume $O\in\mathbb{R}^n$ is a saddle-focus fixed point for $F_0$ with a Shilnikov homoclinic loop $\Gamma$. When $F_0$ satisfies the Shilnikov condition, by Shilnikov's Theorem, its dynamics are complex - but the same is true when $-F_0$ satisfies it. This motivates us to say $F_0$ \textbf{satisfies the Shilnikov condition} provided $|\nu|\in(0,1)$, where $\nu$ is the saddle index for $F_0$ at $0$. It is easy to see that if $-1<\nu<0$, then $-F_0$ satisfies the Shilnikov scenario - thus Theorem \ref{shilnikoventropy} immediately generalizes to the case where $\nu\in(-1,0)$, $\nu\ne-\frac{1}{2}$, with the Milnor attractors becoming Milnor repellers, and vice versa. We now differentiate between these two cases as follows (see the illustration in Figure \ref{transfer}): 
\begin{enumerate}
    \item If $F_0$ satisfies the Shilnikov condition as described in Theorem \ref{shilnikov}, we say its Index is $1$.
    \item In contrast, if $-F_0$ satisfies the Shilnikov condition, we say its Index is $-1$.\\
\end{enumerate}

With these ideas in mind, we now prove the following Corollary of Theorem \ref{shilnikoventropy}:
\begin{corollary}
\label{turbulententropy} With the above notations, let $\{H_i\}_i$ denote the hyperbolic invariant sets suspended around $\Gamma$ w.r.t. $F_0$, given by Theorem \ref{shilnikov}. Assume each $H_i$ persists as a hyperbolic invariant set in some maximal interval $(-\epsilon_i,-\epsilon_i)$, and that $|\nu|\ne\frac{1}{2}$ - where $\nu$ is the saddle index. Then, there exists a countable collection of open sets $N_i\subseteq M\times (-1,1)$ and a choice of Entropy flow on $\mathbb{R}^n\times (-1,1)$, directed by an Entropy vector field $E$ (see Equations \ref{entropyflow}), satisfying the following:
\begin{itemize}
    \item $H_i\times(-\epsilon_i,\epsilon_i)\subseteq N_i$, and moreover, the associated drift function $P$ is positive in $N_i\cap M\times(-\epsilon_i,0)$ and negative on $N_i\cap M\times(0,\epsilon_i)$.
    \item For all sufficiently large $i$, Milnor attractors (or repellers) accumulate on $\partial N_i\cap( \mathbb{R}^n\times\{\pm\epsilon_i\})$.
    \item Moreover, depending on the Index, the following holds:
    \begin{enumerate}
        \item  When the Index of $F_0$ is $1$ then for all $j>i$ and every $(s_j,0)\in H_j\times\{0\}$, there are initial conditions in $N_i$ whose trajectory w.r.t. $E$ flows arbitrarily close to $(s_j,0)$. In addition, for any choice of integers $i<i_1<...<i_k<j$ we can choose these initial conditions in $N_i$ so their trajectory flows arbitrarily close to $H_{i_1}\times\{0\},...,H_{i_k}\times\{0\}$ as it progresses towards $(s_j,0)$.
        \item  When the Index of $F_0$ is $-1$, then for all $j>i$ and every $(s_i,0)\in H_i\times\{0\}$ there exist initial conditions in $N_j$ whose trajectory w.r.t. $E$ flows arbitrarily close to $(s_i,0)$. Conversely, for any choice of integers $i<i_1<...<i_k<j$, we can choose these initial conditions in $N_j$ so their trajectory flows arbitrarily close to $H_{i_1}\times\{0\},...,H_{i_k}\times\{0\}$ as it progresses towards $(s_i,0)$.
    \end{enumerate}
\end{itemize}
\end{corollary}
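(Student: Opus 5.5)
The plan is to build the Entropy flow explicitly around the completely isolated invariant sets $H_i\times(-\epsilon_i,\epsilon_i)$, and then to read off the three assertions from Definition \ref{def6}, Theorem \ref{shilnikoventropy} and the geometry of the Shilnikov horseshoes.

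For the first assertion I would argue exactly as in the last part of the proof of Theorem \ref{shilnikoventropy}. Each $H_i\times(-\epsilon_i,\epsilon_i)$ is a completely isolated invariant set in the sense of Subsection \ref{sec3}, since hyperbolicity gives an isolating neighborhood varying continuously in $\tau$, and the interval $(-\epsilon_i,\epsilon_i)$ is maximal by assumption. Definition \ref{def6} then supplies a neighborhood $N_i$ and an admissible $P_i$ that is positive near $M\times\{-\epsilon_i\}$ and negative near $M\times\{\epsilon_i\}$. I would choose the equilibrium layer at $\tau_0=0$, so that $P$ is positive on $N_i\cap M\times(-\epsilon_i,0)$ and negative on $N_i\cap M\times(0,\epsilon_i)$. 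Since the $H_i$ accumulate only on $\Gamma$, I would shrink the $N_i$ recursively so that they are pairwise disjoint away from $\Gamma\times\{0\}$, using the isolating-collection construction preceding Corollary \ref{final}.

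For the second assertion, Proposition \ref{generalizedgkp} shows that the periodic orbits of $H_i$ are born through period doubling cascades of attractors (or repellers) as $\tau\to\pm\epsilon_i$, i.e.\ at the boundary of the persistence interval. Theorem \ref{shilnikoventropy} and Theorem \ref{major1} then let me choose $E$ so that the bifurcation orbits $T^\pm\times\{\tau^\pm\}$ of these cascades are Milnor attractors (respectively repellers when the Index is $-1$). These orbits accumulate at the terminal parameters of the cascades, namely on $\partial N_i\cap(\mathbb{R}^n\times\{\pm\epsilon_i\})$.

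For the third assertion I would use the horseshoe structure on the cross-section $S$. The rectangles $R_i$ are nested in order, $R_i$ separating $R_{i-k}$ from $R_{i+j}$, and $f(R_i)$ stretches across all $R_j$ with $j>i$ (Figure \ref{shilnikovth}). Since the $H_i$ together generate a larger hyperbolic set with symbolic dynamics, any admissible itinerary $i\to i_1\to\dots\to i_k\to j$ is realized by an orbit of $F_0$ shadowing $H_i,H_{i_1},\dots,H_{i_k}$ and landing near $(s_j,0)$. For $\tau$ small, hyperbolicity keeps this pseudo-orbit realizable, so I would pick a starting point in $N_i$ with $\tau$ close to $0$, where $P$ is close to $0$ and the drift in $\tau$ is slow. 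The resulting Entropy trajectory then shadows the $F_\tau$ orbit for a long time. For Index $-1$ I would run the same argument for $-F_0$, which reverses the direction of travel, from $N_j$ down to $H_i$.

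The main obstacle is this last shadowing step. I need to control the accumulated $\tau$-drift so that the trajectory stays inside the region where the relevant $H_{i_\ell}$ persist; the nested intervals $(-\epsilon_{i_\ell},\epsilon_{i_\ell})$ shrink with $\ell$, which makes this harder. I would first try choosing $P$ to be $C^1$-small, as Definition \ref{def6} allows, with size decaying in $i$, and then apply the shadowing lemma for the perturbed flow near $\tau=0$. A further difficulty is that $V\equiv0$ on the $N_i$, so no correction term is available there; I expect $C^1$-smallness of $P$ to be enough.
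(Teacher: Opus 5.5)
Your proposal is correct and follows essentially the paper's own route. You use Definition \ref{def6} with the equilibrium slice at $\tau=0$ for the first assertion, and the period doubling cascades accumulating at $\tau=\pm\epsilon_i$ together with the argument of Theorem \ref{shilnikoventropy} for the second. For the third, you use the fact that $f(R_i)$ stretches across every $R_j$ with $j>i$ (direction reversed via $-F_0$ for Index $-1$) and take $P$ $C^1$-small, so that near $\tau=0$ the Entropy flow follows the $F_0$-itineraries.

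The differences from the paper are cosmetic. The paper phrases the last step through the first-return map $\mathcal{F}$ on $Q_i=R_i\times(-\epsilon_i,\epsilon_i)$ and the invariant manifolds of the periodic orbits in $H_j$ rather than through a shadowing lemma. It controls the drift by the same device you propose, a $C^1$-small $P$ decaying with its derivatives towards the slices $\tau=0$. Your assignment of the two directions to Index $1$ and Index $-1$ matches the statement, whereas the paper's written proof labels these two cases the other way round.
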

\begin{proof}

The first assertion is an immediate consequence of the definition of the Entropy flow (see Section \ref{sec3}), where $\{N_i\}_i$ are just some completely isolating neighborhoods (see Definition \ref{def6}). Thus we first prove the second assertion, after which we specify the set $N_i$ precisely. We begin with the case where the Index is $-1$. In this case, $\Gamma$ is a component of the one-dimensional manifold $W^u$, which connects to the $n-1$-dimensional $W^s$ via the two-dimensional submanifold corresponding to the complex-conjugate eigenvalues. We now recall the cross-section $S$ for $F_0$ and the $n-1$-dimensional cubes $\{R_i\}_i$ on it, whose invariant set w.r.t. the first-return map $f: R_i\to S$ for $F_0$ corresponds to the intersection of $H_i$ with $S$ (see Figure \ref{shilnikovth}). By definition, $f:R_i\to S$ is an $n-1$ dimensional Smale Horseshoe map, whose vertical sides are mapped to the side of $\partial S$ intersecting the $n-1$-dimensional stable manifold $W^s$. It now follows that $f(R_i)$ also intersects all $R_j$, $j>i$ (see Figure \ref{shilnikovth}), which implies that if $T\subseteq H_j$ is a periodic orbit, the stable manifold of $T$ w.r.t. $F_0$ intersects $R_i$. In fact, as $f(R_i)$ stretches across all of the $R_j$, $j>i$, the same argument also proves that given any periodic orbit $T_i$ intersecting $R_i$ and any periodic orbit $T_j$ intersecting $R_j$, $i>j$, there exists a heteroclinic trajectory for $F_0$ connecting them. Moreover, we can choose this trajectory to pass through arbitrarily many of $R_t$, $i<t<j$.
    
\begin{figure}[h]
\centering
\begin{overpic}[width=0.45\textwidth]{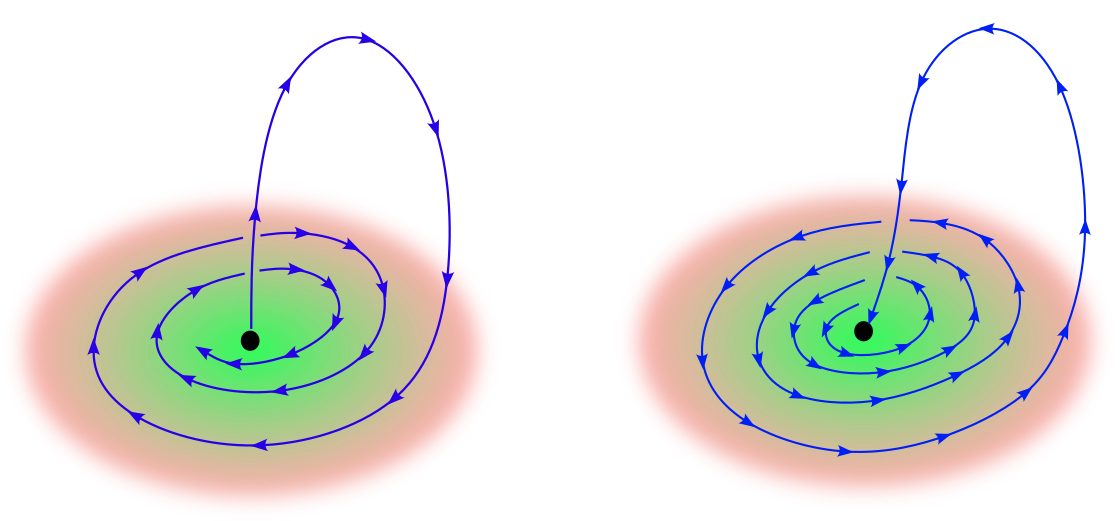}

\end{overpic}
\caption{\textit{Index $1$ homoclinic trajectory is on the left, Index $-1$ is on the right.}}
\label{transfer}
\end{figure}

We now choose an open set $N_i=\cup_{\tau\in(-\epsilon_i,\epsilon_i)}N_i^\tau\times\{\tau\}$, s.t. for all $i,\tau$, $N_i^\tau\cap S=R_i$. Let us choose an Entropy flow defined s.t. the drift function $P$ satisfies the following in $\cup_i N_i$:
\begin{itemize}
    \item $P$ is positive on $N_i^\tau\times\{\tau\}$ if $\tau\in(-\epsilon_i,\epsilon_i)$ and negative if $\tau\in(0,\epsilon_i)$ - we can do so by Definition \ref{def6}.
    \item The $C^1$-norm of $P$ in $\cup_i N_i$ can be chosen to be arbitrarily small, and moreover, we choose it so that $P$ decays uniformly together with its first derivatives as $(s,\tau)\to \cup_i N^0_i$. 
\end{itemize}

We now consider the $N$-dimensional cubes $Q_i=R_i\times(-\epsilon_i,\epsilon_i)$ and let $\mathcal{S}$ denote $S\times(-1,1)$ - by definition, $Q_i=N_i\cap\mathcal{S}$. Provided $P$ is sufficiently $C^1$-small, the first-return map $\mathcal{F}:Q_i\to\mathcal{S}$ is well-defined. Moreover, by the above we also know that $\mathcal{F}(Q_i)$ intersects $Q_j$ for all $j>i$. By the uniform decay as $(s,\tau)\to N^0_i\times\{0\}$, $(s,\tau)\in Q_i$, it follows, the closer initial conditions $(s,\tau)\in\cup_i Q_i$ are to $\cup_i H_i\times \{0\}$, the more their trajectory resembles that of the Laminar flow, i.e., the flow on $\mathbb{R}^n\times (-1,1)$ given by Equations \ref{laminar}. Combined with the existence of an invariant manifold for all periodic orbits in $H_j$ (in $N^0_i$), given any $j>i$ and any $(s_j,0)\in H_j\times\{0\}$, there exist initial conditions in $Q_i$ that are mapped by $\mathcal{F}$ arbitrarily close to every point in $H_j\times\{0\}\cap S\times\{0\}$. This implies we can find points in $Q_i$ flowing arbitrarily close to $(s_j,0)$ w.r.t. the Entropy flow - and moreover, given any finite sequence $i<i_1<..<i_k<j$, we can choose $(s_j,0)$ s.t. its trajectory passes arbitrarily close to $H_{i_1}\times\{0\},...,H_{i_k}\times\{0\}$ as it as the Entropy flow directs it towards $Q_j$.

Finally, by definition (and Theorem \ref{shilnikoventropy}), as $(-\epsilon_i,\epsilon_i)$ is the maximal interval of persistence of $H_i$, we know that $\partial N_i\cap M\times\{\pm\epsilon_i\}$ is the accumulation point for period doubling cascades. Moreover, as the sets $N_i$ are isolated from the said cascades (by their definitions),  we can apply the arguments of Theorem \ref{shilnikoventropy} s.t. when $\nu\in(0,\frac{1}{2})$, Milnor attractors accumulate on $\partial N_i\cap(\mathbb{R}^n\times\{\epsilon^\pm_i\})$ - and when $\nu\in(\frac{1}{2},1)$, Milnor repellers accumulate there instead. Hence, we have proven the Corollary for the case when the Index of $F_0$ is $-1$.
    
We now study the case of the Index $1$. In this case, the sets $N_i$, $Q_i$ and the function $\mathcal{F}$ are defined in the same way. That being said, this time the heteroclinic trajectories connecting any two periodic orbits $T_i$ in $H_i$ and $T_j$ in $H_j$ w.r.t. $F_0$, imply the opposite. Namely, given any initial condition $(s_i,0)\in H_i\times\{0\}$ and given $j>i$, by the density of periodic orbits for $F_0$ inside both $H_i,H_j$ we can always find initial conditions in $Q_j$ sufficiently close to $H_j\times\{0\}$ that are eventually iterated by $\mathcal{F}$ arbitrarily close to $H_i\times\{0\}\cap N_i\times\{0\}$ - and hence, flow arbitrarily close to $(s_i,0)$. Again, the same arguments prove that if we choose some finite sequence $i<i_1<...<i_k<j$, then we can choose those initial conditions s.t. their trajectory w.r.t. $E$ flows arbitrarily close to $H_{i_0}\times\{0\},...,H_{i_k}\times\{i_k\}$ as it progresses towards $Q_i$. The existence of Milnor attractors and repellers accumulating on $\partial N_i$ when $i$ is sufficiently large follows in a similar way. 
\end{proof}
\begin{remark}
\label{nonuniqueness} Homoclinic Shilnikov parameters in $(-1,1)$ in general accumulate on one another - for the precise statement, see Theorem 3 in \cite{g1} and Theorems 1 and 2 in \cite{t1}. As such, despite the simplicity of Figure \ref{cascadeshil2}, in practice it is likely that the Entropy flow around Shilnikov homoclinic parameters can be made significantly more complicated. 
\end{remark}
From now on we refer to an Entropy flow behaving as dictated by both Theorem \ref{shilnikoventropy} and Corollary \ref{turbulententropy} as a \textbf{turbulent Entropy flow}. Before we motivate why we chose this name, we remark that such flows can arise in "real life" systems. To exemplify, recall the Rössler system whose Entropy flow we discussed in the end of Subsection \ref{perioddoubling} (where $a,b,c>0$):
\begin{equation*}
\begin{cases}
\dot{x} = -y-z \\
 \dot{y} = x+ay\\
 \dot{z}=bx+z(x-c)
\end{cases}
\end{equation*}
As mentioned there, this system is well-known numerically to have a period doubling cascade of attractors in its three-dimensional parameter space, which accumulate on surfaces of Shilnikov homoclinic trajectories to the origin (see, for example, \cite{G}, \cite{BBS}, \cite{MBKPS} and the references therein). By \cite{MBKPS} we know that for vector fields on the said homoclinic surface in the parameter space the Index is $1$ and $\nu\in(-\frac{1}{2},0)$. By Proposition \ref{generalizedgkp}, this implies the inverse flows for such parameters has a cascade of repellers - which implies the Rössler system has a cascade of attractors. Moreover, by the numerical evidence, all of these complex dynamics occur inside an attracting invariant set: namely, the Rössler attractor, which is centered around the fixed point at the origin. As such, by Corollary \ref{turbulententropy} we can choose a one-parameter family of Rössler systems, $\dot{s}=F_\tau(s)$, $\tau\in(-1,1)$ that admits a turbulent Entropy flow. For simplicity, let us assume the homoclinic bifurcation occurs at $\tau=0$.\\

Let us now consider a turbulent Entropy flow for the Rössler system on $\mathbb{R}^3\times(-1,1)$. Let the mapping $\varphi_E:(\mathbb{R}^3\times(-1,1))\times\mathbb{R}\to\mathbb{R}^3\times(-1,1)$ denote the flow map, and let $\pi:\mathbb{R}^3\times(-1,1)\to\mathbb{R}^3$ denote the projection to $\mathbb{R}^3$. With the notations above, choose initial conditions $(s_1,\tau_1),(s_2,\tau_2)$ where $s_1,s_2,\tau_1,\tau_2$ are "close" to $0$, and $s_1,s_2$ in $\mathbb{R}^3$ are "close" to the Shilnikov homoclinic loop $\Gamma$ at $F_0$. Denote by $\gamma_i(t)=\pi(\varphi_E((s_i,\tau_i),t)$, $i=1,2$, $t>0$, i.e., the projection of the trajectories of these two initial conditions to $\mathbb{R}^3$. By Corollary \ref{turbulententropy} we can choose $(s_i,\tau_i)$, $i=1,2$ s.t. as $t$ increases, $\gamma_1(t),\gamma_2(t)$ spiral jointly for a long in some neighborhood of $\Gamma$, until being ejected out and getting pulled away towards different invariant sets $I_j,I_k$, the projections under $\pi$ of, say, $H_j\times\{0\}$ and $H_k\times\{0\}$. Moreover, the same is true for initial conditions near $I_1$ and $I_2$ - in other words, we can choose, say, $(s_2,\tau_2)$ s.t. its trajectory is first drawn towards $I_2$ for an arbitrarily long time, until it gets ejected towards $I_1$, while the trajectory of $(s_1,\tau_1)$ is sucked directly towards $I_1$ without first flowing close to $I_2$. Finally, since by both Theorem \ref{shilnikoventropy} and Corollary \ref{turbulententropy} we can find Milnor attractors accumulating arbitrarily close to the origin, it follows the behavior of $\varphi_E$ can be heuristically described as follows:

\begin{itemize}
    \item As there exists an attractor for all $\tau$ (since we assume $\dot{s}=F_\tau(s)$ to be a family of Rössler systems), by Lemma \ref{trapping} we can choose the Entropy flow s.t. it also have an attracting invariant set $\mathcal{A}$. Every slice $A_\tau=\mathcal{A}\times\mathbb{R}^3\times\{\tau\}$, $\tau\in(-1,1)$ is a Rössler attractor for $F_\tau$.
    \item There exist initial conditions $(s_1,\tau_1)$ and $(s_2,\tau_2)$ arbitrarily close to the origin $(0,0)$ that are attracted to different invariant sets inside $\mathcal{A}$ - namely, the projection of the Milnor attractors corresponding to the period doubling orbits.
    \item  In addition, there also exist initial conditions $(s,\tau)$ arbitrarily close to $(0,0)$ whose trajectory flows arbitrarily close to any $H_{i_1}\times\{0\},..., H_{i_k}\times\{0\}$, where $i_1<...<i_k$. Moreover, before drawing close to $H_{i_j}\times\{0\}$ the trajectory of $(s,\tau)$ first flows "close" to $H_{i_r}\times\{0\}$, $k\geq r>j$.\\
\end{itemize}

\begin{figure}[h]
\centering
\begin{overpic}[width=0.4\textwidth]{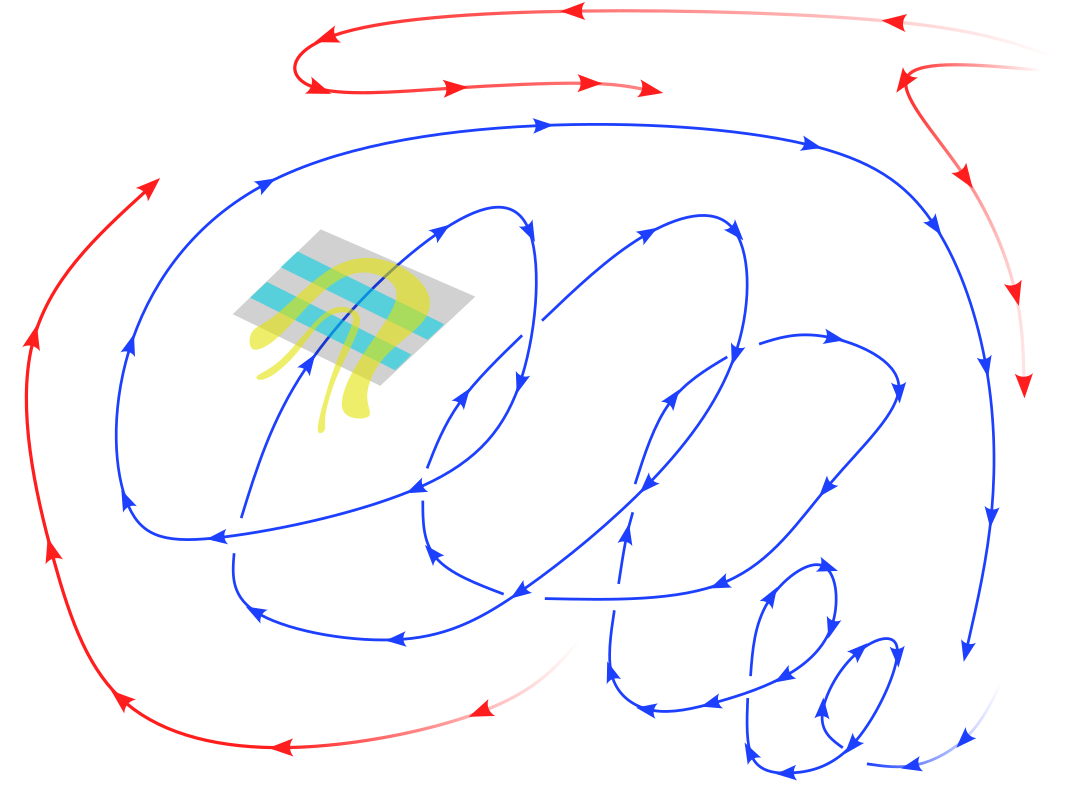}

\end{overpic}
\caption{\textit{How the flow lines for the Entropy flow for the Rössler system around Shilnikov bifurcation possibly look like under the projection to $\mathbb{R}^3$ .}}
\label{shilnikovspiral}
\end{figure}

Summarizing the above, as we project to $\mathbb{R}^3$, the trajectories of initial conditions $v_1,v_2\in\mathbb{R}^3\times (-1,1)\setminus\mathcal{A}$ flow towards $\mathcal{A}$ under the Entropy flow, their motion at first looks ordered, until they draw sufficiently close to the attractor. As the motion of the Entropy flow on $\mathcal{A}$ is erratic and as $\mathcal{A}$ includes different subsets that push initial conditions from one to the other in a certain hierarchy - namely, the sets $\{H_i\times\{0\}\}_i$ - the notion of turbulent fluids comes to mind: that is, a fluid motion that becomes increasingly disordered with time (i.e., the increase in the Reynolds number), coupled with transition of energy between the invariant sets (i.e., the eddies). To be precise, in this analogy the "eddies" for the Entropy flow are the sets $H_i\times\{0\}$, whose hierarchy is defined based on how close they are to the homoclinic loop $\Gamma\times\{0\}$. Moreover, similarly to how larger eddies dissipate energy to smaller ones, by Corollary \ref{turbulententropy} we know initial conditions near $H_j\times\{0\}$ are propelled by the Entropy flow towards $H_i\times\{0\}$, where $i<j$. This comparison, of course, could also apply in much higher dimensional flows which undergo the Shilnikov bifurcation with Index $-1$ and $\nu\in(-\frac{1}{2},0)$.\\

At present, we do not have an explanation for why this analogy arose. Therefore, inspired by the Arnold-Khesin Scheme (see \cite{ArKhe}) and the ideas of \cite{shil3}, \cite{coherent}, \cite{tturbocamb}, \cite{turbopaper}, \cite{turaev}, we conclude this Section by heuristically discussing why we believe this similarity is no mere coincidence. The first argument we make is that at least superficially, turbulent Entropy flows look physically similar to the Taylor–Proudman Theorem (see \cite{TPRotation}). The Theorem states that in a rapidly rotating fluid with slow motion and negligible viscosity, the velocity of the fluid does not change along the axis of rotation. This means that the flow becomes nearly two-dimensional, even though it exists in three-dimensional space, leading to the formation of coherent columnar structures aligned with the rotation axis. The Theorem itself is PDE-based, but since it suppresses 3D variability the flow behaves more coherently and it creates the geometric simplification that often makes ODE descriptions possible. Intuitively, the Shilnikov homoclinic loop does something analogous in phase space: it funnels trajectories towards the saddle-focus equilibrium into a spiral geometrical shape before escaping again - i.e., both involve dynamics dominated by rotational operators, hence this might make their dynamics look similar.\\

Another argument is more philosophical and statistical in nature. To explain it, we first recall Richardson's notion of turbulence (see \cite{RichCascade}), according to which a turbulent flow is composed from eddies of different sizes. These different sizes define a characteristic length scale for the eddies, which are further characterized by flow velocity scales and time scales (turnover time), which is dependent on the length scale. The large eddies are unstable and eventually break up into smaller eddies, with the kinetic energy of the initial large eddy being divided into the smaller eddies that stemmed from it. These smaller eddies undergo the same process, giving rise to even smaller eddies that inherit the energy of their predecessor eddy, and so on. In this way, the energy is passed down from the large scales of the motion to smaller scales until reaching a sufficiently small length scale s.t.  the viscosity of the fluid can effectively dissipate the kinetic energy into internal energy (see \cite{RichCascade} for the precise details). The eddies of size $l$ have some velocity difference (i.e., the magnitude over the length scale), thus the kinetic energy of eddies at scale $l$ and below is proportional to the square of the velocity difference. Theis energy cascade can be described in terms of wavenumbers, $k \sim \frac{1}{l}$. Let \(E(k)\,dk\) represent the kinetic energy per unit mass contained in eddies with wavenumbers between \(k\) and \(k + dk\) - as calculated in \cite{Tong}, using this framework we arrive to the Kolmogorov's $\frac{5}{3}$ law: $E(k) \sim k^{-5/3}$.\\

Even though we do not have a good interpretation for wavenumbers and kinetic energy in the setting of turbulent Entropy flows, one cannot rule out that such an interpretation exists (at least for some one-parameter families of flows). Therefore, at least in principle, the suspended Horseshoes given by Shilnikov's Theorem could be scaled in a way that, w.r.t. this conjectured interpretation, their suspension satisfies the Kolmogorov's $\frac{5}{3}$ law. If that were the case, the resemblance between turbulent behavior and Shilnikov's Theorem can be explained as follows: stretching $\rightarrow$ formation of elongated vortices; folding due to spiraling around the Shilnikov trajectory $\rightarrow$ vortices loop back, interact and create smaller structures; iteration of the first-return map $\rightarrow$ fractal-like hierarchy of eddies (see the illustration in Figure \ref{shilnikovspiral}). In this sense, the equilibrium system of the turbulent Entropy flow is a spiral of suspended Horseshoes that force the dynamics of nearby initial conditions in $\mathbb{R}^n\times (-1,1)$ to behave similarly. Namely, initial conditions passing close to it would appear to behave for a while as dictated by the collection of suspended Horseshoes, until (possibly) drifting away from $\mathbb{R}^n\times\{0\}$. As such, if all the suspended Horseshoes in this model are "to scale" in some sense, by a $C^k$-approximation argument we would expect that the same is true for open sets in $\mathbb{R}^3\times (-1,1)$ transported near them by the Entropy flow (and Corollary \ref{turbulententropy}). \\ 

At present, we do not know if (or how) the above heuristic can be made into a rigorous argument. That being said, as the Shilnikov scenario was proven to exist in the Lorenz-84 model (see  \cite{Broer2}, \cite{Cap}), as well as observed numerically in pipe flows (see \cite{champ}) and in some ocean circulation models (see \cite{NaLu}), we suspect there should be some underlying mathematical explanation. In fact, we believe a potential solution to this problem should be similar to the ideas presented in the GOY shell model (the Gledzer-Ohkitani-Yamada model - see  \cite{GOY}), originally designed to mimic the energy cascade in Fourier space. In detail, we believe something similar could be done to analyze the interaction between different Horseshoes (via stretching factors), which would make the analogue of energy transported between the eddies precise.\\

We will conclude this Section with the words of the horror manga artist Junji Ito, known for works like Uzumaki and Tomie, who once reflected on the unsettling power of spirals: “\textit{The spiral is not just a shape. It’s an idea that infects people. Once you see it, you can’t stop thinking about it. And the more you think, the more it twists your mind}.” In Uzumaki, this concept becomes the central motif, where spirals literally and metaphorically consume the townspeople, embodying obsession, madness, and the inescapable pull of fate. This idea forms the core of his spiral-themed horror, where fascination itself becomes a trap, consuming those who cannot look away.

 \section{Discussion}

Having studied the Entropy flow and its behavior in several scenarios, before we conclude this paper we would like to address several possible extensions and continuations of our work. The reason we do so is because we believe that despite its complexity, the Entropy flow could prove useful as a tool in more than one place. In fact, many of the ideas of this paper originated in our attempts to explore several problems, including the extension of Forcing Theory to bifurcations, studying the bifurcations of the Hopf flow, the Chaotic Hypothesis and the homoclinic spirals observed around the Shilnikov bifurcation. Therefore, in this Section we give a brief review of these problems, explaining their (possible) connection to the Entropy flow, and how we believe it could be used to solve them.\\ 

To begin, recall that as stated in the Introduction, we designed the Entropy flow in order to study two problems. The first major problem we considered was to describe the connection between topology and bifurcations, which we examined in Section \ref{bifdyn}. In the spirit of Subsection \ref{2dtheory}, we think that the main application of the Entropy flow is to inspect how does topology forces certain bifurcations of periodic orbits and fixed points. In other words, we believe the Entropy flow could help study the connection between the topology of a given manifold $M$ and the possible bifurcations of $C^1$ one--parameter families $\dot{s}=F_\tau(s)$ defined one it. That being said, based on the results of the first two authors (see \cite{us}), we believe such theories are dependent at least on the dimension of $M$, and probably also on its topology. In other words, we believe that each manifold $M$ supports a Thurston-Nielsen-like result (see \cite{Fat}), giving sufficient conditions for certain bifurcation to occur. Thus we conjecture the following Sharkovskii type result for bifurcations:

\begin{conjecture*}
    \label{conj1} Let $M$ be a smooth, orientable, metrizable manifold of dimension at least $2$. Then, there exists a class of bifurcations $\{B_\alpha\}_\alpha$ and a partial dynamical ordering on $\{B_\alpha\}_\alpha$, both depending on $M$, satisfying the following: assume a $C^1$ one-parameter family $\dot{s}=F_\tau(s)$, $\tau\in I$ which undergoes the bifurcation $B_\alpha$ at some interior $\tau_\alpha\in I$. Then, if $B_\beta<B_\alpha$ w.r.t. the dynamical ordering, there exists at least one parameter $\tau_\beta$ interior to $I$ s.t. $\dot{s}=F_\tau(s)$ undergoes the bifurcation $B_\beta$ at $\tau_\beta$.
\end{conjecture*}
We note that similar results already exist: for example, from \cite{GKP} (and Proposition \ref{generalizedgkp}) we know Shilnikov bifurcations are preceded by period doubling cascades. Similarly, our results in Section \ref{bifdyn} could also be interpreted as a step towards proving this conjecture. In fact, we suspect the class of bifurcations $\{B_\alpha\}_\alpha$ must at least include those that can be made into isolated invariant sets for the Entropy flow (in the sense of Subsection \ref{2dtheory}) - and that the partial order on $\{B_\alpha\}_\alpha$ has to be related to the Entropy flow in some way.\\

The second problem which underlies this paper is the following: given a $C^1$ one--parameter family $\dot{s}=F_\tau(s)$ which transitions from order into chaos (where $(s,\tau)\in M\times I$), construct a flow on $M\times I$ that realizes the said transition in its dynamics - i.e., make the "laws of motion" change while in motion, by pushing them continuously towards higher dynamical complexity. As the Entropy flow is a construction solving some aspects of this problem, we believe it can be applied to several questions related to such transitions. To illustrate one such example, consider the transition between the Hopf flow and the Kuperberg flow on $S^3$ (or any aperiodic flow on $S^3$ - see \cite{Kup} and \cite{KD}). To be precise, let $\dot{s}=F_\tau(s)$, $(s,\tau)\in S^3\times[0,1]$ be a $C^1$ family satisfying:
\begin{itemize}
    \item $F_0$ directs the Hopf flow on $S^3$: the flow whose trajectories are all periodic orbits of minimal period $2\pi$.
    \item The vector field $F_1$ has no periodic orbits in $S^3$.\\
\end{itemize}

As proven in Proposition 5.1 in \cite{PY}, when $\tau$ is varied from $0$ to $1$ either there exists a fixed point undergoing a Hopf bifurcation or the periods of periodic orbits become unbounded. To the best of our knowledge, there are no other results studying how the Hopf flow bifurcates as it changes into an aperiodic state. We believe the Entropy flow could be useful to study this problem, if only because we would expect (at least intuitively) that the dynamics of $F_1$ should include invariant sets acting either as Milnor attractors or repellers for the Entropy flow (with previous terminology, we would expect these invariant sets of $F_1$ to be equilibrium states for the Entropy flow). If this is the case, we believe, it should follow that the function $P$ is non-zero in some region where the transition occurs, i.e., the bifurcations belong to a certain type. \\

Another more complicated problem where we think the Entropy flow approach could be useful is the Chaotic Hypothesis, originally introduced in \cite{gal}. According to the Chaotic Hypothesis, given a chaotic attractor in, say, $\mathbb{R}^n$, $n\geq3$, its dynamics are "practically hyperbolic". More precisely, according to the Chaotic Hypothesis, one should expect the existence of an SRB measure on $A$ with time averages w.r.t. observables that converge a.e. (for the precise formulation, see \cite{gal}). Heuristically, we believe that the Entropy flow could provide a proof or at least a heuristic justification for the Chaotic Hypothesis. To begin, assume we have a smooth vector field $\dot{s}=F(s)$ defined on some smooth manifold $M$ that admits a chaotic attractor, $A$, which lies away from the fixed points of the flow. Let us further assume we can smoothly deform $F$ to a vector field $G$ in some fixed-point free neighborhood of $A$, $O$, as follows:
\begin{itemize}
    \item No new fixed points appear (or disappear) in $O$ as $F$ is smoothly deformed to $G$.
    \item $A$ is transformed to $A'$, an attractor for $G$, "with no loss of information", i.e., the dynamics of $A$ are semiconjugate to those of $A'$, i.e., $A'$ forms a "topological lower bound" for the complexity of $A$.\\
\end{itemize}

Let us now write $\dot{s}=F_\tau(s)$, $\tau\in[0,1]$, where $F_0=F$, $F_1=G$, and let $E(s,\tau)$ denote an Entropy vector field for this transition. The associated drift function $P$ and the energy transition function $V$ for this family would intuitively be non-zero only on the periodic orbits on $A$ that bifurcate and disappear as $\tau\to1$. In other words, we would expect the dynamics of $-E$, the inverse flow on $M\times[0,1]$, to push towards the dynamics of $F_1$ - i.e.,  much like $E$ pushes the dynamics towards a greater state of complexity, $-E$ would push them towards decreased complexity. When $A'$ can be chosen to be a hyperbolic attractor, the flow of $-E$ can be interpreted as a mechanism to remove the "extra" information from the attractor. As such, the flow of $E$ (or $-E$) could possibly be used to determine just how much of the dynamics of $A$ are practically hyperbolic. In particular, since hyperbolic attractors support an SRB measure (see \cite{SRB} for a survey), we believe the Entropy flow could be useful to study how much of this SRB measure survives as $A'$ is deformed back to $A$.\\

In addition to the above, there are also good reasons to consider the generalization of the Entropy flow - namely, to study the analogous construction for $C^1$ families of vector fields $\dot{s}=F_\omega(s)$, where $\omega$ varies inside some smooth, connected, $k$-dimensional manifold (typically $\mathbb{R}^k$), where $k>1$. To motivate why this is an interesting question, let us recall the spiral structures observed in the parameter space of the Rössler system (see \cite{G}, \cite{BBS}, \cite{MBKPS} and the references therein). Such structures often appear composed from two countable collections of regions in the parameter space - where one collection of regions corresponds to the existence of stable attracting orbits, while another corresponds to the existence of a chaotic attractor. Moreover, all regions in these two collections appear to spiral towards some specific point, often called a \textbf{periodicity hub}, that corresponds to some point on the Shilnikov homoclinic bifurcation set. We believe the Entropy flow could be the key to explain why these structures arise. The reason for that is because the dynamics at the periodicity hub are extremely unstable, in the sense that every small perturbation of them could throw the system off balance and launch it into either a chaotic state or stable state. This effect can even be amplified, as different periodicity hubs in the parameter space appear to accumulate on one another (see \cite{MBKPS}).\\

Spiral structures are not unique to the Rössler system and had been observed in other systems as well (see, for example, \cite{G2}) - as such, it appears likely that there exists some general mechanism underlying them. This mechanism, we believe, is related to the Entropy flow. In detail, assuming a generalization of the Entropy flow exists for higher dimensional parameter spaces, we conjecture that these spiral structures are actually the projection of flow lines for the Entropy flow to the parameter space. This conjecture is motivated by Remark \ref{nonuniqueness}, where we mentioned that due to homoclinic parameters accumulating on one another, stable states should be dense around one another. When the parameter space is two-dimensional, homoclinic parameters are curves that can have fold singularities. As proven in \cite{GKP}, one can match parameters around these curves with periodic orbits undergoing saddle node and period doubling cascades, by drawing spiral curves transverse to the period doubling and saddle node bifurcations. Because periodicity hubs often appear to lie on the fold singularities of the homoclinic curves, we believe these homoclinic fold parameters must somehow be (Milnor) attractors for the generalized of the Entropy flow, which spirals towards them.\\

Another motivation to generalize the Entropy flow to higher dimensional parameter spaces arises from Hamiltonian dynamics. From our construction of the Entropy flow (and from the examples we analyzed) it is clear that when designing it we had dissipative systems in mind, i.e., flows with attractors and repellers. It is hence natural to ask if Entropy flows can, in general, be meaningfully defined for smooth one-parameter families of Hamiltonian flows. In that particular case, the answer depends on higher dimensional Entropy flows. To illustrate, let $\dot{s}=dH_\tau(s)$ by a smooth, one--parameter families of vector fields on, say, $M\times(0,1)$, where $H_\tau:M\to\mathbb{R}$ is a Hamiltonian function for all $\tau\in(0,1)$. Let us further assume that, say, $0$ is a regular value for all $\tau$ - this implies that if $dH_{\tau,0}$ is the restriction of $dH_\tau$ to $H^{-1}_\tau(0)$, one can discuss the smooth family of vector fields $\dot{s}=dH_{\tau,0}(s)$ for $(s,\tau)\in H^{-1}_\tau(0)\times\{\tau\}$. This, of course, applies to any regular value, which intuitively says that smooth one--parameter families of Hamiltonians on a manifold of degree $2n$ can be "foliated" by smooth families of one-parameter vector fields on $2n-1$-dimensional manifolds: namely, their level sets. Or, put simply, every one--parameter family of Hamiltonian flows defines a two-parameter family of flows, at least around regular level sets. This shows that in order to study the Entropy flows of such families meaningfully, one has to develop a theory for Entropy flows with a higher dimensional parameter space. That being said, any extension of our results to $C^k$ families of Hamiltonians would probably require extension of the definitions in Section \ref{defsect} to bifurcations of quasiperiodic motions (see \cite{BHS} for a survey).\\

In addition to the above, one cannot ignore the conceptual similarity between the construction of the Entropy flow and Floer cohomology. We recall Floer cohomology (see \cite{Floer}) studies the topology of a symplectic manifold $M$ by analyzing the action functional on its loop space - i.e., the space $LM = \{\gamma: S^1 \to M \mid \gamma \text{ is smooth}\}$. Critical points of this functional correspond to periodic orbits of a Hamiltonian system and the Floer chain complex is generated by these orbits (see \cite{Floer}). The differential for this chain complex counts solutions to a version of the Cauchy–Riemann equation connecting orbits, which can be viewed as “flow lines” in the loop space. The resulting cohomology is invariant under Hamiltonian isotopies, capturing subtle intersection and dynamical information (see \cite{fuk3}). Floer cohomology thus generalizes Morse Theory from finite-dimensional manifolds to the infinite-dimensional loop space. Therefore, in a sense our construction of Entropy flow is dual to Floer cohomology, since $P(s,\tau)$ in the Entropy flow is defined as a smooth map from $M \times S^1$ to $\mathbb{R}$ (when $I=S^1$), and we have the exponential law: $Map(M \times S^1, \mathbb{R})\simeq Map(M, Map(S^1, \mathbb{R}))$. To see why, note that a function on $M \times S^1$ is the same thing as assigning to each point $x\in M$ a loop in $\mathbb{R}$ - which compares with $LM$ that appoints to each point of the circle a point in $M$ (observe that the roles are reversed). This is especially interesting because of two reasons: the first is because Floer Homology was originally inspired by Conley index Theory. The second is that Floer cohomology has both symplectic and algebraic descriptions, and understanding the interplay between them is central to homological mirror symmetry (it often focuses on $A_{\infty}$-algebras; for details, see \cite{fuk2}, \cite{fuk4}, \cite{fuk3}, \cite{fuk1}). Therefore, we expect a repackaging of our theory in a form that’s more suitable for algebraic and categorical manipulations.\\ 

To conclude, it is impossible not to bring up the name: Evald Vasilievich Ilyenkov (\foreignlanguage{russian}{Эвальд Васильевич Ильенков}), a Soviet philosopher, who connects Entropy with the purpose of Life (see, especially, "The Dialectics of the Ideal" and "Cosmology of the Spirit"). Ilyenkov proposes: "\textit{the goal of intelligent life is the active struggle against the Universe's entropic tendency -- the preservation and development of structured, meaningful forms of Being}", i.e., Life exists only as a process that continuously works against Entropy, and consciousness is the Universe's way of resisting its own decay.

\section{Appendix - Quantifying the behavior of the Entropy flow}
\label{quantitative}

Throughout this paper we studied the Entropy flow using purely qualitative means - namely, topological tools. Inspired by the end of Section \ref{turbulence}, by \cite{facet} and by the theory of the Koopman Operator (see \cite{Koopman} and \cite{Kman2}), in this Appendix we attempt to approach the Entropy flow quantitatively. That being said, as we will see, this approach also has several limitations which we will discuss below. Our main result in this Appendix is Theorem \ref{optimization}, which, in a nutshell, proves one can minimize a (compact) collection of Entropy flows, w.r.t. to several constrains.\\

To begin, consider a $C^1$ family $\dot{s}=F_\tau(s)$ defined over some smooth manifold $M$ and parameterized by some one-dimensional manifold $I$. Let us consider the family as a $C^1$ curve $\gamma:I\to\Xi(M)$, where $\Xi(M)$ denotes the Banach manifold of all $C^1$ vector fields on $M$. We denote by $\mathcal{E}(\gamma)$ the collection of all vector fields on $M\times I$ corresponding to Entropy flows that can be defined on $\gamma$. Ideally, one way to study a given Entropy flow $E\in\mathcal{E}(\gamma)$ is to study it via the Koopman Operator (see \cite{Koopman}) - namely, to convert $E$ to an infinite dimensional system and study the $L^1$-functions defined on it. This approach is known to work well for measure preserving flows. That being said, as shown throughout this paper, in general one should expect an Entropy flow to have interesting dynamics when it is dissipative - i.e., when it has attractors and repellers, which would imply the assumption $E$ is measure-preserving (not to mention ergodic) is in many interesting cases unrealistic. This motivates us to study the set $\mathcal{E}(\gamma)$ with an alternative approach.\\

To motivate, recall the notation $E(s,\tau)=(F_\tau(s)+V(s,\tau),P(s,\tau))$ and let us denote the Lie bracket of two vector field $F$ and $G$ by $[F, G]$. Moreover, given an open domain $A\subseteq M\times I$, let us denote by $|| \ast||_{1,A}$ the $C^1$-norm in $A$. Now, given $E\in\mathcal{E}(\gamma)$ and a domain $A\subseteq M\times I$  define $g:\mathcal{E}(\gamma)\to [0, +\infty]$ via
$$g(E)=||[E(s,\tau),\mathcal{L}(s,\tau)]||_{1,A},$$
where $\mathcal{L}(s,\tau)=(F_\tau(s), 1)$. In the spirit of Section \ref{turbulence}, one could interpret $g$ as measuring how far away are the dynamics of $E$ from inducing a trivial drift on the parameter space throughout $A$. The function $g$ is easily seen to be continuous - as such, if it can be minimized in $\mathcal{E}(\gamma)$, there is a certain lower bound for the complexity of all Entropy flows in $\mathcal{E}(\gamma)$. Specifically, if there exists some $c>0$ s.t. for all $E\in\mathcal{E}(\gamma)$ we have $g(E)>c$, it implies that the complexity of the Entropy flow (when measured via $g$) is intrinsic to the bifurcation structure of $\dot{s}=F_\tau(s)$. To generalize this idea, we begin with the next definition, inspired by the ideas of \cite{facet}:

\begin{definition}
    \label{measurablefunction} A non-constant continuous function $f:\mathcal{E}(\gamma)\to \mathbb{C}$ is called an \textbf{aspect}. An aspect is said to be \textbf{independent of $\mathcal{E}(\gamma)$} (or just \textbf{independent} when the context is clear) if $0=\inf\{|f(E)|, E\in\mathcal{E}(\gamma)\}$. Otherwise, we say $f$ is \textbf{dependent on $\gamma$}, or in short, \textbf{dependent}.
\end{definition}
The definition above has the following meaning - every aspect is some quantifiable notion of the complexity of the Entropy flow. If it is independent, then that aspect is meaningless, i.e., this aspect is independent of the dynamical complexity of $\mathcal{E}(\gamma)$ and is unrelated to whatever complex dynamics arises by the Entropy flow of $\gamma$. In this context, it is easy to see that $g$ is a type of aspect which measures how close is the drift function $P$ is to a constant sign and how much the energy transition map $V$ is close to $0$. Definition \ref{measurablefunction} naturally leads us to ask the following two questions:

\begin{enumerate}
    \item Do dependent aspects even exist? In other words, is the above definition not redundant?
    \item Can we somehow optimize and find which Entropy flow jointly minimizes (or maximizes) a certain collection of aspects? \\
\end{enumerate}

We are going to consider both of these questions. We first show, by an example, that there exists a curve of vector fields $\gamma$, a constant $c>0$ and an aspect $g$ s.t. for all $E\in \mathcal{E}(\gamma)$, $|g(E)|>c$. In other words, we will prove $g$ is a dependent aspect w.r.t. $\gamma$, thus showing the definition is not redundant. To this end, consider a bifurcation scenario as depicted in Figure \ref{no0}, where two attracting periodic orbits, $T_1,T_2$, are created by a saddle node bifurcation in opposing directions in the parameter space. One easily sees by the definition that for all choices of ${E}\in\mathcal{E}(\gamma)$, ${E}=(F_\tau+V,P)$, the sign of $P$ along the orbits of $T_1$ is opposite to that of the orbits of $T_2$. Hence, $[(F_\tau+V,P),(F_\tau,1)]$ is not the $0$ vector field as the flows of these vector fields do not commute. Let us define $g(E)=||[E(s,\tau),(F_\tau(s),1)]||_{1, M\times I}$, i.e., the $C^1$-norm of the Lie bracket throughout $M\times I$. We now prove the following:
\begin{lemma}
Let $g$ be the aspect defined above. Then, every manifold $M$ of dimension $3$ and up supports a $C^1$ one-parameter family $\dot{s}=F_\tau(s)$, $\tau\in[0,1]$, for which $g$ is a dependent on $\gamma$.
\end{lemma}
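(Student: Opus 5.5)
The plan is to realize the scenario of Figure \ref{no0} locally in a chart and then show that the required sign structure of $P$ forces a definite amount of non-commutativity. Fix a coordinate ball $U\subseteq M$ diffeomorphic to $\mathbb{R}^3$ (or to $\mathbb{R}^n$, $n\geq3$). In it we will write down a $C^1$ family $\dot{s}=F_\tau(s)$, $\tau\in[0,1]$, equal to a fixed non-bifurcating field outside a compact subset of $U\times[0,1]$, with the following two features:
\begin{itemize}
    \item Near $\tau=\frac{1}{4}$ an attracting orbit $T_1$ is born in a saddle node bifurcation as $\tau$ increases.
    \item Near $\tau=\frac{3}{4}$, in a disjoint region of $U$, an attracting orbit $T_2$ is born in a saddle node bifurcation as $\tau$ decreases.
\end{itemize}
Concretely, we can take a suspension of a radial planar normal form $\dot r=-(r-1)^2+\mu(\tau)$ times a rotation, with $\mu$ changing sign in opposite directions for the two blocks. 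We then check, using the isolation of attractors noted before Theorem \ref{major1}, that the surfaces $G_1,G_2$ of attracting orbits are components of $Per$ lying respectively in $M\times(\frac14,1]$ and $M\times[0,\frac34)$.

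Next we apply Definition \ref{def5} (creation type) together with Definition \ref{admissibleper}. Every $E\in\mathcal{E}(\gamma)$ must then have $P<0$ on $G_1$ near its birth orbit and $P>0$ on $G_2$ near its birth orbit (or the reflected signs; only their opposition matters). The key point is that this holds for every admissible choice, since the sign is dictated by the bifurcation type and not by $N_\alpha$.

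The main step, and the main obstacle, is to turn this sign information into a bound $g(E)>c$ uniform in $E$. Since admissible $P$ can be arbitrarily $C^1$-small (Definition \ref{def6} and Proposition \ref{def1}), a bound from $\sup|P|$ alone is hopeless. We will instead look at the $\tau$-component of the bracket. With $\mathcal{L}=(F_\tau,1)$ it reads
\begin{equation*}
[E,\mathcal{L}]_\tau=DP\cdot(F_\tau,1)-0=\partial_\tau P+\nabla_sP\cdot F_\tau,
\end{equation*}
and its $s$-component contains $\partial_\tau V-P\,\partial_\tau F_\tau+[F_\tau+V,F_\tau]$.

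The plan is to exploit the term $-P\,\partial_\tau F_\tau$ together with tangency of $E$ to $G_i$. Along a point of $G_1$ the tangency forces $V$ to equal $P$ times the $\tau$-derivative of the orbit family $\partial_\tau T_\tau$. Near the saddle node this derivative blows up like $(\tau-\tau_*)^{-1/2}$. Hence $V/P$ is unbounded near the bifurcation, while $P$ and $V$ must both vanish to first order there. This gives a ratio estimate that bounds $\|V\|_{C^1}$, and hence the bracket, below by a constant independent of how small $P$ is chosen.

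If this scaling argument fails, the fallback is a normalization. The opposite signs on $G_1$ and $G_2$ mean the $\tau$-component of the bracket cannot vanish identically. We would then restrict to the compact subclass of $\mathcal{E}(\gamma)$ with a fixed normalization of $P$ on a reference orbit, and conclude by continuity of $g$ and compactness that $\inf g>0$.

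Finally, the construction is transplanted into an arbitrary $M$ of dimension $\geq3$ by extending $F_\tau$ outside $U$ by a $\tau$-independent field with no new bifurcations. This keeps $Per$ unchanged near $G_1,G_2$, and so gives the statement.
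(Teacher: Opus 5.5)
Your construction, and the observation that a bound from $\sup|P|$ alone is hopeless, are both fine. The gap is in the step meant to give a bound uniform in $E$: it fails, and the fallback does not prove the statement.

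Every admissibility requirement is preserved by the scaling $(V,P)\mapsto(\epsilon V,\epsilon P)$, $\epsilon\in(0,1]$. This covers the sign conditions of Definition~\ref{def5}, the vanishing of $P,V$ and their differentials at bifurcation orbits, and the equilibrium conditions of Definition~\ref{admissibleper}. Tangency is also preserved: if $V=(a-1)F_\tau+P\xi$ on $G_i$, then $\epsilon V=(a'-1)F_\tau+(\epsilon P)\xi$ with $a'=1+\epsilon(a-1)$. So $\mathcal{E}(\gamma)$ is a cone. The scaling leaves $V/P$ unchanged while $\|V\|_{C^1}\to0$. Hence the blow-up $\partial_\tau T_\tau\sim(\tau-\tau_*)^{-1/2}$ gives no lower bound on $\|V\|_{C^1}$, and such a bound would not control the bracket anyway.

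The fallback replaces $\inf_{\mathcal{E}(\gamma)}g$ by an infimum over a normalized subclass. Normalizing $P$ is not without loss of generality, since $g$ is not homogeneous in $(V,P)$. Moreover, such a subclass is not compact in the $C^1$ topology. Likewise, the opposite signs of $P$ on $G_1,G_2$ show that $[E,\mathcal{L}]_\tau\not\equiv0$ for each fixed $E$, but give nothing uniform.

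The missing idea appears once the bracket is computed fully; your $s$-component drops a term. Write $E=\mathcal{L}_0+W$ with $\mathcal{L}_0=(F_\tau,0)$ and $W=(V,P)$. In your convention,
\[
[E,\mathcal{L}]=(\partial_\tau F_\tau,0)+[W,\mathcal{L}_0]+\partial_\tau W,
\]
so the $s$-component is $(1-P)\,\partial_\tau F_\tau+\partial_\tau V+[F_\tau+V,F_\tau]$. The first summand does not depend on $E$. The scaling above shows that any uniform bound must come from it, since $g(E_\epsilon)\to\|(\partial_\tau F_\tau,0)\|_{1}$ as $\epsilon\to0$.

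To exploit this term, evaluate the bracket on the saddle-node orbit $T_0\times\{\tau_*\}$. Definition~\ref{def5} forces $P,V$ and their differentials to vanish there. No other $N_\beta$ can contain this orbit, because it lies in $\overline{G_1}$, $G_1\subseteq Per$, and each $N_\beta$ is open with $N_\beta\cap G_1=\emptyset$. Hence $[E,\mathcal{L}]=(\partial_\tau F_{\tau_*},0)$ on $T_0\times\{\tau_*\}$ for every $E\in\mathcal{E}(\gamma)$.

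In your normal form $\partial_\tau F_\tau=\mu'(\tau)\,\partial_r$. Choosing $\mu'(\tau_*)\neq0$ therefore gives $g(E)\geq|\mu'(\tau_*)|>0$ uniformly, since the $C^1$-norm dominates the sup-norm. This needs only one nondegenerate saddle node. The opposing pair of Figure~\ref{no0}, and the sign opposition of $P$ your plan relies on, play no role in the bound.
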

\begin{proof}
First, note that every manifold $M$ as above admits some $C^1$ family having periodic orbits with a bifurcation diagram as in Figure \ref{no0}. Namely, there exists a $C^1$ family which has two saddle node bifurcation orbits, $T_1$ and $T_2$ for $F_{\tau_1}$ and $F_{\tau_2}$, s.t. the following holds:

\begin{itemize}
    \item There exist two $0<\tau_1<\tau_2<1$ s.t. as $\tau$ enters $(\tau_1,1]$, $T_1$ splits into two periodic orbits: an attractor and a repeller, $T^1_A$ and $T^1_R$, which persist as attractors and repellers for $F_\tau$, $\tau\in(\tau_1,1]$.
    \item Similarly, as $\tau$ crosses into $[0,\tau_2)$, $T_2$ splits into two orbits: an attractor, $T^2_A$, and a repeller, $T^2_R$, which persist without bifurcating as periodic orbits for all $F_\tau$, $\tau\in[0,\tau_2)$.
\end{itemize}
    
Let $C^i_j$, $i=1,2$, $j=A,R$ denote the components of $Per$ that include the attractors and repellers $T^i_j$. By definition, each Entropy flow $E(s,\tau)=(F_\tau(s)+V(s,\tau),P(s,\tau))\in\mathcal{E}(\gamma)$ satisfies that $P$ is negative on $C^2_j$ and positive on $C^1_j$. Now, choose some sequence $\{E_n\}_n\in\mathcal{E}(\gamma)$ and assume by contradiction $g(E_n)\to 0$. As $g$ is continuous, it follows that the motion induced by the vector field $\mathcal{L}(s,\tau)=(F_\tau(s),1)$ comes closer and closer to commuting with the flow of $E_n$ as $n\to\infty$. In other words, writing ${E}_n(s,\tau)=(F_\tau(s)+V_n(s,\tau),P_n(s,\tau))$, the motion induced by $P_n$ on the parameter space $I$ must commute with the motion induced by the constant velocity $1$. As for all $n$ the function $P_n$ is strictly positive on $C^1_A, C^1_R$ and strictly negative on $C^2_A, C^2_R$, this cannot happen. This is a contradiction, which implies $g$ is dependent on $\gamma$, i.e., that dependent aspects exist.
\end{proof}
\begin{remark}
Intuitively, given an aspect one should not expect to find an upper bound on complexity. More precisely, taking again $g$ as an example, one can always perform a perturbation of $P$ while keeping it admissible, and ensure the $C^1$ distance between $[E,\mathcal{L}]$ and $0$ increases. 
\end{remark}

\begin{figure}[h]
\centering
\begin{overpic}[width=0.3\textwidth]{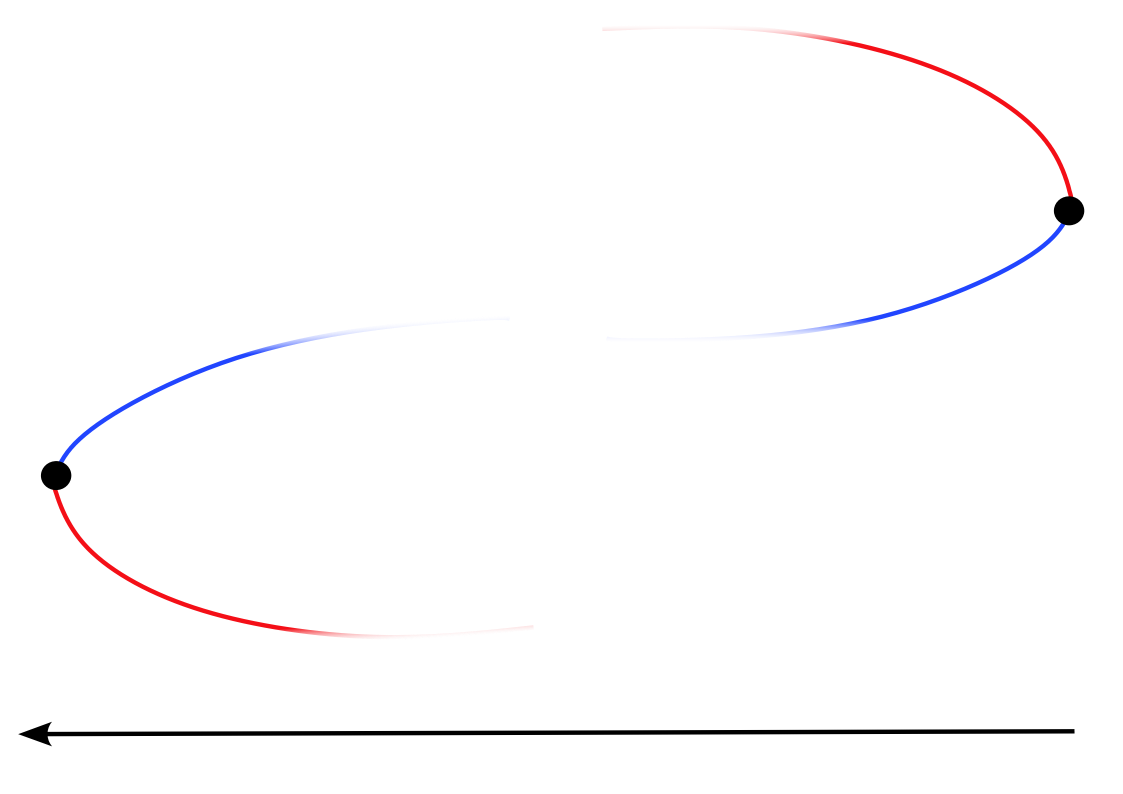}
\end{overpic}
\caption{\textit{A bifurcation with two different saddle node bifurcations pusing at opposing directions. The blue curves correspond to the attracting orbits $T_1$ and $T_2$ created at the bifurcation. }}
\label{no0}
\end{figure}

Having proven the notion of independence is not a purely abstract notion, we now proceed to show that dependent aspects w.r.t. to $\gamma$ can be optimized to some extent. To make this statement precise, given $n$ aspects $f_1,...,f_n:\mathcal{E}(\gamma)\to\mathbb{C}$ and some compact $\mathcal{C}\subseteq \mathcal{E}(\gamma)$, we say an Entropy flow $E\in\mathcal{C}$ \textbf{minimizes $f_1,...,f_n$ in relation to one another} if given some $E'\in C$ s.t. if $|f_i(E')|<|f_i(E)|$ for some $i\in\{1,...,n\}$, there exists some $j\ne i$ s.t. $|f_j({E})|<|f_j({E}')|$. In this case, $|f_1(E)|,...,|f_n(E)|$ would be referred to as the \textbf{relative minimal values}. To visualize, let us perturb ${E}$ a little inside $\mathcal{C}$ s.t. the value of $f_1$ would decrease - then, by the definition we would have to increase the value of, say, $f_2$. This definition aims to capture the following idea: if $E\in\mathcal{C}$ minimizes $f_1,...,f_n$ in relation to one another, it is the "minimal" Entropy flow, for which all the properties measured by $f_1,...,f_n$ are  small as they can be, i.e., $E$ is essentially a saddle point on $\mathcal{C}$ w.r.t. $f_1,...,f_n$. With that idea in mind, we now prove:
\begin{theorem}
    \label{optimization} Let $f_1,...,f_n$ be aspects, let $c_1,...,c_n>0$ be scalars s.t. $\{f_i\geq c_i\}$ is not empty in $P$, and let $\mathcal{C}\subseteq E(\gamma)$ be compact, connected and non-empty. Then, if for all $i=1,...,n$ the sets $T_i=\{|f_i|\geq c_i\}\cap \mathcal{C}\ne\emptyset$, the following holds:
    \begin{itemize}
        \item There exists some ${E}\in \mathcal{C}$ which minimizes $f_1,...,f_n$ in relation to one another in $\cup_{i=1}^n T_i$ - and when $f_1,...,f_n$ are all dependent on $\gamma$, we can minimize $f_1,...,f_n$ in relation to one another on $\mathcal{C}$. 
        \item  Let us denote by $\mathcal{C}_r=\mathcal{C}\cap(\cup_{i=1}^n\{c_i\leq |f_i|\leq r\})$. Then, the relative minimal values of $f_1,...,f_n$ in $\mathcal{C}_r$ vary continuously with $r$.
        
    \end{itemize}

\end{theorem}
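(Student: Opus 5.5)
The natural route is to recast "minimizes $f_1,\dots,f_n$ in relation to one another" as Pareto optimality of the continuous map $\Phi=(|f_1|,\dots,|f_n|):\mathcal{C}\to[0,\infty)^n$. The first assertion then reduces to the standard fact that a continuous vector-valued map on a nonempty compact set has a Pareto-minimal point. I will assume, as in Definition \ref{measurablefunction}, that each $f_i$ is continuous on $\mathcal{E}(\gamma)$ with whatever topology makes $\mathcal{C}$ compact. Each $T_i=\{|f_i|\geq c_i\}\cap\mathcal{C}$ is then a closed subset of the compact set $\mathcal{C}$, so it is compact. Hence $T=\cup_{i=1}^n T_i$ is compact, and it is nonempty by hypothesis.

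To produce a Pareto point on $T$, I will use the scalarization $\Sigma(E)=\sum_{i=1}^n|f_i(E)|$. This is continuous, so it attains its minimum on $T$ at some $E_0$. Suppose $E'\in T$ has $|f_i(E')|<|f_i(E_0)|$ for some $i$ but $|f_j(E')|\le|f_j(E_0)|$ for all $j\neq i$. Then $\Sigma(E')<\Sigma(E_0)$, a contradiction. So some $j\ne i$ satisfies $|f_j(E_0)|<|f_j(E')|$, which is exactly the required property.

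For the dependent case, Definition \ref{measurablefunction} gives $m_i=\inf_{\mathcal{E}(\gamma)}|f_i|>0$. I would try to choose the thresholds $c_i\le m_i$, so that $T_i=\mathcal{C}$ and the argument above runs on all of $\mathcal{C}$. Alternatively, one can apply the same scalarization argument directly on $\mathcal{C}$, since it is compact. Dependence is used only to keep $T$ equal to $\mathcal{C}$.

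For the second assertion, set $\mathcal{C}_r=\mathcal{C}\cap\bigcup_i\{c_i\le|f_i|\le r\}$. This family of compact sets increases with $r$. I will read "relative minimal values" as the value $\mu(r)=\min_{\mathcal{C}_r}\Sigma$ of the scalarized minimizer, or as the Pareto front $\Phi(\mathrm{Pareto}(\mathcal{C}_r))$ in the Hausdorff metric. The function $\mu$ is non-increasing in $r$. I would then prove two one-sided continuities.

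Right-continuity comes from compactness: take $r_k\downarrow r$ and minimizers $E_k\in\mathcal{C}_{r_k}$. A subsequence converges to some $E\in\bigcap_k\mathcal{C}_{r_k}=\mathcal{C}_r$, and by continuity $\Sigma(E)=\lim\mu(r_k)$.

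Left-continuity is the step I expect to be the main obstacle. It requires every point of $\mathcal{C}_r$ to be approximated by points of $\mathcal{C}_{r'}$ with $r'<r$, and this can fail if the level sets $\{|f_i|=r\}$ contain isolated components. Here I would use the connectedness of $\mathcal{C}$: at least I would show that $\Phi(\mathcal{C})$ is connected, so the attained values have no gaps. If that is still insufficient, I would restrict to the generic set of $r$ at which $\mu$ is continuous; monotonicity guarantees this set is cocountable. I would then state the continuity claim in that form, with the Hausdorff-continuity version of the Pareto front handled by the same upper and lower semicontinuity argument.
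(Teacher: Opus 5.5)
Your argument for the first assertion is correct and is essentially the paper's. The paper maximizes $1/\sum_i\lambda_i|f_i|$ over $\mathcal{C}\cap\bigcup_i|f_i|^{-1}([c_i,\theta_i])$ for large $\theta$ via Berge's theorem. You minimize $\Sigma=\sum_i|f_i|$ directly on the compact set $T=\bigcup_i T_i$, which is cleaner. You are also right that dependence is not needed to get a Pareto point on all of $\mathcal{C}$.

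The genuine gap is in the second assertion, and it cannot be closed. The left-continuity you single out fails in general, and connectedness of $\mathcal{C}$ does not help. Let $\mathcal{C}=\{E_x\}_{x\in[0,1]}\subseteq\mathcal{E}(\gamma)$ be an arc (e.g.\ $E_x=(F_\tau+(1+x)V,(1+x)P)$ for a fixed Entropy flow with $P\not\equiv 0$). Take $c_1=c_2=1$, and choose aspects with $|f_1(E_x)|=2x$ and $|f_2(E_x)|=10+40x$ on $\mathcal{C}$, extended continuously to $\mathcal{E}(\gamma)$. For $2\le r<10$ you get $\mathcal{C}_r=\{E_x: x\in[1/2,1]\}$, whereas $\mathcal{C}_{10}=\{E_0\}\cup\{E_x:x\in[1/2,1]\}$. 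So at $r=10$:
\begin{itemize}
\item the minimum of $\Sigma$ jumps from $31$ to $10$;
\item every weighted sum $\sum_i\lambda_i|f_i|$ with $\lambda_i\ge 0$, $\sum_i\lambda_i=1$ jumps as well, from $\lambda_1+30\lambda_2$ to $10\lambda_2$;
\item the Pareto front jumps from $\{(1,30)\}$ to $\{(0,10)\}$.
\end{itemize}
Hence neither of your readings of ``relative minimal values'' is continuous in $r$.

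The paper's proof hides exactly this point. It asserts that $\theta\mapsto\mathcal{D}(\theta)=\mathcal{C}\cap\bigcup_i|f_i|^{-1}([c_i,\theta_i])$ is ``trivially hemicontinuous'' and then applies Berge's Maximum Theorem. Only upper hemicontinuity holds, because the graph is closed and $\mathcal{C}$ is compact, and that yields precisely the right-continuity you proved. Lower hemicontinuity fails at $\theta=(10,10)$ in the example: $E_0$ is not a limit of points of $\mathcal{D}(\theta')$ with $\theta'=(r,r)$, $r<10$.

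What is actually provable is what your argument gives: $\mu(r)=\min_{\mathcal{C}_r}\Sigma$ is nonincreasing and right-continuous, hence continuous off a countable set. Full continuity holds only under an extra hypothesis, e.g.\ that every point of $\mathcal{C}_r$ is a limit of points of $\mathcal{C}_{r'}$ with $r'<r$. Your cocountable fallback is therefore the right salvage, but it is a weaker statement than the one claimed, not a proof of it.
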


\begin{proof} 
 We prove Theorem \ref{optimization} using the Berge Maximum Theorem. To this end, recall that given two topological spaces $X,Y$ and a multivalued map $G:Y\to 2^X$, the map $G$ is said to be \textbf{hemicontinuous} if it satisfies the following two properties:
 \begin{enumerate}
     \item \textbf{Upper hemicontinuity} - if $O\subseteq X$ is open and $G(y)\subset O$, there exists some neighborhood $O'\subseteq Y$ of $y$ s.t. for all $y'\in O'$, $G(y')\subset V$.
     \item   \textbf{Lower hemicontinuity} - if $O\subseteq X$ is open and $G(y)\cap O\ne\emptyset$, there exists a neighborhood $O'$ of $y$ s.t. for all $y'\in O'$, $G(y')\cap O\ne\emptyset$.
 \end{enumerate}
 
 It is easy to see that any continuous single-valued function $g:Y\to X$ defines a hemicontinuous map by setting $G:Y\to 2^X$, $G(y)=\{g(y)\}$. Under these definitions, we have the following result (for a proof, see \cite{Berge}, Chapter VI, "A Maximum Theorem"):
 \begin{theorem*}
     Assume $X,Y$ are topological spaces, let $f:X\times Y\to\mathbb{R}$ be continuous, and let $F:Y\to 2^X$ be a hemicontinuous map s.t. for all $y$ the set $F(y)\in 2^X$ is compact and non-empty. Set $v(y)=\sup\{f(x)|x\in F(y)\}$ and $Arg(y)=\{x\in F(y)|f(x,y)=v(y)\}$. Then
     \begin{enumerate}
         \item $v$ is well-defined and continuous in $y$.
         \item $Arg(y)$ is non-empty, compact and upper-hemicontinuous - i.e., $v$ has a proper maximum in every $F(y)$.
     \end{enumerate}
 \end{theorem*}
We apply the the Berge Theorem as follows. Choose some compact $\mathcal{C}\subseteq \mathcal{E}(\gamma)$ and set $\Theta=(c,\infty)^n$, where $c>0$ is chosen s.t. $T_i=\{c\geq |f_i|\geq c_i\}\cap C$ is non-empty for all $i$. For every Entropy flow ${E}\in E(\gamma)$, set $f({E})=\frac{1}{\sum_{i=1}^n\lambda_i |f_i({E})|}$, where $\sum_{i=1}^n\lambda_i=1$. We first claim it would suffice to maximize $f$ - to see why, assume $f$ is maximized at some ${E}\in\mathcal{ E}(\gamma)$, and let ${E}'\in\mathcal{ E}(\gamma)$ be s.t. $|f_i({E}')|\leq |f_i({E})|$ for all $i$, where at least some of the inequalities are sharp. This would immediately imply ${f({E}')}> {f({E})}$, which contradicts ${E}$ being the maximum of $f$. As such, it would suffice to maximize $f$. Therefore, we now define the following functions:
 \begin{itemize}
     \item Write $\theta=(\theta_1,...,\theta_n)$. We define $F:\mathcal{C}\times\Theta\to\mathbb{R}^+$ by setting $F({E},\theta)=f({E})$ - it is easy to see $F$ is continuous by its definition.
     \item Set $\mathcal{D}:\Theta\to 2^{\mathcal{E}(\gamma)}$ by $\mathcal{D}(\theta_1,...,\theta_n)=(\cup_{i=1}^n |f_i|^{-1}([c_i,\theta_i]))\cap \mathcal{C}$. This function is trivially hemicontinuous and per our assumption on $c_i$, $\mathcal{D}(\theta)$ is always compact and non-empty. 
\end{itemize}

Consequently, writing $f^*(\theta)=\sup\{F(x,\theta)|x\in \mathcal{D}(\theta)\}$, by Berge's Maximum Theorem, we know the set $M(\theta)=\{x\in \mathcal{D}(\theta)|F(x,\theta)=f^*(\theta)\}$ is compact and non-empty. Moreover, $f^*(\theta)$ is continuous in $\theta$. In other words, we have proven that for all $r_i>c$, $i=1,...,n$, we can maximize $f$ in $\cup_{i=1}^n |f_i|^{-1}([c_i,r_i])\cap\mathcal{C}$ - thus jointly minimizing $f_1,...,f_n$ with relation to one another in the same set. Recalling that $\mathcal{C}$ is compact and the aspects $f_1,...,f_n$ are continuous, it follows that there exist some constants $R_1,...,R_n$ for which  $T_i=|f_i|^{-1}([c_i,R_i])\cap\mathcal{C}=\{c_i\leq |f_i|\}\cap C$ - thus we can minimize $f_1,...,f_n$ in relation to one another in $\cup_{i=1}^n T_i$. Moreover, when $f_1,...,f_ n$ are dependent, we can choose $c_i$, $i=1,...,n$, s.t. $\mathcal{C}=\cup_{i=1}^n T_i$. Finally, writing $\mathcal{C}_r=\mathcal{C}\cap(\cup_{i=1}^n\{c_i\leq |f_i|\leq r\})$, by the discussion above we know that as we vary $r$, the maximum of $f$ varies continuously - as such, the relative minima of $f_1,...,f_n$ also varies continuously.
\end{proof}
\begin{remark}
Theorem \ref{optimization} is particularly inspired by the ideas of \cite{facet}, according to which chaos is the same in all systems, only observed in different ways. Moreover, recalling notion of Information Entropy introduced in \cite{Shannon}, recall it is uniquely determined (up to some positive multiplicative constant) by three axioms: \textbf{Continuity} (small changes in probabilities should only lead to small changes in uncertainty), \textbf{Maximality} (uncertainty is greatest when you are completely unsure which outcome will happen), and \textbf{Recursivity} (uncertainty can be calculated step by step without changing the overall measure). It is easy to see that the aspects $f_1,...,f_n$ lack only the Recursivity Property. \end{remark}

In the spirit of Theorem \ref{optimization}, we conclude this Appendix by providing a list of possible aspects - together with their possible interpretation w.r.t. the Entropy flow. Needless to say, our list is far from being complete and it is likely many more aspects could easily be conceived. To this end, let $M$ be a smooth manifold, let $\dot{s}=F_\tau(s)$ be a $C^1$ one--parameter family of vector fields parameterized by $\tau\in I$, and let $E(s,\tau)=(F_\tau(s)+V(s,\tau),P(s,\tau))$ denote some corresponding Entropy flow. With these notations, we list the following collection of possible aspects:

\begin{itemize} 
    \item The standard $C^1$-norm for the function $P$, $V$, or both is an aspect measuring how "fast" is the drift on the parameter space induced by $P$, or alternatively, how strong is the energy transition along the bifurcations induced by $V$.
    \item $||[E(s,\tau),(F_\tau(s),1)]||_{1,A}$, where $A$ is a domain in $M\times I$. This aspect measures how much a given Entropy flow deviates from having a "simple" drift with constant velocity in the parameter space and no energy transition (the constant $1$, of course, can be replaced by any other non-zero constant).
    \item $||[E(s,\tau),(F_\tau(s),0)]||_{1,A}$, where again $A$ is a domain in $M\times I$. This aspect measures how much a given Entropy flow deviates from being the Laminar flow given by Equations \ref{laminar}.
    \item $||P(s,\tau)+\int div(F_\tau+V(s,\tau))d\tau||_{1,O}$, where $div$ is the divergence form on $M$ and $O$ is an open set in $M\times I$. This aspect quantifies how much the Entropy flow deviates from being volume-preserving. To see why, note that when $M=R^n$, $E$ is divergence free in $O$ precisely when $\sum_{i=1}^n\frac{\partial(F_\tau(s)+V(s,\tau))}{\partial s_i}+\frac{\partial P(s,\tau)}{\partial\tau}$ vanishes identically in $O$ (where $s=(s_1,...,s_n)$).
    \item The distance from hyperbolicity (see \cite{Gromov}), which is measured by the smallest $\delta$ s.t. for any geodesic triangle in the space, each side of the triangle is within a $\delta$-neighborhood of the union of the other two sides. This notion could possibly help measure how curved are the projections to $M$ of any three flow lines for the Entropy flow in $M\times I$.
    \item The Gromov-Hausdorff distance (see \cite{GHD2}, \cite{GHD1}) or the Wasserstein distance (see \cite{WasD}) applied to $E$ all quantify how much the flow of $E$ deviates from being symplectic.\\ 
\end{itemize}

\section{Appendix - Conley indices for period doubling bifurcations}
\label{periodappendix}

Recall that in Subsection \ref{2dtheory} we proved the Conley index can be used to derive constrains on the dynamics of the Entropy flow - and consequently, on the bifurcations of the one-parameter family it corresponds to. In this Appendix, we discuss the  Conley index for the Entropy flow around a finite number of period doubling bifurcations (under certain idealized assumptions, explained below). In detail, we will prove that under these idealized assumptions the Entropy flow admits isolating blocks for a sequence of three consecutive period doubling bifurcations, and that the corresponding entropy-adapted index filtration has dyadic degree-one transport - where by \textbf{dyadic} we mean that the degree-one Conley groups are $ \mathbb Z/2$, $\mathbb Z/4$, and $\mathbb Z/8$. In particular, we will prove the inclusion-induced transport maps given by Definition \ref{transportmap} between these groups are multiplication by powers of two. \\

To begin, let $\dot{s}=F_\tau(s)$ be a $C^3$ one-parameter family defined on some three-manifold $M$ and parameterized by $\tau\in I$. Keeping up with the notations of Subsection~\ref{perioddoubling}, we will denote the vector field generating an Entropy flow for the family by \(E\) - i.e., the Entropy vector field, in the terminology of Section \ref{bifdyn}. Recall a surface $S$ embedded in $M\times I$ is a \textbf{cylinder of periodic orbits} if for all $\tau\in I$ s.t. $S\cap M\times\{\tau\}\ne\emptyset$, all components of the said intersection are periodic orbits for $F_\tau$. Let us assume that as we vary $\tau$, the system undergoes three consecutive period doubling bifurcations at parameters $\tau_1<\tau_2<\tau_3$. This implies there exist three cylinders of periodic orbits in $M\times I$, denoted by \(C_n\subset M\times I\), $n=1,2,3$. For \(i=1,2,3\), \(C_i\) denotes the regular period doubled continuation sheet between its two indicated boundary circles. Thus the interior of \(C_i\) contains no bifurcation orbit. Each $C_n$ is glued to one another at sets of the type \(P_n\times\{\tau_n\}\) - where $P_n$ are periodic orbits for the vector fields $F_{\tau_n}$ corresponding to the period doubling bifurcation. Throughout this Appendix, we write \(P_n\) for \(P_n\times\{\tau_n\}\) when this causes no confusion. The geometric picture is therefore as follows: \(C_1\) connects \(P_1\) to \(P_2\), \(C_2\) connects \(P_2\) to \(P_3\), and \(C_3\) is the next branch born at \(P_3\) (see the illustration in Figure \ref{ill:perioddoublingillustration}).\\

In particular, as we vary periodic orbits on $C_n$ via $P_{n+1}$ to $C_{n+1}$, the period w.r.t. the flow is doubled. Let \(\tau_4>\tau_3\) denote the next period doubling parameter after \(\tau_3\). Choose \( \tau_*\in(\tau_3,\tau_4),\)and let \(L=T_{\tau_*}\times\{\tau_*\}\subset C_3 \) be the corresponding periodic orbit. The circle \(L\) will serve as the terminal exit set of the index pairs constructed below. We will always assume that \(P_1,P_2,P_3\) are nondegenerate flip orbits, at each $P_n$ there exists a cylinder of periodic orbits, \(D_n\), glued to $C_{n}$ and $C_{n+1}$ at $P_n$, corresponding to periodic orbits for $\dot{s}=F_\tau(s)$ along with the preserved period (i.e., the Möbius orbits in the terminology of \cite{PY2}). Specifically, the geometric picture is as follows: $P_1$ glues $C_1$ and $D_1$, $P_2$ glues $C_1, C_2$ and $D_2$, while $P_3$ glues $C_2,C_3$ and $D_3$ (see the illustration in Figure \ref{ill:perioddoublingillustration}). To further clarify our notation, note we let \(P_n \subset \overline{D_n}\), \(P_n \subset \overline{C_n}\) but \(P_n \cap D_n =P_n\cap C_n=\emptyset\).\\ 

We now recall the standard Conley index notation (see Definition \ref{index}). For a compact set \(A\subset M\times I\) we denote the invariant set w.r.t. the vector field $E$ by $\operatorname{Inv}_E(A)=\{x\in A\mid \varphi_E(t,x)\in A\text{ for all }t\in\mathbb R\}$, where \(\varphi_E\) is the flow of \(E\). If \(\mathcal P=(N,L)\) is an index pair, then $CH_q(\mathcal P;\mathbb Z):=H_q(N,L;\mathbb Z)$. Moreover, in this Appendix the notation \(\mathcal P_k=(N_k,L_k)\) always denotes an Entropy flow index pair in \(M\times I\). For the homology calculation we use the following periodic-orbit skeleta:
\[
        \widehat C_1=P_1\cup C_1\cup P_2,
\]
\[
        \widehat C_2=P_2\cup C_2\cup P_3,
\]
\[
        \widehat C_3
=
P_3
\cup
\bigl(C_3\cap(M\times[\tau_3,\tau_*])\bigr)
\cup
L.
\]
We will use these two-dimensional sets to construct index pairs for $E$, under idealized assumptions which will be stated later on. To this end, define the sets:
\[
        X_0=L,
\]
\[
        X_1=\widehat C_3,
\]
\[
        X_2=\widehat C_2\cup_{P_3}\widehat C_3,
\]
\[
        X_3=\widehat C_1\cup_{P_2}\widehat C_2\cup_{P_3}\widehat C_3.
\]
Here \(\cup_{P_i}\) denotes gluing along the circle \(P_i\).  In this Appendix, the word skeleton will refer only to these explicitly defined sets \(X_k\), $k=0,1,2,3$ - see the illustration in Figure ~\ref{ill:perioddoublingillustration}.\\

\begin{figure}[h]
\centering
\begin{overpic}[width=0.6\textwidth]{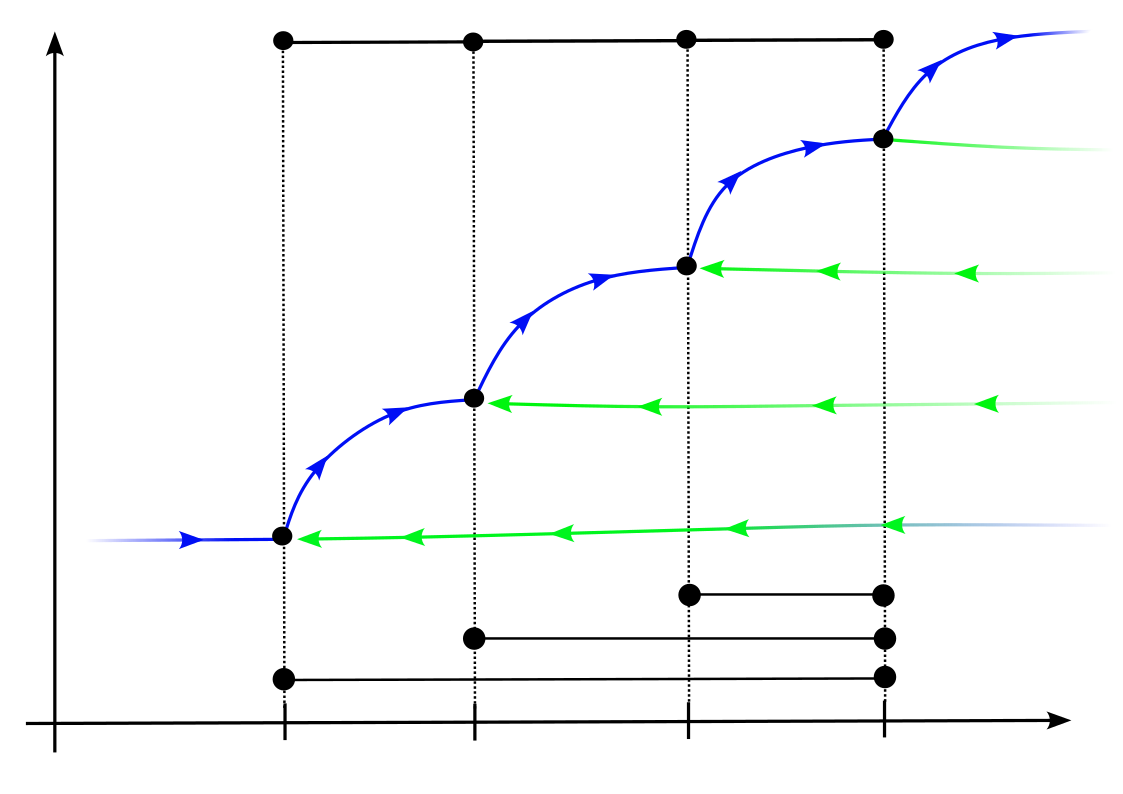}
\put(780,20){$\tau_*$}
\put(605,20){$\tau_3$}
\put(410,20){$\tau_2$}
\put(245,20){$\tau_1$}

\put(685,190){$x_1$}
\put(545,145){$x_2$}
\put(305,115){$x_3$}

\put(750,540){$L$}
\put(570,420){$P_3$}
\put(380,300){$P_2$}
\put(210,180){$P_1$}

\put(1020,230){$D_1$}
\put(1000,330){$D_2$}
\put(1020,430){$D_3$}

\put(690,600){$C_3$}
\put(500,485){$C_2$}
\put(330,365){$C_1$}

\put(690,690){$\widehat C_3$}
\put(500,690){$\widehat C_2$}
\put(330,690){$\widehat C_1$}

\put(30,690){$M$}
\put(980,60){$\tau$}

\end{overpic}
\caption{\textit{A diagram of three consecutive period doubling bifurcations, as described above.}}
\label{ill:perioddoublingillustration}
\end{figure}

We shall use the word \textbf{flip} in the following sense: with previous notations, assume $T$ is a period doubling orbit for some vector field $F_\tau$ in the $C^3$ family $\dot{s}=F_\tau(s)$. We say $T$ is a nondegenerate flip orbit if, given a cross section $S_\tau$ transverse to $T$ at some $s_\tau$, the differential for the first-return map $f:S_\tau\to S_\tau$ at $s_\tau$ has exactly one eigenvalue that is a root of unity, $-1$ (as proven in the Appendix of \cite{PY2}, such period doubling bifurcations are $C^3$-generic). In our setting, we will always assume $P_1,P_2,P_3$ are nondegenerate flip orbits for $F_{\tau_1},F_{\tau_2}$ and $F_{\tau_3}$ respectively. The next Lemma records the only local dynamical input needed for the dyadic calculation.

\begin{lemma}
\label{lem:local-flip-attachment-Cn}

Let \(P_i\), $i=1,2,3$, be a nondegenerate flip bifurcation orbit of the $C^1$ family \(\dot s=F_\tau(s)\), and choose a regular parameter \(\tau\) on the period-doubled side of \(P_i\), and set \(
\Gamma_\tau=C_i\cap(M\times\{\tau\}).
\) Then, there is a projection from \(\Gamma_\tau\) to \(P_i\) whose degree is \(\pm2\).  Moreover, \(\widehat C_i\) admits a deformation retraction  $r_i:\widehat C_i\longrightarrow P_i$ whose restriction to \(\Gamma_\tau\) is homotopic to the phase projection above and the restriction of \(r_i\) to the outgoing circle has degree \(\pm2\) too.

\end{lemma}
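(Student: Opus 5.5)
The plan is to reduce everything to a local normal form for the period doubling bifurcation near $P_i$, read off the degree-two covering from it, and then build the retraction by collapsing the cylinder $C_i$ along the parameter direction.

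\textbf{Step 1: local normal form.} Fix a cross-section $S$ transverse to $P_i$ at a point $p_i$. Since $P_i$ is a nondegenerate flip orbit, the first-return map has exactly one eigenvalue equal to $-1$ at $p_i$. I would apply the center manifold reduction for the one-parameter family of return maps $f_\tau:S\to S$ (using the $C^3$ regularity assumed in this Appendix). This yields a one-dimensional center curve on which, up to $C^1$ conjugacy, $f_\tau(x)=-(1+\mu(\tau))x+O(x^2)$, with $\mu$ changing sign transversally at $\tau_i$.

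On the period-doubled side, $f_\tau^2$ then has a unique period-two orbit $\{x_+(\tau),x_-(\tau)\}$ on this curve. It satisfies $x_\pm(\tau)=\pm c\sqrt{|\tau-\tau_i|}\,(1+o(1))$, and $f_\tau$ swaps the two points. The periodic orbit $\Gamma_\tau$ of $F_\tau$ through $x_\pm$ is the orbit lying on $C_i$. It winds twice around a tubular neighbourhood $S^1\times D$ of $P_i$, crossing $S$ exactly twice, once at each of $x_\pm$.

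\textbf{Step 2: the degree-$\pm2$ projection.} Take the phase projection $\Gamma_\tau\to P_i$ given by the nearest-point retraction of the tubular neighbourhood onto its core. Each fibre over $p_i$ is $\{x_+,x_-\}$. The map is a local diffeomorphism because $\Gamma_\tau$ is $C^1$-close to a double traversal of $P_i$, so it is a covering. Its degree equals the number of returns to $S$ per period, which is $2$. Local orientations agree because the flow direction is preserved; the sign $\pm$ only records the chosen orientation of $P_i$.

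\textbf{Step 3: the deformation retraction $r_i$.} The closure of $C_i$ near $P_i$ is, in these coordinates, the suspension of the curve $\tau\mapsto x_\pm(\tau)$. This is a Möbius-type band whose boundary circle is the double cover $\Gamma_\tau$ and whose core is $P_i$. I would first retract along the $\tau$-direction: slide $\widehat C_i$ down the cylinder $C_i$, which is a regular sheet and hence homeomorphic to $\Gamma_\tau\times(\text{interval})$ by Corollary \ref{regularitylemma}, onto a collar near $P_i$. I would then collapse that collar onto $P_i$ radially in the center coordinate $x$.

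Both moves are homotopies of $\widehat C_i$ relative to $P_i$. Their composition restricted to $\Gamma_\tau$ is homotopic to the phase projection of Step 2, so it has degree $\pm2$. The outgoing circle is connected to $\Gamma_\tau$ through the regular sheet, so the restriction of $r_i$ to it is homotopic to the same covering and also has degree $\pm2$.

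\textbf{Main obstacle.} The hard part is making Step 3 honest: $\widehat C_i$ also contains the far boundary circle $P_{i+1}$ (or $L$ when $i=3$), so a literal deformation retraction onto $P_i$ must deal with that end too. My first attempt would be to argue that $\widehat C_i$ is homeomorphic to a mapping cylinder of the degree-two cover $S^1\to S^1$ (with the other end a collar). That space deformation retracts onto its base circle, which gives $r_i$ with the stated degree. I would also check that the regularity conditions of Corollary \ref{regularitylemma} prevent $C_i$ from twisting further in its interior.
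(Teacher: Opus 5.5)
Your proposal is correct and is essentially the paper's argument: the doubled orbit $\Gamma_\tau$ crosses the section twice at points swapped by $f_\tau$, so the tubular phase projection has degree $\pm2$. That degree is then carried to the outgoing circle by the isotopy of slice orbits along the regular sheet $C_i$. Your identification of $\widehat C_i$ with the mapping cylinder of the double cover (a M\"obius band at $P_i$, a collar at $P_{i+1}$ or $L$) actually constructs the retraction $r_i$, which the paper's proof only presupposes. The normal-form analysis in your Step 1 is unnecessary, since the doubled branch $C_i$ is given by the setup, and it uses genericity conditions beyond the paper's definition of a nondegenerate flip.
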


\begin{proof}
Choose a point on \(P_i\), a local cross-section $S$ transverse to $P_i$ at that point, and let \(f_{\tau}:S\to S\) denote the corresponding first return map for $F_\tau$, for all $\tau$ close to $\tau_i$. At a flip bifurcation, the doubled orbit, appearing for $\tau>\tau_i$, intersects the cross-section in two points, \(x_+(\tau)\) and \(x_-(\tau)\), where $f_\tau(x_+(\tau))=x_-(\tau)$ and $f_\tau(x_-(\tau))=x_+(\tau)$. Thus, one full circuit of \(\Gamma_\tau\) consists of two first-return passages near \(P_i\). In a tubular neighborhood of \(P_i\), choose coordinates $ (\theta,y)\in S^1\times D^2$ so that $ P_i=\{y=0\}$ and \(\theta\) is the phase coordinate along \(P_i\). Now, let $\pi:S^1\times D^2\to P_i$ be the projection \(\pi(\theta,y)=(\theta,0)\). A first-return passage near \(P_i\) follows one full turn of the phase coordinate \(\theta\).  Since the period-doubled orbit \(\Gamma_\tau\) closes only after two first returns, the map $\pi|_{\Gamma_\tau}:\Gamma_\tau\to P_i$ winds twice around \(P_i\). Thus $\deg(\pi|_{\Gamma_\tau})=\pm2$, i.e., the induced map $H_1(\Gamma_\tau;\mathbb Z)\to H_1(P_i;\mathbb Z)$ is multiplication by \(\pm2\).

Since \(C_i\) contains no bifurcation orbit between \(\Gamma_\tau\) and the boundary circle of \( \widehat C_i \), the slice periodic orbits in \(C_i\) give an isotopy from \(\Gamma_\tau\) to the outgoing boundary circle of \( \widehat C_i \).  Composing this isotopy with \(r_i\) gives a homotopy between \(r_i|_{\Gamma_\tau}\) and the restriction of \(r_i\) to the outgoing boundary circle of \( \widehat C_i \). As the degree is invariant under homotopy and as \(r_i|_{\Gamma_\tau}\) is homotopic to the local phase projection, the outgoing circle also maps to \(P_i\) with degree \(\pm2\).  With compatible orientations, the degree is \(+2\).
\end{proof}

Having proven Lemma \ref{lem:local-flip-attachment-Cn}, we now introduce the following standing assumption we would make throughout this Appendix:
\begin{assumption}
\label{ass:clean-isolated-block-Cn}
For \(k=0,1,2,3\), there are compact neighborhoods \(U_k\subset M\times I\) with a terminal exit collar over \(L\), as illustrated in Figure \ref{ill:perioddoublingillustrationwithindexfiltration}.  Let \(E_k\subset U_k\) denote this terminal exit collar, and set $U_k^\circ:=\overline{U_k\setminus E_k}.$ We assume the following:
\begin{itemize}
    \item $\operatorname{Inv}_E(U_k^\circ)=\mathcal S_k\subset \operatorname{int}(U_k^\circ)$, $k=0,1,2,3$, where $\mathcal S_0=\emptyset$, $\mathcal S_1=P_3$, $  \mathcal S_2=\widehat C_2$ and $ \mathcal S_3=\widehat C_1\cup_{P_2}\widehat C_2$.
    \item Each \(\widehat C_i\) satisfies the conclusion of Lemma~\ref{lem:local-flip-attachment-Cn} with positive degree (w.r.t. some fixed orientations).
    \item If a same-period continuation branch \(D_n\) meets the chosen neighborhood, we assume that the corresponding part of \(D_n\) is a component in $Per$, and has no further bifurcation before leaving $U_k$.  By the definition of the Entropy flow on such a branch, \(E\) points along \(D_n\) towards the limiting bifurcation orbit \(P_n\) (see Definition \ref{def511}). Consequently, the \(D_n\)-side contributes neither an additional exit set nor an additional isolated invariant set to the chosen block.
\end{itemize}

We will refer to these three assumptions as \textbf{the clean isolated period doubling sheet block}.
\end{assumption}

Before moving on, we remark the equality \(\operatorname{Inv}_E(U_k^\circ)=\mathcal S_k\) is the isolation hypothesis. The second assumption is a genericity assumption, as it assumes the flip condition which is $C^3$-generic (again, we refer to the Appendix of \cite{PY2} for the complete details). The third assumption is the idealized one, as it assumes the orbits on $D_n$, which are saddles, are isolated. We stress that this final assumption cannot be relaxed - if $D_n$ is not in $Per$ (which could occur, for example, when the invariant manifolds for orbits on $D_n$ intersect transversely) the calculation below does not apply. We now prove:

\begin{lemma}
\label{lem:thin-index-pairs-Cn}
Under Assumption~\ref{ass:clean-isolated-block-Cn}, assume the surfaces \(C_n\) are composed of attracting orbits in the fixed slices \(M\times\{\tau\}\).  Then we can choose the Entropy flow s.t. there exist index pairs $\mathcal P_k=(N_k,L_k)$, $k=0,1,2,3$ which form an entropy-adapted index filtration (see Definition \ref{ill:perioddoublingillustrationwithindexfiltration}), given by:
\[
        N_0\subset N_1\subset N_2\subset N_3,
        \qquad
        L_i=L_j\cap N_i\quad\text{for }i<j.
\]
Moreover, for every \(k\), there is a homotopy equivalence of pairs $(N_k,L_k)\simeq (X_k,L).$
\end{lemma}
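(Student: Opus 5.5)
I would build the index pairs as thin tubular neighborhoods of the skeleta $X_k$, using the neighborhoods $U_k$ from Assumption~\ref{ass:clean-isolated-block-Cn}. Take $N_k$ to be a closed regular neighborhood of $X_k$ inside $U_k$ (shrunk if necessary). Take the exit set $L_k$ to be the part of the terminal exit collar $E_k$ that meets $N_k$. This exit collar is a closed tubular neighborhood of the circle $L$, and I would choose it to be the same collar for every $k$.

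The first step is to choose the Entropy flow so that, near each $C_n$, the field $E$ points into $N_k$ along the lateral boundary. Since the $C_n$ consist of attracting orbits in the slices $M\times\{\tau\}$, I would apply the construction in the proof of Theorem~\ref{major1} with Lemma~\ref{trapping}: thicken each $C_n$ into a tube around its slice orbits on whose lateral boundary $P$ and $V$ vanish and $F_\tau$ points inward. By Definition~\ref{def511}, $P>0$ on the sheets $C_n$, so the drift carries points from $P_n$ toward $P_{n+1}$ and finally past $\tau_3$ toward $L$. I would then cut the block at $\tau_*$, so that trajectories leave only through the collar over $L$; this is what makes $L_k$ the exit set. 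The third clause of the assumption says the $D_n$ sides create no extra exit, because $E$ points along $D_n$ toward $P_n$. With these choices, the three index-pair axioms of Definition~\ref{index} would follow from $\operatorname{Inv}_E(U_k^\circ)=\mathcal S_k\subset\operatorname{int}(U_k^\circ)$ together with the transversality on the lateral boundary.

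Next I would check the filtration properties. Nesting $N_0\subset N_1\subset N_2\subset N_3$ is automatic because $X_0\subset X_1\subset X_2\subset X_3$, provided the regular neighborhoods are chosen compatibly: build $N_3$ first and let each $N_k$ be the sub-neighborhood lying over $X_k$, with the tubes for the earlier pieces added only near $\tau\le\tau_3$. Exit-set compatibility $L_i=L_j\cap N_i$ holds because every $L_k$ is the same collar over $L$, and that collar lies in all the $N_k$, including $N_0$, which is just a tube around $L$.

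For the homotopy equivalence $(N_k,L_k)\simeq(X_k,L)$, I would use that $N_k$ deformation retracts onto $X_k$ along the fibers of the tube, and that this retraction can be chosen to carry the collar $L_k$ onto $L$. This gives a homotopy equivalence of pairs, and excision-type considerations are not needed since both inclusions are cofibrations.

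The main obstacle is the gluing circles $P_2,P_3$. At these circles the skeleton is not a manifold: the sheets $C_n$ and $C_{n+1}$ meet there, and $C_{n+1}$ is a double cover near $P_{n+1}$. Making $E$ transverse to the boundary of a regular neighborhood there requires the local normal form, Lemma~\ref{lem:local-flip-attachment-Cn}, together with the Milnor-attractor behaviour from Theorem~\ref{major1}. I would try to build a model mapping-torus neighborhood of $P_n$ in which the flip acts by $y\mapsto -y$, and check inward pointing there first.
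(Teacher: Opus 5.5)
Your proposal follows the paper's proof essentially step for step. Both take thin neighborhoods $N_k$ of the skeleta $X_k$, made inward-pointing on their lateral boundary via Lemma~\ref{trapping}. Both use a single terminal collar $W_L$ with $L_k=N_k\cap W_L$, which gives $L_i=L_j\cap N_i$ automatically. The $D_n$ sides are handled by the third clause of the Assumption, isolation comes from $\operatorname{Inv}_E(U_k^\circ)=\mathcal S_k$, nesting is obtained by choosing $N_3$ first, and $(N_k,L_k)\simeq(X_k,L)$ follows from the fiber-collapsing homotopy. The gluing circles you flag as the main obstacle are not worked out in the paper: it simply asserts that near each $P_i$ the neighborhood can be chosen compatibly with the adjacent sheets so that the projection $\pi_k:N_k\to X_k$ extends, so your proposed mapping-torus model would go beyond what the paper itself supplies.
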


\begin{proof}
For each \(k\), choose \(N_k\) to be a sufficiently small neighborhood of
\(X_k\) in \(M\times I\).  Away from the period doubling bifurcation orbits \(P_1,P_2,P_3\), this neighborhood is locally a product of the corresponding
periodic-orbit sheet with a small transverse disk.  Thus, locally, $ N_k \simeq X_k\times D^2.$ To continue, for a fixed regular parameter value
\(\tau\neq \tau_1,\tau_2,\tau_3\), as $N_k$ is four-dimensional, the slice $N_k\cap (M\times\{\tau\})$ can be assumed to be a small tubular neighborhood encasing the corresponding periodic orbit in \(M\times\{\tau\}\) - in particular, because \(\dim M=3\), this slice-wise neighborhood is a solid torus (which may or may not be knotted in $M$).

Near a bifurcation circle \(P_i\), the neighborhood is chosen in \(M\times I\), not only inside the slice \(M\times\{\tau_i\}\).  We choose it to be small enough and compatible with the adjacent neighborhoods on the selected period-doubled sheets $C_n$, so that there is a projection $\pi_k:N_k\to X_k$ which collapses each transverse disk fiber $n_k=N_k\cap (M\times\{\tau\})$ to its base point in \(X_k\) - using Lemma \ref{trapping}, we can choose the Entropy vector field $E$ s.t. it is transverse to $\partial n_k\times\{\tau\}$, i.e., on $\cup_{\tau\in(\tau_1,\tau_2)}\partial n_k\times\{\tau\}$ the Entropy flow enters $N_k$. We now recall that the cylinders \(D_n\) are not included in the branch skeleton \(X_k\).  If the neighborhood \(N_k\) meets \(D_n\) near the bifurcation orbit \(P_n\), this does not create an exit set - specifically, by Assumption~\ref{ass:clean-isolated-block-Cn}, such a \(D_n\)-side piece is an entrance piece for the Entropy flow into $N_k$. Hence, the motion of $E$ on \(D_n\) contributes neither to the exit set nor to the isolated invariant set.

It follows the only exit part is the terminal exit collar over \(L\). Choose one terminal exit collar \(W_L\) over \(L\), contained in the terminal collars of the \(U_k\), and define $L_k=N_k\cap W_L.$ After shrinking \(N_k\) (if necessary), we can ensure $   N_k\subset U_k$, $ \overline{N_k\setminus L_k}\subset U_k^\circ$, and $  \mathcal S_k\subset \operatorname{int}(N_k\setminus L_k)$ for all $k=1,2,3$. Then, by Assumption~\ref{ass:clean-isolated-block-Cn} we have:
\[
        \operatorname{Inv}_E(\overline{N_k\setminus L_k})
        =
        \mathcal S_k
        \subset
        \operatorname{int}(N_k\setminus L_k).
\]
Since every boundary piece except the terminal collar is an entrance set w.r.t $E$, every positive-time orbit that leaves \(N_k\) exits through \(L_k\).  Hence \((N_k,L_k)\) is an Entropy flow index pair. It remains to identify the pair homotopy type. To this end, recall the projection \(\pi_k: N_k \to X_k\) collapses the transverse disk fibers. In local coordinates this can be written \((x,v)\in X_k \times D^2\), where the homotopy \(Q_k(t,(x,v)) = (x,(1-t)v)\)  deforms \(N_k\)  onto \(X_k\). The neighborhoods near the bifurcation orbits were chosen compatibly with the same projection, so this collapse extends over them. Since \(L_k\) is the part of \(N_k\) lying over the terminal collar \(W_L\), the same homotopy restricts to a deformation retraction of \(L_k\) onto \(L\). Therefore, $(N_k,L_k)\simeq (X_k,L).$

To complete the proof, we choose the neighborhoods recursively. First choose \(N_3\), then choose \(N_2,N_1,N_0\) small enough so that we have the chain of inclusions $ N_0\subset N_1\subset N_2\subset N_3$ (see the illustration in Figure \ref{ill:perioddoublingillustrationwithindexfiltration}). Since all exit sets are defined using the same terminal collar \(W_L\), for \(i<j\) we have: $$L_j\cap N_i
        =
        (N_j\cap W_L)\cap N_i
        =
        N_i\cap W_L
        =
        L_i.$$ Thus the pairs $\mathcal P_k=(N_k,L_k)$, $k=0,1,2,3$, illustrated in Figure \ref{ill:perioddoublingillustrationwithindexfiltration}, form an entropy-adapted index filtration.
\end{proof}
\begin{figure}[h]
\centering
\begin{overpic}[width=0.6\textwidth]{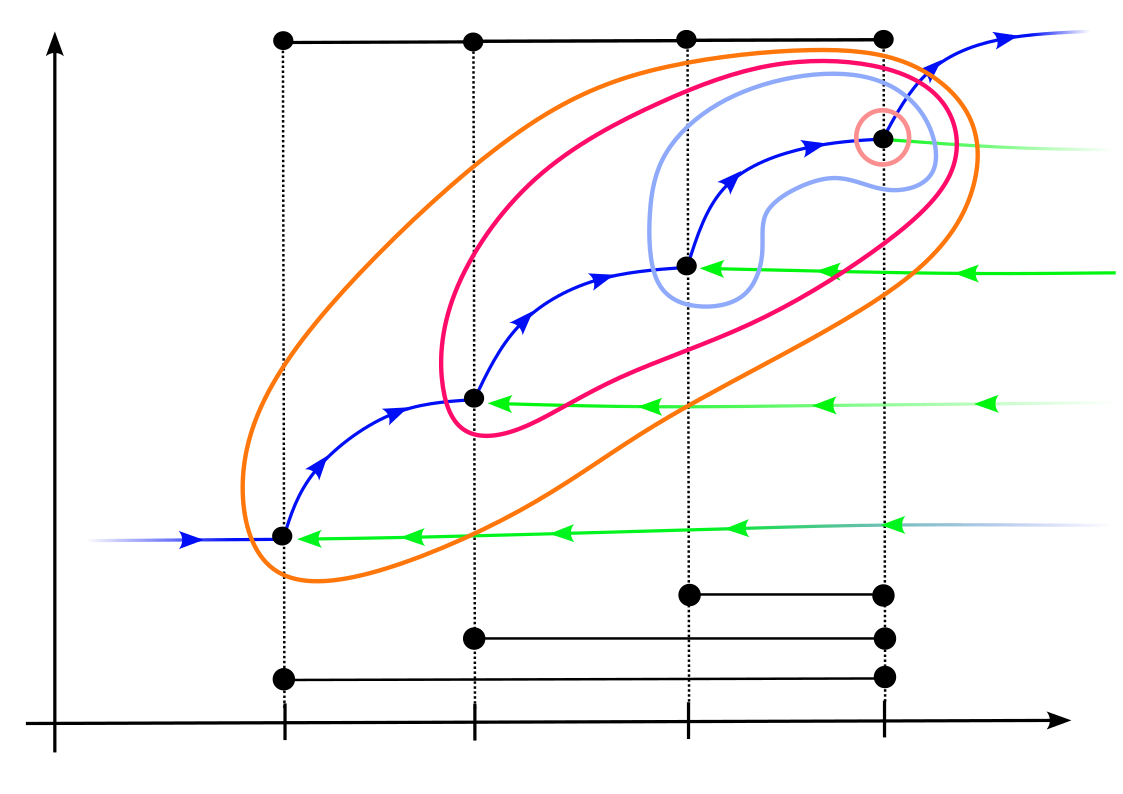}
\put(780,20){$\tau_*$}
\put(605,20){$\tau_3$}
\put(410,20){$\tau_2$}
\put(245,20){$\tau_1$}

\put(685,190){$x_1$}
\put(545,145){$x_2$}
\put(305,115){$x_3$}

\put(750,540){$L$}
\put(570,415){$P_3$}
\put(380,300){$P_2$}
\put(205,180){$P_1$}

\put(1020,230){$D_1$}
\put(1000,330){$D_2$}
\put(1020,430){$D_3$}

\put(650,580){$C_3$}
\put(500,485){$C_2$}
\put(290,345){$C_1$}

\put(720,600){$\mathcal{P}_0$}
\put(630,485){$\mathcal{P}_1$}
\put(450,370){$\mathcal P_2$}
\put(275,250){$\mathcal P_3$}

\put(690,690){$\widehat C_3$}
\put(500,690){$\widehat C_2$}
\put(330,690){$\widehat C_1$}

\put(30,690){$M$}
\put(980,60){$\tau$}

\end{overpic}
\caption{\textit{A diagram of three consecutive period doubling bifurcations with index filtration $\mathcal P_i.$}}
\label{ill:perioddoublingillustrationwithindexfiltration}
\end{figure}


By Lemma~\ref{lem:thin-index-pairs-Cn}, we have $CH_q(\mathcal P_k;\mathbb Z) \cong    H_q(X_k,L;\mathbb Z)$. We now come to:

\begin{proposition}
\label{prop:dyadic-conley-groups-Cn}
Under the assumptions of Lemma~\ref{lem:thin-index-pairs-Cn}, for all $q$ we have $CH_q(\mathcal P_0;\mathbb Z)=0$. In contrast, for \(k=1,2,3\) we have:
\[
        CH_q(\mathcal P_k;\mathbb Z)
        \cong
        \begin{cases}
        \mathbb Z/2^k\mathbb Z, & q=1,\\
        0, & q\neq 1.
        \end{cases}
\]
\end{proposition}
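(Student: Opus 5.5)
By Lemma~\ref{lem:thin-index-pairs-Cn} it suffices to compute $H_q(X_k,L;\mathbb Z)$ for $k=0,1,2,3$. The plan is to replace each $X_k$ by a mapping-cylinder model built from the degree-two attachments of Lemma~\ref{lem:local-flip-attachment-Cn}. The relative homology is then that of an iterated mapping cylinder of degree-two circle maps, relative to its top circle $L$.

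For $k=0$ we have $X_0=L$. Since $(L,L)$ has trivial relative homology, $CH_*(\mathcal P_0)=0$.

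For $k\ge1$, Lemma~\ref{lem:local-flip-attachment-Cn} gives a deformation retraction $r_i:\widehat C_i\to P_i$. Its restriction to the outgoing boundary circle has degree $+2$ with the orientations fixed in Assumption~\ref{ass:clean-isolated-block-Cn}. Hence $\widehat C_i$ is homotopy equivalent, relative to its outgoing circle, to the mapping cylinder $M_{z\mapsto z^2}$ of the squaring map $S^1\to S^1$.

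For $k=1$ the outgoing circle of $\widehat C_3$ is $L$, and $X_1$ retracts onto $P_3$. I then apply the long exact sequence of the pair $(X_1,L)$:
\[
0\to H_1(L)\xrightarrow{\times 2} H_1(X_1)\to H_1(X_1,L)\to H_0(L)\xrightarrow{\cong} H_0(X_1)\to H_0(X_1,L)\to 0 .
\]
This gives $H_1(X_1,L)\cong\mathbb Z/2$. All other relative groups vanish, because $X_1$ and $L$ are one-dimensional up to homotopy and both are connected.

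For $k=2,3$, gluing along $P_3$ (respectively $P_2$) and using $\widehat C_2\simeq P_2$, $\widehat C_1\simeq P_1$ shows that $X_k$ deformation retracts onto the bottom circle $P_{4-k}$. Under this retraction, $L$ maps in through a composition of $k$ degree-two maps, so $H_1(L)\to H_1(X_k)\cong\mathbb Z$ is multiplication by $2^k$. Composition is legitimate here, because each gluing identifies the outgoing circle of $\widehat C_{i}$ with the incoming bottom circle $P_{i+1}$ of $\widehat C_{i+1}$. The same exact sequence then gives $H_1(X_k,L)\cong\mathbb Z/2^k$ and $H_q(X_k,L)=0$ for $q\neq1$. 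For $q\ge2$ the group vanishes since $H_2(X_k)=0$, and for $q=0$ it vanishes by connectedness.

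The main obstacle is making the gluing rigorous. I have to check that the retractions $r_i$ can be chosen compatibly on the shared circles, so that $X_k$ is genuinely the iterated mapping cylinder. I would handle this with the gluing lemma for homotopy equivalences of adjunction spaces along cofibrations, where $P_i\hookrightarrow\widehat C_i$ is a cofibration because it is a boundary circle of a surface. I also have to confirm that the degree signs compose to $+2^k$, which is fixed by the orientation convention of Assumption~\ref{ass:clean-isolated-block-Cn}. In any case the sign does not affect the order of the cokernel $\mathbb Z/2^k$.
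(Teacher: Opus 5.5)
Your proposal is correct and follows essentially the same route as the paper. It reduces via Lemma~\ref{lem:thin-index-pairs-Cn} to $H_*(X_k,L;\mathbb Z)$ and deformation retracts $X_k$ onto its bottom circle, so that the successive degree-two attachments of Lemma~\ref{lem:local-flip-attachment-Cn} make $H_1(L)\to H_1(X_k)\cong\mathbb Z$ multiplication by $2^k$. It then reads off $\mathbb Z/2^k\mathbb Z$ as the cokernel in the long exact sequence of the pair. The mapping-cylinder model and the gluing-lemma check you flag as the main obstacle are not needed: the exact sequence only uses this induced map on $H_1$, together with $H_2(X_k)=0$ and connectedness.
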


\begin{proof}
It is enough to compute \(H_q(X_k,L;\mathbb Z)\).  Fix compatible orientations on the period doubling bifurcation orbits \(P_1,P_2,P_3\), and \(L\). Furthermore, let $ u_i\in H_1(P_i;\mathbb Z)\cong \mathbb Z$ be the generator determined by the orientation of \(P_i\), and let $\lambda\in H_1(L;\mathbb Z)\cong \mathbb Z$ be the generator determined by the orientation of \(L\). We note that for \(X_1=\widehat C_3\), the deformation retraction \(r_3:\widehat C_3\to P_3\) implies $H_1(X_1;\mathbb Z)\cong \mathbb Z$. Similarly, let \(c_3^{(1)}\) be the image of \(u_3\) under the inclusion \(P_3\hookrightarrow X_1\). By Lemma~\ref{lem:local-flip-attachment-Cn}, the image of \(\lambda\) under the inclusion \(L\hookrightarrow X_1\) is $2c_3^{(1)}\in H_1(X_1;\mathbb Z)$. This proves both classes are being viewed after applying the homomorphisms induced by their inclusions into \(X_1\).

For \(X_2=\widehat C_2\cup_{P_3}\widehat C_3\), the space \(X_2\) deformation retracts onto \(P_2\). Let \(c_2^{(2)}\) be the image of \(u_2\) under \(P_2\hookrightarrow X_2\). The two degree-two attachments give both $c_3^{(2)}=2c_2^{(2)}$ and  $\ell^{(2)}=2c_3^{(2)}=4c_2^{(2)}$ in $H_1(X_2;\mathbb Z)$. Conversely, for \(X_3=\widehat C_1\cup_{P_2}\widehat C_2\cup_{P_3}\widehat C_3\), the space \(X_3\) deformation retracts onto \(P_1\). Let \(c_1^{(3)}\) be the image of \(u_1\) under \(P_1\hookrightarrow X_3\). Similarly, the three degree-two attachments imply both $c_2^{(3)}=2c_1^{(3)}$ and $c_3^{(3)}=2c_2^{(3)}=4c_1^{(3)}$,therefore we have $\ell^{(3)}=2c_3^{(3)}=8c_1^{(3)}$ in $H_1(X_3;\mathbb Z)$. All in all, for \(k=1,2,3\), the inclusion \(L\hookrightarrow X_k\) induces the map:
\[
        H_1(L;\mathbb Z)\cong \mathbb Z
        \xrightarrow{\ \times 2^k\ }
        H_1(X_k;\mathbb Z)\cong \mathbb Z.
\]

The long exact sequence of the pair \((X_k,L)\) now implies the needed. To see why, note \(X_k\) has the homotopy type of a circle, so \(H_2(X_k;\mathbb Z)=0\). Both \(L\) and \(X_k\) are connected, so \(H_0(L;\mathbb Z)\to H_0(X_k;\mathbb Z)\) is an isomorphism. Moreover, since multiplication by \(2^k\) is injective on \(\mathbb Z\), we get $H_2(X_k,L;\mathbb Z)=0$ and $H_1(X_k,L;\mathbb Z)
        \cong
        \operatorname{coker}
        \left(
        \mathbb Z\xrightarrow{\times 2^k}\mathbb Z
        \right)
        \cong
        \mathbb Z/2^k\mathbb Z$, while all other relative homology groups vanish. The case \(k=0\) is \((X_0,L)=(L,L)\), so again, all relative groups vanish.
\end{proof}

Recalling the transition groups and transition maps (see Definitions \ref{transitiongroup} and  \ref{transportmap}, respectively) and that they encode the topological change in sequences of bifurcations, we now prove:

\begin{proposition}
\label{prop:dyadic-transport-maps-Cn}
Under the assumptions of Lemma \ref{lem:thin-index-pairs-Cn}, for \(k=1,2,3\), let  $q_k:H_1(X_k;\mathbb Z)\longrightarrow H_1(X_k,L;\mathbb Z)$ be the quotient map in the long exact sequence of the pair \((X_k,L)\), let $g_1=c_3^{(1)},\; g_2=c_2^{(2)},\; g_3=c_1^{(3)}$ be the generators of \(H_1(X_k;\mathbb Z)\) used in the proof of Proposition~\ref{prop:dyadic-conley-groups-Cn},
and choose $\alpha_k=q_k(g_k)\in CH_1(\mathcal P_k;\mathbb Z).$ Then the following holds:

\begin{itemize}
    \item $ CH_1(\mathcal P_k;\mathbb Z)   =    \langle \alpha_k\mid 2^k\alpha_k=0\rangle$. 
    \item For \(1\leq i<j\leq 3\), the inclusion-induced maps $\Phi^1_{ij}:CH_1(\mathcal P_i;\mathbb Z) \longrightarrow CH_1(\mathcal P_j;\mathbb Z)$ satisfy the relations $\Phi^1_{ij}(\alpha_i)=2^{j-i}\alpha_j$.
\end{itemize}

Equivalently, setting $\theta_k:\mathbb Z/2^k\mathbb Z
        \longrightarrow
        CH_1(\mathcal P_k;\mathbb Z)$ , $\theta_k(a+2^k\mathbb Z)=a\alpha_k$, then \(\Phi^1_{ij}\) corresponds under \(\theta_i,\theta_j\) to the homomorphism $m_{ij}:\mathbb Z/2^i\mathbb Z
        \longrightarrow
        \mathbb Z/2^j\mathbb Z$, $m_{ij}(a+2^i\mathbb Z)
        =
        2^{j-i}a+2^j\mathbb Z$.
\end{proposition}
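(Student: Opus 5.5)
We reduce everything to the skeleton pairs via Lemma~\ref{lem:thin-index-pairs-Cn}. The homotopy equivalences $(N_k,L_k)\simeq(X_k,L)$ come from the fibrewise collapse $\pi_k$, and these collapses are compatible with the inclusions $N_i\subset N_j$ and $X_i\subset X_j$, since all $N_k$ are built from the same transverse disk structure and the same terminal collar $W_L$. Hence the inclusion-induced maps $\Phi^1_{ij}$ of Definition~\ref{transportmap} correspond to the maps $H_1(X_i,L;\mathbb Z)\to H_1(X_j,L;\mathbb Z)$ induced by $X_i\hookrightarrow X_j$. The first assertion then follows directly from the proof of Proposition~\ref{prop:dyadic-conley-groups-Cn}. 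There $H_1(X_k)=\mathbb Z\langle g_k\rangle$ and the image of $\lambda$ is $2^kg_k$, so the long exact sequence gives $q_k$ surjective with kernel $2^k\mathbb Z g_k$. Thus $\alpha_k=q_k(g_k)$ generates $CH_1(\mathcal P_k)$ with the single relation $2^k\alpha_k=0$.

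For the transport relation, we use naturality of the long exact sequence of pairs under $(X_i,L)\hookrightarrow(X_j,L)$, which gives $\Phi^1_{ij}\circ q_i=q_j\circ\iota_*$, where $\iota_*:H_1(X_i)\to H_1(X_j)$. It therefore suffices to compute $\iota_*(g_i)$ in terms of $g_j$. By construction $g_1=c_3^{(1)}$, $g_2=c_2^{(2)}$ and $g_3=c_1^{(3)}$ are the images of $u_3$, $u_2$ and $u_1$ respectively. Since $X_i\subset X_j$ and $P_3\subset X_i$ for all $i\ge1$, we get $\iota_*(c_3^{(i)})=c_3^{(j)}$ and $\iota_*(c_2^{(2)})=c_2^{(3)}$. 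The degree-two attachment relations from Lemma~\ref{lem:local-flip-attachment-Cn}, already recorded in the proof of Proposition~\ref{prop:dyadic-conley-groups-Cn}, give $c_3^{(2)}=2c_2^{(2)}$, $c_2^{(3)}=2c_1^{(3)}$ and $c_3^{(3)}=4c_1^{(3)}$. It follows that $\iota_*(g_i)=2^{j-i}g_j$ in each of the three cases $(1,2)$, $(2,3)$ and $(1,3)$, and applying $q_j$ yields $\Phi^1_{ij}(\alpha_i)=2^{j-i}\alpha_j$. Functoriality, via the functor lemma following Definition~\ref{transportmap}, confirms consistency: $\Phi^1_{13}=\Phi^1_{23}\circ\Phi^1_{12}$, matching $4=2\cdot2$.

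The reformulation through $\theta_k$ is immediate. $\theta_k$ is well defined and an isomorphism by the presentation in the first assertion, and $\Phi^1_{ij}\theta_i(a)=a\,2^{j-i}\alpha_j=\theta_j(m_{ij}(a))$. Note also that $m_{ij}$ is well defined, because $2^{j-i}\cdot 2^i\mathbb Z\subset 2^j\mathbb Z$.

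The main obstacle is justifying that the homotopy equivalences of Lemma~\ref{lem:thin-index-pairs-Cn} commute up to homotopy with the inclusions of pairs, so that $\Phi^1_{ij}$ really is the skeleton map. We handle this by choosing $N_i$ as a subneighbourhood of $N_j$ using the restriction of the same disk-bundle projection $\pi_j$; the straight-line homotopy $Q_j$ then restricts to $Q_i$. A secondary point is fixing orientations: the degrees must be taken as $+2$ rather than $\pm2$, as allowed by Assumption~\ref{ass:clean-isolated-block-Cn}, so that no signs appear in $2^{j-i}$.
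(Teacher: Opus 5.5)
Your proposal is correct and is essentially the paper's proof. Naturality of the long exact sequence for $(X_i,L)\hookrightarrow(X_j,L)$ gives $\Phi^1_{ij}\circ q_i=q_j\circ(\iota_{ij})_*$, the degree-two attachments give $(\iota_{ij})_*g_i=2^{j-i}g_j$, and your derivation of the presentation of $CH_1(\mathcal P_k;\mathbb Z)$ from surjectivity of $q_k$ fills in what the paper leaves implicit from Proposition~\ref{prop:dyadic-conley-groups-Cn}. The ``main obstacle'' you flag is not actually needed: the inclusions $(X_k,L)\hookrightarrow(N_k,L_k)$ are themselves homotopy equivalences of pairs, and the square formed with $(X_i,L)\hookrightarrow(X_j,L)$ and $(N_i,L_i)\hookrightarrow(N_j,L_j)$ consists only of inclusions and so commutes on the nose, so no compatibility between the retractions $Q_i$ and $Q_j$ has to be arranged.
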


\begin{proof}
Let $q_k:H_1(X_k;\mathbb Z)\longrightarrow H_1(X_k,L;\mathbb Z)$ denote the homomorphism induced by the inclusion of pairs $(X_k,\varnothing)\hookrightarrow (X_k,L)$. Thus \(q_k\) is the quotient map appearing in the long exact sequence of the pair \((X_k,L)\). Recall from the proof of Proposition~\ref{prop:dyadic-conley-groups-Cn} that we have the identities $H_1(X_1;\mathbb Z)=\mathbb Z\langle c_3^{(1)}\rangle$, $H_1(X_2;\mathbb Z)=\mathbb Z\langle c_2^{(2)}\rangle$ and $H_1(X_3;\mathbb Z)=\mathbb Z\langle c_1^{(3)}\rangle$. To continue, set $  g_1=c_3^{(1)}$, $
        g_2=c_2^{(2)}$, and $ g_3=c_1^{(3)}$ - by definition, we have:       $$\alpha_k=q_k(g_k)\in H_1(X_k,L;\mathbb Z)
        \cong CH_1(\mathcal P_k;\mathbb Z).$$
Let $\iota_{ij}:X_i\hookrightarrow X_j$ denote the inclusion. Since \(L\subset X_i\subset X_j\), this is an inclusion of pairs $(X_i,L)\hookrightarrow (X_j,L)$, and the induced map on relative homology is the model for $ \Phi^1_{ij}:CH_1(\mathcal P_i;\mathbb Z)\to
        CH_1(\mathcal P_j;\mathbb Z)$ under the pair homotopy equivalences $(N_k,L_k)\simeq (X_k,L).$ By the naturality of the long exact sequence of a pair, we have the equality $\Phi^1_{ij}\circ q_i =  q_j\circ (\iota_{ij})_*$. In particular, for \(i=1\) and \(j=2\) the inclusion \(X_1\hookrightarrow X_2\) maps the class \(g_1=c_3^{(1)}\), represented by the circle \(P_3\), to the class of \(P_3\) inside \(X_2\) - where by the degree-two attachment along \(\widehat C_2\), we have  $(\iota_{12})_*g_1=c_3^{(2)}=2c_2^{(2)}=2g_2$. Therefore, we arrive at $\Phi^1_{12}(\alpha_1) = \Phi^1_{12}(q_1(g_1)) = q_2((\iota_{12})_*g_1) = q_2(2g_2) = 2\alpha_2.$

Similarly, the inclusion \(X_2\hookrightarrow X_3\) maps \(g_2=c_2^{(2)}\), represented by the circle \(P_2\), to the class of \(P_2\) inside \(X_3\). By the degree-two attachment along \(\widehat C_1\), we similarly get $(\iota_{23})_*g_2=c_2^{(3)}=2c_1^{(3)}=2g_3.$  Hence, $\Phi^1_{23}(\alpha_2) = 2\alpha_3.$ In addition, for \(i=1\) and \(j=3\), by either composing the two previous maps or by using the inclusion \(X_1\hookrightarrow X_3\), it immediately follows that $(\iota_{13})_*g_1
        =
        c_3^{(3)}
        =
        2c_2^{(3)}
        =
        4c_1^{(3)}
        =
        4g_3 $ and $\Phi^1_{13}(\alpha_1)=4\alpha_3$. These are exactly the formulas required, i.e., $\Phi^1_{ij}(\alpha_i)=2^{j-i}\alpha_j$, $1\leq i<j\leq3$. Finally, from the identification $\theta_k:\mathbb Z/2^k\mathbb Z\longrightarrow
        CH_1(\mathcal P_k;\mathbb Z)$, $\theta_k(a+2^k\mathbb Z)=a\alpha_k$, the previous formula becomes $\theta_j^{-1}\circ \Phi^1_{ij}\circ \theta_i
        \bigl(a+2^i\mathbb Z\bigr)
        =
        2^{j-i}a+2^j\mathbb Z.$
\end{proof}
\begin{remark}
To dispel any doubts, note the expression $\theta_j^{-1}\circ \Phi^1_{ij}\circ \theta_i
        \bigl(a+2^i\mathbb Z\bigr)
        =
        2^{j-i}a+2^j\mathbb Z$ is well defined: if \(a'=a+2^i b\) and  $2^{j-i}a' = 2^{j-i}a+2^j b,$ then \(2^{j-i}a'\), \(2^{j-i}a\) determine the same element of $\mathbb Z/2^j\mathbb Z$.
\end{remark}

Propositions \ref{prop:dyadic-conley-groups-Cn} and \ref{prop:dyadic-transport-maps-Cn} mean that the degree-one transport module of this entropy-adapted index filtration can be written as follows:
\[
        0
        \longrightarrow
        \langle \alpha_1\mid 2\alpha_1=0\rangle
        \xrightarrow{\ \alpha_1\mapsto 2\alpha_2\ }
        \langle \alpha_2\mid 4\alpha_2=0\rangle
        \xrightarrow{\ \alpha_2\mapsto 2\alpha_3\ }
        \langle \alpha_3\mid 8\alpha_3=0\rangle .
\]
The two displayed maps above are injective and not surjective, i.e., in the terminology of Subsection \ref{2dtheory} (and in particular, Definition \ref{birthdeathcode}), there is no degree-one Death between adjacent nonzero levels. The adjacent degree-one Birth groups are:
\[
        \operatorname{Birth}_1(1,2)
        =
        \operatorname{coker}\Phi^1_{12}
        \cong
        \mathbb Z/2\mathbb Z,
\]
\[
        \operatorname{Birth}_1(2,3)
        =
        \operatorname{coker}\Phi^1_{23}
        \cong
        \mathbb Z/2\mathbb Z
\]
and
\[
        \operatorname{Birth}_1(0,3)
        =
        CH_1(\mathcal P_3;\mathbb Z)
        \cong
        \mathbb Z/8\mathbb Z,
\]
where the last group denotes the Birth group for the transition from the empty initial level to the full three-step block. Moreover, using the transition groups of Section \ref{2dtheory}, the long exact sequence of the triple gives $TR_1(i,j;\mathbb Z)\cong \operatorname{coker}\Phi^1_{ij}$ which yields:
\[
        TR_1(1,2;\mathbb Z)\cong \mathbb Z/2\mathbb Z,
        \qquad
        TR_1(2,3;\mathbb Z)\cong \mathbb Z/2\mathbb Z
\]
and
\[
        TR_1(1,3;\mathbb Z)\cong \mathbb Z/4\mathbb Z,
        \qquad
        TR_1(0,3;\mathbb Z)\cong \mathbb Z/8\mathbb Z,
\]
where all other transition groups vanish.\\

At this point, we would like to elaborate on the dynamical meaning of the dyadic transition groups. The generator \(\alpha_k\) represents the phase class of the \(k\)-step period doubling block after the terminal exit circle \(L\) has been collapsed. In contrast, the relation \(2^k\alpha_k=0\) records that the terminal orbit \(L\) projects through the successive flip attachments as a \(2^k\)-fold phase cover of the bifurcation periodic orbit circle. Since \(L\) is a part of the exit set, this \(2^k\)-fold phase relation becomes a torsion in the relative Conley group. Similarly, the transport map $\Phi^1_{ij}(\alpha_i)=2^{j-i}\alpha_j$ can be interpreted as follows: when the isolated block is enlarged from level \(i\) to level \(j\), the phase class already present at level \(i\) is not destroyed, but rather it is re-expressed as a \(2^{j-i}\)-fold phase class inside the deeper period doubling branch. Hence, the adjacent maps are injective - that is, no degree-one phase class dies in this clean attracting model. That being said, these maps are not surjective, as each period doubling adds one new binary phase layer which is not detected at the previous level. Consequently, the adjacent groups $TR_1(1,2;\mathbb Z)\cong \mathbb Z/2\mathbb Z,\; TR_1(2,3;\mathbb Z)\cong \mathbb Z/2\mathbb Z$ measure the new parity information created at each single flip. In particular, the group $TR_1(1,3;\mathbb Z)\cong \mathbb Z/4\mathbb Z$ records the two-step dyadic refinement, while $TR_1(0,3;\mathbb Z)\cong \mathbb Z/8\mathbb Z$ records the full three-step phase refinement of the block. Thus, under Assumption \ref{ass:clean-isolated-block-Cn}, the transition groups are a Conley index signature of the successive twofold phase attachments along the period doubling route. The vanishing of the other transition groups reflects the absence of additional dynamical phenomena inside chosen isolated block, i.e., by $D_n\subseteq Per$, $n=1,2,3$. We further highlight that the dyadic torsion is an invariant of the entire finite transition block, not of an individual attracting periodic orbit. It arises because the terminal circle \(L\) maps into \(X_k\) with degree \(2^k\), while the relevant Conley group is the relative group \(H_1(X_k,L;\mathbb Z)\).\\

Before concluding the Appendix, we remark that the above calculations assume the normal fibers to the skeleton are entrance fibers, which forces \((N_k,L_k)\simeq (X_k,L)\) (this holds due to our choice of Entropy flow in Lemma \ref{lem:thin-index-pairs-Cn}). If the isolated block has an additional unstable normal bundle \(E^u\to X_k\) of rank \(u\), the standard Thom space model for the Conley index replaces \((X_k,L)\) by the corresponding Thom pair. If in addition \(E^u\) is orientable, the Thom isomorphism would give $CH_q(\mathcal P_k;\mathbb Z) \cong H_{q-u}(X_k,L;\mathbb Z).$ Thus, the group \(\mathbb Z/2^k\mathbb Z\) would appear in degree \(u+1\) rather than degree \(1\). In other words, in both cases the Conley index records both the dyadic attachment along the skeleton and the transverse unstable directions.

\section{Statements}
\paragraph{\textbf{Author contribution statement}}
All three authors made an equal contribution to this study and all three comply with the COPE regulations.\\
\paragraph{\textbf{Data availability statement}}
No data was used for this paper.\\
\paragraph{\textbf{Competing interests statement}}
On behalf of all authors, the corresponding author states there is no conflict of interest and the authors did not receive support from any organization for the submitted work. Moreover, the authors have no financial or proprietary interests in any material discussed in this article.\\
\paragraph{\textbf{Funding statement}}
The authors did not receive funding from any organization for the submitted work.\\

\newpage
\printbibliography
\end{document}